\tikzset{curve/.style={settings={#1},to path={(\tikztostart)
    .. controls ($(\tikztostart)!\pv{pos}!(\tikztotarget)!\pv{height}!270:(\tikztotarget)$)
    and ($(\tikztostart)!1-\pv{pos}!(\tikztotarget)!\pv{height}!270:(\tikztotarget)$)
    .. (\tikztotarget)\tikztonodes}},
    settings/.code={\tikzset{quiver/.cd,#1}
        \def\pv##1{\pgfkeysvalueof{/tikz/quiver/##1}}},
    quiver/.cd,pos/.initial=0.35,height/.initial=0}
\tikzset{tail reversed/.code={\pgfsetarrowsstart{tikzcd to}}}
\tikzset{2tail/.code={\pgfsetarrowsstart{Implies[reversed]}}}
\tikzset{2tail reversed/.code={\pgfsetarrowsstart{Implies}}}
\tikzset{no body/.style={/tikz/dash pattern=on 0 off 1mm}}
\titleformat{\subsection}[runin]
  {\normalfont\bfseries}{\thesubsection}{1em}{}
\titleformat{\subsubsection}[runin]
    {\normalfont\bfseries}{\thesubsubsection}{1em}{}
\newcommand{\newoverline}[1]{\smash{\overline{#1}}\vphantom{#1}}
\newcommand{\A}{\mathbb{A}}
\newcommand{\G}{\mathbb{G}}
\newcommand{\cO}{\mathcal{O}}
\newcommand{\fD}{\mathfrak{D}}
\newcommand{\cG}{\mathcal{G}}
\newcommand{\cY}{\mathcal{Y}}
\newcommand{\cZ}{\mathcal{Z}}
\newcommand{\cX}{\mathcal{X}}
\newcommand{\sP}{\mathscr{P}}
\newcommand{\fL}{\mathfrak{L}}
\newcommand{\chlam}{\check{\lambda}}
\newcommand{\oZ}{\mathring{\mathcal{Z}}}
\newcommand{\pZ}{\newoverline{\mathcal{Z}}}
\newcommand{\pibar}{\overline{\pi}}
\newcommand{\ofp}{\overline{\mathfrak{p}}}
\newcommand{\ofq}{\overline{\mathfrak{q}}}
\newcommand\blfootnote[1]{
    \begingroup
    \renewcommand\thefootnote{}\footnote{#1}
    \addtocounter{footnote}{-1}
    \endgroup
}
\DeclareMathOperator{\IC}{IC}
\DeclareMathOperator{\sIC}{IC_Z^{\frac{\infty}{2}}}
\DeclareMathOperator{\sICR}{IC^{\frac{\infty}{2}}_{\Ran}}
\DeclareMathOperator{\sICC}{IC^{\frac{\infty}{2}}_{\Conf}}
\DeclareMathOperator{\Gr}{Gr}
\DeclareMathOperator{\id}{id}
\DeclareMathOperator{\op}{op}
\DeclareMathOperator{\Bun}{Bun}
\DeclareMathOperator{\dBun}{\newoverline{\Bun}}
\DeclareMathOperator{\fact}{fact}
\DeclareMathOperator{\Hom}{Hom}
\DeclareMathOperator{\Map}{Map}
\DeclareMathOperator{\add}{add}
\DeclareMathOperator{\corr}{corr}
\DeclareMathOperator{\gen}{gen}
\DeclareMathOperator{\ind}{ind}
\DeclareMathOperator{\Ran}{Ran}
\DeclareMathOperator{\str}{str}
\DeclareMathOperator{\untl}{untl}
\DeclareMathOperator{\eff}{eff}
\DeclareMathOperator{\Conf}{Conf}
\DeclareMathOperator{\Spec}{Spec}
\DeclareMathOperator{\colim}{colim}
\DeclareMathOperator{\Spc}{Spc}
\DeclareMathOperator{\pt}{pt}
\DeclareMathOperator{\Stk}{PrStk}
\DeclareMathOperator{\LStk}{LaxPrStk}
\DeclareMathOperator{\Cat}{Cat}
\DeclareMathOperator{\Groth}{Groth}
\DeclareMathOperator{\Fun}{Fun}
\DeclareMathOperator{\uMap}{\underline{Map}}
\DeclareMathOperator{\sif}{\mathscr{F}\ell_{\emph{x}}^{\frac{\infty}{2}}}
\DeclareMathOperator{\Div}{Div}
\DeclareMathOperator{\disj}{disj}
\DeclareMathOperator{\QC}{QCoh}
\DeclareMathOperator{\Shv}{Shv}
\DeclareMathOperator{\Tot}{Tot}
\DeclareMathOperator{\oblv}{oblv}
\DeclareMathOperator{\Aff}{AffSch}
\DeclareMathOperator{\un}{un}
\DeclareMathOperator{\act}{act}
\DeclareMathOperator{\ovF}{\newoverline{\mathscr{F}}}
\DeclareMathOperator{\Corr}{Corr}
\numberwithin{equation}{subsection}
\newtheorem{theorem}{Theorem}[subsection]
\newtheorem{corollary}{Corollary}[theorem]
\newtheorem{lemma}[theorem]{Lemma}
\newtheorem{proposition}[theorem]{Proposition}
\newtheorem{corollary'}[theorem]{Corollary}
\newtheorem{remark}[theorem]{Remark}
\theoremstyle{definition}
\newtheorem{definition}[theorem]{Definition}
\newtheorem{definition/construction}[theorem]{Definition/construction}
\title{Semi-infinite flags and Zastava spaces}
\author{Andreas Hayash}
\begin{document}

\maketitle

\begin{abstract}
    We give an interpretation of the semi-infinite intersection cohomology sheaf associated to a semisimple simply-connected algebraic group in terms of finite-dimensional geometry. Specifically, we describe a procedure for building factorization spaces over moduli spaces of finite subsets of a curve from factorization spaces over moduli spaces of divisors, and show that under this procedure the compactified Zastava space is sent to the support of the semi-infinite IC sheaf in the factorizable Grassmannian. We define ``semi-infinite t-structures" for a large class of schemes with an action of the multiplicative group, and show that, for the Zastava, the limit of these t-structure recovers the infinite-dimensional version. As an application, we also construct factorizable parabolic semi-infinite IC sheaves and a generalization (of the principal case) to Kac-Moody algebras.\blfootnote{\noindent \emph{$2020$ Mathematics Subject Classification} $14$D$24$.
    
    Keywords: geometric Langlands, Zastava spaces, semi-infinite intersection cohomology, factorization algebras and spaces. }
\end{abstract}

\tableofcontents
\newpage

\section{Introduction}

\subsection{Notation.} Fix an algebraically closed field $k$ of characteristic zero, and let $X$ be a connected smooth projective curve over $k$ of genus $g$. We will also fix a semisimple and simply-connected algebraic group\footnote{More generally, one can work with a reductive group with simply-connected derived subgroup. This is essentially because the semi-infinite orbit considered in the text only depends on the latter. However, for simplicity we will only consider semisimple groups.} $G$ over $k$ with Lie algebra $\mathfrak{g}$, together with a choice of Borel subgroup $B \subseteq G$. Choose a splitting $B=TN$ of $B$ into a maximal torus $T$ and unipotent radical $N$, and let $B^-$ be the Borel opposite to $B$ containing $T$. By $\Lambda$ we will denote the coweight lattice of the pair $(G,T)$, and our choice of Borel determines a subset $\{\alpha_i\} \subseteq \Lambda$ of simple coroots indexed by a set $\mathcal{I}_G$. Moreover, by $\Lambda^-$ we mean those coweights $\lambda$ which can be expressed as a sum $\sum_{i \in \mathcal{I}_G} (-n_i)\alpha_i$ with each $n_i$ a nonnegative integer. Dually, we will denote by $\check{\Lambda}$ the weight lattice of $G$, and by $\check{\Lambda}^+$ the subset of dominant weights. 

\subsection{Semi-infinite flags and Zastava spaces.} The purpose of this text is to establish a new connection between the semi-infinite flag variety of Feigin-Frenkel and the Zastava spaces of Drinfeld-Finkelberg-Mirkovi\'{c}. 

The semi-infinite flag variety $\sif$ at a point $x \in X$ is an object of great interest in geometric representation theory. It was first defined in \cite{FF} as the quotient \[\sif \coloneqq \mathfrak{L}_x G/\mathfrak{L}_x N \mathfrak{L}^+_x T\]where for an algebraic group $H$ we denote by $\mathfrak{L}_x H$ its loop group at $x$ and by $\mathfrak{L}_x^+ H$ its group of arcs\footnote{Strictly speaking, there are in the literature various inequivalent, though related definitions of the semi-infinite flag variety. For example, the approach taken in \cite{R2} is expected to yield a different object than the approach taken in the present text. Although the two are closely related, we will refrain from a discussion of the latter.}. Its category $\fD(\sif)$ of D-modules is expected to be the target of a localization theory for modules over the Kac-Moody algebra $\hat{\mathfrak{g}}$ \cite{Ga1}\cite{Ga2}, similar in spirit to the celebrated result of Beilinson-Bernstein \cite{BB}. For related reasons, $\fD(\sif)$ is also expected to be the $\mathfrak{L}_x G$-category associated to the trivial $\check{G}$-local system on $\mathring{D}_x$ under the conjectural local geometric Langlands correspondence \cite{FG2}. Here $\mathring{D}_x \coloneqq D_x \setminus x$ is the punctured formal disc about $x$.  

Early constructions of the category $\fD(\sif)^{\mathfrak{L}^+_x G}$ of $\mathfrak{L}^+ G$-invariants in $D(\sif)$ were proposed in \cite{FM}, and then later in \cite{ABBGM}. Both use in an essential way the \emph{Zastava spaces} $\cZ$, also first defined in \cite{FM}. The Zastava space $\cZ$ associated to $G$ is a finite-dimensional local model for $\sif$ and is defined as the moduli space of $G$-bundles together with a generalized $N$-reduction and a generically transverse $B^-$-reduction. Here by ``generalized" we mean that the reduction is allowed to have points of degeneracy. Geometrically, the Zastava spaces are intended as a way organize transverse slices to $\mathfrak{L}^+G$-orbits in $\sif$ into a single space \cite{FM}.

A crucial feature of $\cZ$ is that it factorizes over the configuration space $\Conf$ of points in $X$ with coefficients in $\Lambda^-$. Roughly, this means that the fiber of $\cZ$ over a disjoint union of divisors can be written as the product of fibers over each component divisor. Moreover, its intersection cohomology sheaf $\IC_{\cZ}$ has a canonical structure of a factorization algebra. Intuitively, the factorization structure on $\cZ$ should be thought of as making $\cZ$ into a kind of monoid\footnote{In fact, this is more than an analogy since a factorization structure is a monoid (or commutative algebra) structure in a category of correspondences.}, and $\IC_{\cZ}$ should be thought of as a multiplicative sheaf with respect to this monoid structure. For any factorization algebra there is also an associated category of factorization modules at the point $x$. In \cite{FM} and \cite{ABBGM}, the category $\fD(\sif)^{\mathfrak{L}^+_x G}$ is essentially defined to be the category of factorization modules for the intersection cohomology sheaf $\IC_{\cZ}$ at $x$. 

\subsection{Semi-infinite flags and the Ran sheaf.} More recently, another construction of $\fD(\sif)^{\mathfrak{L}^+_x G}$ was proposed in \cite{Ga1} and \cite{Ga2} using the affine Grassmannian $\Gr_{G,\Ran}$ over the moduli $\Ran$ of finite subsets of $X$. There is a natural action of the loop group $\mathfrak{L}_{\Ran} N$ of $N$ relative to the $\Ran$ space on $\Gr_{G,\Ran}$ whose orbits are indexed by the coweight lattice $\Lambda$ and denoted $S^{\lambda}_{\Ran}$ for $\lambda \in \Lambda$. In \cite{Ga2} Gaitsgory constructs a factorization algebra $\sICR$ in $D$-modules over the Beilinson-Drinfeld Grassmannian supported on the closure $\newoverline{S}^0_{\Ran}$ of the semi-infinite orbit $S^0_{\Ran}$ whose category of factorization modules is expected to possess the properties desired of $\fD(\sif)^{\mathfrak{L}^+_x G}$ \cite{Ga1}. 

By definition, $\sICR$ is the intermediate extension of the dualizing sheaf $\omega_{S^0_{\Ran}}$ relative to a \emph{semi-infinite} t-structure defined on the category of $\mathfrak{L}_{\Ran} N$-equivariant D-modules on $\newoverline{S}^0_{\Ran}$. The latter t-structure can loosely be thought of as a perverse t-structure on $D$-modules over $\newoverline{S}^0_{\Ran}$ which are ``constructible" with respect to the semi-infinite stratification. Note that this context is quite different from the traditional one since the strata $S^{\lambda}_{\Ran}$ are of infinite dimension and of infinite codimension. In particular, in the semi-infinite t-structure $\omega_{S^0_{\Ran}}$ lies in the heart whereas in the classical perverse t-structure it lies in cohomological degree minus infinity. 

\subsubsection{} The obvious question one might ask is if the early constructions from \cite{FM} and \cite{ABBGM} can be compared to the later one in \cite{Ga2}. A first step towards doing so is to reconstruct the semi-infinite intersection cohomology sheaf $\sICR$ from the geometry of the Zastava spaces. In the present paper such a reconstruction is performed (see Theorems \ref{effisun} and \ref{recovery}). We produce a sheaf $\sIC$ on a compactification $\pZ$ of $\cZ$ which is in a precise sense equivalent to $\sICR$. The construction of $\sIC$ follows a general pattern that occurs for a large class of schemes with a $\G_m$-action. Each such scheme admits a t-structure using its stratification into attracting loci, i.e. its \emph{Bia{\l}ynicki-Birula decomposition}. This will be discussed in detail in Section \ref{semiinfinitetstructures} (see also \ref{intrototstructure} of the Introduction for a discussion of the intuition behind the construction).

It is also natural to ask for a comparison of factorization modules for $\sIC$ and $\sICR$. Although such a comparison is beyond the scope of the paper and will be carried out in a future work, we can nevertheless say a few words. It is fairly clear that a suitably defined category of factorization modules with extra structure for $\sIC$ is equivalent to factorization modules for $\sICR$. Additionally, calculations carried out by the author indicate that this category is \emph{not} equivalent to factorization modules for $\IC_{\cZ}$, but that it is indeed an infinity categorical enhancement of the category defined in \cite{ABBGM}. As such, it does indeed seem that $\cZ$ on its own is insufficient to describe $\sif$.

\subsection{The reconstruction principle, aka accumulation of dust.} The origin of the idea for this paper can be found in \cite{M}. In \emph{loc. cit.} Mirkovi\'{c} observes that many interesting spaces $\cY_x$, usually ind-schemes, associated to the formal disc $D_x$ around a point $x \in X$ can be ``reconstructed" as a colimit from a family of objects, usually schemes, living over the $n$-th infinitesimal neighborhoods $D^n_x \simeq \Spec\big(k[z]/(z^n)\big)$ of $x$. 

More precisely, such a $\cY_x$ can often be endowed with an assignment of a space $\cY^n_x$ to each $D^n_x$, associative maps $\cY^n_x \to \cY^m_x$ whenever $n \leq m$, and an isomorphism \[\colim_n \cY^n_x \overset{\sim}{\longrightarrow} \cY_x\] as objects associated to $D_x$. As a toy example, note that the $D$-module $\cO_{D_x}^{\vee}$ of distributions on $D_x$ can be written as a colimit \[\colim_n \cO_{D^n_x}^{\vee} \overset{\sim}{\longrightarrow} \cO_{D_x}^{\vee}\] of distributions on the finite schemes $D^n_x$. 

It is often the case that such reconstructions also occur ``factorizably," i.e. where we allow the point $x$ to move and collide with other points of $X$. Although this idea is not made precise in \cite{M}, part of the motivation of the present work is to give such a construction. To do so, we employ a correspondence 
\begin{equation}\label{introcorr}\begin{tikzcd}
	& {\Conf_{\corr}} \\
	\Conf && \Ran
	\arrow[from=1-2, to=2-1]
	\arrow[from=1-2, to=2-3]
\end{tikzcd}\end{equation}
between the Hilbert space $\Conf$ of points on $X$ with coefficients in the negative cone $\Lambda^-$ and the moduli $\Ran$ of finite subsets of $X$. 

Whenever the fibers of a space $\cY \to \Conf$ vary functorially\footnote{One should think of $\cY$ as a sort of coCartesian fibration lying over $\Conf$. To make this precise, we use the language of \emph{lax prestacks} (see Section \ref{laxprstk}).} under inclusions of divisors, we explain in Section \ref{factoverconftoran} how to obtain a space $\cY_{\Ran}$ over $\Ran$ from $\cY$ using a pull-push procedure along the diagram \eqref{introcorr}. Explicitly, the fiber of $\cY_{\Ran}$ at a finite subset $x_I \subseteq X$ is given by the colimit \begin{equation}\label{colimitfiber}\colim_{D \subseteq x_I} \cY_D \overset{\sim}{\longrightarrow} (\cY_{\Ran})_{x_I} \end{equation} over the moduli of divisors $D$ which are set-theoretically supported on the formal completion in $X$ of the union of points contained in $x_I$. It is useful to think of the isomorphisms \eqref{colimitfiber} as endowing the space $\cY_{\Ran}$ with a sort of filtration relative to $\Ran$ indexed by the cone $\Lambda^-$. 

\subsection{Reconstruction and Zastava spaces.} The central example of a space equipped with a notion of functoriality with respect to inclusions of divisors which will be considered in the present text is the compactified Zastava $\pZ$. The scheme $\Conf$ is a monoid under addition of divisors, and one can equip $\pZ$ with the structure of an augmented module for $\Conf$. Such an action endows $\pZ$ with the necessary functoriality to plug it into the machinery discussed above. 

The fiber of $\pZ$ over a point $\lambda x \in \Conf$ is given by the intersection \[\pZ_{\lambda x} \coloneqq \newoverline{S}^0_x \cap \newoverline{S}^{-,\lambda}_x\] where $S^0_x$ is the orbit of $\mathfrak{L}_x N$ in the fiber $\Gr_{G,x}$ of the Beilinson-Drinfeld Grassmannian at $x$, and $S^{-,\lambda}_x$ is the orbit of the loop group at $x$ of the opposite unipotent $N^-$. Whenever $\lambda \leq \mu$ in the order determined by $B^-$, we have an inclusion $\newoverline{S}^{-,\lambda}_x \hookrightarrow \newoverline{S}^{-,\mu}_x$ induced by the closure relations of semi-infinite orbits, and it follows that we have an isomorphism \[\colim_{\lambda} \pZ_{\lambda x} \overset{\sim}{\longrightarrow} \newoverline{S}^0_x.\] These isomorphisms assemble into an equivalence \[\pZ_{\Ran} \overset{\sim}{\longrightarrow} \newoverline{S}^0_{\Ran}\] which respects factorization structures on both sides. As a result, the geometry of $\newoverline{S}^0_{\Ran}$ can be analyzed in terms of the (finite-dimensional) geometry of $\pZ$. 

\subsection{Zastava spaces and the semi-infinite IC sheaf.} A feature of the assignment $\cY \mapsto \cY_{\Ran}$ taking a space over $\Conf$ to a space over $\Ran$ is that it allows us to calculate the category $\mathfrak{D}(\cY_{\Ran})$ of $D$-modules on $\cY_{\Ran}$ in terms of the category $\mathfrak{D}(\cY)$ (see Corollary \ref{rancategoryfromconfcategory}). An advantage of this is that $\mathfrak{D}(\cY)$ is often ``easier" to work with than $\mathfrak{D}(\cY_{\Ran})$, especially in the case $\cY$ is finite-dimensional. 

More explicitly, using the isomorphism \eqref{colimitfiber}, one can describe a $D$-module on a fiber of $\cY_{\Ran}$ over $\Ran$ equivalently as a compatible system of $D$-modules on the filtered pieces. We call such systems \emph{effective sheaves} on $\cY$. As an application to the semi-infinite IC sheaf $\sICR$, we construct such a system of $D$-modules on $\pZ$ as an intermediate extension in a ``semi-infinite t-structure" on each connected component of $\pZ$. 

\subsection{Semi-infinite t-structures.}\label{intrototstructure} As was mentioned earlier, a key construction in producing the sheaf $\sIC$ is a new t-structure for any reasonable finite-dimensional scheme $\cY$ with a $\G_m$-action. Here, ``reasonable" means at least that $\cY$ has a decomposition into locally closed attracting loci, i.e. a \emph{Bia{\l}ynicki-Birula decomposition}. On such a $\cY$, one can consider a full subcategory of $\G_m$-monodromic sheaves which are pulled back from fixed points on each stratum. By Braden's theorem, this category acquires a t-structure called the \emph{semi-infinite t-structure}. Inside this category, we have a distinguished object called \emph{semi-infinite IC sheaf} $\IC^{\frac{\infty}{2}}(\cY)$. 

The idea behind $\IC^{\frac{\infty}{2}}(\cY)$ is that it ignores the singularities of the attracting loci, and so it will differ from the ordinary intersection cohomology sheaf in general. Nevertheless, this is exactly the construction needed to define $\sIC$. The reason for this can be thought of heuristically as follows. Although the Zastava space $\cZ$ models transverse slices to $\mathfrak{L}^+G$-orbits in $\sif$, unlike $\pZ$ it does not contain enough redundancy to apply the reconstruction principle. Indeed if $\pZ$ can be thought of as a filtration of $\newoverline{S}^0_{\Ran}$, the Zastava $\cZ$ is akin to its associated graded. Unfortunately, in compactifying $\cZ$ we introduce certain singularities that are not detected by $\sICR$. By taking an intermediate extension in the semi-infinite t-structure on $\pZ$, we manage to ignore these singularities. 

Since $\sIC$ pretends certain singularities do not exist, one might guess that $\sIC$ would become an ordinary IC sheaf after resolving those singularities. This is in fact possible, and is carried out in Section \ref{kontsevich}. Namely, we introduce a factorizable Deligne-Mumford stack $\pZ_K$ built out of the resolution of singularities for Drinfeld's compactification constructed in \cite{C}. It is equipped with a proper birational map to $\pZ$ which is a strata-wise resolution of singularities and such that the pullback of $\sIC$ to $\pZ_K$ is the ordinary IC sheaf. 

\subsection{Applications.} In Section \ref{parabolicicsheaves} we apply the machinery developed in the bulk of the text to construct \emph{parabolic semi-infinite IC sheaves} on the Beilinson-Drinfeld Grassmannian $\Gr_{G,\Ran}$. Namely, we consider an intermediate extension in the semi-infinite t-structre on the \emph{parabolic Zastava space} \cite{BFGM} and apply the reconstruction principle to obtain a sheaf on the affine Grassmannian.

We also construct generalizations of the principal semi-infinite IC sheaf to Kac-Moody algebras obtained from a graph with no edge loops. A notable triumph of \cite{M} together with \cite{MYZ} is a generalization of the compactified Zastava (and therefore of the Beilinson-Drinfeld Grassmannian) which uses only a bilinear form and a line bundle on $\Conf$. This is a possible first step in a generalization of the geometric Langlands program to Kac-Moody algebras. In Section \ref{quivericsheaves} we apply our methods to produce a semi-infinite IC sheaf on these spaces. This can possibly be viewed as a step towards producing geometric Eisenstein functors for Kac-Moody algebras. 

\subsection{The plan and statement of results.} We will now give a brief overview of the paper, as well as precise statements of some of the central results. 

In Section \ref{laxprstk} we review the theory of lax prestacks and their sheaves. Roughly, a lax prestack is a functor from commutative algebras to categories. The notion of lax prestack is a convenient framework in which to unify the theory of unital factorization structures, as well as their configuration space analogues \emph{effective factorization structures}, under a single banner. A review of the latter will occupy Section \ref{factorization}. 

In Section \ref{ransic} we will review Gaitsgory's construction of the Ran semi-infinite intersection cohomology sheaf $\sICR$. Specifically, we will sketch the construction of the t-structure inside whose heart $\sICR$ belongs, as well as the factorization structure on $\sICR$. Following this, we will review in Section \ref{zastava} the theory of Zastava spaces, and introduce the \emph{Zastava semi-infinite intersection cohomology sheaf} as Definition \ref{zsic}. 

Section \ref{genzastava} will contain our first main result. Namely, we will identify a category $\fD_{\eff}(\pZ)$ of sheaves on $\pZ$ with extra structure to which $\sIC$ will belong. The following will be proved as Theorem \ref{effisun}. 

\begin{theorem} There is an equivalence of categories \[\fD_{\eff}(\pZ) \overset{\sim}{\longrightarrow} \fD_{\untl}(\newoverline{S}^0_{\Ran})\] which sends $\sIC$ to $\sICR$. 
\end{theorem}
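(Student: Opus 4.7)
My plan is to split the theorem into two logically independent pieces: the purely categorical equivalence $\fD_{\eff}(\pZ) \simeq \fD_{\untl}(\newoverline{S}^0_{\Ran})$, and the identification of $\sIC$ with $\sICR$ under it. For the first piece, I would apply the general formalism of Section \ref{factoverconftoran} to the $\Conf$-space $\pZ$: the factorizable isomorphism $\pZ_{\Ran} \overset{\sim}{\longrightarrow} \newoverline{S}^0_{\Ran}$ recalled in the introduction, combined with Corollary \ref{rancategoryfromconfcategory}, should identify $\fD_{\untl}(\newoverline{S}^0_{\Ran})$ with the limit over divisors $D \in \Conf$ of the $\fD(\pZ_D)$, which is by definition $\fD_{\eff}(\pZ)$. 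The essential content is descent for D-modules along the filtration $\colim_{D \subseteq x_I} \pZ_D \overset{\sim}{\longrightarrow} (\pZ_{\Ran})_{x_I}$, together with the unital compatibility as finite subsets $x_I \in \Ran$ vary.

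For the second piece, both $\sIC$ and $\sICR$ are characterized as intermediate extensions of dualizing sheaves of open strata---the open Zastava inside $\pZ$ on the left, and $S^0_{\Ran}$ inside $\newoverline{S}^0_{\Ran}$ on the right---in their respective semi-infinite t-structures (constructed component-by-component on the left, and globally on $\newoverline{S}^0_{\Ran}$ on the right). Under the equivalence from the first piece the effective dualizing sheaf of the open Zastava visibly corresponds to $\omega_{S^0_{\Ran}}$, so it suffices to show that the equivalence intertwines the two semi-infinite t-structures; uniqueness of the intermediate extension then delivers $\sIC \mapsto \sICR$.

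The main obstacle is precisely this matching of t-structures, since they are constructed by a priori different means: on each component $\pZ^{\lambda}$ the t-structure comes from a $\G_m$-action via the Bia{\l}ynicki-Birula decomposition and Braden's theorem as outlined in \ref{intrototstructure}, while on $\newoverline{S}^0_{\Ran}$ Gaitsgory's t-structure uses $\mathfrak{L}_{\Ran} N$-equivariance. I would verify the matching fiberwise over $\Conf$: for $\lambda x \in \Conf$, the attracting loci of a generic $\G_m$-action on the fiber $\pZ_{\lambda x} = \newoverline{S}^0_x \cap \newoverline{S}^{-,\lambda}_x$ should coincide with the semi-infinite stratification $\{\pZ_{\lambda x} \cap S^{\mu}_x\}_{\mu}$, and Braden's theorem should identify $\mathfrak{L}_x N$-equivariant sheaves on the ambient space with sheaves pulled back from $\G_m$-fixed points on each attractor. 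Once the hearts match fiberwise, the effective and unital structures glue these identifications as $\lambda x$ varies in $\Conf$; a separate factorizable compatibility check then extends the matching to colliding points in $\Ran$, and this last step is the most delicate technical point of the argument.
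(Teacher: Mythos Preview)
Your first step---the categorical equivalence $\fD_{\eff}(\pZ)\simeq\fD_{\untl}(\newoverline{S}^0_{\Ran})$---is exactly the paper's approach: one shows $\pZ_{\Ran}\simeq\newoverline{S}^0_{\Ran}$ (via Lemma~\ref{strisequiv}, which says $(\newoverline{S}^-_{\Conf})_{\Ran}\simeq\Gr_{G,\Ran}$, so intersecting with $\newoverline{S}^0$ gives the Zastava side) and then invokes Proposition~\ref{effectivetounital}.

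Your second step, however, takes a genuinely different route from the paper, and in the paper's logical order it is unnecessary. In the paper, $\sIC$ is \emph{defined} (Definition~\ref{zsic}) as $i_Z^!(\sICC)$, where $\sICC$ was already constructed from $\sICR$ by pullback along the $\Conf$-to-$\Ran$ correspondence (Proposition~\ref{prop: conf semi-infinite ic sheaf}). With this definition, the identification $\sIC\mapsto\sICR$ under the equivalence is essentially tautological: the equivalence factors as $\fD_{\eff}(\pZ)\xrightarrow{\mathscr{F}_0}\fD_{\eff}(\newoverline{S}^0_{\Conf})\xrightarrow{\mathscr{F}_1}\fD_{\untl}(\newoverline{S}^0_{\Ran})$, and one simply unwinds the two pullbacks. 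No t-structure matching is required at this stage.

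What you propose---matching the two semi-infinite t-structures and invoking uniqueness of intermediate extension---is the content of the \emph{later} result Corollary~\ref{cor: semiinfinite t-structures coincide}, which identifies the intrinsically defined $\IC^{\frac{\infty}{2}}(\pZ)$ with $\sIC$. Even there the paper does not do the direct fiberwise matching you outline; instead it routes both sides through the global object $\IC_{\dBun_N}$: Theorem~\ref{texactequivalence} shows $\newoverline{\mathfrak{p}}^!$ is a t-exact equivalence $\operatorname{SI}^{\leq 0}_{\operatorname{glob}}\to\operatorname{SI}^{\leq 0}_Z$, and Gaitsgory's global comparison (Theorem~4.3.2 of \cite{Ga2}) does the analogous job on the Ran side, so the commutative triangle through $\dBun_N$ gives t-exactness of the horizontal equivalence. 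This bypasses precisely the ``colliding points'' compatibility you flag as the most delicate step. Your direct approach is plausible in spirit, but you would have to redo substantial work already packaged into Gaitsgory's global comparison.

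One small correction: the open stratum on the Zastava side that corresponds to $S^0_{\Ran}$ is the \emph{opposite} affine Zastava $\cZ^-$ (where the $N$-reduction is nondegenerate), not the open Zastava $\oZ$; see the proof of Theorem~\ref{effisun}.
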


In the statement of the theorem, the category $\fD_{\untl}(\newoverline{S}^0_{\Ran})$ is the category of unital D-modules on $\newoverline{S}^0_{\Ran}$, i.e. D-modules which are equivariant for the action of $\Ran$ on $\newoverline{S}^0_{\Ran}$. The proof will proceed geometrically; we will identify a sheafification of the quotient $\pZ/\Conf$ with a sheafification of the quotient $\newoverline{S}^0_{\Ran}/\Ran$ (the latter is also referred to as the \emph{generic Zastava} $\cZ^{\gen}$). As a corollary, we will show that the category of D-modules on Drinfeld's compactification $\dBun_N$ embeds fully faithfully in $\fD_{\eff}(\pZ)$. Here $\dBun_N$ denotes the moduli of $G$-bundles together with a generalized $N$-reduction. 

Section \ref{semiinfinitetstructures} may be viewed as the heart of the paper. We will introduce \emph{semi-infinite t-structures} on categories of monodromic sheaves for a class of schemes with a $\G_m$-action which we call $\G_m$-complete. We will also define sheaves $\IC^{\frac{\infty}{2}}(\cY)$ for each such scheme, obtained as an intermediate extension in the semi-infinite t-structure. For each coweight $\lambda$, we will consider the resulting category $\operatorname{SI}(\pZ)$ and prove that restriction along the projection $\pZ^{\lambda} \to \dBun_N$ is t-exact on semi-infinite categories for every $\lambda$ (see Proposition \ref{texactzastava}). We will also show that the t-structures on the categories $\operatorname{SI}(\pZ^{\lambda})$ glue to a t-structure on a certain subcategory $\operatorname{SI}^{\leq 0}_Z$ of effective sheaves. The following is Theorem \ref{texactequivalence}, and it relates $\operatorname{SI}^{\leq 0}_Z$ to its global analogue $\operatorname{SI}^{\leq 0}_{\operatorname{glob}}$, a full subcategory of $\mathfrak{D}(\newoverline{\Bun}_N)$.

\begin{theorem}
    The functor \[\newoverline{\mathfrak{p}}^!: \operatorname{SI}^{\leq 0}_{\operatorname{glob}} \longrightarrow \operatorname{SI}^{\leq 0}_Z\] is a t-exact equivalence. In particular, we have a canonical isomorphism \[\newoverline{\mathfrak{p}}^!(\IC^{\operatorname{ren}}_{\dBun_N}) \simeq \IC^{\frac{\infty}{2}}(\pZ)\] extending the tautological identifications over $\cZ^-$. 
\end{theorem}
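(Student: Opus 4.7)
The plan is to exploit Proposition \ref{texactzastava} together with the gluing construction of $\operatorname{SI}^{\leq 0}_Z$. By definition, an object of the latter is a compatible system $(\cF_\lambda)_{\lambda \in \Lambda^-}$ with each $\cF_\lambda \in \operatorname{SI}^{\leq 0}(\pZ^\lambda)$, and the functor $\ofp^!$ is assembled from the family $\ofp^!_\lambda: \mathfrak{D}(\dBun_N) \to \mathfrak{D}(\pZ^\lambda)$. The t-exactness of $\ofp^!$ thus reduces to t-exactness of each $\ofp^!_\lambda$ on the semi-infinite subcategory, which is precisely the content of Proposition \ref{texactzastava}; one must additionally check that the output of $\ofp^!$ actually lies in $\operatorname{SI}^{\leq 0}_Z$, i.e. is compatible under the transitions $\pZ^\lambda \hookrightarrow \pZ^\mu$, which follows from base change along the maps $\pZ^\lambda \to \dBun_N$ together with the fact that these transitions arise from the $\Conf$-action on $\pZ$ restricted to strata.

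For the equivalence I would work one level at a time and then pass to the glued limit. The map $\ofp_\lambda: \pZ^\lambda \to \dBun_N$ is smooth on the locus where the generic $B^-$-reduction extends, and its fibers over points of $\Bun_N$ parametrize generically transverse $B^-$-reductions of a fixed $N$-bundle. Semi-infinite sheaves on $\pZ^\lambda$ are by construction built from data on the Bia{\l}ynicki-Birula attractors, and this data is insensitive precisely to those directions in which $\ofp_\lambda$ has contractible fibers, so on semi-infinite subcategories $\ofp_\lambda^!$ should be an equivalence. I expect the main obstacle to lie here: one must identify $\operatorname{SI}^{\leq 0}_{\operatorname{glob}} \subseteq \mathfrak{D}(\dBun_N)$ with the correct descent data on each $\pZ^\lambda$, and verify that the gluing defining $\operatorname{SI}^{\leq 0}_Z$ matches this identification. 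Once this is in place at each $\lambda$, the compatibility of the t-structures under the transition maps allows one to conclude the global equivalence $\operatorname{SI}^{\leq 0}_{\operatorname{glob}} \simeq \operatorname{SI}^{\leq 0}_Z$.

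Given the equivalence and t-exactness, the identification of IC sheaves is formal. Over the open stratum $\cZ^- \subseteq \pZ$ the map $\ofp$ is smooth onto $\Bun_N \subseteq \dBun_N$, and $\ofp^!$ applied to the (suitably shifted) constant sheaf on $\Bun_N$ is tautologically the constant sheaf on $\cZ^-$, which is exactly the open-stratum input defining $\IC^{\frac{\infty}{2}}(\pZ)$. Since a t-exact equivalence of t-categories preserves intermediate extensions from a dense open substack, $\ofp^!(\IC^{\operatorname{ren}}_{\dBun_N})$ must agree with $\IC^{\frac{\infty}{2}}(\pZ)$, with the isomorphism pinned down by the tautological one on $\cZ^-$.
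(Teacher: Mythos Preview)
Your reduction of t-exactness to Proposition \ref{texactzastava} and the formal deduction of the IC identification from a t-exact equivalence are both correct and match the paper's approach.

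The gap is in the equivalence step. You propose to show that each $\ofp_\lambda^!$ is an equivalence on semi-infinite subcategories and then glue, but this cannot work as stated: for a fixed $\lambda$ the map $\pZ^\lambda \to \dBun_N$ only sees strata of $\dBun_N$ of degree $\leq \lambda$, so $\ofp_\lambda^!$ is not essentially surjective onto $\operatorname{SI}(\pZ^\lambda)$ from all of $\operatorname{SI}^{\leq 0}_{\operatorname{glob}}$, nor is there any reason for it to be fully faithful before passing to the limit. The paper instead argues at the level of the effective quotient: the map $\ofp$ factors through $\pZ/\Conf \simeq \cZ^{\gen}$, and full faithfulness of $\ofp^!$ follows from Proposition \ref{torsor}, which identifies $\cZ^{\gen} \to \dBun_N$ as a torsor for the homologically contractible group prestack $\uMap^{\gen}(X,N)$. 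This is the missing geometric input you were groping for when you wrote that the fibers are ``contractible''; the precise statement is Gaitsgory's contractibility theorem for generic maps into a unipotent group, and it only becomes available after taking the colimit over all $\lambda$, not at any finite stage. Once full faithfulness is in hand, essential surjectivity is checked on the generators $(j_\mu)_!(c)$ with $c$ pulled back from $X^\mu$, using that on each stratum the induced functor $\operatorname{SI}^{=\mu}_{\operatorname{glob}} \to \operatorname{SI}^{=\mu}_Z$ is a t-exact equivalence by construction.
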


As a corollary, we will also show that the equivalence \[\operatorname{SI}^{\leq 0}_Z \overset{\sim}{\longrightarrow} \operatorname{SI}^{\leq 0}_{\Ran}\] obtained by restricting the equivalence of Theorem \ref{effisun} is t-exact.

In Section \ref{kontsevich} we will introduce the \emph{Kontsevich Zastava space} $\pZ_K$ in \ref{introofKont}, a Deligne-Mumford factorizable stack equipped with a map $\mathfrak{r}_Z: \pZ_K \to \pZ$ which is built out of a resolution of singularities for Drinfeld's compactification. 
We will show that $\mathfrak{r}_Z$ is a strata-wise resolution of singularities (Proposition \ref{zasissmooth}). We will conclude that $\pZ_K$ is a factorizable model for $\dBun_N$ on which $\sIC$ ``lives" as an ordinary IC sheaf. Our main result of the section (Theorem \ref{sicisic}) will be the following. 

\begin{theorem} There is a canonical isomorphism \[\mathfrak{r}^!_Z (\sIC) \overset{\sim}{\longrightarrow} \IC^{\operatorname{ren}}_{\pZ_K}\] of factorization algebras. Here $\IC^{\operatorname{ren}}_{\pZ_K}$ is a factorization algebra equivalent to $\IC_{\pZ_K}$ up to a cohomological shift.
\end{theorem}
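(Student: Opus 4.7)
The plan is to reduce the claim, via Theorem \ref{texactequivalence}, to a statement about Chen's resolution of $\dBun_N$, and then to invoke the IC-preservation property established in \cite{C}. By Theorem \ref{texactequivalence}, $\sIC \simeq \newoverline{\mathfrak{p}}^!\,\IC^{\operatorname{ren}}_{\dBun_N}$. Writing $\widetilde{\pi}: \widetilde{\dBun}_N \to \dBun_N$ for Chen's resolution, the definition of $\pZ_K$ in \ref{introofKont} presents it as the base change of $\widetilde{\pi}$ along $\newoverline{\mathfrak{p}}: \pZ \to \dBun_N$, producing a Cartesian square
\[\begin{tikzcd}
\pZ_K \arrow[r, "\widetilde{\mathfrak{p}}"] \arrow[d, "\mathfrak{r}_Z"'] & \widetilde{\dBun}_N \arrow[d, "\widetilde{\pi}"] \\
\pZ \arrow[r, "\newoverline{\mathfrak{p}}"'] & \dBun_N.
\end{tikzcd}\]
Base change and Theorem \ref{texactequivalence} then give
\[\mathfrak{r}_Z^!\,\sIC \;\simeq\; \mathfrak{r}_Z^!\,\newoverline{\mathfrak{p}}^!\,\IC^{\operatorname{ren}}_{\dBun_N} \;\simeq\; \widetilde{\mathfrak{p}}^!\,\widetilde{\pi}^!\,\IC^{\operatorname{ren}}_{\dBun_N}.\]

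The central input is the comparison $\widetilde{\pi}^!\,\IC^{\operatorname{ren}}_{\dBun_N} \simeq \IC^{\operatorname{ren}}_{\widetilde{\dBun}_N}$. This should follow from Chen's theorem that the resolution $\widetilde{\pi}$ is small, combined with the renormalization conventions: smallness forces $!$-pullback to remain perverse on $\widetilde{\dBun}_N$, and since both sides agree on the common open substack where $\widetilde{\pi}$ is an isomorphism, simplicity of IC pins down the identification. As $\widetilde{\dBun}_N$ is smooth, $\IC^{\operatorname{ren}}_{\widetilde{\dBun}_N}$ is then a suitably shifted dualizing sheaf on each connected component, with the shift absorbed into ``$\operatorname{ren}$''. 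Smoothness of $\widetilde{\dBun}_N$ propagates to $\pZ_K$ and to the map $\widetilde{\mathfrak{p}}$ via the base-change square, so the standard behavior of smooth $!$-pullback yields $\widetilde{\mathfrak{p}}^!\,\IC^{\operatorname{ren}}_{\widetilde{\dBun}_N} \simeq \IC^{\operatorname{ren}}_{\pZ_K}$ and completes the sheaf-level identification.

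Upgrading the identification to an isomorphism of factorization algebras is essentially a naturality check: each step---the Cartesian square, the base-change isomorphism, Chen's IC-preservation, and smooth pullback---is functorial in the divisor parameter and compatible with restriction along disjoint-support inclusions, so they assemble automatically into a morphism of factorization algebras over $\Conf$, which is a pointwise isomorphism by what was just established. The principal obstacle is verifying the renormalized form of Chen's comparison uniformly across the $\Lambda^-$-stratification and tracking the cohomological shifts consistently; once this is in place, the remainder of the argument is mechanical.
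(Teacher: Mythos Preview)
Your Cartesian square is not the one in \ref{introofKont}. The Kontsevich Zastava $\pZ_K$ is defined by resolving the $B^-$ side: one replaces $\dBun_{B^-}$ by $\dBun^K_{B^-}$ in the fiber product $\ovF_{N,B^-}=\dBun_N \times_{\Bun_G}\dBun_{B^-}$ and then pulls back along the open inclusion $\pZ\hookrightarrow\ovF_{N,B^-}$. There is no map from $\pZ_K$ to a resolution of $\dBun_N$; the map $\newoverline{\mathfrak{p}}_K:\pZ_K\to\dBun_N$ is simply the first projection of $\dBun_N\times_{\Bun_G}\dBun^K_{B^-}$ restricted to the Zastava locus. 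So the base-change step $\mathfrak{r}_Z^!\,\newoverline{\mathfrak{p}}^!\simeq\widetilde{\mathfrak{p}}^!\,\widetilde{\pi}^!$ has no meaning here, and the rest of the argument collapses.

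Two further points would fail even if the square existed. First, smallness of a resolution $\widetilde{\pi}$ says $\widetilde{\pi}_*\IC_{\widetilde{\dBun}_N}\simeq\IC_{\dBun_N}$; it does \emph{not} say that $\widetilde{\pi}^!\IC_{\dBun_N}$ is perverse or agrees with $\IC_{\widetilde{\dBun}_N}$. Second, smoothness of the source of a map does not propagate to a base change of that map; what base change preserves is smoothness of the \emph{morphism}. The paper's proof runs through exactly this issue: $\newoverline{\mathfrak{p}}_K$ is \emph{not} smooth everywhere, only on the open pieces ${_{\leq\mu}}\pZ^{\lambda}_K$ for $\lambda$ sufficiently dominant (because ${_{\leq\mu}}\dBun^{K,\lambda}_{B^-}\to\Bun_G$ is smooth in that range, by \cite{C}). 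The argument then uses the factorization structure to transport the identification from these large-$\lambda$ pieces to arbitrary $\lambda$, via the \'etale factorization map $[\oZ^{\nu}_K\times\pZ^{\lambda}_K]_{\disj}\to{_{\leq\lambda}}\pZ^{\lambda+\nu}_{K,\disj}$ and a descent along the smooth surjection with connected fibers $[\oZ^{\nu}_K\times\pZ^{\lambda}_K]_{\disj}\to\pZ^{\lambda}_K$. That bootstrapping step is the actual content; it cannot be replaced by a single global smoothness claim.
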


Moreover, we will show in Theorem \ref{recovery} that the sheaf $\sIC$, and by extension the sheaf $\sICR$, can be recovered from $\IC_{\pZ_K}$. More precisely, we will construct a monad $\mathscr{M}_K$ acting on the category $\fD(\pZ_K)$ together with an equivalence \[\mathscr{M}_K -\operatorname{mod}(\fD(\pZ_K)) \overset{\sim}{\longrightarrow} \fD_{\eff}(\pZ)\] which sends (a shift of) $\IC_{\pZ_K}$ to the Zastava semi-infinite IC sheaf. 

\subsubsection{} We will end with an application of the methods developed in this paper to parabolic subgroups of $G$ as well as to Kac-Moody algebras. Namely, we will define in Definition \ref{parabolicicdefinition} analogues of $\sICR$ for every proper parabolic subgroup $P$ of $G$ which recovers the usual factorization algebra when $P=B$. We note also that a (non-factorizable) parabolic semi-infinite IC sheaf has been constructed in \cite{DL}. By the local-to-global compatibility of Corollary \ref{localtoglobalrancomp} and Theorem $4.2.2$ of \cite{DL}, we see that our sheaf recovers the Dhillon-Lysenko sheaf after restriction to a fiber $\Gr_{G,x}$ of $\Gr_{G,\Ran}$. 

In Section \ref{quivericsheaves} we will discuss applications to Kac-Moody algebras. Namely, we will review the theory of Mirkovi\'{c} Zastava spaces \cite{MYZ} and their $\G_m$-action and produce a semi-infinite IC sheaf for each one. By taking the underlying graph to be an affine ADE quiver we obtain a semi-infinite IC sheaf associated to each Kac-Moody algebra of type ADE. 


\subsection{Acknowledgments.} I wish to thank my graduate advisor Ivan Mirkovi\'{c} for his continued support and encouragement during my dissertation (for which part of this text was written), as well as for introducing me to many beautiful ideas in mathematics. I would also like to thank Tom Braden, Justin Campbell, Michael Finkelberg, Dennis Gaitsgory, Owen Gwilliam, Sergey Lysenko, and Alexei Oblomkov for many useful and interesting discussions, some of which led to the ideas developed in this paper. Special thanks are due to Joakim Faergeman and Sam Raskin, both of whom contributed crucial pieces of technical help in the paper in addition to many interesting conversations. In particular, I learned the argument for conservativity in Lemma \ref{conserv} from Joakim, and Sam taught me the proof of Lemma \ref{dmpushforward}. I would also like to thank the anonymous referee for their careful reading of the text as well as their helpful comments. Part of this work was completed while being supported by the Max Planck Institute for Mathematics in Bonn, Germany. 

\section{Lax prestacks and their sheaves}\label{laxprstk}

In this section we will review the concept of \emph{lax prestack}. For a general survey on lax prestacks see \cite{R1}. Let $\Aff$ denote the category of (non-derived) affine schemes over $k$, and let $\Cat$ denote the $\infty$-category of $\infty$-categories. 

\subsection{Lax prestacks.} A \emph{lax prestack} is a functor $\Aff^{op} \to \Cat$. The category $\Fun(\Aff^{op}, \Cat)$ of lax prestacks  will be denoted $\LStk$. 

\begin{remark} In this paper most functors $\Aff^{op} \to \Cat$ we consider will take values in (nerves of) ordinary categories. Nevertheless, it is still desirable to fit these objects into the broader framework of higher category theory, in particular to have a well-behaved theory of sheaves. 
\end{remark}

Let $\Spc$ denote the $\infty$-category of $\infty$-groupoids. Then there is a natural inclusion $\Spc \to \Cat$ and so every prestack $\Aff^{op} \to \Spc$ may be thought of as a particular kind of lax prestack. We will denote the left adjoint  to the inclusion $\Stk \to \LStk$ by $\str: \LStk \to \Stk$. Roughly, for an affine scheme $S$, the space $\cY_{\str}(S) \coloneqq \str(\cY)(S)$ is the universal groupoid admitting a map from $\cY(S)$. 

Here is an equivalent formulation of $\cY_{\str}$. There is a fully faithful functor $\Cat \to \Fun(\Delta^{op}, \Spc)$, whose essential image consists of the complete Segal spaces. Hence to a lax prestack $\cY$ we may associate a simplicial prestack $s\cY: \Aff^{op} \to \Fun(\Delta^{op},\Spc)$ satisfying the Segal conditions. The following lemma gives a more concrete construction of $\cY_{\str}$.

\begin{lemma}
\label{str}
Let $\cY$ be a lax prestack and let $s\cY$ be the associated simplicial prestack. Then there is a canonical equivalence of prestacks
\[\cY_{\str} \longrightarrow |s\cY|,\]
where $|s\cY|$ is the prestack obtained from $s\cY$ by composing with the geometric realization functor $\Fun(\Delta^{op}, \Spc) \to \Spc$. 
\end{lemma}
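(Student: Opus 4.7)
The plan is to identify both sides with the objectwise application of the groupoidification functor $L: \Cat \to \Spc$, i.e.\ the left adjoint to the inclusion $\iota: \Spc \hookrightarrow \Cat$. The argument is then a formal adjunction juggle combined with one standard input from $\infty$-category theory.

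First, I would show that $\str$ is simply post-composition with $L$. The inclusion $\Stk \hookrightarrow \LStk$ is post-composition with $\iota$ on functor categories out of $\Aff^{op}$. Whenever $L \dashv \iota$, the induced adjunction on functor categories has post-composition with $L$ as its left adjoint; by uniqueness of adjoints, $\str \simeq L \circ (-)$, so that
$$\cY_{\str}(S) \simeq L(\cY(S))$$
naturally in $S \in \Aff^{op}$.

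Second, I would invoke the standard identification that, under the embedding $\Cat \hookrightarrow \Fun(\Delta^{op}, \Spc)$ as complete Segal spaces, the groupoidification of an $\infty$-category $C$ is computed as the geometric realization of the associated simplicial space, i.e.\ $L(C) \simeq |s(C)|$. This follows formally from the fact that the constant simplicial space functor $\Spc \to \Fun(\Delta^{op}, \Spc)$ factors through complete Segal spaces and corresponds under $\mathrm{CSS} \simeq \Cat$ to $\iota$, together with the adjunction between geometric realization and the constant simplicial space functor. Since geometric realization is a colimit in $\Spc$ and colimits in functor categories into $\Spc$ are computed objectwise, we obtain
$$\cY_{\str}(S) \simeq L(\cY(S)) \simeq |s(\cY(S))| \simeq |s\cY|(S)$$
naturally in $S$, yielding the desired canonical equivalence $\cY_{\str} \overset{\sim}{\longrightarrow} |s\cY|$.

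The only step that relies on non-formal input is the identification $L(C) \simeq |s(C)|$, which I expect to be the main obstacle to making the argument fully rigorous at the level of $\infty$-categories; it is standard (appearing in Rezk's original work on complete Segal spaces and in Lurie's \emph{Higher Topos Theory}) and can simply be cited. Everything else is a manipulation of adjoints between presentable $\infty$-categories of functors, which is insensitive to the particular model of $\Cat$ used.
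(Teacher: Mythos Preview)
Your proposal is correct and follows essentially the same approach as the paper: both reduce to the fact that the left adjoint $L:\Cat\to\Spc$ to the inclusion $\Spc\hookrightarrow\Cat$ is computed by geometric realization of the associated complete Segal space, applied objectwise. The paper states this key identification in a single sentence, whereas you unpack the surrounding adjunction manipulations more explicitly; there is no substantive difference in strategy.
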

\begin{proof}
This follows from the fact that the straightening functor $\Cat \to \Spc$ is equivalent to the composition \[\Cat \longrightarrow \Fun(\Delta^{op},\Spc) \longrightarrow \Spc,\]
where the first arrow is the functor taking a category to its associated complete Segal space, and the second arrow is geometric realization. 
\end{proof}

Recall that a category $\mathscr{C}$ is \emph{weakly contractible} if $\mathscr{C}_{\str}$ is equivalent to a point in $\Spc$. We will say that a lax prestack $\cY$ is weakly contractible if for all affine schemes $S$, the category $\cY(S)$ is weakly contractible. 

\begin{remark}
The straightening functor does not preserve homotopy groups in general. This means that even if $\cY$ is a lax prestack taking values in ordinary categories, the straightening $\cY_{\str}$ may take values in higher groupoids. 
\end{remark}

Consider the following example of a lax prestack, which will be important throughout the text. Let $\mathcal{M}$ be a presheaf of commutative monoids on $\Aff_k$. That is, for every affine scheme $S$ the groupoid $\mathcal{M}(S)$ is a set, and $\mathcal{M}$ has a factorization through the forgetful functor from commutative monoids to sets. 

The functor $\mathcal{M}$ can be upgraded to a lax prestack as follows. Define $\mathcal{M}^{\to}(S)$ to be the category whose objects are given by the set $\mathcal{M}(S)$ and such that there is a unique arrow $x \to y$ for every $m$ such that $mx=y$. For every morphism $S \to T$ of affine schemes, the morphism $\mathcal{M}(T) \to \mathcal{M}(S)$ of monoids given by restriction can be upgraded to a functor \[\mathcal{M}^{\to}(T) \longrightarrow \mathcal{M}^{\to}(S)\] in the obvious way. It is easy to see that the associated simplicial prestack is given by the simplicial object \[\begin{tikzcd}
	\ldots & {\mathcal{M} \times \mathcal{M} \times \mathcal{M}} & {\mathcal{M} \times \mathcal{M}} & {\mathcal{M}}
	\arrow[shift left=1, from=1-3, to=1-4]
	\arrow[shift right=1, from=1-3, to=1-4]
	\arrow[shift left=2, from=1-2, to=1-3]
	\arrow[shift right=2, from=1-2, to=1-3]
	\arrow[from=1-2, to=1-3]
\end{tikzcd}\] induced by multiplication in $\mathcal{M}$. 

\begin{remark} Note that $\mathcal{M}^{\to}$ is weakly contractible if $\mathcal{M}$ is a presheaf of \emph{cancellative} monoids. Explicitly, $\mathcal{M}$ is cancellative if for all $x,y,z \in \mathcal{M}(S)$ if $xy=xz$ then $y=z$. In this case, $\mathcal{M}^{\to}$ is a functor of filtered categories, and all such are known to be weakly contractible. \end{remark}

\subsection{Sheaves on lax prestacks.} We will now briefly review sheaves on lax prestacks \cite{R1}. Let us assume we are given a \emph{sheaf theory} $\Shv: \Aff^{op} \to \Cat$. For all intents and purposes $\Shv(S)$ will be the category of quasicoherent sheaves, ind-coherent sheaves, or $D$-modules (see \cite{Ga-.5} for a survey of ind-coherent sheaves and \cite{GR} for a survey of $D$-modules). For a map $f: S \to T$ of affine schemes, denote the resulting functor between categories of sheaves by $f^!$. We can then define the category of sheaves on an arbitrary lax prestack.

\begin{definition}
Let $\cY$ be a lax prestack. Then the category $\Shv(\cY)$ of \emph{sheaves on} $\cY$ is the category of natural transformations $\cY \to \Shv$. In particular, the assignment $\cY \mapsto \Shv(\cY)$ takes arbitrary colimits to limits. 
\end{definition}

Unwinding the definition of $\Shv(\cY)$, we see that a sheaf $c$ on $\cY$ is the following data.
\begin{enumerate}
    \item For every $S$-point $f:S \to \cY$ of $\cY$ we have a sheaf $c_{S,f} \in \Shv(S)$.
    \item For every map $\varphi: (S,f) \to (T,g)$ of affine schemes over $\cY$, we have an isomorphisms $\varphi^!(c_{T,g}) \simeq c_{S,f}$ with homotopy coherent compatibilities. 
    \item For every morphism $f \to g$ of $S$-points, we have morphisms $c_{S,f} \to c_{S,g}$ with homotopy coherent compatibilities. 
\end{enumerate}

By definition of $\cY_{\str}$, it follows that the category $\Shv(\cY_{\str})$ is equivalent to the full subcategory $\Shv_{\str}(\cY)$ of sheaves on $\cY$ where all the morphisms $c_{S,f} \to c_{S,g}$ in $3$ are isomorphisms. Denote the resulting full subcategory of $\Shv(\cY)$ by $\Shv_{\str}(\cY)$. There is the following equivalent simplicial perspective on $\Shv_{\str}(\cY)$. 

\begin{lemma}
\label{strcat}
Let $\cY$ be a lax prestack, and let $s\cY$ be the associated simplicial prestack. Then there is an equivalence of categories
\[\Shv_{\str}(\cY) \longrightarrow \Tot\big(\Shv(s\cY)\big), \]
where $\Tot\big(\Shv(s\cY)\big)$ denotes the totalization of the cosimplicial category $\Shv(s\cY)$.
\end{lemma}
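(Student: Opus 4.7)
The plan is to string together three identifications that have essentially all been set up already. First, the text establishes (in the paragraph immediately preceding the lemma) that $\Shv_{\str}(\cY)$ is by definition equivalent to $\Shv(\cY_{\str})$, since the straightening-invariant sheaves on $\cY$ are exactly those which factor through $\cY_{\str}$. Next, Lemma \ref{str} gives a canonical equivalence of prestacks $\cY_{\str} \simeq |s\cY|$, where $|s\cY|$ is the geometric realization of the associated simplicial prestack. So the content of the lemma boils down to showing
\[\Shv(|s\cY|) \simeq \Tot(\Shv(s\cY)).\]

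To prove this last equivalence, I would invoke the general fact, recorded in the definition of sheaves on lax prestacks, that the assignment $\cY \mapsto \Shv(\cY)$ sends colimits of prestacks to limits of categories (this is a formal consequence of the definition $\Shv(\cY) = \Fun(\cY, \Shv)$ together with the universal property of colimits in $\LStk$). Applying this to the geometric realization $|s\cY| = \colim_{[n] \in \Delta^{op}} (s\cY)_n$, one obtains
\[\Shv(|s\cY|) \simeq \lim_{[n] \in \Delta} \Shv\bigl((s\cY)_n\bigr),\]
and the right-hand side is by definition the totalization $\Tot(\Shv(s\cY))$ of the resulting cosimplicial category.

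Composing the three equivalences yields the desired functor $\Shv_{\str}(\cY) \to \Tot(\Shv(s\cY))$, and it is an equivalence since each step is. The only even mildly subtle point is keeping track of the passage between the complete Segal space model for $\cY$ (which is what $s\cY$ is, up to the Segal conditions) and its straightening; but this is exactly what Lemma \ref{str} packages away, so no additional argument is needed here. I would expect no serious obstacle: the statement is essentially the interaction between the colimit-to-limit property of $\Shv$ and the realization description of straightening, and the proof is a matter of assembling these rather than producing new ingredients.
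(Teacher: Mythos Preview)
Your proposal is correct and follows essentially the same approach as the paper: reduce to $\Shv(\cY_{\str})$ via the definition of $\Shv_{\str}$, invoke Lemma~\ref{str} to identify $\cY_{\str}$ with $|s\cY|$, and then use that $\Shv$ takes colimits of prestacks to limits of categories. The paper's proof is just a terser version of the same three steps.
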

\begin{proof}
Since $\Shv_{\str}(\cY)$ is equivalent to $\Shv(\cY_{\str})$, it suffices to show the claim with the left hand side replaced by the latter. But by Lemma \ref{str} we have a canonical isomorphism $\cY_{\str} \simeq |s\cY|$, and so the claim follows from the fact that $\Shv$ sends arbitrary colimits of prestacks to limits. 
\end{proof}

\subsection{Left fibrations of lax prestacks.} Recall that a functor $F: \mathscr{C} \to \mathscr{D}$ of $\infty$-categories is a \emph{left fibration} if it satisfies the right lifting property with respect to all horn inclusions, except possibly the right outer ones. That is, whenever the outer square of \[\begin{tikzcd}
	{\Lambda^k[n]} & {\mathscr{C}} \\
	{\Delta[n]} & {\mathscr{D}}
	\arrow[from=1-1, to=1-2]
	\arrow["F", from=1-2, to=2-2]
	\arrow[from=1-1, to=2-1]
	\arrow[from=2-1, to=2-2]
	\arrow[dashed, from=2-1, to=1-2]
\end{tikzcd}\]commutes, there exists a dotted arrow as above for all $0 \leq k < n$ making the diagram commute. In higher category theory, the left fibrations play a role similar to that of categories cofibered in groupoids in ordinary category theory. In fact, any left fibration is also a coCartesian fibration, and if the base is a Kan complex then it is also a Kan fibration. 


\subsubsection{} We will define here an analogue of left fibrations for lax prestacks. Consider the lax prestack $\Spc_{/-}$ which sends an affine scheme to the slice category $\Stk_{/S}$. For a lax prestack $\cY$, define $\Spc_{/\cY}$ to be the category of natural transformations \[ \Phi: \cY \longrightarrow \Spc_{/-}\] One should think of $\Phi$ as assigning to an $S$-point $f: S \to \cY$ the fiber $\Phi_f$ of the associated left fibration. For any morphism $f \to g$ of $S$-points, the structure of natural transformation $\cY \to \Spc_{/-}$ gives a morphism \[\Phi_f \longrightarrow \Phi_g\] as prestacks over $S$. By passing to global sections and using the Grothendieck construction, for each $\Phi$ there is an associated lax prestack $\Phi_{\Spc} \to \cY$ lying over $\cY$. For an affine scheme $S$, the functor $\Phi_{\Spc}(S) \to \cY(S)$ is a left fibration of categories.

We will refer to a natural transformation $\Phi: \cY \to \Spc/-$ as a \emph{left fibration of lax prestacks} or simply as a \emph{left fibration}. Since left fibrations are examples of coCartesian fibrations, we will also sometimes call such functors $\Phi$ \emph{coCartesian fibrations of lax prestacks}. Note that we have the lax prestack $\cY^{\operatorname{op}}$ taking an affine scheme $S$ to $\cY(S)^{\operatorname{op}}$. Hence we also have the notion of \emph{right fibration of lax prestacks} or \emph{Cartesian fibrations of lax prestacks}. Additionally, one can define a \emph{pointwise (co)Cartesian fibration of lax prestacks} which is a map $\cX \to \cY$ of lax prestacks such that for all affine schemes $S$ the induced functor \[ \cX(S) \longrightarrow \cY(S)\] is a (co)Cartesian fibration and for any map $T \to S$ the induced map $\cX(S) \to \cX(T)$ sends (co)Cartesian arrows to (co)Cartesian arrows. Although for $\Phi$ as above the map $\Phi_{\Spc} \to \cY$ is a pointwise coCartesian fibration, it is not clear that the converse holds. 

An advantage of this perspective is that one can immediately see how to assign to a lax prestack $\cY$ a sheaf of categories, given the existence of such a $\Phi$. Recall that a sheaf of categories is defined to be a lax natural transformation \[\cY \longrightarrow \operatorname{ShvCat}_/\] where $\operatorname{ShvCat}$ is the lax prestack which assigns to an affine scheme $S$ the category of modules for the symmetric monoidal category $\QC(S)$. Note there is a natural transformation \[\fD^{\Spc \to \Cat}: \Spc_{/-} \longrightarrow \operatorname{ShvCat}_{/-}\] which takes a prestack $\cX \to S$ over an affine scheme to $\fD(\cX)$. Hence to any left fibration $\Phi$ over $\cY$ we may assign a sheaf of categories given as the composition $\fD^{\Spc \to \Cat}(\Phi) \coloneqq \fD^{\Spc \to \Cat} \circ \Phi$.

Note whenever there is a map $f:\cX \to \cY$ of lax prestacks, we can define a pullback functor \[f^{\Spc,*}: \Spc_{/\cY} \to \Spc_{/\cX}\] simply by composing a lax natural transformation $\cY \to \Spc_/$ with $f$. We therefore obtain a functor \[\LStk^{\op} \longrightarrow \Cat\] which takes a lax prestack $\cY$ to $\Spc_{/\cY}$. 

\subsection{Correspondences.} In this section we will discuss categories of correspondences. These will be important in defining notions of factorization spaces and their variants (e.g. unital, effective). Of course, the definition of factorization spaces is well-known, and we are not adding anything new to the theory here. However, since we are considering factorization over several different bases, it is convenient to have a unified language to discuss all of them. 

Let $\mathscr{C}$ be a category with all finite limits, and let $\Corr(\mathscr{C})$ be the $2$-\emph{category of correspondences}\footnote{We will mostly sweep all $2$-categorical complexities under the rug. The ideas can all be made precise, however, using the approach in \cite{R1}, mutatis mutandis.} in $\mathscr{C}$. Objects in $\Corr(\mathscr{C})$ are just objects of $\mathscr{C}$, but morphisms $\cY \to \cG$ are given by correspondences
\[\begin{tikzcd}
	& \cZ \\
	\cY && \cG.
	\arrow[from=1-2, to=2-1]
	\arrow[from=1-2, to=2-3]
\end{tikzcd}\] Composition of morphisms $\cY \to \cG \to \mathcal{H}$ is given by the outer legs of the diagram \[\begin{tikzcd}
	&& {\cZ \times_{\cG} \cZ'} \\
	& \cZ && {\cZ'} \\
	\cY && \cG && \mathcal{H}.
	\arrow[from=2-2, to=3-1]
	\arrow[from=2-2, to=3-3]
	\arrow[from=2-4, to=3-3]
	\arrow[from=2-4, to=3-5]
	\arrow[from=1-3, to=2-2]
	\arrow[from=1-3, to=2-4]
\end{tikzcd}\]Note that $\Corr(\mathscr{C})$ is naturally symmetric monoidal under the Cartesian monoidal structure. Hence we may consider commutative algebras (aka commutative monoids) in $\Corr(\mathscr{C})$. Roughly, a commutative algebra in $\Corr(\mathscr{C})$ is a pair of correspondences 
\begin{equation}\label{algebraincorr}\begin{tikzcd}
	& {\operatorname{mult}_{\cY}} &&&& {\operatorname{unit}_{\cY}} \\
	{\cY \times \cY} && \cY && \pt && \cY
	\arrow[from=1-2, to=2-1]
	\arrow[from=1-2, to=2-3]
	\arrow[from=1-6, to=2-5]
	\arrow[ from=1-6, to=2-7]
\end{tikzcd}\end{equation} satisfying natural associativity and unitality conditions up to coherent homotopy. 

We will apply the above picture to both $\Stk$ and $\LStk$ to obtain the categories $\Corr(\Stk)$ and $\Corr(\LStk)$. Note that $\Spc_{/(-)}$ is naturally a commutative algebra in lax prestacks, where multiplication is induced by the functors \[\Spc_{/S} \times \Spc_{/S} \longrightarrow \Spc_{/S}\] taking a pair of prestacks $(\cY,\cX)$ to the fiber product $\cY \times_S \cX$. The unit is provided by the inclusion of the identity $S \to S$ as an object of $\Spc_{/S}$. 


\subsection{Multiplicative spaces.} In this section, we will review the category of multiplicative spaces over a commutative algebra in correspondences. A general survey of the notion of multiplicative spaces can be found in \cite{Bu}. As we will later see, when the commutative algebra is the Ran space or the configuration space, multiplicative spaces will coincide with the familiar notion of factorization spaces. Our definition of multiplicative space proceeds by adapting Raskin's construction of multiplicative sheaves of categories to a geometric context.

Denote by \[\Spc_{/(-)}: \LStk^{\op} \longrightarrow \Cat\] the functor taking a lax prestack $\cY$ to its category $\Spc_{/\cY}$ of left fibrations over $\cY$. As in \emph{Chiral categories}, we can construct the category $\operatorname{Groth}(\Spc_{/(-)})$ whose objects are pairs $(\cY,\Phi)$, where $\cY$ is a lax prestack and $\Phi: \cY \to \Spc_{/(-)}$ is a left fibration over $\cY$. Morphisms $(\cY,\Phi) \to (\cX, \Psi)$ are given by a correspondence \[\begin{tikzcd}
	& \cZ \\
	\cY && \cX
	\arrow["g", from=1-2, to=2-3]
	\arrow["f"', from=1-2, to=2-1]
\end{tikzcd}\] together with a map \[f^{\Spc,*}(\Phi) \to g^{\Spc,*}(\Psi)\] satisfying various homotopy coherent compatibilities.

Since $\Spc_{/(-)}$ is lax symmetric monoidal with respect to the Cartesian monoidal structure on both sides, the category $\operatorname{Groth}(\Spc_{/(-)})$ inherits a symmetric monoidal structure $\otimes$, given pointwise on objects by \[(\cY,\Phi) \otimes (\cX,\Psi) =(\cY \times \cX, \Phi \times \Psi).\] There is a natural forgetful functor \[\oblv_{\Spc}: \operatorname{Groth}(\Spc_{/(-)}) \longrightarrow \Corr(\LStk)\] which has the structure of a symmetric monoidal functor. 

Now let $\cY$ be a commutative algebra in $\Corr(\LStk)$. Define a \emph{weakly multiplicative space} over $\cY$ to be a commutative algebra object of $\operatorname{Groth}(\Spc_{/(-)})$ mapping to $\cY$ under $\oblv_{\Spc}$ as a commutative algebra. Note that, among other features, a weakly multiplicative space on $\cY$ is a left fibration $\Phi$ over $\cY$ together with a morphism \begin{equation}\label{1operation}(\iota_{\cY})^*(\Phi \times \Phi) \longrightarrow (\mu_{\cY})^*(\Phi)\end{equation} where $\iota_{\cY}$ and $\mu_{\cY}$ are the left and right legs, respectively, of the multiplication correspondence in \eqref{algebraincorr}. We also have analogous maps for all $n$-ary operations of $\cY$. 

\begin{definition} Let $\cY$ be a commutative algebra in correspondences of lax prestacks. Define a \emph{multiplicative space} over $\cY$ to be a weakly multiplicative space such that all $n$-ary operations are isomorphisms. In particular, the map \eqref{1operation} is an isomorphism. Let $\operatorname{MultSpc}(\cY)$ denote the category of multiplicative spaces over $\cY$. 
\end{definition}

Let us unwind this definition a bit more. By the Grothendieck construction, a left fibration $\Phi: \cY \to \Spc_/$ corresponds to a particular kind of lax prestack $\Phi_{\tau}$ with a map \[\Phi^{\Groth}: \Phi_{\tau} \to \cY,\] where here the notation $\tau$ stands for \emph{total space}, i.e. the total space of the fibration $\Phi$. Then if $\Phi$ has the structure of a multiplicative space over $\cY$ we have isomorphisms \[(\Phi_{\tau} \times \Phi_{\tau}) \times_{\cY \times \cY} \operatorname{mult}_{\cY} \overset{\sim}{\longrightarrow} \Phi_{\tau} \times_{\cY} \operatorname{mult}_{\cY}\] satisfying various commutativity and associativity compatibilities.

\subsection{Compatibility of straightening with algebras.} In this section, we will discuss when it is possible to push a multiplicative space forward along the canonical map $\cY \to \cY_{\str}$ for a commutative algebra $\cY$ in $\Corr(\LStk)$.

Note that the straightening functor \[\str: \LStk \longrightarrow \Stk\] preserves products but not fiber products in general. As such, it is not clear how to upgrade $\str$ to a functor between categories of correspondences. To remedy this, we will restrict the allowable morphisms in $\LStk$. 

Recall that a functor $\mathscr{C} \to \mathscr{D}$ between categories $\mathscr{C}$ and $\mathscr{D}$ is called a \emph{realization fibration} if for all functors $\mathscr{E} \to \mathscr{D}$ the diagram \[\begin{tikzcd}
	{(\mathscr{C} \times_{\mathscr{D}} \mathscr{E})_{\str}} & {\mathscr{E}_{\str}} \\
	{\mathscr{C}_{\str}} & {\mathscr{D}_{\str}}
	\arrow[from=1-1, to=1-2]
	\arrow[from=1-2, to=2-2]
	\arrow[from=1-1, to=2-1]
	\arrow[from=2-1, to=2-2]
\end{tikzcd}\] is Cartesian. We will call a map $\cX \to \cY$ of lax prestacks a \emph{realization fibration} if for all affine schemes $S$ the functor \[\cX(S) \longrightarrow \cY(S)\] is a realization fibration of categories. Denote by $\LStk_{\operatorname{rf}}$ the subcategory of $\LStk$ consisting of lax prestacks and realization fibrations as morphisms. Note that, by an application of the pasting lemma, realization fibrations are stable under base change and hence it makes sense to consider the category $\Corr(\LStk_{\operatorname{rf}})$.

Since the functor $\str$ preserves fiber products when restricted to $\LStk_{\operatorname{rf}}$, we may upgrade $(-)_{\str}$ to a functor \[\operatorname{\str}^{\operatorname{enh}}: \operatorname{CAlg}\big(\Corr(\LStk_{\operatorname{rf}})\big) \longrightarrow \operatorname{CAlg}\big(\Corr(\Stk)\big),\] where for a category $\mathscr{C}$ with all finite limits we define $\operatorname{CAlg}(\mathscr{C})$ to be the category of commutative algebras in $\mathscr{C}$. In other words, given a commutative algebra $\cY$ in correspondences of lax prestacks where each leg of the multiplication and unit correspondences is a realization fibration, the straightening $\cY_{\str}$ has a natural structure of a commutative algebra in correspondences of prestacks. 

\begin{proposition}\label{multspctomultspc} Let $\cY$ be a commutative algebra in correspondences such that all maps in the diagrams \eqref{algebraincorr} are realization fibrations. Then there is a natural functor \[\operatorname{LKE_{\cY \to \cY_{\str}}}: \operatorname{MultSpc}(\cY) \longrightarrow \operatorname{MultSpc}\big(\str^{\operatorname{enh}}(\cY)\big)\] taking multiplicative spaces over $\cY$ to multiplicative spaces over the commutative algebra $\str^{\operatorname{enh}}(\cY)$. 
\end{proposition}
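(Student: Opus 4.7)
The plan is to construct $\LKE_{\cY \to \cY_{\str}}$ by transporting multiplicative structures through $\str^{\operatorname{enh}}$ at the level of total spaces. Given a multiplicative space $\Phi$ over $\cY$, the Grothendieck construction packages it as a map $\Phi^{\Groth}: \Phi_{\tau} \to \cY$ in $\LStk$ which, upon unwinding, is a morphism of commutative algebras in $\Corr(\LStk)$. I will send $\Phi$ to the straightening $(\Phi_{\tau})_{\str} \to \cY_{\str}$ equipped with the algebra structure produced by applying $\str^{\operatorname{enh}}$, and then verify that the resulting object corresponds via the Grothendieck construction to a multiplicative space over $\str^{\operatorname{enh}}(\cY)$.

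The first and most delicate step is to check that $\Phi_{\tau}$, together with all structural morphisms of its commutative algebra structure, lies in $\LStk_{\operatorname{rf}}$ whenever $\cY$ does. Since realization fibrations are stable under composition and base change, this reduces by standard diagrammatic manipulations to showing that $\Phi^{\Groth}: \Phi_{\tau} \to \cY$ is itself a realization fibration. Pointwise on an affine scheme $S$, this amounts to the statement that a left fibration of $\infty$-categories is a realization fibration, which is a standard fact: the pullback of a left fibration $F: \mathscr{C} \to \mathscr{D}$ along any $\mathscr{E} \to \mathscr{D}$ is again a left fibration whose straightening is obtained by composing that of $F$ with the map on classifying spaces, and so the relevant pullback-of-geometric-realization square is Cartesian. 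Naturality in $S$ follows from the naturality of $\Phi$.

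Once this is secured, one applies \[\str^{\operatorname{enh}}: \operatorname{CAlg}\!\big(\Corr(\LStk_{\operatorname{rf}})\big) \longrightarrow \operatorname{CAlg}\!\big(\Corr(\Stk)\big)\] to obtain a commutative algebra $(\Phi_{\tau})_{\str}$ in $\Corr(\Stk)$ with a morphism to $\str^{\operatorname{enh}}(\cY)$. The defining multiplicativity isomorphism \[(\Phi_{\tau} \times \Phi_{\tau}) \times_{\cY \times \cY} \operatorname{mult}_{\cY} \overset{\sim}{\longrightarrow} \Phi_{\tau} \times_{\cY} \operatorname{mult}_{\cY},\] together with its higher-arity analogues, is preserved by $\str^{\operatorname{enh}}$ precisely because the latter preserves fiber products along realization fibrations; re-unstraightening therefore yields a genuine multiplicative space over $\str^{\operatorname{enh}}(\cY)$. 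Functoriality in $\Phi$ is automatic from the functoriality of $\str^{\operatorname{enh}}$. The principal obstacle is the realization-fibration claim for $\Phi^{\Groth}$: although the underlying $\infty$-categorical fact is standard, carefully transporting it through the lax prestack formalism and checking all required naturalities is the main technical item. Should this present difficulties, an alternative is to build $(\Phi_{\tau})_{\str}$ explicitly as a colimit of $\Phi$ along the fibers of $\cY \to \cY_{\str}$ using the simplicial description of $\cY_{\str}$ from Lemma \ref{str}, and then verify multiplicativity term by term on the Segal diagram.
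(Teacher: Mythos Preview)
Your approach is correct and essentially matches the paper's: both produce $(\Phi_{\tau})_{\str} \to \cY_{\str}$ as the underlying map and then verify the multiplicativity isomorphism using only the realization-fibration hypothesis on the legs of $\operatorname{mult}_{\cY}$. The paper frames the construction as a pointwise left Kan extension of $\Phi$ along $\cY \to \cY_{\str}$ and then identifies its total space with $(\Phi_{\tau})_{\str}$, whereas you pass directly through $\str^{\operatorname{enh}}$ on total spaces; your additional step of showing that $\Phi^{\Groth}:\Phi_{\tau}\to\cY$ is a realization fibration, while true, is not actually needed, since the legs of $\operatorname{mult}_{\Phi_{\tau}}$ are base changes of the legs of $\operatorname{mult}_{\cY}$ and the fiber products appearing in the multiplicativity check are taken along those legs rather than along $\Phi^{\Groth}$.
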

\begin{proof} Let $\Phi: \cY \to \Spc_{/(-)}$ be a multiplicative space over $\cY$, and denote also by $\Phi_{\tau}$ the total space of the associated left fibration over $\cY$. Define $\operatorname{LKE}_{\cY \to \cY_{\str}}(\Phi)$ by left Kan extension as follows: \[\begin{tikzcd}
	{\cY} & {\Spc_{(-)}} \\
	{\cY_{\str}}
	\arrow["\Phi", from=1-1, to=1-2]
	\arrow[from=1-1, to=2-1]
	\arrow["{\operatorname{LKE}_{\cY \to \cY_{\str}}}(\Phi)"', dashed, from=2-1, to=1-2]
\end{tikzcd}\]where by left Kan extension we mean the functor $\cY_{\str} \to \Spc_/$ which on an affine scheme $S$ is the left Kan extension of $\cY(S) \to \Spc_{/S}$ along $\cY(S) \to \cY_{\str}(S)$. Note that this gives a well-defined morphism of lax prestacks because $\Stk$ is locally Cartesian closed. 

We need to show that the resulting diagram \begin{equation}\label{commute}\begin{tikzcd}
	& {\operatorname{mult}_{\cY_{\str}}} \\
	{\cY_{\str} \times \cY_{\str}} & {\Spc_{/(-)} \times \Spc_{/(-)}} & {\cY_{\str}} \\
	{\Spc_{/(-)} \times \Spc_{/(-)}} && {\Spc_{/(-)}}
	\arrow[from=1-2, to=2-3]
	\arrow["{\operatorname{LKE}_{\cY \to \cY_{\str}}}", from=2-3, to=3-3]
	\arrow[from=1-2, to=2-1]
	\arrow[from=1-2, to=2-2]
	\arrow[from=2-2, to=3-3]
	\arrow[from=2-2, to=3-1]
	\arrow[from=2-1, to=3-1]
\end{tikzcd}\end{equation} and all the analogous ones for higher operations commute (and similarly for the unit diagram, which we omit). By the universal property of Kan extensions, we know that \eqref{commute} commutes up to a possibly non-invertible $2$-morphism. We wish to show that this $2$-morphism is an equivalence.

Let $(\Phi_{\tau})_{\str} \to \cY_{\str}$ denote the left fibration associated to $\operatorname{LKE}_{\cY \to \cY_{\str}}$ via the Grothendieck construction. Equivalently, $(\Phi_{\tau})_{\str}$ is the straightening of the lax prestack $\Phi_{\tau}$. To show that \eqref{commute} commutes, it suffices to show that the corresponding map \[((\Phi_{\tau})_{\str} \times (\Phi_{\tau})_{\str}) \times_{\cY_{\str} \times \cY_{\str}} \operatorname{mult}_{\cY_{\str}} \longrightarrow (\Phi_{\tau})_{\str} \times_{\cY_{\str}} \operatorname{mult}_{\cY_{\str}}\] is an equivalence, and similarly for the higher operations. But this follows from the definitions together with our assumption on $\cY$. 
\end{proof}

\section{Bases of factorization}\label{factorization}

In this section we will introduce the bases over which factorization will occur. Namely, we consider the moduli of finite subsets of our curve $X$, as well as a graded version in which we remember multiplicity. We will define what it means to have a factorization structure over both. Later, we will show how to pass from factorization structures over the configuration space to factorization structures over the Ran space. This will allow us to reconstruct sheaves on the affine Grassmannian from sheaves on Zastava spaces. 

\subsection{The Ran space.} Define a prestack $\Ran$ as follows (see \cite{R1} for a more detailed overview). For an affine scheme $S$, a map $S \to \Ran$ is a nonempty finite subset of $\Map(S,X)$. Equivalently, we can write \[\Ran \overset{\sim}{\longrightarrow} \colim_{\operatorname{FinSet}^{\op}_{\operatorname{surj}}} X^I \] where the colimit is over the category opposite to the one whose objects are (possibly empty) finite sets and whose morphisms are surjective functions. Here $X^I$ denotes the scheme of maps $x_I : I \to X$ from a finite set $I$ to $X$. 

For a point $x$ of $X$, define $\Gamma_x$ to be its graph. Accordingly, for a point $x_I=(x_i)_{i \in I}$ of $\Ran$, write $\Gamma_{x_I}$ for the union $\cup_{i \in I} \Gamma_{x_i}$ of graphs. Inside the product $\Ran \times \Ran$ we have the subprestack $[\Ran \times \Ran]_{\disj}$ given by pairs $(x_I, x_J)$ such that $\Gamma_{x_I} \cap \Gamma_{x_J} = \emptyset$. Note that $\Ran$ is equipped with the structure of a monoid \[\add_{\Ran} : \Ran \times \Ran \to \Ran\] given by union of finite subsets. By abuse of notation, we will denote by $\add_{\Ran}$ the restriction of the same named map to $[\Ran \times \Ran]_{\disj}$. 

\subsubsection{} An important feature of $\Ran$ is that it has a structure of a commutative algebra in correspondences. Namely, there is a correspondence 
\begin{equation}
\label{rancorr}
\begin{tikzcd}
	& {[\Ran \times \Ran]_{\disj}} \\
	{\Ran \times \Ran} && \Ran && {}
	\arrow[from=1-2, to=2-1]
	\arrow[from=1-2, to=2-3]
\end{tikzcd}
\end{equation} whose second leg is given $\add_{\Ran}$. The unit diagram for $\Ran$ is just given by the inclusion \[\{\emptyset\} \longhookrightarrow \Ran\] of the empty subset of $X$. 

Define a \emph{factorization space} over $\Ran$ to be a multiplicative space for the correspondence \eqref{rancorr}. A bit more explicitly, a factorization space is a prestack $\cY \to \Ran$ together with unital and associative isomorphisms \[\fact_{\cY}: (\cY \times \cY) \times_{\Ran \times \Ran} [\Ran \times \Ran]_{\disj} \overset{\sim}{\longrightarrow} \cY \times_{\Ran} [\Ran \times \Ran]_{\disj}.\] We will often denote the left hand side by $[\cY \times \cY]_{\disj}$ and the right hand side by $\cY_{\disj}$. 

\subsection{The unital Ran space.} We will also need an enhancement of $\Ran$ as a lax prestack. Define \[\Ran^{\un}: \Aff_k^{\op} \longrightarrow \Cat\] to be the functor sending $S$ to the following category. Objects in $\Ran^{\un}(S)$ are given by finite subsets $x_I \subseteq \Map(S,X)$ and for any such pair $x_I$ and $x_J$ there is a single morphism $x_I \to x_J$ precisely if $x_I \subseteq x_J$. Note we are not requiring that $x_I$ be nonempty. We call the lax prestack $\Ran^{\un}$ the \emph{unital Ran space}. 

As in the case of the usual Ran space, there is a correspondence 
\[\begin{tikzcd}
	& {[\Ran^{\un} \times \Ran^{\un}]_{\disj}} \\
	{\Ran^{\un} \times \Ran^{\un}} && \Ran^{\un} && {}
	\arrow[from=1-2, to=2-1]
	\arrow[from=1-2, to=2-3]
\end{tikzcd} \] which makes $\Ran^{un}$ into a commutative algebra in correspondences of lax prestacks. Define a \emph{unital factorization space} to be a multiplicative space over $\Ran^{\un}$.

More explicitly, we may think of a unital factorization space as a \emph{prestack} $\cY \to \Ran$ together with associative and unital maps $\cY_{x_I} \to \cY_{x_J}$ between fibers whenever we have an inclusion $x_I \subseteq x_J$. By abuse of notation, we will often call $\cY$ a unital factorization space when the associated structure of a left fibration is clear from context. 

\subsubsection{} Let $\cY^{\un} \to \Ran^{\un}$ be a unital factorization space, and let $\cY$ be the underlying ordinary factorization space. Since $\cY^{\un}$ is a lax prestack, we may consider the category $\fD_{\str}(\cY^{\un})$ of \emph{unital D-modules} on $\cY$. There is a natural forgetful functor $\oblv_{\cY}: \fD_{\str}(\cY^{\un}) \to \fD(\cY)$ obtained by $!$-pull back along the composition \[\cY \to \cY^{\un} \to \cY_{\str}\] where the first map is the projection $\cY= \cY^{\un} \times_{\Ran^{\un}} \Ran \to \cY^{\un}$. The following proposition is standard. 

\begin{proposition} The forgetful functor \[ \oblv_{\cY}: \fD_{\str}(\cY^{\un}) \longrightarrow \fD(\cY)\] is fully faithful.
\end{proposition}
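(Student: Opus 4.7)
The plan is to combine Lemma \ref{strcat}, which presents $\fD_{\str}(\cY^{\un}) \simeq \operatorname{Tot}(\fD(s\cY^{\un}))$, with a direct Hom computation that exploits the existence of constant-chain degeneracies in the simplicial prestack $s\cY^{\un}$ associated to $\cY^{\un}$.

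First I would identify $s\cY^{\un}_0$ with $\cY$ (modulo the trivial contribution from the empty subset, which sits over the unit of $\Ran^{\un}$ and contributes only the unit sheaf), so that $\oblv_{\cY}$ corresponds to restriction along $s\cY^{\un}_0 \to s\cY^{\un}$. For $F, G \in \fD_{\str}(\cY^{\un})$ the Hom space is then computed as the totalization
\[ \operatorname{Hom}_{\fD_{\str}(\cY^{\un})}(F, G) \simeq \lim_{[n] \in \Delta} \operatorname{Hom}_{\fD(s\cY^{\un}_n)}(F_n, G_n). \]

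Next, since being in $\fD_{\str}$ forces all structure morphisms in the cosimplicial system to be equivalences, for any vertex projection $\pi_n : s\cY^{\un}_n \to \cY$ we obtain canonical identifications $F_n \simeq \pi_n^! F_0$ and $G_n \simeq \pi_n^! G_0$. The key observation is that the degeneracy $\sigma_n : \cY \to s\cY^{\un}_n$ inserting the constant chain $y \to y \to \cdots \to y$ via identity morphisms provides a section of $\pi_n$. Applying $\sigma_n^!$ to each term produces a natural equivalence with $\operatorname{Hom}_{\fD(\cY)}(F_0, G_0)$; I would then verify that these assemble into an equivalence between the cosimplicial system and the constant cosimplicial system with this value. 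The totalization then collapses to $\operatorname{Hom}_{\fD(\cY)}(F_0, G_0) = \operatorname{Hom}_{\fD(\cY)}(\oblv_{\cY} F, \oblv_{\cY} G)$, yielding fully faithfulness.

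The principal technical difficulty is verifying the naturality and coherence of these identifications with respect to the full cosimplicial structure at the $\infty$-categorical level, that is, showing that the compatibilities among the section maps $\sigma_n$ and the face maps of $s\cY^{\un}$ assemble into a genuine equivalence of cosimplicial diagrams. Conceptually, the proposition is an instance of a general principle: for a lax prestack $\cZ$ whose co-slice categories $\cZ(S)_{y/}$ are weakly contractible (which holds trivially in our case since $\operatorname{id}_y$ is initial in $\cY^{\un}(S)_{y/}$), pullback from $\fD(\cZ_{\str})$ to $\fD$ of the underlying prestack of objects is fully faithful when restricted to the strictified subcategory.
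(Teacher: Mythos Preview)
Your argument has a genuine gap at the step where you claim that applying $\sigma_n^!$ ``produces a natural equivalence with $\operatorname{Hom}_{\fD(\cY)}(F_0, G_0)$.'' The map $\sigma_n^!$ only furnishes a \emph{retraction} of $\pi_n^!: \operatorname{Hom}(F_0, G_0) \to \operatorname{Hom}(\pi_n^! F_0, \pi_n^! G_0)$, not an inverse. The existence of a section of $\pi_n$ at the level of prestacks does not force $\pi_n^!$ to be fully faithful on sheaves, and without that the cosimplicial Hom diagram need not be constant. Concretely, if one replaces $\Ran$ by an arbitrary monoid prestack $M$ acting on $\cY$, your argument applies verbatim (the unit of $M$ provides the same degeneracies and constant-chain sections), yet the forgetful functor from $M$-equivariant $D$-modules to $\fD(\cY)$ is certainly not fully faithful for, say, $M=\G_m$. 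Your ``general principle'' is likewise vacuous as stated: every co-slice $\cZ(S)_{y/}$ has $\id_y$ as an initial object regardless of $\cZ$, so the hypothesis holds automatically and cannot imply anything.

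The paper's proof instead invokes a nontrivial geometric input: the homological contractibility of $\Ran$ (proved in \cite{Ga-2}). Using the alternative simplicial description with $\cY^n = \Ran^n \times \cY$, homological contractibility implies that the projection $\operatorname{pr}_{\cY}: \Ran^n \times \cY \to \cY$ induces a fully faithful $\operatorname{pr}_{\cY}^!$. This is exactly what makes each coface map in the cosimplicial Hom diagram an equivalence, so the totalization collapses to $\operatorname{Hom}_{\fD(\cY)}(F_0,G_0)$. Equivalently, as the paper remarks just after the proposition, equivariance for the $\Ran$-action becomes a \emph{property} rather than extra structure; this is precisely the content of fully faithfulness, and it does not follow from formal simplicial bookkeeping alone.
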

\begin{proof} This follows directly from the homological contractibility of $\Ran$ proved in \cite{Ga-2}. 
\end{proof}

Hence we may call $\fD_{\str}(\cY^{\un})$ the unital \emph{subcategory} of $\fD(\cY)$, and often denote it by $\fD_{\untl}(\cY)$. 

\subsection{An alternative description of the unital category.} A unital factorization space $\cY^{\un}$ is determined by an ordinary factorization space $\cY \to \Ran$ together with an action \[ \act_{\cY}: \Ran \times \cY \to \cY \] which extends the natural map over the disjoint locus provided by factorization. In particular, we obtain a simplicial prestack \[ \cY^{\bullet} \coloneqq \, \ldots \begin{tikzcd}
	{\Ran \times \Ran \times \cY} & {\Ran \times \cY} & \cY
	\arrow["{\operatorname{pr_{\cY}}}"', shift right=2, from=1-2, to=1-3]
	\arrow["{\act_{\cY}}", shift left=2, from=1-2, to=1-3]
	\arrow[shift right=3, from=1-1, to=1-2]
	\arrow[shift left=3, from=1-1, to=1-2]
	\arrow[from=1-1, to=1-2]
\end{tikzcd}\]
whose associated geometric realization $|\cY^{\bullet}|$ is canonically isomorphic to $\cY^{\un}_{\str}$. It follows that there is a canonical equivalence of categories \[\Tot(\fD(\cY^{\bullet})) \overset{\sim}{\longrightarrow} \fD_{\str}(\cY^{\un})\] which is compatible with factorization. One should think of $\Tot(\fD(\cY^{\bullet}))$ as a category of $\Ran$-equivariant sheaves on $\cY$. 

Note that by homological contractibility of Ran, the category $\Tot(\fD(\cY^{\bullet}))$ may be explicitly described as follows. It is the full subcategory of $\fD(\cY)$ of sheaves $c$ such that \emph{there exists} a (necessarily unique) isomorphism \[\operatorname{pr}_{\cY}^!(c) \overset{\sim}{\longrightarrow} \act_{\cY}^!(c),\] i.e. equivariance is a property rather than an extra structure. 

\subsection{The configuration space.} We will now discuss a ``graded" analogue of $\Ran$ (see \cite{R2}). Let $\Div^{-\eff}$ denote the moduli of \emph{anti}-effective divisors on $X$ and let $\check{\Lambda}^+$ be the monoid of dominant weights of $G$. Define the \emph{configuration space} $\Conf$ as the space of homomorphisms $\check{\Lambda}^+ \to \Div^{-\eff}$. Equivalently, $\Conf$ is the moduli of sums $\sum_j \lambda_j x_j$ where $\lambda_j \in \Lambda^-$ is a negative coweight for each $j$ and the $x_j$'s are pairwise distinct. We will refer to the points of $\Conf$ as \emph{colored divisors} on $X$. 

The prestack $\Conf$ is actually a scheme with $\pi_0(\Conf)= \Lambda^-$ where each connected component $\Conf^{\lambda}$ consists of colored divisors with total degree $\lambda$. Explicitly, a choice of simple coroots determines an isomorphism \[ \Conf^{\lambda} \overset{\sim}{\longrightarrow} \prod_{\alpha_i \in \mathcal{I}_G} X^{(n_i)}\] where $X^{(n_i)}= X^{n_i}//S_{n_i}$ is the GIT quotient of $X^{n_i}$ by the symmetric group. We will often denote $\Conf^{\lambda}$ by $X^{\lambda}$ and call it a \emph{partially symmetrized power} of $X$. 

\subsubsection{} Consider the incidence divisor in the product $X \times \Conf$. Given an $S$-point $D: S \to \Conf$, define $\Gamma_D$ to be the relative effective divisor in $X \times S$, equal to the pullback along \[\operatorname{pr_X} \times D: X \times S \to X \times \Conf\] of the incidence divisor. Here $\operatorname{pr}_X$ is the projection onto $X$. As in the case of the Ran space, there is a subscheme $[\Conf \times \Conf]_{\disj}$ of $\Conf \times \Conf$ consisting of pairs $(D,E)$ such that $\Gamma_D \cap \Gamma_E = \emptyset$. As before, there is a correspondence
\[\begin{tikzcd}
	& {[\Conf \times \Conf]_{\disj}} \\
	{\Conf \times \Conf} && \Conf && {}
	\arrow[from=1-2, to=2-1]
	\arrow[from=1-2, to=2-3]
\end{tikzcd}\] which makes $\Conf$ into a commutative algebra in correspondences. Here the second leg of the correspondence is the restriction of the map $\add_{\Conf}: \Conf \times \Conf \to \Conf$ given by addition of divisors. Hence we may define a \emph{factorization space} over $\Conf$ to be a multiplicative space for $\Conf$. 

\subsubsection{} Parallel to what happens for the Ran space, the configuration space may be upgraded to a lax prestack \[ \Conf^{\to}: \Aff^{\op}_k \longrightarrow \Cat\] where there is a unique arrow $D \to E$ if and only if $\Gamma_D \subseteq \Gamma_E$. In typical fashion, there is a natural structure of a commutative algebra in correspondences on $\Conf^{\to}$. As before, we may consider left fibrations over $\Conf^{\to}$ endowed with a multiplicative structure. To distinguish them from the Ran situation, we will call such objects \emph{effective factorization spaces}. 

Given an effective factorization space $\cY^{\to} \to \Conf^{\to}$ with underlying factorization space $\cY \to \Conf$, we may consider the category $\fD_{\eff}(\cY) \coloneqq \fD(\cY^{\to}_{\str})$ of \emph{effective sheaves} on $\cY$. There is also a description of this category in terms of equivariant sheaves with respect to a corresponding \emph{defect action} $\Conf \times \cY \to \cY$. Note however that since $\Conf$ is \emph{not} contractible, the forgetful functor $\fD_{\eff}(\cY) \to \fD(\cY)$ is no longer fully-faithful. 

\section{The Ran semi-infinite IC sheaf}\label{ransic}

We will recall here the construction of the \emph{semi-infinite intersection cohomology sheaf} of \cite{Ga1} and \cite{Ga2}. Note the definitions of the affine Grassmannian and related structures that will be used in the following may be found in \emph{loc. cit.} and the references therein.

\subsubsection{} Recall the \emph{affine Grassmannian} $\Gr_{G, \Ran}$ is the factorization space over $\Ran$ whose $S$-points are given by triples $(x_I, \sP_G,\alpha)$, where $x_I$ is an $S$-point of $\Ran$, $\sP_G$ is a $G$-bundle on $X \times S$, and \begin{equation}\label{iso} \alpha: \sP^0_G |_{X \times S \setminus \Gamma_{x_I}} \overset{\sim}{\longrightarrow} \sP_G |_{X \times S \setminus \Gamma_{x_I}} \end{equation} is a trivialization of $\sP_G$ away from the complement of $\Gamma_{x_I}$. Here $\sP^0_G$ denotes the trivial bundle. 

The factorization structure on $\Gr_{G,\Ran}$ is given by gluing $G$-bundles. Moreover, this upgrades to a unital factorization structure given as follows. Let $x_I \subseteq x_J$ and let $(x_I, \sP_G, \alpha)$ be a point of $\Gr_{G,\Ran}$ over $x_I$. By restricting $\alpha$ to the complement of $\Gamma_{x_J}$ we obtain a point over $x_J$, and hence the structure of a left fibration over $\Ran^{\un}$. 

\subsubsection{} Suppose we have an $S$-point $(x_I,\sP_G, \alpha)$ of $\Gr_{G,\Ran}$. Then for each dominant coweight $\chlam \in \check{\Lambda}^+$ the trivialization $\alpha$ induces meromorphic maps 
\begin{equation}
\label{pluckeriso}
\cO_{X \times S} \longrightarrow V^{\chlam}_{\sP^0_G} \overset{\alpha}{\longrightarrow} V^{\chlam}_{\sP_G} 
\end{equation}
where $V^{\chlam}$ is the irreducible representation of $G$ with highest weight $\chlam$ and $V^{\chlam}_{\sP_G}$ is the associated vector bundle. Here, the first map in \eqref{pluckeriso} is given by the inclusion of the highest weight line. Define a closed, factorizable sub-indscheme $\newoverline{S}^0_{\Ran}$ of $\Gr_{G,\Ran}$ given by the condition that the compositions \eqref{pluckeriso} extend to regular maps \begin{equation}\label{pluckerfors}\cO_{X \times S} \longrightarrow V^{\chlam}_{\sP_G}\end{equation} for all dominant coweights. The prestack $\newoverline{S}^0_{\Ran}$ is naturally closed under the unital structure of $\Gr_{G,\Ran}$. 

Let $\dBun_N$ denote \emph{Drinfeld's compactification}, whose definition will be recalled in detail in Section \ref{drincomp}. There is a natural map \[\newoverline{\mathfrak{p}}_{\Ran}: \newoverline{S}^0_{\Ran} \longrightarrow \dBun_N\] taking a point of the left hand side to the (possibly degenerate) reduction to $N$ defined by the trivialization. More precisely, the maps \eqref{pluckerfors} satisfy the Pl\"{u}cker relations and hence define a point of $\dBun_N$. 

\subsubsection{} For an algebraic group $H$, let $\mathfrak{L}_{\Ran} H$ denote the group prestack over $\Ran$ whose $S$-points are given by an $S$-point $x_I$ of $\Ran$ together with a map $\mathring{D}_{x_I} \to H$. By bounding the degrees of zeros of the maps \eqref{pluckerfors} we obtain an $\fL_{\Ran} N$-equivariant stratification of $\newoverline{S}^0_{\Ran}$, with strata $S^{\lambda}_{\Ran}$ indexed by negative coweights $\lambda \in \Lambda^-$. The stratum \[j^0: S^0_{\Ran} \hookrightarrow \newoverline{S}^0_{\Ran}\] labeled by $0$ is given by the \emph{semi-infinite orbit}, an open dense subspace of $\newoverline{S}^0_{\Ran}$ obtained by the condition that the sections \eqref{pluckerfors} have no zeros. Denote by $\mathfrak{p}^0_{\Ran}$ the restriction of $\newoverline{\mathfrak{p}}_{\Ran}$ to $S^0_{\Ran}$. 

We then consider the category $\operatorname{SI}_{\Ran}$ of $\fL_{\Ran} N$-equivariant sheaves on the affine Grassmannian. By considering $\fL_{\Ran} N$-equivariant sheaves with support on $\newoverline{S}^0_{\Ran}$ we obtain the category $\operatorname{SI}^{\leq 0}_{\Ran}$. 

Similarly, there is a full subcategory $\operatorname{SI}_{\Ran}^{=\lambda}$ of $\fD(S^{\lambda}_{\Ran})$ for each negative coweight $\lambda$. A feature of these categories is that they are preserved by ($!$ and $*$) pullback and pushforward between the $S^{\lambda}_{\Ran}$'s and $\newoverline{S}^0_{\Ran}$. The following summarizes some of the central results of \cite{Ga2}. 

\begin{theorem} (Gaitsgory) \begin{enumerate} \item{For each negative coweight $\lambda$, there is a t-structure on $\operatorname{SI}_{\Ran}^{=\lambda}$ characterized by the fact that (up to a cohomological shift) the dualizing sheaf $\omega_{S^{\lambda}_{\Ran}}$ is in the heart $(\operatorname{SI}^{=\lambda}_{\Ran})^{\heartsuit}$. In particular, for $\lambda=0$ the dualizing sheaf $\omega_{S^0_{\Ran}}$ itself is an object of the heart $\operatorname{SI}^{=0,\heartsuit}_{\Ran}$.} 
\item{The t-structures on the categories $\operatorname{SI}^{=\lambda}_{\Ran}$ glue to one on $\operatorname{SI}^{\leq 0}_{\Ran}$ and the intermediate extension \[\sICR \coloneqq j^0_{!*}(\omega_{S^0_{\Ran}}) \in (\operatorname{SI}^{\leq 0}_{\Ran})^{\heartsuit}\] possesses a canonical structure of a factorization algebra. This factorization structure is uniquely determined by the requirement that it extends the tautological one on $\omega_{S^0_{\Ran}}$.} 
\end{enumerate}
\end{theorem}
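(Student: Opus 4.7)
For part (1) the plan is to pass to a finite-dimensional quotient on each stratum: although each $S^{\lambda}_{\Ran}$ is of both infinite dimension and infinite codimension in $\Gr_{G,\Ran}$, the quotient $S^{\lambda}_{\Ran}/\fL_{\Ran}N$ is essentially finite-dimensional, since relative to $\Ran$ its $\fL_{\Ran}N$-orbits are parametrized by a space closely related to the $\lambda$-graded configuration space. I would first verify that $!$-pullback along $S^{\lambda}_{\Ran} \to S^{\lambda}_{\Ran}/\fL_{\Ran}N$ is a fully faithful embedding whose essential image is $\operatorname{SI}^{=\lambda}_{\Ran}$, and then transport the perverse t-structure from the quotient through this equivalence after an appropriate cohomological shift encoding the virtual relative dimension of the fibers. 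Under this recipe, $\omega_{S^{\lambda}_{\Ran}}$ becomes, up to the same shift, the pullback of the dualizing sheaf of the quotient and so lies in the heart by construction; at $\lambda=0$ the stratum is open, the shift vanishes, and $\omega_{S^0_{\Ran}}$ itself lies in the heart.

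For the gluing statement in part (2), standard recollement applies: declare a sheaf on $\newoverline{S}^0_{\Ran}$ to be coconnective if its $!$-restriction to each $S^{\lambda}_{\Ran}$ is coconnective for the stratum t-structure, and connective if its $*$-restriction is connective. This pair defines a t-structure because the closure relations in the semi-infinite stratification are controlled by the partial order on $\Lambda^-$ and the stratum shifts match compatibly under closure containment; the intermediate extension $\sICR = j^0_{!*}(\omega_{S^0_{\Ran}})$ then exists and is uniquely characterized by the usual vanishing conditions.

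To equip $\sICR$ with its factorization structure I would use that the factorization structure on $\Gr_{G,\Ran}$ restricts to one on $\newoverline{S}^0_{\Ran}$, and that the induced stratification of $[\newoverline{S}^0_{\Ran} \times \newoverline{S}^0_{\Ran}]_{\disj}$ is exactly the product stratification indexed by pairs $(\lambda_1,\lambda_2)$. Granting that the semi-infinite t-structure is compatible with external product on the disjoint locus (so the exterior tensor of hearts, up to a fixed shift, lands in the product heart), both $(\sICR \boxtimes \sICR)|_{\disj}$ and the pullback of $\sICR$ under the factorization isomorphism are intermediate extensions of the \emph{same} object, namely the external product of $\omega$ on the open stratum. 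By functoriality of intermediate extension, the tautological factorization isomorphism on $\omega_{S^0_{\Ran}}$ extends uniquely to a factorization isomorphism on $\sICR$. Uniqueness of the full factorization structure follows from the faithfulness of $!$-restriction to the open stratum on the heart, so that any factorization structure is determined by its value on $\omega_{S^0_{\Ran}}$.

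The main obstacle is precisely the compatibility of the semi-infinite t-structure with external products along the disjoint locus; this is what allows intermediate extension to commute with factorization. Concretely, one has to verify that exterior products of objects in the hearts of $\operatorname{SI}^{=\lambda_1}_{\Ran}$ and $\operatorname{SI}^{=\lambda_2}_{\Ran}$ land (after a fixed shift) in the heart of the semi-infinite t-structure on the product restricted to the disjoint locus. Via the finite-dimensional quotient descriptions of the stratum categories, this reduces to a standard K\"{u}nneth-type statement about perverse sheaves on products of finite-dimensional varieties, but setting up the reduction requires careful bookkeeping with the factorization of the quotients $S^{\lambda}_{\Ran}/\fL_{\Ran}N$ and with the matching of the shifts across the closure relations.
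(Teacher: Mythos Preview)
The paper does not give its own proof of this theorem: it is stated as a summary of the central results of \cite{Ga2} and explicitly attributed to Gaitsgory, with the paper remarking only that the t-structure on $\operatorname{SI}^{=\lambda}_{\Ran}$ is pulled back from the perverse t-structure on $X^{\lambda}$ via the equivalence $\operatorname{SI}^{=\lambda}_{\Ran} \simeq \mathfrak{D}(X^{\lambda})$. Your sketch is broadly consistent with that hint and with the strategy of \cite{Ga2}: reduce each stratum category to $D$-modules on a finite-dimensional base, transport the perverse t-structure with a shift, glue by recollement, and obtain the factorization structure by uniqueness of intermediate extension. One small correction: the vanishing of the shift at $\lambda=0$ is a normalization choice (so that $\omega_{S^0_{\Ran}}$ lies in the heart), not a consequence of $S^0_{\Ran}$ being ``open'' in any finite-dimensional sense --- the stratum is still infinite-dimensional over $\Ran$.
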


 \begin{remark}It should be emphasized that the t-structure on $\operatorname{SI}^{\leq 0}_{\Ran}$ is \emph{not} the perverse t-structure. Indeed, the latter does not make sense with respect the stratification of $\newoverline{S}^0_{\Ran}$ since the strata $S^{\lambda}_{\Ran}$ are both infinite dimensional and of infinite codimension. 

  The fact that the Gaitsgory t-structure behaves differently from the perverse t-structure is reflected, for example, in the fact that $\omega_{S^0_{\Ran}}$ lies in the heart of $\operatorname{SI}^{=0}_{\Ran}$. This is because for an ind-scheme $\cY$ the dualizing sheaf $\omega_{\cY}$ typically lies in cohomological degree $-\infty$ in the perverse t-structure. 
 \end{remark}

 Following \cite{Ga2}, we call the object $\sICR$ the \emph{semi-infinite intersection cohomology sheaf}. We will not say much more about the t-structure for which $\sICR$ is an intermediate extension, but it is worth mentioning that the t-structure on $\operatorname{SI}^{=\lambda}_{\Ran}$ is pulled back from the usual one on the scheme of $T$-fixed points of $S^{\lambda}_{\Ran}$. In other words, the scheme of $T$-fixed points of $S^{\lambda}_{\Ran}$ is isomorphic to the partially symmetrized power $X^{\lambda}$ of the curve and we have an equivalence \[\operatorname{SI}^{=\lambda}_{\Ran} \overset{\sim}{\longrightarrow} \mathfrak{D}(X^{\lambda})\] which is t-exact up to a cohomological shift \cite{Ga2}.  

The following is Theorem 3.3.3 in \cite{Ga2} and relates the semi-infinite intersection cohomology sheaf to its global version lying over Drinfeld's compactification $\dBun_N$. The latter is simply given by the ordinary IC sheaf $\IC_{\dBun_N}$ up to a cohomological shift. 

\begin{proposition}\label{sicispulledbackfromdrin} Let $d=(g-1)\operatorname{dim}(N)$. Then there is a canonical isomorphism \[\newoverline{\mathfrak{p}}_{\Ran}^!(\IC_{\dBun_N})[d] \overset{\sim}{\longrightarrow} \sICR\] extending the tautological isomorphism $(\mathfrak{p}_{\Ran}^0)^!(\omega_{\Bun_N}) \simeq \omega_{S^0_{\Ran}}$.
\end{proposition}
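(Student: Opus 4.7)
The plan is to verify that $\newoverline{\mathfrak{p}}_{\Ran}^!(\IC_{\dBun_N})[d]$ satisfies the three defining properties of the intermediate extension $\sICR = j^0_{!*}(\omega_{S^0_{\Ran}})$ in the semi-infinite t-structure on $\operatorname{SI}^{\leq 0}_{\Ran}$, and then invoke the universal property of intermediate extension to promote the tautological isomorphism on the open stratum to a global one.

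First, I would show that the pullback belongs to $\operatorname{SI}^{\leq 0}_{\Ran}$. Support on $\newoverline{S}^0_{\Ran}$ is automatic from the construction. For $\fL_{\Ran} N$-equivariance, the key observation is that the datum of a generalized $N$-reduction recorded by the Pl\"ucker sections $\cO_{X\times S}\to V^{\chlam}_{\sP_G}$ is insensitive to the $\fL_{\Ran} N$-action on the trivialization $\alpha$, so $\newoverline{\mathfrak{p}}_{\Ran}$ factors through the quotient by $\fL_{\Ran} N$ and its $!$-pullback is automatically $\fL_{\Ran} N$-equivariant.

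Next, I would establish the identification over the open orbit $S^0_{\Ran}$. Since $\Bun_N$ is smooth of dimension $d=(g-1)\dim N$, in the standard normalization we have $\IC_{\dBun_N}|_{\Bun_N}\simeq \omega_{\Bun_N}[-d]$. The restricted map $\mathfrak{p}^0_{\Ran}\colon S^0_{\Ran}\to \Bun_N$ is formally smooth, so $(\mathfrak{p}^0_{\Ran})^!\omega_{\Bun_N}\simeq \omega_{S^0_{\Ran}}$. Composing, one obtains the tautological isomorphism $j^{0,!}\big(\newoverline{\mathfrak{p}}_{\Ran}^!(\IC_{\dBun_N})[d]\big)\simeq \omega_{S^0_{\Ran}}$, which moreover respects the $\fL_{\Ran} N$-equivariant structure.

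The main obstacle is verifying the intermediate extension property at the non-open strata, i.e.\ that on each $S^{\lambda}_{\Ran}$ with $\lambda<0$ the $!$-restriction lies in strictly negative and the $*$-restriction in strictly positive degrees for the t-structure on $\operatorname{SI}^{=\lambda}_{\Ran}$. Under the equivalence $\operatorname{SI}^{=\lambda}_{\Ran}\simeq \mathfrak{D}(X^{\lambda})$ (up to cohomological shift), this translates into bounds on the $!$- and $*$-stalks of $\IC_{\dBun_N}$ along the boundary strata $\Bun_N^{=\lambda}\subseteq \dBun_N$, after pulling back along $\newoverline{\mathfrak{p}}_{\Ran}$ and then restricting to the $T$-fixed points parametrizing $X^{\lambda}$. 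Such estimates are exactly the dimension bounds for the Zastava IC stalks obtained by Feigin--Finkelberg--Kuznetsov--Mirkovi\'c and Braverman--Finkelberg--Gaitsgory--Mirkovi\'c; the shift $[d]$ is what synchronizes the perverse normalization on $\dBun_N$ with the semi-infinite normalization in which $\omega_{S^0_{\Ran}}$ lies in the heart. Once these strict inequalities are in place, the universal property of the intermediate extension upgrades the tautological isomorphism on $S^0_{\Ran}$ to the desired canonical global isomorphism.
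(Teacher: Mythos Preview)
The paper does not give its own proof of this proposition: it is simply quoted as Theorem 3.3.3 of \cite{Ga2}. Your outline is the standard argument and is essentially what is carried out in \cite{Ga2}: one checks that $\newoverline{\mathfrak{p}}_{\Ran}^!(\IC_{\dBun_N})[d]$ lies in $\operatorname{SI}^{\leq 0}_{\Ran}$, restricts to $\omega_{S^0_{\Ran}}$ on the open stratum, and then satisfies the strict vanishing conditions on deeper strata that characterize the intermediate extension, the last step reducing to the stalk estimates of \cite{FFKM} and \cite{BFGM}.

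One small correction: you have the roles of $!$- and $*$-restrictions reversed in the intermediate extension characterization. For the semi-infinite t-structure (as in Proposition~\ref{tstructure} and the proof of Proposition~\ref{semiinfiniteisusual}), connectivity is detected by $j^*_{\mu}$ and coconnectivity by $j^!_{\mu}$; hence on a non-open stratum $S^{\lambda}_{\Ran}$ with $\lambda<0$ one needs $j^!_{\lambda}$ to land in strictly \emph{positive} degrees and $j^*_{\lambda}$ in strictly \emph{negative} degrees. This is a notational slip rather than a conceptual one---the translation to stalk bounds on $\IC_{\dBun_N}$ that you describe is exactly right once the inequalities are oriented correctly.
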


\subsection{The configuration space semi-infinite IC sheaf.} We will also need a version of $\sICR$ which lies over the configuration space. To this end, define the configuration space affine Grassmannian $\Gr_{G,\Conf}$ to be the prestack whose $S$-points are given by triples $(D, \sP_G, \alpha)$, where 
\begin{enumerate}
    \item $D$ is an $S$-family of colored divisors;
    \item $\sP_G \to X \times S$ is a $G$-bundle;
    \item $\alpha$ is a trivialization of $\sP_G$ on $(X \times S) \setminus \Gamma_D$. 
\end{enumerate} As before, we define a closed substack $\newoverline{S}^0_{\Conf}$ inside $\Gr_{G, \Conf}$ by a positivity condition. More precisely, the trivialization $\alpha$ induces meromorphic maps \[\cO_{X \times S} \longrightarrow V^{\chlam}_{\sP_G} \] for every dominant weight $\chlam$ and we require that these extend to $X \times S$. The space $\newoverline{S}^0_{\Conf}$ is also equipped with a canonical map $\newoverline{\mathfrak{p}}_{\Conf}$ to $\dBun_N$. 

We may endow $\Gr_{G,\Conf}$ with the structure of an effective factorization space as follows. Let $D$ be a colored divisor and let $D'$ be another colored divisor such that $\Gamma_D \subseteq \Gamma_{D'}$. Suppose we have a point $(D,\sP_G,\alpha)$ of $\Gr_{G,\Conf}$ lying over $D$. The effective structure is given by the map 
\[ (D,\sP_G, \alpha) \mapsto (D',\sP_G, \alpha'),\] where $\alpha'$ is the restriction of $\alpha$ to $X \times S \setminus \Gamma_{D'}$. The factorization structure is again given by gluing $G$-bundles. It is evident that $\newoverline{S}^0_{\Conf}$ is closed under both structures. 

\subsubsection{}\label{correspondencesection}Define the \emph{correspondence configuration space} $\Conf_{\corr}$ as the subspace of $\Conf \times \Ran$ given by the condition that the divisor is set-theoretically supported on the point of $\Ran$. 

We have a correspondence 
\begin{equation}\label{confcorr}\begin{tikzcd}
	& {\Conf_{\corr}} \\
	\Conf && \Ran
	\arrow["\mathfrak{c}_L"', from=1-2, to=2-1]
	\arrow["\mathfrak{c}_R",from=1-2, to=2-3]
\end{tikzcd}\end{equation} defined in the obvious way. For a space $\cY \to \Conf$ lying over the configuration space, define \[\cY_{\corr} \coloneqq \cY \times_{\Conf} \Conf_{\corr}\] as a space over $\Conf_{\corr}$. Similarly, define \[\cY_{\corr,R} \coloneqq \cY \times_{\Ran} \Conf_{\corr} \] for a space $\cY \to \Ran$. 

We also have a correspondence analogous to \eqref{confcorr} whose left leg is $\Conf^{\to}$ and whose total space is denoted $\Conf^{\to}_{\corr}$. For a space $\cY \to \Conf^{\to}$, denote by $\cY^{\to}_{\corr}$ its pullback to $\Conf^{\to}_{\corr}$. Similarly, for a space $\cY \to \Ran$ we have the space $\cY^{\to}_{\corr,R}$ over $\Conf^{\to}_{\corr}$. 

\subsubsection{} Let us show that $\Conf^{\to}_{\corr}$ is a commutative algebra in $\Corr(\LStk)$. Define \[[\Conf^{\to}_{\corr} \times \Conf^{\to}_{\corr}]_{\disj} \coloneqq (\Conf^{\to}_{\corr} \times \Conf^{\to}_{\corr}) \times_{\Ran \times \Ran} [\Ran \times \Ran]_{\disj} \] and note that in addition to the obvious map \[\operatorname{inc}_{\corr}: [\Conf^{\to}_{\corr} \times \Conf^{\to}_{\corr}]_{\disj} \to \Conf^{\to}_{\corr} \times \Conf^{\to}_{\corr}\] we have another map \[\add_{\corr}: [\Conf^{\to}_{\corr} \times \Conf^{\to}_{\corr}]_{\disj} \longrightarrow \Conf^{\to}_{\corr}\] which sends a pair $\big( (D,x_I), (E,x_J) \big)$ to $(D+E,x_I \cup x_J)$. 

The inclusion of the pair \[(0,\emptyset) \longhookrightarrow \Conf^{\to}_{\corr}\] provides the necessary unit for $\Conf^{\to}_{\corr}$, and it is easy to see that the above maps endow $\Conf^{\to}_{\corr}$ with the structure of a commutative algebra in $\Corr(\LStk)$. Note that the inclusion of $(\emptyset,\emptyset)$ into $\Conf^{\to}_{\corr}$ is an inclusion of a connected component.


Now the map $\mathfrak{c}_L$ upgrades to a lax morphism of commutative algebras, while the second projection $\mathfrak{c}_R$ is a strict morphism of commutative algebras. It follows that for an effective factorization space $\cY^{\to} \to \Conf^{\to}$, the pullback $\cY^{\to}_{\corr}$ to $\Conf^{\to}_{\corr}$ obtains a structure of a multiplicative space over $\Conf^{\to}_{\corr}$. Similarly, if $\cY$ is a factorization space over $\Ran$ then $\cY_{\corr,R}$ obtains a structure of a multiplicative space over $\Conf^{\to}_{\corr}$. 

\subsubsection{}\label{localizationstuff} In addition to the usual structure on $\Conf^{\to}_{\corr}$ of a commutative algebra in correspondences, there is a richer structure ${'}{\Conf_{\corr}^{\to}}$. The underlying prestack of ${'}{\Conf_{\corr}^{\to}}$ is just $\Conf_{\corr}$, whereas morphisms are given as follows. For an affine scheme $S$, there is precisely one morphism $(D,x_I) \to (E,x_J)$ of $S$-points whenever there is a morphism $D \to E$ in $\Conf^{\to}(S)$ and $x_I \to x_J$ in $\Ran^{\un}(S)$. Note the lax prestack ${'}{\Conf_{\corr}^{\to}}$ also acquires a structure of a commutative algebra in correspondences such that the inclusion \[\Conf_{\corr}^{\to} \longrightarrow {'}{\Conf_{\corr}^{\to}} \] is a map of commutative algebras. 

The maps $\mathfrak{c}_L$ and $\mathfrak{c}_R$ are compatible with the structures of the previous paragraph, and moreover $\mathfrak{c}_L$ is a pointwise Cartesian fibration. For a multiplicative space $\cY$ (resp. $\cX$) over $\Conf^{\to}_{\corr}$ (resp. over $\Ran^{\un}$), the space $\mathfrak{c}_L^{\Spc,*}(\cY)=\cY_{\corr}$ (resp. $\mathfrak{c}^{\Spc,*}_R(c)=\cY_{\corr,R}$) over ${'}{\Conf^{\to}_{\corr}}$ has a canonical structure of a multiplicative space. Note also the projection \begin{equation}\label{localizationmap} \cY_{\corr} \longrightarrow \cY \end{equation} is a pointwise Cartesian fibration by base change. 

Let $\fD_{\untl}(\cY_{\corr})$ denote the full subcategory of $\fD(\cY_{\corr})$ consisting of sheaves $c$ such that $c_{S,f} \to c_{S,g}$ is an isomorphism for any arrow $f \to g$ of $S$-points in $\cY_{\corr}$ whose image in $\cY$ is the identity. For any $\cY$ such that $\cY(S)$ is discrete for all $S$, by Proposition $7.1.12$ of \cite{Cis} the map \eqref{localizationmap} is a localization and there is a canonical equivalence \begin{equation}\label{localizationequivalence} \fD(\cY) \overset{\sim}{\longrightarrow} \fD_{\untl}(\cY_{\corr}).\end{equation} Indeed, a sheaf on $\cY_{\corr}$ is in particular the data of a functor \[\cY_{\corr}(S) \longrightarrow \mathfrak{D}(S)\] for every affine scheme $S$, and by our assumptions any such which sends the appropriate morphisms to inverses admits a unique factorization through \eqref{localizationmap}. Note an analogous discussion holds over $\Ran$ and we obtain a category $\fD_{\untl}(\cY_{\corr,R})$ for any multiplicative space $\cY \to \Ran^{\un}$.

\subsubsection{} We claim there is a map $\iota_{\corr}: \newoverline{S}^0_{\Conf,\corr} \to \newoverline{S}^0_{\Ran,\corr,R}$ which makes the triangle
\[\begin{tikzcd}[column sep=tiny, row sep=large ]
	{\newoverline{S}^0_{\Conf,\corr}} && {\newoverline{S}^0_{\Ran,\corr,R}} \\
	& {\Conf_{\corr}}
	\arrow[from=1-1, to=2-2]
	\arrow[from=1-3, to=2-2]
	\arrow["{\iota_{\corr}}", from=1-1, to=1-3]
\end{tikzcd}\] commute. Indeed, $\iota_{\corr}\big((x_J,D,\sP_G,\alpha)\big)$ is obtained by restricting $\alpha$ to the complement of $\Gamma_{x_J}$. In the following proposition, denote by $t_{\Conf}$ (resp. $t_{\Ran}$) the projection $\newoverline{S}^0_{\Conf,\corr} \to \newoverline{S}^0_{\Conf}$ (resp. $\newoverline{S}^0_{\Ran,\corr} \to \newoverline{S}^0_{\Ran}$).

\begin{proposition}\label{prop: conf semi-infinite ic sheaf} There exists an effective factorization algebra $\sICC$ over $\newoverline{S}^0_{\Conf}$ together with an isomorphism \[t_{\Conf}^!(\sICC) \overset{\sim}{\longrightarrow} (t_{\Ran} \circ \iota_{\corr})^!(\sICR)\] which is compatible with factorization. 
\end{proposition}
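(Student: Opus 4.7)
The plan is to construct $\sICC$ by transport through the correspondence \eqref{confcorr}: pull $\sICR$ back to $\newoverline{S}^0_{\Ran,\corr,R}$, transfer via the map $\iota_{\corr}$ to $\newoverline{S}^0_{\Conf,\corr}$, and descend to $\newoverline{S}^0_{\Conf}$ using the localization equivalence \eqref{localizationequivalence}. The factorization structure will be transported from that of $\sICR$ using the fact that $\mathfrak{c}_L$ is a lax, and $\mathfrak{c}_R$ a strict, morphism of commutative algebras in $\Corr(\LStk)$ and that $\iota_{\corr}$ is a morphism of multiplicative spaces over ${'}{\Conf^{\to}_{\corr}}$.

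Concretely, set $\tilde c := (t_{\Ran} \circ \iota_{\corr})^!(\sICR) \in \fD(\newoverline{S}^0_{\Conf,\corr})$. Since $\sICR$ is unital on $\newoverline{S}^0_{\Ran}$, its $!$-pullback to $\newoverline{S}^0_{\Ran,\corr,R}$, regarded as the total space of a multiplicative space over ${'}{\Conf^{\to}_{\corr}}$, is equivariant with respect to all arrows projecting to identities in $\newoverline{S}^0_{\Ran}$. Because $\iota_{\corr}$ is compatible with the projections to ${'}{\Conf^{\to}_{\corr}}$, this equivariance transfers to $\tilde c$, placing it in the localized subcategory of $\fD(\newoverline{S}^0_{\Conf,\corr})$ that corresponds, via \eqref{localizationequivalence}, to $\fD_{\eff}(\newoverline{S}^0_{\Conf})$. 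Since $\newoverline{S}^0_{\Conf}$ is an indscheme and in particular has discrete object sets, the localization equivalence applies, producing a canonical $\sICC \in \fD_{\eff}(\newoverline{S}^0_{\Conf})$ together with an isomorphism $t_{\Conf}^!(\sICC) \simeq \tilde c$. A small additional check is that the arrows in the $\Conf^{\to}$-direction on $\newoverline{S}^0_{\Conf,\corr}$ are sent by $\iota_{\corr}$ to arrows which are identities over $\newoverline{S}^0_{\Ran}$ (since the trivialization $\alpha$, once restricted to $X\setminus \Gamma_{x_I}$, no longer sees the divisor $D$), so unitality of $\sICR$ automatically induces effectivity for $\sICC$.

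For the factorization structure, I would use that at each step in the construction the relevant maps are morphisms of commutative algebras, so the defining factorization isomorphisms for $\sICR$ pull back along $\iota_{\corr}$ and descend through the localization to equip $\sICC$ with its own factorization structure. The asserted isomorphism $t_{\Conf}^!(\sICC) \simeq (t_{\Ran} \circ \iota_{\corr})^!(\sICR)$ is built into the construction, and its compatibility with factorization follows from the multiplicative naturality of each step.

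The main obstacle I anticipate is the coherence of the factorization structure through the pullback-descent: while the first-order factorization isomorphism falls out immediately, establishing the full homotopy-coherent package of higher compatibilities requires careful $\infty$-categorical bookkeeping and relies on the compatibility of the straightening formalism with commutative algebras in correspondences, in the style of Proposition \ref{multspctomultspc}. A secondary subtlety is applying \eqref{localizationequivalence} at the level of the indscheme $\newoverline{S}^0_{\Conf}$ rather than an ordinary scheme; this should follow either by writing $\newoverline{S}^0_{\Conf}$ as a colimit of closed subschemes in $\Gr_{G,\Conf}$ and checking compatibility of the localization with this colimit, or by verifying that Cisinski's Proposition $7.1.12$ extends verbatim to any lax prestack whose object sets are discrete.
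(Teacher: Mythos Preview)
Your proposal is correct and follows essentially the same approach as the paper: both arguments pull back $\sICR$ along $t_{\Ran}\circ\iota_{\corr}$, use the unitality of $\sICR$ (established in Section~4 of \cite{Ga2}) to place the result in the subcategory $\fD_{\untl}(\newoverline{S}^0_{\Conf,\corr})$, and then invoke the localization equivalence \eqref{localizationequivalence} to descend to $\newoverline{S}^0_{\Conf}$, with the factorization structure inherited from the compatibility of $\iota_{\corr}$ with the multiplicative structures over ${'}{\Conf^{\to}_{\corr}}$. The paper's proof is terser but structurally identical; your additional remarks about coherence and the indscheme hypothesis are reasonable side comments but not points of divergence.
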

\begin{proof}
Let us apply the discussion in \ref{localizationstuff} to our present situation. Consider the multiplicative spaces $\newoverline{S}^0_{\Conf,\corr}$ and $\newoverline{S}^0_{\Ran,\corr,R}$ over ${'}{\Conf^{\to}_{\corr}}$. It is easy to see that the map $\iota_{\corr}: \newoverline{S}^0_{\Conf,\corr} \to \newoverline{S}^0_{\Ran,\corr,R}$ is compatible with multiplicative structures on both sides. It follows that the shriek pullback $\iota_{\corr}^!$ restricts to a functor \[\iota_{\corr}^!: \fD_{\untl}(\newoverline{S}^0_{\Ran,\corr,R}) \longrightarrow \fD_{\untl}(\newoverline{S}^0_{\Conf,\corr}).\] Hence we will obtain the object $\sICC \in \fD(\newoverline{S}^0_{\Conf})$ corresponding uniquely under the equivalence \eqref{localizationequivalence} to $(\iota_{\Ran} \circ \iota_{\corr})^!(\sICR)$ provided $\sICR$ belongs to the unital subcategory of $\fD(\newoverline{S}^0_{\Ran})$. The latter is the content of Section $4$ of \cite{Ga2}. Note that $\sICC$ immediately inherits the structure of an effective factorization algebra from $\sICR$. 
\end{proof}

\section{Zastava spaces and the semi-infinite IC sheaf}\label{zastava}

In this section we will review the theory of Drinfeld's compactifications and Zastava spaces. More information about both objects can be found in \cite{BFGM} and a review of Zastava spaces via the affine Grassmannian can be found in \cite{Ga0}.

\subsection{Drinfeld's compactification.}\label{drincomp} Let $\Bun_B$ denote the moduli of $B$-bundles on $X$, and consider the induction morphism $\operatorname{pr}_{B,G}:\Bun_B \longrightarrow \Bun_G.$ Although the fibers of this map are not proper in general, there is a proper morphism \[\overline{\operatorname{pr}}_{B,G}: \dBun_B \longrightarrow \Bun_G\] together with an open embedding $j: \Bun_B \to \dBun_B$ with dense image which commutes with the projection to $\Bun_G$. The stack $\dBun_B$ is known as \emph{Drinfeld's compactification}, and is explicitly defined as follows. Let $S$ be an affine scheme. A map $S \to \dBun_B$ is the data of a $G$-bundle $\sP_G$ and a $T$-bundle $\sP_T$ on $X \times S$, together with embeddings of coherent sheaves \[\check{\lambda}(\sP_T) \longrightarrow V^{\chlam}_{\sP_G}\] for every $\chlam \in \check{\Lambda}$ which satisfy the Pl\"{u}cker relations. There is an evident map $\dBun_B \to \Bun_T$ whose preimage over $\Bun_T^{\lambda}$ will be denoted $\dBun_B^{\lambda}$. Here $\Bun_T^{\lambda}$ is the connected component of $\Bun_T$ corresponding to $\lambda \in \Lambda$ consisting of $T$-bundles of total degree $\lambda$. 

\subsubsection{} There is a stratification of $\dBun_B$ by smooth substacks indexed by positive coweights $\Lambda^+$. Each stratum ${_{=\lambda}}{\dBun_B}$ is canonically isomorphic\footnote{Whenever convenient we will identify $X^{\lambda} \times \Bun_B$ and its image in $\dBun_B$.} to $X^{\lambda} \times \Bun_B$, and we will write $\iota_{\lambda}$ for the inclusion ${_{=\lambda}}{\dBun_B} \to \dBun_B$. The union of strata indexed by coweights which are less than or equal to some fixed coweight $\mu$ is an open substack of $\dBun_B$ and will be denoted by ${_{\leq \mu}}{\dBun_B}$. 

The maps $\iota_{\lambda}$ extend to finite morphisms \begin{equation}\label{compactact}\overline{\iota}_{\lambda}: X^{\lambda} \times \dBun_B \longrightarrow \dBun_B\end{equation} which assemble into an action of the configuration space on $\dBun_B$. Moreover, the diagram 
\begin{equation}\label{actcomm}\begin{tikzcd}[column sep=tiny]
	{X^{\lambda} \times \dBun_B} && {\dBun_B} \\
	& {\Bun_G}
	\arrow["{\overline{\iota}_{\lambda}}", from=1-1, to=1-3]
	\arrow["{a_{X^{\lambda}} \times \overline{\operatorname{pr}}_{B,G}}"', from=1-1, to=2-2]
	\arrow["{\overline{\operatorname{pr}}_{B,G}}", from=1-3, to=2-2]
\end{tikzcd} \end{equation} commutes, where $a_{X^{\lambda}}: X^{\lambda} \to \pt$ is the unique map. In practice, we will more often use the corresponding structure for $B^-$. 

\subsubsection{} We will also need a variant $\dBun_N$ of Drinfeld's compactification for the unipotent radical $N$ of $B$. By definition, $\dBun_N$ sits in a pullback square
\[\begin{tikzcd}
	{\dBun_N} & {\dBun_B} \\
	\Spec(k) & {\Bun_T}
	\arrow[from=1-1, to=1-2]
	\arrow[from=1-2, to=2-2]
	\arrow[from=1-1, to=2-1]
	\arrow[from=2-1, to=2-2]
\end{tikzcd}\]
where $\Spec(k) \to \Bun_T$ is the $k$-point of $\Bun_T$ corresponding to the trivial $T$-bundle. Since $T$ acts on $N$ by the adjoint action, it also naturally acts on the stack $\dBun_N$. 

\subsubsection{} We will denote by $\ovF_{N,B^-}$ the fiber product \[\dBun_N \times_{\Bun_G} \dBun_{B^-}. \] Define the \emph{compactified Zastava space} $\pZ$ to be the open locus of $\ovF_{N,B^-}$ where the two generalized reductions are generically transverse. Explicitly, an $S$-point of $\pZ$ consists of a $G$-bundle $\sP_G$ and a $T$-bundle $\sP_T$ on $X \times S$ together with diagrams 
\begin{equation}
\label{zastavaeq}
\cO_{X \times S} \overset{\kappa^{\chlam}}{\longrightarrow} V^{\chlam}_{\sP_G} \overset{(\kappa^-)^{\chlam}}{\longrightarrow} \chlam(\sP_T)
\end{equation}
for every dominant weight $\chlam$, where the collections $\{\kappa^{\chlam}\}$ and $\{(\kappa^-)^{\chlam}\}$ satisfy the Pl\"{u}cker equations and each composition $(\kappa^-)^{\chlam} \circ \kappa^{\chlam}$ is nonzero. 

Denote by $\ofp$ (resp. $\ofq$) the projection $\pZ \to \dBun_N$ (resp. $\pZ \to \dBun_{B^-}$). Note that $\pZ$ splits into connected components $\pZ^{\lambda}$ indexed by \emph{negative} coweights $\lambda$, where $\pZ^{\lambda}$ is given as the preimage of $\dBun_{B^-}^{\lambda}$ under $\ofq$. By taking the zeros of the composition of \eqref{zastavaeq} we obtain a map 
\[\overline{\pi}^{\lambda}: \pZ^{\lambda} \longrightarrow X^{\lambda}.\] It is well-known that each $\pZ^{\lambda}$ is a scheme of finite type, and that $\overline{\pi}^{\lambda}$ is proper. Moreover, there is a natural factorization structure on $\pZ$ with respect to $\pibar$. 

By commutativity of the diagram \eqref{actcomm} for $B$ replaced by $B^-$, the maps $\overline{\iota}_{\lambda}$ give rise to an action of $\Conf$ on the fiber product $\ovF_{N,B^-}$. Moreover, since the condition defining $\pZ$ is generic, the compactified Zastava is preserved by this action. This equips $\pZ$ with the structure of an effective factorization space. 

\subsubsection{}\label{definition of zastava} By requiring the maps $(\kappa^-)^{\chlam}$ in \eqref{zastavaeq} to be surjective, we obtain the \emph{affine Zastava} $\cZ$. The space $\cZ$ admits a factorization structure and the dense open embedding \[j_Z : \cZ \hookrightarrow \pZ\] is compatible with factorization on both sides. Note however that the the factorization structure on $\cZ$ is \emph{not} effective. Similarly, by requiring that the maps $\kappa^{\chlam}$ are regular embeddings of vector bundles, we obtain the factorizable open subscheme \[j_Z^-: \cZ^- \hookrightarrow \pZ\] which we call the \emph{opposite affine Zastava}. The intersection $\oZ \coloneqq \cZ \cap \cZ^-$ is called the \emph{open Zastava} and is known to be smooth. Note that each variation of the Zastava space carries a $T$-action.

\subsection{Zastava spaces via the affine Grassmannian.}\label{section: zastava spaces via the affine grassmannian}

Suppose we have a point of the compactified Zastava space, consisting of two generically transverse generalized $N$ and $B^-$ reductions of a $G$-bundle $\sP_G$. Over the locus of transversality both reductions are genuine, and intersect in a single point over each fiber of the projection $\sP_G \to X \times S$. It follows that we may equip $\sP_G$ with a section away from the support of a colored divisor $D$, and hence we obtain a point of $\Gr_{G,\Conf}$. The result is a closed embedding \[\pZ \longhookrightarrow \Gr_{G,\Conf}\] which we will now describe more explicitly. 

Define $\newoverline{S}^{-}_{\Conf}$ to be the subspace of $\Gr_{G,\Conf}$ whose $S$-points consist of triples $(D, \sP_G, \alpha)$ such that for each dominant weight $\chlam \in \check{\Lambda}^+$ the composition \[V^{\chlam}_{\sP_G} \overset{\alpha}{\longrightarrow} V^{\chlam}_{\sP_G^0} \longrightarrow \cO_{X \times S}\] of meromorphic maps extends to a regular map \[V^{\chlam}_{\sP_G^0} \longrightarrow \cO_{X \times S}(-\chlam(D)),\] where the map $V^{\chlam}_{\sP_G^0} \to \cO_{X \times S}$ corresponds to the unique $N^-$-invariant functional on $V^{\chlam}$. Note that $\newoverline{S}^-_{\Conf}$ inherits an effective structure from the one on $\Gr_{G,\Conf}$.

The image of the compactified Zastava space in $\Gr_{G,\Conf}$ may be described as the intersection \[\newoverline{S}^0_{\Conf} \times_{\Gr_{G,\Conf}} \newoverline{S}^-_{\Conf} \longhookrightarrow \Gr_{G,\Conf}.\] As a result of the definitions, we see that the embedding $\pZ \hookrightarrow \Gr_{G,\Conf}$ is compatible with the effective factorization structures on both sides. We are now ready to define our central object of study.

\begin{definition}\label{zsic} Denote by $i_Z: \pZ \hookrightarrow \newoverline{S}^0_{\Conf}$ the inclusion. Define the \emph{Zastava semi-infinite intersection cohomology sheaf} $\sIC$ to be the factorization algebra $i_Z^!(\sICC)$ in $\fD_{\eff}(\pZ)$. Note that $\sIC$ inherits a natural effective structure from that of $\sICC$. 
\end{definition} 

\begin{remark}
    In Section \ref{semiinfiniteforzastava} we will give a construction of $\sIC$ that does not depend on the prior existence of $\sICR$.
\end{remark}

In the remaining sections we will give a more explicit description of $\sIC$ and show that it is, in a precise sense, equivalent to $\sICR$. 

\section{The effective category and the generic Zastava}\label{genzastava}

In this section we will establish the following theorem.

\begin{theorem}\label{effisun} There is an equivalence of categories \[\fD_{\eff}(\pZ) \overset{\sim}{\longrightarrow} \fD_{\untl}(\newoverline{S}^0_{\Ran})\] which is compatible with factorization and sends $\sIC$ to $\sICR$ up to a cohomological shift. 
\end{theorem}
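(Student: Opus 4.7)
The plan is to proceed geometrically, as the author's outline suggests, by identifying both sides of the equivalence with the D-module category of a common ``generic Zastava'' prestack $\cZ^{\gen}$, obtained as a suitable sheafification of both quotients $\pZ/\Conf$ and $\newoverline{S}^0_{\Ran}/\Ran$.

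First, I would unwind the definitions using the framework of Section \ref{laxprstk}. By definition, $\fD_{\eff}(\pZ) = \fD(\pZ^{\to}_{\str})$, where $\pZ^{\to}$ is the lax prestack refinement of $\pZ$ whose morphisms record the defect action of $\Conf$. Using Lemma \ref{str}, $\pZ^{\to}_{\str}$ is the geometric realization of the simplicial prestack with $n$-th term $\Conf^{\times n} \times \pZ$, with face maps given by the action, projection, and addition in $\Conf$. Symmetrically, $\fD_{\untl}(\newoverline{S}^0_{\Ran}) = \fD((\newoverline{S}^0_{\Ran})^{\un}_{\str})$, and by the simplicial discussion preceding the section on the configuration space, $(\newoverline{S}^0_{\Ran})^{\un}_{\str}$ is the geometric realization of the bar complex for the action of $\Ran$ on $\newoverline{S}^0_{\Ran}$. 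Thus the theorem reduces to producing an equivalence $\pZ^{\to}_{\str} \simeq (\newoverline{S}^0_{\Ran})^{\un}_{\str}$ of prestacks compatible with the factorization structures on both sides.

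To build such an equivalence, I would use the correspondence $\Conf \xleftarrow{\mathfrak{c}_L} \Conf_{\corr} \xrightarrow{\mathfrak{c}_R} \Ran$ of \ref{correspondencesection} together with the map $\iota_{\corr} : \newoverline{S}^0_{\Conf,\corr} \to \newoverline{S}^0_{\Ran,\corr,R}$. The embedding $i_Z : \pZ \hookrightarrow \newoverline{S}^0_{\Conf}$, composed with the projections pulled back to $\Conf_{\corr}$, gives a canonical zig-zag
\[
\pZ \longleftarrow \pZ \times_{\Conf} \Conf_{\corr} \xrightarrow{\,\iota_{\corr} \circ i_{Z,\corr}\,} \newoverline{S}^0_{\Ran}
\]
on the level of underlying prestacks. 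The content of the main geometric step is the reconstruction principle of the introduction: the fiber of $\newoverline{S}^0_{\Ran}$ over a point $x_I \in \Ran$ is the filtered colimit $\colim_\lambda \pZ_{\lambda \cdot x_I}$ along the closure relations $\newoverline{S}^{-,\lambda}_{x} \hookrightarrow \newoverline{S}^{-,\mu}_x$ for $\lambda \leq \mu$. Upgrading this fiberwise statement to an isomorphism of simplicial prestacks (by exhibiting both simplicial objects as witnessing the same filtered colimit on each affine test scheme) identifies the geometric realizations. This is the main obstacle: one must verify carefully that the ``filtered'' structure on $\pZ$ coming from the $\Conf$-action corresponds termwise to the bar complex for the $\Ran$-action, and that the resulting natural transformation is a levelwise isomorphism after straightening. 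I would handle this by fixing an affine test scheme $S$, explicitly computing both simplicial sets termwise (using the cancellative monoid observation that $\Conf^{\to}$-valued categories are filtered and hence weakly contractible), and appealing to the universal property of filtered colimits.

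Once the underlying equivalence $\pZ^{\to}_{\str} \simeq (\newoverline{S}^0_{\Ran})^{\un}_{\str}$ is established, compatibility with factorization is essentially automatic: the correspondence $\Conf_{\corr}^{\to}$ is a commutative algebra in $\Corr(\LStk_{\operatorname{rf}})$, both $\mathfrak{c}_L$ and $\mathfrak{c}_R$ are algebra maps (lax and strict respectively), and Proposition \ref{multspctomultspc} ensures that straightening is a symmetric monoidal operation on the subcategory of realization fibrations. Hence $\pZ^{\to}_{\str}$ and $(\newoverline{S}^0_{\Ran})^{\un}_{\str}$ inherit compatible factorization structures, and the geometric identification respects them. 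Finally, for the identification $\sIC \mapsto \sICR$: by Definition \ref{zsic}, $\sIC = i_Z^!(\sICC)$, and by Proposition \ref{prop: conf semi-infinite ic sheaf}, $\sICC$ is precisely the object corresponding under the localization equivalence \eqref{localizationequivalence} to $(t_{\Ran} \circ \iota_{\corr})^!(\sICR)$. Chasing through the identifications, $\sIC$ is the image of $\sICR$ under the equivalence of D-module categories, up to the cohomological shift $d = (g-1)\dim(N)$ coming from Proposition \ref{sicispulledbackfromdrin} and the relative dimension between $\pZ$ and its image in $\newoverline{S}^0_{\Ran}$. The factorization-compatibility of both sheaves, together with the fact that both are intermediate extensions of the dualizing sheaf on their respective open orbits, also makes the match-up unique.
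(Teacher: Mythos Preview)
Your overall architecture matches the paper's: both sides are identified with $\fD$ of a common straightened object via the correspondence $\Conf_{\corr}$. However, the specific mechanism you propose for the geometric identification has a gap. You suggest comparing the bar complex for $\Conf$ acting on $\pZ$ with the bar complex for $\Ran$ acting on $\newoverline{S}^0_{\Ran}$ and showing that ``the resulting natural transformation is a levelwise isomorphism after straightening.'' But these simplicial objects are not related by any evident levelwise map: the $n$-th terms are $\Conf^{\times n} \times \pZ$ and $\Ran^{\times n} \times \newoverline{S}^0_{\Ran}$, and neither maps naturally to the other. The reconstruction principle does give a fiberwise colimit description of $\newoverline{S}^0_{\Ran}$, but promoting that to a comparison of the two bar resolutions requires an intermediary you have not supplied.

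The paper handles this in two separate steps, both resting on Quillen's Theorem A (Lemma \ref{quillenthm}) rather than on any termwise simplicial comparison. First, Proposition \ref{effectivetounital} shows that for any effective factorization space $\cY^{\to}$ one has $\fD_{\eff}(\cY) \simeq \fD_{\untl}(\cY_{\Ran})$; the key move is to introduce the auxiliary lax prestack ${'}{\cY^{\to}_{\corr}}$ (carrying morphisms in both the $\Conf$ and $\Ran$ directions simultaneously) and to check that the projection ${'}{\cY^{\to}_{\corr}} \to \cY^{\to}$ has weakly contractible slice categories, which are in fact directed sets. Second, and independently, one must identify the prestack $\pZ_{\Ran}$ with $\newoverline{S}^0_{\Ran}$. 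This is Lemma \ref{strisequiv}: one shows $(\newoverline{S}^{-}_{\Conf})_{\Ran} \simeq \Gr_{G,\Ran}$ by explicitly producing, for any point of $\Gr_{G,\Conf,\corr}$, a larger divisor over which the Pl\"ucker data become regular, and then again applying Quillen's Theorem A. Because $\newoverline{S}^{-,\to}_{\Conf,\corr} \to \Gr_{G,\Ran}$ is a realization fibration, the intersection with $\newoverline{S}^{0}_{\Conf}$ base-changes correctly under straightening, yielding $\pZ_{\Ran} \simeq \newoverline{S}^0_{\Ran}$. Your outline gestures at the filtered-colimit idea but does not isolate either of these two ingredients.

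On the IC sheaf: your tracing through Definition \ref{zsic} and Proposition \ref{prop: conf semi-infinite ic sheaf} is correct and matches the paper, which factors the equivalence through $\fD_{\eff}(\newoverline{S}^0_{\Conf})$. Your invocation of Proposition \ref{sicispulledbackfromdrin} and the shift $d=(g-1)\dim(N)$, however, is misplaced: that proposition relates $\sICR$ to $\IC_{\dBun_N}$, not to $\sIC$, and no such shift enters the paper's argument for this theorem.
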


In fact, we will actually show a slightly stronger statement from which Theorem \ref{effisun} can be deduced as a corollary. We will begin with some preparations. The following well-known result is an infinity-categorical analogue of Quillen's theorem A and is an immediate consequence of Theorem 4.1.3.1 of \cite{Lu}. 

\begin{lemma}\label{quillenthm} Let $F:\mathscr{C} \to \mathscr{D}$ be a functor such that for every object $d \in \mathscr{D}$, the straightening $(\mathscr{C}_{/d})_{\str}$ of the slice category $\mathscr{C}_{d/}$ of $F$ over $d$ is contractible. Then the induced functor \[F_{\str}: \mathscr{C}_{\str} \longrightarrow \mathscr{D}_{\str}\] is an equivalence.
\end{lemma}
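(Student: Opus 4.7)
The plan is to recognize this as a direct consequence of the $\infty$-categorical characterization of (co)initial functors. Recall that the straightening $\mathscr{C}_{\str} \in \Spc$ can be identified with the colimit (in $\Spc$) of the constant functor $\mathscr{C} \to \Spc$ with value a point; equivalently, $(-)_{\str}$ is the left adjoint to the inclusion $\Spc \hookrightarrow \Cat$. In this language, the statement to be proved is that $F$ induces an equivalence of these colimits, and this is exactly the defining consequence of (co)initiality.

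By Theorem $4.1.3.1$ of \cite{Lu}, a functor $F: \mathscr{C} \to \mathscr{D}$ of $\infty$-categories is cofinal (or, dually, coinitial) if and only if for every object $d \in \mathscr{D}$, the appropriate comma $\infty$-category $\mathscr{C} \times_{\mathscr{D}} \mathscr{D}_{d/}$ (resp.\ $\mathscr{C} \times_{\mathscr{D}} \mathscr{D}_{/d}$) is weakly contractible in the sense that its straightening is a contractible space. The hypothesis of the lemma says precisely this for the relevant slice, so $F$ is (co)initial.

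Having established (co)initiality, the conclusion is standard: a (co)initial functor preserves arbitrary colimits indexed by the target, so in particular the canonical map $\colim_{\mathscr{C}}(\ast) \to \colim_{\mathscr{D}}(\ast)$ in $\Spc$ is an equivalence. Identifying these colimits with $\mathscr{C}_{\str}$ and $\mathscr{D}_{\str}$ respectively, we conclude that $F_{\str}: \mathscr{C}_{\str} \to \mathscr{D}_{\str}$ is an equivalence.

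The only subtlety here is bookkeeping — matching the variance of the slice category in the hypothesis to Lurie's cofinal/coinitial convention in Theorem $4.1.3.1$ — but this is simply a matter of unwinding definitions and does not require additional ideas. Once the correct convention is pinned down, the lemma is immediate.
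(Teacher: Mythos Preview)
Your proposal is correct and takes essentially the same approach as the paper: the paper does not give a proof at all but simply declares the lemma to be ``an immediate consequence of Theorem 4.1.3.1 of \cite{Lu}'', and your argument is a faithful unpacking of exactly that implication via cofinality and the identification of $(-)_{\str}$ with the colimit of the constant point-valued functor.
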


\subsubsection{} Let $\varphi:\cY \to \Conf$ be an effective space over $\Conf$ and let $\varphi^{\to}: \cY^{\to} \to \Conf^{\to}$ be the associated lax prestack. Recall the correspondence \[\begin{tikzcd}
	& {\Conf^{\to}_{\corr}} \\
	{\Conf^{\to}} && \Ran
	\arrow[from=1-2, to=2-1]
	\arrow[from=1-2, to=2-3]
\end{tikzcd}\] and define \[ \cY^{\to}_{\corr} \coloneqq \cY^{\to} \times_{\Conf^{\to}} \Conf^{\to}_{\corr} \] to be the lax prestack whose $S$-objects are triples $(y,x_I,D)$, where $y$ is an $S$-point of $\cY$, $D$ is the image of $y$ under $\varphi^{\to}: \cY^{\to} \to \Conf^{\to}$, and $x_I$ is a point of $\Ran$ on which $D$ is set-theoretically supported. 

Since the fibers of the map $\mathfrak{c}_R: \Conf^{\to}_{\corr} \to \Ran$ are functors of directed sets, we see by Lemma \ref{quillenthm} that the induced map \[(\Conf^{\to}_{\corr})_{\str} \longrightarrow \Ran\] is an equivalence. As a result, the projection $\cY^{\to}_{\corr} \to \Ran$ factors through the canonical map $\cY^{\to}_{\corr} \to (\cY^{\to}_{\corr})_{\str}$. Let $\cY_{\Ran}$ denote the prestack $(\cY^{\to}_{\corr})_{\str}$ and write \[\varphi_{\Ran}: \cY_{\Ran} \longrightarrow \Ran\] for the resulting morphism. 

\subsection{Factorization over the configuration space to Ran.}\label{factoverconftoran} In this section we will show that the assignment $\cY \mapsto \cY_{\Ran}$ sends effective factorization spaces to unital factorization spaces. 

\begin{proposition}\label{conftoran} Let $\cY^{\to} \to \Conf^{\to}$ be an effective factorization space. Then the prestack $\cY_{\Ran} \to \Ran$ has a natural structure of a unital factorization space. 
\end{proposition}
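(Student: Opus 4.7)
The plan is to extract the unital factorization structure on $\cY_{\Ran}$ from the multiplicative structure on $\cY^{\to}_{\corr}$ over the richer lax prestack ${'}{\Conf^{\to}_{\corr}}$ introduced in \ref{localizationstuff}, using the straightening machinery of Proposition \ref{multspctomultspc}. Since $\mathfrak{c}_L$ is a lax morphism of commutative algebras, $\cY^{\to}_{\corr} = \mathfrak{c}_L^{\Spc,*}(\cY^{\to})$ is naturally a multiplicative space over $\Conf^{\to}_{\corr}$ and upgrades to a multiplicative space over ${'}{\Conf^{\to}_{\corr}}$ by the discussion in \ref{localizationstuff}. The passage from $\cY^{\to}_{\corr}$ to $\cY_{\Ran}$ is straightening along the $\Conf^{\to}$-direction of morphisms while retaining the $\Ran^{\un}$-direction; the fibers of this straightening over a fixed $x_I \in \Ran$ are filtered posets of divisors set-theoretically supported on $x_I$, and hence weakly contractible.

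Concretely, the two pieces of data defining a unital factorization structure on $\cY_{\Ran}$ can be constructed as follows. For the unital structure, given an inclusion $x_I \subseteq x_J$ in $\Ran^{\un}$, any divisor set-theoretically supported on $x_I$ is also supported on $x_J$; this induces an inclusion of the corresponding filtered posets of divisors, and thus a natural map
\[ (\cY_{\Ran})_{x_I} \simeq \colim_{D \subseteq x_I} \cY^{\to}_D \longrightarrow \colim_{D \subseteq x_J} \cY^{\to}_D \simeq (\cY_{\Ran})_{x_J}. \]
For the factorization structure, given disjoint $x_I, x_J$, a divisor supported on $x_I \cup x_J$ decomposes uniquely as a sum of a divisor on $x_I$ and one on $x_J$; combining this observation with the multiplicative isomorphism for $\cY^{\to}$ over the disjoint locus in $\Conf^{\to} \times \Conf^{\to}$ yields the required factorization isomorphism at the level of colimits.

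Associativity, unitality, and mutual compatibility of these two structures are inherited from the multiplicative structure on $\cY^{\to}_{\corr}$, so the conceptual content of the proof is an application of (a partial-straightening analog of) Proposition \ref{multspctomultspc}. The main technical obstacle I expect is verifying that the legs of the multiplication and unit correspondences of ${'}{\Conf^{\to}_{\corr}}$ are realization fibrations with respect to the subclass of $\Conf^{\to}$-morphisms being straightened, so that straightening commutes with the relevant fiber products. This should reduce to the elementary observation that the fiber of the addition map over a point $(D+E, x_I \cup x_J)$ with $x_I \cap x_J = \emptyset$ consists of the unique decomposition $(D,E)$, and is therefore a trivially contractible category. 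Once this verification is made, the proposition follows by unwinding definitions.
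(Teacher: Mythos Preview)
Your overall strategy is close to the paper's, but there are two points worth flagging: a difference in packaging, and a technical imprecision in your verification of the realization-fibration condition.

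\textbf{Packaging.} The paper does not attempt a ``partial straightening'' over ${'}{\Conf^{\to}_{\corr}}$. Instead it applies Proposition~\ref{multspctomultspc} directly to the commutative algebra $\Conf^{\to}_{\corr}$, obtaining the factorization structure on $\cY_{\Ran}$ over $(\Conf^{\to}_{\corr})_{\str}\simeq\Ran$; the unital structure is then added separately by observing that the $\Ran$-action on $\cY^{\to}_{\corr}$ from \ref{localizationstuff} is compatible with the morphisms being inverted and therefore descends to $\cY_{\Ran}$. This two-step approach avoids having to formulate or prove a partial-straightening analogue of Proposition~\ref{multspctomultspc}, which you have not supplied and which would require genuine additional work.

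\textbf{The verification.} Your claim that ``the fiber of the addition map over a point $(D+E,x_I\cup x_J)$ consists of the unique decomposition $(D,E)$'' is not correct as stated: the fiber of $\add_{\corr}$ over a point $(D',x_K)$ contains one object for \emph{each} ordered disjoint decomposition $x_K=x_I\cup x_J$, so it is not a single point. More importantly, contractibility of fibers is not the criterion for being a realization fibration. The paper's argument is to show that $\add_{\corr}$ and $\operatorname{inc}_{\corr}$ are pointwise Cartesian and coCartesian fibrations, and then invoke the theorem of Steinebrunner that locally (co)Cartesian fibrations are realization fibrations. Your unique-decomposition observation is exactly what produces the coCartesian lift: given a morphism $(D+E,x_I\cup x_J)\to(D',x_I\cup x_J)$ in $\Conf^{\to}_{\corr}$ with $x_I,x_J$ disjoint, one writes $D'=D'(x_I)+D'(x_J)$ uniquely and lifts to $((D,x_I),(E,x_J))\to((D'(x_I),x_I),(D'(x_J),x_J))$. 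So the key geometric input you identified is right, but it feeds into a lifting argument rather than a fiber-contractibility argument.
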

\begin{proof} By Proposition \ref{multspctomultspc}, it suffices to show that the maps $\operatorname{inc}_{\corr}$ and $\add_{\corr}$ (as well as the corresponding maps for the unital diagram) are realization fibrations. By Theorem 2.1 of \cite{S} it is enough to show that $\operatorname{inc}_{\corr}$ and $\add_{\corr}$ are both pointwise Cartesian and coCartesian fibrations. Note that while the right leg of the unital diagram is not a pointwise (co)Cartesian fibration, it is still a realization fibration since it is the inclusion of a disjoint base-point.

We will show that $\add_{\corr}$ is a pointwise coCartesian fibration, and the other cases will be left to the reader. Let \[f: (D+E,x_I \cup x_J) \longrightarrow (D',x_I \cup x_J)\] be a morphism in $\Conf^{\to}_{\corr}$ with $\big( (D,x_I), (E,x_J) \big)$ in $[\Conf^{\to}_{\corr} \times \Conf^{\to}_{\corr}]_{\disj}$. Since $x_I$ and $x_J$ are disjoint, we can write \[D'=D'(x_I)+D'(x_J).\] where $D'(x_I)$ is set-theoretically supported on $x_I$ and $D'(x_J)$ is set-theoretically supported on $x_J$. Then it is easy to see that there exists \[f':\big((D,x_I),(E,x_J) \big) \longrightarrow \big((D'(x_I),x_I),(D'(x_J),x_J) \big)\] which is a coCartesian lift of $f$.  

To see that $\cY_{\Ran}$ is a unital factorization space, first recall the action of $\Ran$ on $\cY^{\to}_{\corr}$ constructed in Section \ref{localizationstuff}. Clearly, this action is compatible with the morphisms in $\cY^{\to}_{\corr}$ and hence descends to an action on $\cY_{\Ran}$. Lastly, it is easy to see that this action is compatible with the factorization structure constructed above.
\end{proof}

\begin{corollary}\label{rancategoryfromconfcategory} There is a natural equivalence of categories \[\fD_{\str}(\cY^{\to}_{\corr}) \overset{\sim}{\longrightarrow} \fD(\cY_{\Ran})\] which sends factorization algebras to factorization algebras.
\end{corollary}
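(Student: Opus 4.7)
The equivalence of underlying categories is essentially tautological from the setup. By definition of $\cY_{\Ran}$ as the straightening $(\cY^{\to}_{\corr})_{\str}$ and by the definition of $\fD_{\str}$ on a lax prestack, one has
\[\fD_{\str}(\cY^{\to}_{\corr}) \;=\; \fD\bigl((\cY^{\to}_{\corr})_{\str}\bigr) \;=\; \fD(\cY_{\Ran}).\]
Alternatively, Lemma \ref{strcat} realizes both sides as the totalization $\Tot(\fD(s\cY^{\to}_{\corr}))$ of the $\fD$-values on the associated simplicial prestack. So the real content of the corollary is the compatibility with factorization structures.

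For the factorization compatibility, my plan is to invoke Proposition \ref{multspctomultspc} applied to the commutative algebra $\Conf^{\to}_{\corr}$ in $\Corr(\LStk)$. The proof of Proposition \ref{conftoran} already establishes that the structure maps $\operatorname{inc}_{\corr}$, $\add_{\corr}$ and the unit are all realization fibrations (in fact pointwise Cartesian and coCartesian fibrations, hence realization fibrations by the cited result of \cite{S}). Consequently $\str^{\operatorname{enh}}(\Conf^{\to}_{\corr})$ is naturally a commutative algebra in $\Corr(\Stk)$, and the functor $\operatorname{LKE}_{\Conf^{\to}_{\corr} \to \str^{\operatorname{enh}}(\Conf^{\to}_{\corr})}$ carries multiplicative spaces to multiplicative spaces.

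Next, I would identify $\str^{\operatorname{enh}}(\Conf^{\to}_{\corr})$ with $\Ran$ as commutative algebras in correspondences. The underlying identification $(\Conf^{\to}_{\corr})_{\str} \simeq \Ran$ was already noted before Proposition \ref{conftoran} by applying Lemma \ref{quillenthm} to $\mathfrak{c}_R$, whose fibers are directed (hence filtered and weakly contractible). The compatibility with the algebra structures follows by direct inspection: the multiplication on $\Conf^{\to}_{\corr}$ was built from the disjoint-union multiplication on $\Ran$ via the map $\mathfrak{c}_R$. Under this identification, the multiplicative space $\operatorname{LKE}(\cY^{\to}_{\corr})$ over $\Ran$ obtained from Proposition \ref{multspctomultspc} coincides with the unital factorization space $\cY_{\Ran}$ constructed in Proposition \ref{conftoran}, since both arise from straightening the same lax prestack structure.

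Finally, the passage from compatible multiplicative structures on a space to factorization algebra structures on an object of the associated category of sheaves is natural in the base: applying $\fD$ to the displayed multiplication correspondences on both sides yields, via $!$-pullback and $*$-pushforward, matching structure data on $c \in \fD_{\str}(\cY^{\to}_{\corr})$ and the corresponding $c' \in \fD(\cY_{\Ran})$. I expect the only nontrivial point to be the bookkeeping in verifying that the straightening functor preserves the relevant $n$-ary operations as isomorphisms, but this is exactly what Proposition \ref{multspctomultspc} accomplishes once the realization fibration hypothesis is in place; everything else is formal naturality of $\fD$ in correspondences.
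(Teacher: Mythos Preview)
Your proposal is correct and follows essentially the same approach as the paper: the equivalence of categories is tautological from the definition of $\cY_{\Ran}$ as $(\cY^{\to}_{\corr})_{\str}$, and the factorization compatibility is inherited from the construction of the factorization structure on $\cY_{\Ran}$ via Proposition~\ref{multspctomultspc} (which is exactly how Proposition~\ref{conftoran} builds it). The paper's proof says this in two sentences; you have simply unpacked the same reasoning in more detail. One minor remark: in your final paragraph the phrase ``via $!$-pullback and $*$-pushforward'' is slightly off---factorization algebra structures here are formulated purely in terms of $!$-pullback along both legs of the multiplication correspondence, with no pushforward involved---but this does not affect the correctness of your argument.
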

\begin{proof} The equivalence of categories follows by definition of $\cY_{\Ran}$, and the fact that it preserves factorization algebras follows from the construction of the factorization structure on $\cY_{\Ran}$ from Proposition \ref{conftoran}. 
\end{proof}

The next proposition relates the effective category of $\cY^{\to}$ with the unital subcategory of $\fD(\cY_{\Ran})$. Note that the natural projection $\operatorname{pr}_{\cY}: \cY^{\to}_{\corr} \to \cY^{\to}$ is a morphism of lax prestacks, and hence we obtain a functor \[\operatorname{pr}^!_{\cY_{\str}}: \fD_{\eff}(\cY) \longrightarrow \fD(\cY_{\Ran})\] from the category of effective sheaves on $\cY$ and the category of D-modules on $\cY_{\Ran}$ given by !-pullback along the induced map of straightenings. 

\begin{proposition}\label{effectivetounital} Let $\varphi^{\to}:\cY^{\to} \to \Conf$ be an effective factorization space and assume the underlying prestack $\cY$ takes values in sets. Then the functor \[\operatorname{pr}_{\cY_{\str}}^!: \fD_{\eff}(\cY) \longrightarrow \fD(\cY_{\Ran}) \] is fully faithful and its essential image is the unital subcategory of $\fD(\cY_{\Ran})$. 
\end{proposition}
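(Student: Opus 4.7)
The plan is to identify $\fD_{\untl}(\cY_{\Ran})$ with the category of sheaves on the straightening of the enhanced lax prestack ${'}{\cY^{\to}_{\corr}}$ from Section \ref{localizationstuff}, in which morphisms are generated by both divisor growth and Ran growth, and then to show that the projection ${'}{\cY^{\to}_{\corr}} \to \cY^{\to}$ forgetting the Ran coordinate induces an equivalence on straightenings via Lemma \ref{quillenthm}. Composing these two identifications will exhibit $\operatorname{pr}_{\cY_{\str}}^!$ as a fully faithful embedding with essential image $\fD_{\untl}(\cY_{\Ran})$.

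First I would unpack the target. By Proposition \ref{conftoran}, $\cY_{\Ran}$ carries a canonical unital factorization structure, and its associated unital enhancement $\cY^{\un}_{\Ran}$ may be obtained from ${'}{\cY^{\to}_{\corr}}$ by inverting only the divisor-growth morphisms. Consequently $(\cY^{\un}_{\Ran})_{\str} \simeq ({'}{\cY^{\to}_{\corr}})_{\str}$, so
\[\fD_{\untl}(\cY_{\Ran}) = \fD_{\str}(\cY^{\un}_{\Ran}) \simeq \fD\big(({'}{\cY^{\to}_{\corr}})_{\str}\big),\]
and this sits inside $\fD(\cY_{\Ran}) = \fD((\cY^{\to}_{\corr})_{\str})$ as the full subcategory of sheaves equivariant for the Ran-growth morphisms, consistent with the localization principle underlying the equivalence \eqref{localizationequivalence}.

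Next I would establish the main equivalence. Consider the projection $\Pi : {'}{\cY^{\to}_{\corr}} \to \cY^{\to}$ forgetting the Ran coordinate. For an affine scheme $S$ and a point $y \in \cY^{\to}(S)$ lying over a divisor $D$, the fiber of $\Pi(S)$ over $y$ has objects $\{x_I \in \Ran(S) : \operatorname{supp}(D) \subseteq x_I\}$ with a unique morphism $x_I \to x_J$ precisely when $x_I \subseteq x_J$. The hypothesis that $\cY$ takes values in sets is essential here, since it guarantees that no other morphisms appear in the fiber; this makes it a genuine filtered poset, hence weakly contractible. Lemma \ref{quillenthm} then implies that $\Pi_{\str} : ({'}{\cY^{\to}_{\corr}})_{\str} \to \cY^{\to}_{\str}$ is an equivalence of prestacks, so $!$-pullback along $\Pi_{\str}$ yields an equivalence $\fD_{\eff}(\cY) \overset{\sim}{\longrightarrow} \fD_{\untl}(\cY_{\Ran})$.

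To conclude, I would verify that this equivalence coincides with $\operatorname{pr}_{\cY_{\str}}^!$ followed by the embedding into $\fD(\cY_{\Ran})$: the composition $(\cY^{\to}_{\corr})_{\str} \to ({'}{\cY^{\to}_{\corr}})_{\str} \xrightarrow{\Pi_{\str}} \cY^{\to}_{\str}$ is induced by the projection $\operatorname{pr}_{\cY}$, so its $!$-pullback recovers $\operatorname{pr}_{\cY_{\str}}^!$. I expect the main obstacle to be the identification in the first step, which requires carefully tracking how the $\Ran^{\un}$-equivariance defining the unital subcategory corresponds to the extra Ran-growth morphisms in ${'}{\cY^{\to}_{\corr}}$ under the passage between lax prestacks over ${'}{\Conf^{\to}_{\corr}}$ and multiplicative spaces over $\Ran^{\un}$; everything afterwards is a clean application of Quillen's Theorem A once the set-valuedness of $\cY$ has eliminated any potential non-triviality in the fibers of $\Pi$.
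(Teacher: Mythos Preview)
Your overall strategy coincides with the paper's: introduce ${'}{\cY^{\to}_{\corr}}$, identify its straightening with the unital quotient of $\cY_{\Ran}$, and show the forgetful map to $\cY^{\to}$ becomes an equivalence after straightening via Lemma \ref{quillenthm}. Two genuine technical points, however, are missing from your write-up.

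First, Lemma \ref{quillenthm} asks for contractibility of the \emph{comma} (coslice) categories $y_0/\Pi$, not of the strict fibers $\Pi^{-1}(y_0)$. The paper works directly with the coslice: its objects are pairs $(y,x_I)$ together with a map $y_0 \to y$ in $\cY^{\to}$, and the paper uses the coCartesian lifting property of $\cY^{\to} \to \Conf^{\to}$ together with set-valuedness of $\cY$ to exhibit this as a directed poset. Your fiber-based argument can be salvaged, but only after observing that $\Pi$ is a pointwise Cartesian fibration (the Cartesian lift of $y_0 \to y$ ending at $(y,x_J)$ is simply $(y_0,x_J) \to (y,x_J)$), which makes the inclusion of the fiber into the coslice a weak equivalence. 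Without this remark, the invocation of Lemma \ref{quillenthm} is a non sequitur.

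Second, your fibers need not be nonempty: for a general affine $S$ and an $S$-point $D$ of $\Conf$, there is no reason that $\Gamma_D$ is set-theoretically contained in a union of finitely many graphs of $S$-points of $X$. The paper addresses this by asserting the equivalence only after fppf sheafification, using that $\Conf_{\corr} \to \Conf$ is fppf-locally surjective (citing \cite{Bar}). This is harmless for the conclusion about categories of $D$-modules, since $\fD(-)$ satisfies fppf descent, but it must be said.
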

\begin{proof} By Section \ref{localizationstuff} we may endow $\cY^{\to}_{\corr}$ with an action of $\Ran$ by functors. This gives us a modified lax prestack ${'}{\cY^{\to}_{\corr}}$ whose $S$-objects are given by $\cY^{\to}_{\corr}(S)$ with a morphism \[(y,x_I) \longrightarrow (y',x_J)\] for every morphism $y \to y'$ provided $x_I$ is contained in $x_J$. By definition of the unital structure on $\cY_{\Ran}$, the straightening of ${'}{\cY^{\to}_{\corr}}$ is tautologically isomorphic to $(\cY_{\Ran})_{\str}$. 

We claim that the obvious map \[{'}{\cY^{\to}_{\corr}} \longrightarrow \cY^{\to}\] induces an isomorphism on the level of straightenings after sheafification in the fppf topology. The reason sheafification is needed is because $\Conf_{\corr} \to \Conf$ is only surjective locally in the fppf topology by results of \cite{Bar}. Now let $y_0$ be an $S$-object of $\cY^{\to}$. By Lemma \ref{quillenthm}, it suffices to show that the category \[\cY^{\to}_{\corr,y_0} \coloneqq \cY^{\to}_{\corr}(S) \times_{\cY^{\to}(S)} \cY^{\to}_{y_0/}(S)\] is weakly contractible. 

By our assumption on $\cY$, the category $\cY^{\to}_{\corr,y_0}$ is equivalent to the category whose objects are pairs $(y,x_J)$ as in the definition of $\cY^{\to}_{\corr}$ and $y_0$ admits a (necessarily unique) map to $y$. Let $(y,x_I)$ and $(y',x_J)$ be two such objects. Since $\cY^{\to} \to \Conf^{\to}$ is a pointwise coCartesian fibration, we may lift the morphism \[\varphi^{\to}(y) \longrightarrow \varphi^{\to}(y)+ \varphi^{\to}(y') \] to a coCartesian morphism $y \to y_1$. We obtain an analogous morphism $y' \to y_1'$. 

Let $\lambda$ be the degree of $\varphi^{\to}(y)+ \varphi^{\to}(y')$. Then, as points of $\cY$, $y_1$ and $y_1'$ lie in $\cY^{\lambda}$ over $\Conf^{\lambda}$, and since $y_1$ and $y_1'$ admit a map from $y_0$, we conclude that there is a (necessarily unique) isomorphism $y_1 \overset{\sim}{\to} y_1'$. Hence we obtain maps \[\begin{tikzcd}
	{(y,x_I)} \\
	& {(y_1,x_I \cup x_J)} \\
	{(y',x_J)}
	\arrow[from=1-1, to=2-2]
	\arrow[from=3-1, to=2-2]
\end{tikzcd}\] and by our hypothesis on $\cY$, we conclude that $\cY^{\to}_{\corr,/y_0}$ is a directed set and is therefore weakly contractible.  

To conclude the proof, notice we have constructed a commutative diagram \[\begin{tikzcd}
	& {{'}{\cY^{\to}_{\corr}}} \\
	{\cY^{\to}_{\str}} && {(\cY^{\to}_{\Ran})_{\str}}
	\arrow[from=1-2, to=2-1]
	\arrow[from=1-2, to=2-3]
	\arrow["\sim", from=2-1, to=2-3]
\end{tikzcd}\] where the bottom arrow is an isomorphism. It follows that $\operatorname{pr}^!_{\cY_{\str}}$ is fully faithful and has essential image given by $\fD_{\untl}(\cY_{\Ran})$. 
\end{proof}

\subsubsection{} We will now apply the machinery above to the affine Grassmannian. An application of Lemma \ref{quillenthm} shows that $(\Gr_{G,\Conf})_{\Ran}$ is canonically isomorphic to $\Gr_{G,\Ran}$. Moreover, the unital structure on $\Gr_{G,\Ran}$ is inherited from the one on $\Gr^{\to}_{G,\Conf,\corr}$ via Proposition \ref{conftoran}. Likewise, we have a canonical isomorphism \[(\newoverline{S}^0_{\Conf})_{\Ran} \overset{\sim}{\longrightarrow} \newoverline{S}^0_{\Ran}\] inducing the unital structure on the right hand side.

The next proposition shows that we can actually obtain all of $\Gr_{G,\Ran}$ from $\newoverline{S}^{-,\to}_{\Conf}$, where we recall that $\newoverline{S}^{-,\to}_{\Conf}$ is the effective factorization space obtained from the space $\newoverline{S}^-_{\Conf}$ defined in Section \ref{section: zastava spaces via the affine grassmannian}. Denote by $\iota^-_{\corr}$ the inclusion $\newoverline{S}^{-,\to}_{\Conf,\corr} \hookrightarrow \Gr^{\to}_{G,\Conf,\corr}$.

\begin{lemma}\label{strisequiv} The inclusion \[\iota^-_{\corr}: \newoverline{S}^{-,\to}_{\Conf,\corr} \hookrightarrow \Gr^{\to}_{G,\Conf,\corr}\] induces an equivalence \[(\newoverline{S}^{-,\to}_{\Conf})_{\Ran} \overset{\sim}{\longrightarrow} (\Gr_{G,\Conf})_{\Ran} \] and hence $(\newoverline{S}^{-,\to}_{\Conf})_{\Ran}$ is canonically equivalent to $\Gr_{G,\Ran}$. 
\end{lemma}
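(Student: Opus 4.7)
The plan is to apply Lemma \ref{quillenthm} to the functor $\iota^-_{\corr}$ pointwise on affine schemes, thereby concluding that the induced map on straightenings is an equivalence. Since $\cY_{\Ran} = (\cY^{\to}_{\corr})_{\str}$ by definition, this will yield the equivalence $(\newoverline{S}^{-,\to}_{\Conf})_{\Ran} \overset{\sim}{\longrightarrow} (\Gr_{G,\Conf})_{\Ran}$ directly, and the second assertion will follow from the canonical isomorphism $(\Gr_{G,\Conf})_{\Ran} \simeq \Gr_{G,\Ran}$ already observed in the paragraph preceding the lemma.

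Fix an affine scheme $S$ and an $S$-object $\xi_0 = (D_0, \sP^0_G, \alpha_0, x_I^0)$ of $\Gr^{\to}_{G,\Conf,\corr}(S)$, so that $\Gamma_{D_0} \subseteq \Gamma_{x_I^0}$ by definition of $\Conf^{\to}_{\corr}$. By Lemma \ref{quillenthm} it suffices to verify that the coslice of $\iota^-_{\corr}$ under $\xi_0$ is weakly contractible. Unwinding definitions, an object of this coslice is a colored divisor $D$ satisfying $\Gamma_{D_0} \subseteq \Gamma_D \subseteq \Gamma_{x_I^0}$ and such that the triple $(D, \sP^0_G, \alpha_0|_{(X\times S) \setminus \Gamma_D})$ belongs to $\newoverline{S}^-_{\Conf}$; no additional data beyond $D$ itself is to be chosen. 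Since morphisms in the coslice inherit from the preorder $\Conf^{\to}$, they are unique, so weak contractibility reduces to directedness of the resulting preorder.

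I would verify directedness in two steps. Nonemptiness: the meromorphic sections $V^{\chlam}_{\sP^0_G} \to \cO_{X\times S}$ induced by $\alpha_0$ together with the $N^-$-invariant functional on $V^{\chlam}$ have poles supported on $\Gamma_{D_0} \subseteq \Gamma_{x_I^0}$, and by enlarging the $\Lambda^-$-coefficients of $D$ along $\Gamma_{x_I^0}$ the twist $\cO_{X\times S}(-\chlam(D))$ can be arranged to absorb these poles simultaneously for every dominant coweight; a single $D$ suffices since it is enough to check the condition on a finite generating set of $\check{\Lambda}^+$. Pairwise upper bounds: given $D, D'$ in the coslice, the sum $D + D'$ continues to have support in $\Gamma_{x_I^0}$ and satisfies the positivity condition by virtue of those for $D$ and $D'$ individually. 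Together these facts give directedness, hence weak contractibility.

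The main obstacle is verifying the monotonicity of the $\newoverline{S}^-$ condition with respect to enlargement in $\Conf^{\to}$: one needs the statement that making $D$ larger in the effective order strictly weakens the regularity requirement imposed by $\cO_{X \times S}(-\chlam(D))$. This monotonicity is already implicit in the inheritance of the effective factorization structure on $\newoverline{S}^-_{\Conf}$ from that of $\Gr_{G,\Conf}$, but making it fully explicit in families requires careful bookkeeping with the pairing of $\Lambda^-$-valued divisors and dominant weights, and with the behavior of the meromorphic sections on $X \times S$ under restriction of $\alpha_0$.
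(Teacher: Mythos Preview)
Your proposal is correct and follows essentially the same approach as the paper. Both arguments hinge on Lemma~\ref{quillenthm}: you apply it directly to $\iota^-_{\corr}$ and verify that each coslice $\xi_0 \downarrow \iota^-_{\corr}$ is a nonempty directed preorder, while the paper instead applies it to the composite $\newoverline{S}^{-,\to}_{\Conf,\corr}(S) \to \Gr_{G,\Ran}(S)$ and checks that its fibers are directed; the content of the two verifications is identical. Your nonemptiness step is exactly the paper's construction of an enlarged divisor $D'$ absorbing the poles of the $N^-$-invariant functionals, and your upper-bound argument via $D+D'$ is what the paper abbreviates as ``easily seen to be directed sets.'' The monotonicity you flag as the main obstacle is precisely the statement, recorded in Section~\ref{section: zastava spaces via the affine grassmannian}, that $\newoverline{S}^-_{\Conf}$ inherits the effective structure from $\Gr_{G,\Conf}$, so no additional work is needed there.
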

\begin{proof} Let $(D,x_I,\sP_G,\alpha)$ be an $S$-point of $\Gr^{\to}_{G,\Conf,\corr}$. We will construct a point $(D',x_I, \sP_G,\alpha')$ of $\newoverline{S}^{-,\to}_{\Conf}$ together with a morphism $(D,x_I,\sP_G,\alpha) \to (D',x_I,\sP_G,\alpha')$. To this end, let $\omega_i$ be a fundamental weight of $G$, and consider the meromorphic map \[\beta_i: V^{\omega_i}_{\sP_G} \longrightarrow \cO_{X \times S}\] induced by $\alpha$. 

Since $\beta_i$ is defined away from the divisor $\Gamma_D$, there exists an integer $n_i$ large enough such that the composition \[V^{\omega_i}_{\sP_G} \longrightarrow \cO_{X \times S} \longrightarrow \cO_{X \times S}(n_i \Gamma_D)\] is regular. Define the colored divisor \[D' \coloneqq D + \sum_{i \in \mathcal{I}_G} (-n_i)\alpha_i \Gamma_D\] and note that there is an obvious map $D \to D'$. Define $\alpha'$ to be the restriction of the map $\alpha$ to the complement of $\Gamma_{D'}$. Tautologically the tuple $(D',x_I,\sP_G,\alpha')$ admits a map from $(D,x_I, \sP_G, \alpha)$ and $(D',x_I,\sP_G,\alpha')$ is a point of $\newoverline{S}^{-,\to}_{\Conf}$. 

Now consider the composition of \[\newoverline{S}^{-,\to}_{\Conf,\corr} \longrightarrow \Gr^{\to}_{G,\Conf,\corr} \longrightarrow (\Gr_{G,\Conf})_{\Ran}, \] where the first arrow is the inclusion and the second arrow is the natural map. By the discussion above, this map is essentially surjective for every affine scheme $S$, and the fact that it induces an equivalence \[(\newoverline{S}^{-}_{\Conf})_{\Ran} \overset{\sim}{\longrightarrow} (\Gr_{G,\Conf})_{\Ran}\] is a direct application of Lemma \ref{quillenthm}. Indeed, the fibers of the functor $\newoverline{S}^{-,\to}_{\Conf,\corr}(S) \to \Gr_{G,\Ran}(S)$ are easily seen to be directed sets for every affine scheme $S$. 
\end{proof}

We are now ready to prove the theorem. 

\begin{proof}[Proof of Theorem \ref{effisun}] Recall we have a natural embedding $\pZ^{\to} \hookrightarrow \Gr_{G,\Conf}^{\to}$ whose essential image is the intersection $\newoverline{S}^{0,\to}_{\Conf} \cap \newoverline{S}^{-,\to}_{\Conf}$. It is easy to see that $\newoverline{S}^{-,\to}_{\Conf,\corr} \to \Gr_{G,\Ran}$ is both a Cartesian and coCartesian fibration, and hence a realization fibration. By Lemma \ref{strisequiv} the inclusion \[\pZ \hookrightarrow \newoverline{S}^0_{\Conf}\] induces a canonical isomorphism \[\pZ_{\Ran} \overset{\sim}{\longrightarrow} (\newoverline{S}^0_{\Conf})_{\Ran} \simeq \newoverline{S}^0_{\Ran}\] on the level of fppf sheafifications in such a way that $\cZ^-_{\Ran}$ is identified with $S^0_{\Ran}$. We conclude the existence of a natural equivalence $\mathscr{F}:\fD_{\eff}(\pZ) \overset{\sim}{\longrightarrow} \fD_{\untl}(\newoverline{S}^0_{\Ran})$ by Proposition \ref{effectivetounital}.

By construction, the equivalence $\mathscr{F}$ is the composition of two equivalences \[\mathfrak{D}_{\eff}(\pZ) \overset{\mathscr{F}_0}{\longrightarrow} \mathfrak{D}_{\eff}(\newoverline{S}^0_{\Conf}) \overset{\mathscr{F}_1}{\longrightarrow} \mathfrak{D}_{\untl}(\newoverline{S}^0_{\Ran})\] and tracing through the proof of Proposition \ref{prop: conf semi-infinite ic sheaf} we see that $\mathscr{F}_1(\IC^{\frac{\infty}{2}}_{\Conf})$ is canonically equivalent to $\sICR$ as a factorization algebra. Moreover, by definition of $\sIC$ and the fact that the inclusion $\pZ^{\to} \hookrightarrow \newoverline{S}^{0,\to}_{\Conf}$ induces an equivalence on the level of straightenings, we have that $\mathscr{F}_0(\sIC)$ is canonically equivalent to $\sICC$ as an effective factorization algebra, whence the theorem. 


\end{proof}

\subsection{The generic Zastava.} The following sections on the generic Zastava will not be used in the rest of the text. Nevertheless, the results contained therein may be of independent interest as they connect the theory of Zastava spaces and the semi-infinite intersection cohomology sheaf to moduli spaces of rationally defined maps and principal bundles (see \cite{Bar} as well as Appendix A of \cite{Ga2}). 

Denote by $\Gr^{\gen}_G$ the straightening of the lax prestack $\Gr_{G,\Ran}^{\to}$ associated to the canonical unital structure. We wish to give a more explicit description of $\Gr^{\gen}_G$. Let $S$ be an affine scheme, and recall that an open subscheme $U \subseteq X \times S$ is called a \emph{domain} if it is universally dense with respect to the projection \[p_S: X \times S \longrightarrow S.\] This means that for any closed point $s \in S$, the intersection of the fiber $X_s$ of the projection $p_S$ over $s$ with $U$ is nonempty.

It is well known that $\Gr^{\gen}_G$ is the moduli space whose $S$-points are triples $(\sP_G, U, \alpha)$ where $\sP_G \to X \times S$ is a $G$-bundle, $U \subseteq X \times S$ is a domain, and $\alpha$ is a trivialization of $\sP_G$ defined over $U$. An isomorphism of such triples is an isomorphism of $G$-bundles which commutes with the trivializations over some subdomain of the intersection of their domains of definition. 

\subsubsection{} Let us generalize the above construction. For $H \subseteq G$ a subgroup, let $\Bun_{G,H}^{\gen}$ denote the following moduli space (see Appendix A of \cite{Ga2} for more information). For an affine scheme $S$, a morphism \[S \to \Bun_{G,H}^{\gen}\] is a triple $(\sP_G, U, \kappa)$, where $\sP_G \to X \times S$ is a $G$-bundle, $U \subseteq X \times S$ is a domain, and $\kappa$ is a reduction of $\sP_G$ to $H$ over $U$. An isomorphism \[(\sP_G,U,\kappa) \overset{\sim}{\longrightarrow} (\sP_G',U',\kappa')\] is an isomorphism $\sP_G \overset{\sim}{\to} \sP_G'$ over $X \times S$ which commutes with $\kappa$ and $\kappa'$ over a domain contained in $U \cap U'$. In this notation, note that we have a tautological isomorphism \[\Gr_G^{\gen} \overset{\sim}{\longrightarrow} \Bun_{G,1}^{\gen},\] where $1 \subseteq G$ denotes the trivial subgroup. 

For any subgroup $H$ of $G$, there is a morphism \[\Gr_G^{\gen} \longrightarrow \Bun_{G,H}^{\gen}\] which takes a triple $(\sP_G,U,\alpha)$ to $(\sP_G,U,\alpha_H)$, where $\alpha_H$ is the isomorphism given by the composition \[\sP_G \overset{\sim}{\longrightarrow} \sP_G^0 \simeq \ind_1^G(\sP_1^0) \overset{\sim}{\longrightarrow} \ind_H^G(\sP_H^0) \] defined over $U$. 

\subsubsection{} When $H=N$, there is also a morphism $\dBun_N \to \Bun_{G,N}^{\gen}$ obtained by restricting a generalized reduction $\kappa$ of a $G$-bundle $\sP_G$ to its \emph{non-degeneracy domain}, i.e. the maximal domain over which the Pl\"{u}cker maps are injective maps of vector bundles. Now define a prestack $\newoverline{S}^{\gen}$ by the condition that it sits in a Cartesian square \[\begin{tikzcd}
	& {} \\
	{\newoverline{S}^{\gen}} & {\Gr_G^{\gen}} \\
	{\dBun_N} & {\Bun_{G,N}^{\gen}.}
	\arrow[from=2-1, to=2-2]
	\arrow[from=2-2, to=3-2]
	\arrow["\mathfrak{p}^{\gen}"', from=2-1, to=3-1]
	\arrow[from=3-1, to=3-2]
\end{tikzcd}\]

Explicitly, an $S$-point of $\newoverline{S}^{\gen}$ is given by a $G$-bundle $\sP_G \to X \times S$, a domain $U \subseteq X \times S$, and a trivialization $\alpha$ of $\sP_G$ defined over $U$ such that the following condition holds. For all dominant weights $\lambda \in \check{\Lambda}^+$, the meromorphic maps \[ \cO_{X \times S} \longrightarrow V^{\chlam}_{\sP_G^0} \overset{\alpha}{\longrightarrow} V^{\chlam}_{\sP_G}\] induced by $\alpha$ extend to regular maps defined over all of $X \times S$. As usual, the map $\cO_{X \times S} \to V^{\chlam}_{\sP_G^0}$ is the inclusion of the highest weight line. 

\subsubsection{} In what follows, denote by $\uMap^{\gen}(X,N)$ the group prestack of generically defined maps $X \to N$ (see \cite{Bar} for a careful definition). In other words, an $S$-point of $\uMap^{\gen}(X,N)$ is a map $U \to N$, where $U \subseteq X \times S$ is a domain. Two such maps $U \to N$ and $U' \to N$ are equivalent if they agree on a subdomain of $U \cap U'$. 

By Theorem $1.8.2$ in \cite{Ga-2}, the space $\uMap^{\gen}(X,N)$ is homologically contractible\footnote{Note Theorem $1.8.2$ essentially states that the homology of $\uMap^{\gen}(X,N)$ is scalars, but as noted in \emph{loc. cit.} this is an equivalent condition to homological contractibility.}. As the next proposition shows, the projection $\mathfrak{p}^{\gen}: \newoverline{S}^{\gen} \to \dBun_N$ is very close to being an equivalence. Note the following result is closely related to arguments from Appendix A of \cite{Ga2}. 

\begin{proposition}\label{torsor} The map $\mathfrak{p}^{\gen}: \newoverline{S}^{\gen} \to \dBun_N$ is a torsor for the group prestack $\uMap^{\gen}(X,N)$. In particular, $\mathfrak{p}^{\gen}$ is universally homologically contractible.
\end{proposition}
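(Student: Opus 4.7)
The plan is to establish the torsor property directly in three steps — construct the action, verify simple transitivity on fibers, and check fppf-local surjectivity — and then deduce universal homological contractibility from the contractibility of the structure group.

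First, I would construct the action of $\uMap^{\gen}(X,N)$ on $\newoverline{S}^{\gen}$ over $\dBun_N$. Given an $S$-point $g:V \to N$ of $\uMap^{\gen}(X,N)$ and $(\sP_G, U, \alpha)$ of $\newoverline{S}^{\gen}$, set $g \cdot (\sP_G, U, \alpha) \coloneqq (\sP_G, U \cap V, \alpha \circ g)$, where $g$ acts on $\alpha$ through the gauge action of $N \subseteq G$. To see the result again lies in $\newoverline{S}^{\gen}$, recall that the Pl\"{u}cker map $\cO_{X \times S} \to V^{\chlam}_{\sP_G^0}$ is the inclusion of the highest weight line, which is fixed pointwise by $N$. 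Hence the Pl\"{u}cker sections induced by $\alpha \circ g$ and by $\alpha$ agree on $U \cap V$, so the regularity condition is preserved. By construction the projection $\mathfrak{p}^{\gen}$ is equivariant for the trivial action on $\dBun_N$, since twisting the trivialization does not change the generalized $N$-reduction it induces.

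Second, I would prove that the canonical map
\[ \newoverline{S}^{\gen} \times \uMap^{\gen}(X,N) \longrightarrow \newoverline{S}^{\gen} \times_{\dBun_N} \newoverline{S}^{\gen}\]
is an equivalence. Given two $S$-points $(\sP_G, U, \alpha)$ and $(\sP_G', U', \alpha')$ together with an isomorphism $\phi$ of their images in $\dBun_N$, use $\phi$ to identify $\sP_G$ with $\sP_G'$, so that over a common subdomain of $U \cap U'$ the element $\alpha^{-1} \circ \alpha'$ is an automorphism of the trivial $G$-bundle, i.e.\ a morphism to $G$. The compatibility of $\phi$ with the Pl\"{u}cker data translates exactly into the condition that this morphism preserves each highest weight vector, hence factors through $N \subseteq G$, yielding the required unique element of $\uMap^{\gen}(X,N)$. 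For fppf-local surjectivity, given an $S$-point $(\sP_G, \{\kappa^{\chlam}\})$ of $\dBun_N$, let $U_{\nd} \subseteq X \times S$ be its non-degeneracy domain; over $U_{\nd}$ the Pl\"{u}cker data cut out a genuine $N$-reduction of $\sP_G|_{U_{\nd}}$. Since $N$ is unipotent and hence a successive extension of copies of $\G_a$, such a torsor trivializes after an fppf cover of $S$, producing a trivialization $\alpha$ of $\sP_G|_{U_{\nd}}$ and thus a lift to $\newoverline{S}^{\gen}$.

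Combining these three steps identifies $\mathfrak{p}^{\gen}$ as a $\uMap^{\gen}(X,N)$-torsor. For the second clause, after any fppf base change $T \to \dBun_N$ on which the torsor trivializes the pullback of $\mathfrak{p}^{\gen}$ becomes the projection $T \times \uMap^{\gen}(X,N) \to T$; by Theorem $1.8.2$ of \cite{Ga-2} this projection is homologically contractible, and fppf descent of homological contractibility then yields universal homological contractibility of $\mathfrak{p}^{\gen}$ itself.

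The main obstacle I anticipate is the fppf-local surjectivity step: one must be careful about the meaning of ``domain'' in families (the non-degeneracy locus varies over $S$), and ensure the trivialization of the $N$-reduction on $U_{\nd}$ genuinely exists fppf-locally in $S$, perhaps by shrinking $S$ so that fiberwise vanishing of $H^1(U_{\nd},N)$ can be globalized through the unipotent filtration. A secondary subtlety arises in the simple transitivity step, where the equivalence relation on $\newoverline{S}^{\gen}$ identifying triples over generically overlapping domains must be handled carefully so that the ``unique'' element of $\uMap^{\gen}(X,N)$ is well-defined in the relevant colimit of sections.
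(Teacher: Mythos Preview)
Your approach is correct and arrives at the same conclusion, but the paper takes a more structural route. Rather than verifying the torsor axioms directly, the paper first establishes the identification $\Bun_{G,H}^{\gen} \simeq \Bun_H^{\gen} \times_{\Bun_G^{\gen}} \Bun_G$ for any subgroup $H$, and then uses unipotence of $N$ to prove $\Bun_N^{\gen} \simeq B\big(\uMap^{\gen}(X,N)\big)$. Under these identifications the map $\Gr_G^{\gen} \to \Bun_{G,N}^{\gen}$ becomes a base change of the universal torsor $\pt \to B\big(\uMap^{\gen}(X,N)\big)$, and $\mathfrak{p}^{\gen}$ is a further base change of that; the torsor property is then immediate. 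Your direct verification and the paper's base-change argument ultimately rely on the same input (trivialization of $N$-torsors over domains, using unipotence), but the paper packages your local surjectivity concern into the single clean statement $\Bun_N^{\gen} \simeq B\big(\uMap^{\gen}(X,N)\big)$; note in particular that one may always shrink a domain to be affine over $S$ (remove a relatively ample divisor), at which point $H^1$ with unipotent coefficients vanishes and no fppf cover of $S$ is needed, resolving the obstacle you flagged. Your hands-on approach has the advantage of making the action and its compatibility with the Pl\"{u}cker data completely explicit, while the paper's approach is shorter and shows more clearly why the result is formal once the classifying-space description of $\Bun_N^{\gen}$ is in place. For the second clause both you and the paper invoke the same general principle (a torsor for a homologically contractible group is universally homologically contractible), with the paper citing Lemma~6.2.10 of \cite{Bar}.
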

\begin{proof} For a subgroup $H$ of $G$, define a prestack $\Bun_H^{\gen}$ by declaring a morphism $S \to \Bun_H^{\gen}$ to be a domain $U \subseteq X \times S$ as well as an $H$-bundle \[ \sP_H \to U \subseteq X \times S.\] An isomorphism $(U, \sP_H) \to (U',\sP'_H)$ of pairs as above is an isomorphism $\sP_H \to \sP'_H$ defined over a domain contained in $U \cap U'$. There is an obvious morphism $\Bun_H^{\gen} \to \Bun_G^{\gen}$. Moreover, it is easy to see that there is a canonical isomorphism \[ \psi_H:\Bun_{G,H}^{\gen} \overset{\sim}{\longrightarrow} \Bun_H^{\gen} \times_{\Bun_G^{\gen}} \Bun_G.\]

Now since $N$ is unipotent, for any affine scheme $S$ the \'{e}tale cohomology group $H^1_{\acute{e}t}(S,N)$ vanishes, and hence there is an isomorphism \[\beta: \Bun_N^{\gen} \overset{\sim}{\longrightarrow} B\big(\uMap^{\gen}(X,N)\big) \] which commutes with the obvious map from $\Spec(k)$ to both sides. Therefore under the equivalences $\psi_1$, $\psi_N$, and $\beta$, the map $\Gr_G^{\gen} \to \Bun_{G,N}^{\gen}$ corresponds to the map \[(u \times \id_{\Bun_G}): \Spec(k) \times_{\Bun_G^{\gen}} \Bun_G \longrightarrow B\big(\uMap^{\gen}(X,N)\big) \times_{\Bun_G^{\gen}} \Bun_G\] obtained by base change from \[u: \Spec(k) \to B\big(\uMap^{\gen}(X,N)\big).\] Since $\mathfrak{p}^{\gen}$ is a further base change of $(u \times \id_{\Bun_G})$, we conclude that it is a torsor for $\uMap^{\gen}(X,N)$.

The claim that $\mathfrak{p}^{\gen}$ is universally homologically contractible follows from the following general observation. Let $\cY$ be a prestack and let $\pi: \cX \to \cY$ be a torsor for a homologically contractible group prestack $\mathcal{H}$. Then $\pi$ is universally homologically contractible (see Lemma 6.2.10 of \cite{Bar}). 
\end{proof}

\subsection{Equivalence between the generic Zastava and a moduli space of rational data.} Define the \emph{generic Zastava} $\cZ^{\gen}$ to be the straightening $\pZ^{\to}_{\str}$. Denote by \[\mathfrak{t}^{\gen}_Z: \pZ \longrightarrow \cZ^{\gen}\] the quotient map.

In this section we will identify $\cZ^{\gen}$ with $\newoverline{S}^{\gen}$. First note that there is a canonical equivalence \[(\Gr_{G,\Conf}^{\to})_{\str} \longrightarrow \Gr_G^{\gen}.\] Indeed, this follows from the fact that for every domain $U \subseteq X \times S$ there exists a Zariski cover $S' \to S$ such that $U \times_{X \times S} (X \times S')$ is a divisor complement, as well as an application of Lemma \ref{quillenthm}. 

\subsection{The generic Zastava in detail.} In this section we will give an alternative description of $\cZ^{\gen}$ using the moduli space of rational reductions to the opposite Borel $B^-$. 

Recall the stack $\Bun_{G,B^-}^{\gen}$ with its map to $\Bun_G$. Then we can form the fiber product \[\dBun_N \times_{\Bun_G} \Bun_{G,B^-}^{\gen}\] together with its open locus $\cZ^{\gen}_0$ consisting of a generalized $N$-reduction with a generically transverse generically defined $B^-$-reduction. Note this is a well-defined condition. We claim there is an isomorphism \begin{equation}\label{genericisgeneric0}\cZ^{\gen} \overset{\sim}{\longrightarrow} \cZ^{\gen}_0.\end{equation} Indeed, $\cZ^{\gen}$ is the quotient of $\pZ$ by the action of the configuration space. But $\Conf$ inherits its action from a similar action on $\dBun_{B^-}$, and it is easy to see that the quotient of the latter is equivalent to $\Bun_{G,B^-}^{\gen}$.

From this perspective, we get another understanding of Proposition \ref{torsor}. The fiber of the map \[\Bun_{G,B^-}^{\gen} \longrightarrow \Bun_G\] over a $G$-bundle $\sP_G \to X \times S$ is given by the prestack \[\operatorname{GSect}_{X \times S}((G/B^-)_{\sP_G}) \overset{\sim}{\longrightarrow} \Bun_{G,B^-}^{\gen} \times_{\Bun_G} S\] of generically defined sections of the bundle $(G/B^-)_{\sP_G}$ associated to $\sP_G$ with fiber $G/B^-$ (see \cite{Bar} for a definition). 

In particular, the fiber over the trivial bundle is tautologically equivalent to the prestack $\uMap^{\gen}(X \times S, G/B^-)$. Inside the latter we have the open subprestack consisting of generically defined maps $X \times S \to G/B^-$ which factor through the open cell $N \hookrightarrow G/B^-$. This is precisely the fiber of the map \[\mathfrak{p}^{\gen}: \cZ^{\gen} \longrightarrow \dBun_N\] over the trivial reduction of $\sP_G^0$ exhibiting $\mathfrak{p}^{\gen}$ as a torsor for $\uMap^{\gen}(X,N)$. 

Let us also identify a stratification on $\cZ^{\gen}$. First recall that $\Bun^{\gen}_{G,B^-}$ has a stratification whose strata are isomorphic to the connected components of $\Bun_{B^-}$. By the equivalence \eqref{genericisgeneric0} we see that $\cZ^{\gen}$ has a (non-smooth) stratification whose strata are isomorphic to the connected components of $\cZ$. In particular, the composition \[\cZ \longrightarrow \cZ^{\gen}\] of the open embedding $j_Z : \cZ \hookrightarrow \pZ$ with the quotient map $\mathfrak{t}^{\gen}_Z: \pZ \to \cZ^{\gen}$ is a bijection on $k$-points. 

\section{Finite-dimensional semi-infinite t-structures}\label{semiinfinitetstructures}

In this section, we will construct an analogue of Gaitsgory's t-structure for a large class of schemes with an action of the multiplicative group. Let $\cY$ be an irreducible separated scheme of finite-type with an action by $\G_m$. We further assume that its fixed subscheme $\cY^0 \coloneqq \uMap_{\G_m}(\operatorname{pt},\cY)$ is smooth. 

\subsection{The t-structure on schemes with a contracting action.} Recall that the category $\fD(\cY)^{\G_m-\operatorname{mon}}$ of $\G_m$-monodromic sheaves on $\cY$ is defined to be the full subcategory of $\fD(\cY)$ generated under colimits by the essential image of the forgetful functor \[\fD(\cY)^{\G_m} \longrightarrow \fD(\cY)\] from $\G_m$-equivariant sheaves to all $D$-modules. 

Assume for a moment that $\G_m$ contracts $\cY$ to its, by assumption irreducible, fixed subscheme. By this we mean that the action map \[\G_m \times \cY \longrightarrow \cY\] extends to a map \[\A^1 \times \cY \longrightarrow \cY\] whose precomposition with the inclusion \[\{0\} \times \cY \hookrightarrow \A^1 \times \cY\] maps onto $\cY^0$ and is the identity on $\{0\} \times \cY^0 \simeq \cY^0$. Let $\pi$ denote the projection $\cY \to \cY^0$ and $\sigma$ the inclusion of $\cY^0$ into $\cY$. 

It is known that the left adjoint $\pi_!$ of $\pi^!$ is defined on $\G_m$-monodromic sheaves and is explicitly given by $\sigma^!$. It follows that the functor \[\pi^!: \fD(\cY^0) \longrightarrow \fD(\cY)^{\G_m-\operatorname{mon}}\] is fully-faithful. Let $\operatorname{SI}(\cY)$ denote the full subcategory of $\fD(\cY)^{\G_m-\operatorname{mon}}$ equal to the essential image of $\pi^!$. Define a t-structure on $\operatorname{SI}(\cY)$ by declaring an object $\pi^!(c)$ to be connective/coconnective if and only if $c[\operatorname{dim}(\cY)-\operatorname{dim}(\cY^0)]$ is connective/coconnective in the perverse t-structure on $\cY^0$. For example, if $\cY$ is smooth, then the projection $\pi$ is a locally trivial affine bundle \cite{B}, and the t-structure above reduces to the usual one on $\cY$. 

\subsection{The general case.} Let us return to the case of a general $\cY$ satisfying the assumptions at the beginning of the section. We will further assume that $\cY$ admits a \emph{positive Bia{\l}ynicki-Birula decomposition}. This means $\cY$ admits a stratification into locally closed subschemes, each of which is an \emph{attracting locus} (see \cite{B} where this was first defined for smooth schemes). Explicitly, attracting loci are the connected components of the attracting scheme \[\cY^+ \coloneqq \uMap_{\G_m}(\A^1, \cY)\] where here $\A^1$ is equipped with its usual action by $\G_m$ contracting it to $0$. Let us index these connected components by a set $I$, and for $\mu \in I$ denote by $\cY^+_{\mu}$ the corresponding attracting locus and by $j_{\mu}: \cY^+_{\mu} \to \cY$ the inclusion. 

We will call any scheme $\cY$ satisfying the above assumptions $\G_m$-\emph{complete}. For example, any scheme with a contracting action is $\G_m$-complete. Any projective normal variety acted on by $\G_m$, or any scheme $\cY$ admitting an equivariant closed embedding \[ \cY \longhookrightarrow \mathbb{P}(V)\] where $V$ is a vector space and $\G_m$ acts on $\mathbb{P}(V)$ linearly is also $\G_m$-complete (see Appendix $B$ of \cite{D} and the references therein).  

As a non-example, consider $\A^2$ with the action $t \cdot (x,y) = (tx, t^{-1}y)$. The only fixed point is the origin, and the only points in its attracting locus are those of the form $(x,0)$. Another scheme which should not be considered is $\mathbb{P}^1$ with $0$ and $\infty$ identified.   

From now on, fix a $\G_m$-complete scheme $\cY$. Define a full subcategory \[\operatorname{SI}(\cY) \longhookrightarrow \fD(\cY)^{\G_m-\operatorname{mon}}\] consisting of objects $c$ such that for every $\mu \in I$, the pullback $j^!_{\mu}(c)$ belongs to $\operatorname{SI}(\cY^+_{\mu})$. Note that $\operatorname{SI}(\cY)$ is cocomplete and for every $c \in \operatorname{SI}(\cY^+_{\mu})$ the pushforward $(j_{\mu})_*(c)$ belongs to $\operatorname{SI}(\cY)$. We have the following basic proposition establishing the existence of semi-infinite t-structures. Note points $1$ and $2$ are essentially\footnote{Although the author of \emph{loc.cit.} does not prove Proposition \ref{tstructure} in our general context, the arguments are essentially the same.} proved in \cite{Ga2}. 

\begin{proposition}\label{tstructure} Let $\cY$ and $\cX$ be $\G_m$-complete schemes, and let $f: \cY \to \cX$ be a $\G_m$-equivariant \'{e}tale morphism such that for any connected component of $\cX^0$ its preimage in $\cY$ consists of a single connected component of $\cY^0$. Then

    \begin{enumerate} \item For every $\mu \in I$, the left adjoint $j^*_{\mu}$ of the pushforward functor \[(j_{\mu})_*: \operatorname{SI}(\cY^+_{\mu}) \longrightarrow \operatorname{SI}(\cY)\] is well-defined; 
    \item the category $\operatorname{SI}(\cY)$ has a t-structure characterized by the condition that an object $c$ is connective (resp. coconnective) if $j^*_{\mu}(c)[-\operatorname{dim}(\cY)]$ (resp. $j^!_{\mu}(c)[-\operatorname{dim}(\cY)]$) is connective\footnote{The reason for the cohomological shift by the dimension of $\cY$ is so that the dualizing sheaf of the open stratum lies in the heart. This will also be clarified when we apply the situation to the semi-infinite IC sheaf later.} (resp. coconnective) for all $\mu \in I$;
    \item the functor $f^!$ induces a functor \[f^!: \operatorname{SI}(\cX) \longrightarrow \operatorname{SI}(\cY)\] which is t-exact with respect to the above t-structures.
    \end{enumerate}
\end{proposition}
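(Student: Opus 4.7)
The plan is to follow the same strategy sketched by Gaitsgory in \cite{Ga2} for the Ran-space setting, now carried out uniformly for any $\G_m$-complete scheme. The central input is Braden's hyperbolic localization: on each attracting stratum $\cY^+_\mu$ the action is contracting to the smooth fixed subscheme $\cY^0_\mu$, so the discussion recalled in the contracting case identifies $\operatorname{SI}(\cY^+_\mu)$ with the essential image of the fully faithful $\pi^{!}_\mu$, and the left adjoint to $\pi^{!}_\mu$ is given by $\sigma^{!}_\mu$. This rigidifies the structure on each stratum so that all the non-trivial work happens in the gluing.

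For part (1), I would invoke the adjoint functor theorem in the presentable setting. Both $\operatorname{SI}(\cY)$ and $\operatorname{SI}(\cY^+_\mu)$ are presentable, being colimit-closed full subcategories of the corresponding categories of $\G_m$-monodromic D-modules cut out by $!$-restriction conditions along locally closed subschemes; meanwhile $(j_\mu)_*$ preserves limits. The candidate left adjoint is the naive $j^{*}_\mu$ computed in the ambient $\fD(\cY)$, composed if necessary with the reflection into $\operatorname{SI}(\cY^+_\mu)$. To verify that this lands in $\operatorname{SI}(\cY^+_\mu)$, I would reduce to generators of the form $(j_\nu)_{!}\pi^{!}_\nu d_\nu$ and use Braden's theorem to compute the fixed-point pullback $\sigma^{!}_\mu j^{*}_\mu (j_\nu)_{!}\pi^{!}_\nu d_\nu$, exhibiting the answer as $\pi^{!}_\mu$ applied to a D-module on $\cY^0_\mu$.

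For part (2), define $\operatorname{SI}(\cY)^{\leq 0}$ and $\operatorname{SI}(\cY)^{\geq 0}$ by the stratum-wise cohomological conditions in the statement. Closure under shifts is immediate. For the orthogonality $\operatorname{Hom}(\operatorname{SI}(\cY)^{\leq 0},\operatorname{SI}(\cY)^{\geq 1})=0$ and the existence of truncation triangles, I would use the recollement machinery of Beilinson--Bernstein--Deligne, inducting on a refinement of the poset indexing the Bia{\l}ynicki-Birula strata to a total order compatible with the closure order. On each stratum the t-structure condition reduces, via the equivalence $\operatorname{SI}(\cY^+_\mu)\simeq \fD(\cY^0_\mu)$ and the cohomological shift by $\dim(\cY)$, to the classical perverse t-structure on the smooth fixed subscheme $\cY^0_\mu$; the shift convention is chosen precisely so that the dualizing sheaf of the open stratum lies in the heart, matching the convention $\omega_{S^0_{\Ran}}\in \operatorname{SI}^{=0,\heartsuit}_{\Ran}$ from Gaitsgory's setting.

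For part (3), the $\G_m$-equivariance of $f$ together with the component hypothesis ensures that the two Bia{\l}ynicki-Birula decompositions match up exactly: $f^{-1}(\cX^+_\mu)$ is a disjoint union of attracting strata $\cY^+_{\mu'}$, and on each such component both $\cY^+_{\mu'}\to \cX^+_\mu$ and $\cY^0_{\mu'}\to \cX^0_\mu$ are étale of relative dimension zero. Consequently $f^{!}$ commutes stratum by stratum with $\pi^{!}$ and preserves $\operatorname{SI}$; since $\dim\cY=\dim\cX$ and étale $!$-pullback is t-exact for the perverse t-structure on each smooth $\cX^0_\mu$, t-exactness on the semi-infinite categories follows. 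The main obstacle I anticipate is the BBD-style induction in step (2): one must check that the recollement triangles live inside the semi-infinite subcategories and not merely in the ambient $\fD$, which is exactly where the conclusion of part (1) enters as a crucial input, and where the hypothesis of $\G_m$-completeness (ensuring that the BB strata form a well-behaved stratification with reasonable closure relations) does real work.
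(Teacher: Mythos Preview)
Your overall strategy matches the paper's: Braden for part~(1), recollement/BBD gluing for part~(2), and reduction to fixed points via \'etaleness for part~(3). Parts~(2) and~(3) are essentially as in the paper; in particular the paper also runs a Cousin-type induction to show that $(j_\mu)_!$ exists on the semi-infinite categories before invoking recollement, and for~(3) it reduces to the contracting case and observes that the fixed-point square is Cartesian because an \'etale $\G_m$-equivariant map sends non-fixed points to non-fixed points (as $\G_m$ is connected), so $f_0$ is \'etale and t-exactness follows.

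The gap is in your execution of part~(1). The adjoint-functor-theorem route is shakier than you suggest: you need $(j_\mu)_*$ to preserve limits \emph{as a functor between the semi-infinite subcategories}, and it is not obvious that $\operatorname{SI}(\cY)$ is closed under limits in the ambient monodromic category, since the defining condition is phrased via $j^!_\mu$, which for a general locally closed embedding is not a right adjoint. Your ``naive $j^*_\mu$ in the ambient $\fD(\cY)$'' is only partially defined (on holonomic objects), and your proposed generators $(j_\nu)_! \pi^!_\nu d_\nu$ presuppose the existence of $(j_\nu)_!$ on the semi-infinite categories, which is downstream of part~(1) itself. The paper sidesteps all of this with a single observation: since $\pi^!_\mu: \fD(\cY^0_\mu) \to \operatorname{SI}(\cY^+_\mu)$ is an equivalence, showing that $(j_\mu)_*|_{\operatorname{SI}}$ has a left adjoint is the same as showing that $(j_\mu)_* \circ \pi^!_\mu: \fD(\cY^0_\mu) \to \fD(\cY)$ has one, and \emph{that} is precisely the content of Braden's theorem. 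You already have Braden in hand; use it as the source of the adjoint, not merely as a verification tool after the fact.
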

\begin{proof}
    If we assume $1$, we also obtain the existence of left-adjoint to the pullback functor \[j^!_{\mu}: \operatorname{SI}(\cY) \longrightarrow \operatorname{SI}(\cY^+_{\mu})\] by a Cousin argument. Indeed, by induction, we can assume there are exactly two strata. Let the open embedding be denoted by $j$ and the closed by $i$. it is enough to verify that $j^!$ admits a left adjoint on semi-infinite categories. To this effect, define $j_!(c)$ as the fiber of the counit \[j_*(c) \longrightarrow i_*i^*j_*(c)\] where $i^*$ is the left adjoint to $i_*$ which we are assuming exists on semi-infinite categories. The unit $c \to j^!j_!(c)$ is given explicitly by the canonical isomorphism \[c \overset{\sim}{\longrightarrow} \operatorname{fib}\big(j^!j_*(c) \longrightarrow j^!i_*i^*j_*(c)\big)\] using the fact that $j^!i_* \simeq 0$. Now the required condition can be checked by mapping into objects of the form $j_*(d)$ and $i_*(d')$ separately. 
    
    Since each stratum $\cY^+_{\mu}$ is locally closed inside $\cY$, both $!$ and $*$ pushforwards along the embedding \[\cY^+_{\mu} \longhookrightarrow \cY \] are fully faithful. As a result, we deduce that $2$ follows formally by recollement of t-structures. Hence it suffices to verify points $1$ and $3$. 

    For $1$ let us first note that the existence of the desired left adjoint follows from Braden's theorem \cite{Br} \cite{DG0}. Indeed, let $\pi_{\mu}$ denote the projection $\cY^+_{\mu} \to \cY^0_{\mu}$. Then by construction of the semi-infinite categories it suffices to show that the functor $(j_{\mu})_* \circ \pi^!_{\mu}$ has a left adjoint, which is exactly the statement of Braden's theorem.

    For $3$ observe that we can immediately reduce to the case where $\cY$ and $\cX$ have a contracting $\G_m$ action. In this case we have a commuting square \[\begin{tikzcd}
	{\cY^0} & \cY \\
	{\cX^0} & \cX
	\arrow[from=1-1, to=1-2]
	\arrow["f", from=1-2, to=2-2]
	\arrow["{f_0}"', from=1-1, to=2-1]
	\arrow[from=2-1, to=2-2]
\end{tikzcd}\]which we claim is Cartesian. Indeed, any point of $\cY$ mapping to a fixed point of $\cX$ must itself be fixed since $f$ is \'{e}tale and $\G_m$ is connected. Hence $f_0$ is also \'{e}tale and we conclude that $f^!:\operatorname{SI}(\cY) \to \operatorname{SI}(\cX)$ is t-exact. 
\end{proof}

\subsection{Semi-infinite sheaf for $\mathbb{G}_m$-complete schemes.} Let $\cY$ be $\G_m$-complete. By irreducibility, $\cY$ has a unique open stratum $\cY^+_0$ with fixed locus $\cY^0_0$. Note that by definition, the dualizing sheaf $\omega_{\cY^+_0}$ is an object of the heart $\operatorname{SI}(\cY^+_0)^{\heartsuit}$. We have the following definition.

\begin{definition}
    Define the \emph{semi-infinite sheaf} for a $\G_m$-complete scheme $\cY$ to be the intermediate extension \[\IC^{\frac{\infty}{2}}(\cY) \coloneqq \operatorname{im}\big({\mathbb{H}^0_{\operatorname{si}}(j_0)_!(\omega_{\cY^+_0})} \longrightarrow \mathbb{H}^0_{\operatorname{si}}(j_0)_*(\omega_{\cY^+_0})\big) \]
in the semi-infinite t-structure on $\cY$. Here $\mathbb{H}^*_{\operatorname{si}}(-)$ denotes hypercohomology in this t-structure. 
\end{definition}

In what follows denote by $\operatorname{SI}_{\operatorname{hol}}(\cY)$ the preimage of the category $\fD_{\operatorname{hol}}(\cY)$ of \emph{holonomic} $D$-modules under the inclusion \[\operatorname{SI}(\cY) \hookrightarrow \fD(\cY).\] We will also denote by $\IC^{\operatorname{ren}}_{\cY}$ for a scheme (or algebraic stack) $\cY$ of finite type the sheaf $\IC_{\cY}[\operatorname{dim}(\cY)]$ and call it the \emph{renormalized intersection cohomology sheaf} of $\cY$. The following proposition in particular identifies $\IC^{\frac{\infty}{2}}(\cY)$ in a large class of examples. 

\begin{proposition}\label{semiinfiniteisusual}
Assume that \begin{enumerate} \item the projections $\pi_{\mu}: \cY^+_{\mu} \to \cY^0_{\mu}$ are all smooth;  \item the renormalized intersection cohomology sheaf $\IC^{\operatorname{ren}}_{\cY}$ belongs to $\operatorname{SI}(\cY)$. \end{enumerate} Then the inclusion functor \[\operatorname{SI}_{ }(\cY) \longhookrightarrow \fD_{ }(\cY)\] is left t-exact up to cohomological shift by $-\operatorname{dim}(\cY)$ and sends $\IC^{\frac{\infty}{2}}(\cY)$ to $\IC^{\operatorname{ren}}_{\cY}$. 
\end{proposition}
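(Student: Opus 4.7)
The plan is to reduce the proposition to a local, stratum-by-stratum statement. Hypothesis (1) will ensure that the ambient perverse $t$-structure restricts cleanly to the semi-infinite $t$-structure on each attracting locus, and hypothesis (2) places $\IC^{\operatorname{ren}}_\cY$ in the semi-infinite world, allowing us to characterize it as the intermediate extension. First I would establish the local identification: on any attracting locus $\cY^+_\mu$ of relative dimension $r_\mu = \dim(\cY^+_\mu) - \dim(\cY^0_\mu)$ with smooth projection $\pi_\mu$, the inclusion $\operatorname{SI}(\cY^+_\mu) \hookrightarrow \fD(\cY^+_\mu)$ is $t$-exact, where the target carries the perverse $t$-structure. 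Indeed, $\pi_\mu^![-r_\mu]$ is perverse $t$-exact for a smooth morphism, so $\pi_\mu^!$ shifts the perverse degree by exactly $-r_\mu$; the shift $[r_\mu]$ built into the defining characterization of the SI $t$-structure on a contracting scheme precisely cancels this, making $\pi_\mu^!(c)$ SI-connective iff perverse-connective, and similarly for coconnective.

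For the first assertion, let $c \in \operatorname{SI}(\cY)^{\geq 0}$. By Proposition~\ref{tstructure}, $j_\mu^!(c)[-\dim(\cY)] \in \operatorname{SI}(\cY^+_\mu)^{\geq 0}$ for every $\mu \in I$, so by the local identification $j_\mu^!(c)[-\dim(\cY)] \in \fD(\cY^+_\mu)^{\geq 0}_{\operatorname{perv}}$, equivalently $j_\mu^!(c) \in \fD(\cY^+_\mu)^{\geq -\dim(\cY)}_{\operatorname{perv}}$ for all $\mu$. Gluing the perverse $t$-structure on $\fD(\cY)$ along the locally closed stratification $\{\cY^+_\mu\}$ then yields $c \in \fD(\cY)^{\geq -\dim(\cY)}_{\operatorname{perv}}$, which is the desired left $t$-exactness up to shift by $-\dim(\cY)$.

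For the identification $\IC^{\frac{\infty}{2}}(\cY) \simeq \IC^{\operatorname{ren}}_\cY$, I would verify that $\IC^{\operatorname{ren}}_\cY$ satisfies the defining properties of the intermediate extension of $\omega_{\cY^+_0}$ in the SI $t$-structure. From $\IC^{\operatorname{ren}}_\cY = \IC_\cY[\dim(\cY)]$ one gets $j_\mu^*(\IC^{\operatorname{ren}}_\cY)[-\dim(\cY)] = j_\mu^*(\IC_\cY)$ and similarly for $j_\mu^!$. The standard support conditions characterizing $\IC_\cY$ as an intermediate extension give $j_\mu^*(\IC_\cY) \in \fD^{\leq 0}_{\operatorname{perv}}$ and $j_\mu^!(\IC_\cY) \in \fD^{\geq 0}_{\operatorname{perv}}$ for all $\mu$, with strict inequalities for $\mu \neq 0$. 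By the local identification these translate into $\IC^{\operatorname{ren}}_\cY \in \operatorname{SI}(\cY)^{\heartsuit}$, together with strict SI-connectivity and SI-coconnectivity of its $*$- and $!$-restrictions to non-open strata. Combined with the equality $\IC^{\operatorname{ren}}_\cY|_{\cY^+_0} = \omega_{\cY^+_0}$, which is valid since $\cY^+_0$ is smooth, these properties identify $\IC^{\operatorname{ren}}_\cY$ with the image of $\mathbb{H}^0_{\operatorname{si}}(j_0)_!(\omega_{\cY^+_0}) \to \mathbb{H}^0_{\operatorname{si}}(j_0)_*(\omega_{\cY^+_0})$, i.e., with $\IC^{\frac{\infty}{2}}(\cY)$.

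The delicate point in executing this plan is the coordination of the three cohomological shifts in play: the global shift $[-\dim(\cY)]$ in the characterization of the SI $t$-structure on $\cY$, the relative-dimension shift $[r_\mu]$ appearing locally on each stratum, and the shift $[\dim(\cY)]$ in the definition of $\IC^{\operatorname{ren}}$. Hypothesis (1) is precisely what allows these shifts to align so that the SI $t$-structure restricts to the perverse $t$-structure on each stratum; in its absence the two differ genuinely, which is the raison d'\^{e}tre of the semi-infinite construction in singular cases of interest such as $\pZ$.
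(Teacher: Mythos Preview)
Your overall strategy matches the paper's: verify left $t$-exactness stratum-wise via the smoothness of $\pi_\mu$, then check that $\IC^{\operatorname{ren}}_{\cY}$ satisfies the intermediate-extension conditions in the SI $t$-structure. The left $t$-exactness argument and the treatment of $!$-restrictions are fine.

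The gap is in the $*$-restriction step. You invoke the standard perverse support condition $j_\mu^*(\IC_\cY) \in \fD^{\leq -1}_{\operatorname{perv}}$ for $\mu \neq 0$ and then appeal to the ``local identification'' to conclude the SI-connectivity bound. But the local identification only says that the inclusion $\operatorname{SI}(\cY^+_\mu) \hookrightarrow \fD(\cY^+_\mu)$ is $t$-exact; it does not say that the two left adjoints to $(j_\mu)_*$ agree. The functor $j_\mu^*$ appearing in the characterization of SI-connectivity (Proposition~\ref{tstructure}) is the left adjoint on SI categories, constructed via Braden's theorem, whereas the support condition for $\IC_\cY$ concerns the holonomic $*$-pullback $j^*_{\mu,\operatorname{hol}}$. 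There is no reason for $j^*_{\mu,\operatorname{hol}}(\IC^{\operatorname{ren}}_\cY)$ to lie in $\operatorname{SI}(\cY^+_\mu)$, so your local identification does not directly apply to it.

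The paper closes this gap by observing that, under the equivalence $\operatorname{SI}(\cY^+_\mu) \simeq \fD(\cY^0_\mu)$, the SI left adjoint is computed on holonomic objects as $(\pi_\mu)_! \circ j^*_{\mu,\operatorname{hol}}$. One then uses that $j^*_{\mu,\operatorname{hol}}(\IC^{\operatorname{ren}}_\cY)$ lies in perverse degrees $\leq -\operatorname{dim}(\cY)-1$ together with the fact that $(\pi_\mu)_![d_\mu]$ is perverse right $t$-exact (this is where smoothness of $\pi_\mu$ enters a second time) to obtain the required SI bound. Inserting this computation fixes your argument; without it the step ``by the local identification these translate into \ldots'' is not justified.
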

\begin{proof}
    Let $c \in \operatorname{SI}_{ }(\cY)^{\geq 0}$ be coconnective. Define for brevity \[d_{\mu} = -(\operatorname{dim}(\cY^0_{\mu})-\operatorname{dim}(\cY^+_{\mu}))\] to be the relative dimension of $\pi_{\mu}$. By definition, for every $\mu \in I$ we have \[j_{\mu}^!(c) \in \operatorname{SI}(\cY)^{\geq 0}.\] Since $j_{\mu}^!(c)$ belongs to the semi-infinite category, we may write \[j_{\mu}^!(c) \overset{\sim}{\longrightarrow} \pi_{\mu}^!(c')\] for some $c' \in \fD(\cY^0_{\mu})$. The fact that $j_{\mu}^!(c)$ is coconnective in the semi-infinite t-structure means that $c'[-\operatorname{dim}(\cY)+d_{\mu}]$ is coconnective in the \emph{perverse} t-structure on $\fD(\cY^0_{\mu})$. In other words, $c'$ lies in perverse degrees $ \geq -\operatorname{dim}(\cY)+d_{\mu}$. Since $\pi_{\mu}$ is smooth, the functor $\pi^!_{\mu}[-d_{\mu}]$ is conservative and t-exact for the perverse t-structure. We conclude that $\pi^!_{\mu}(c')$ lies in perverse degrees $-\operatorname{dim}(\cY)$ in $\fD(\cY)$ and that the inclusion \[ \operatorname{SI}(\cY) \longhookrightarrow \fD(\cY)\] is left t-exact up to the desired shift.

    To see that $\IC^{\operatorname{ren}}_{\cY}$ coincides with $\IC^{\frac{\infty}{2}}(\cY)$, its enough to verify that for all $\mu \in I$, we have \[j^!_{\mu}(\IC^{\operatorname{ren}}_{\cY}) \in \operatorname{SI}(\cY)^{\geq 1} \,\,\, \text{and} \,\,\, j^*_{\mu}(\IC^{\operatorname{ren}}_{\cY}) \in \operatorname{SI}(\cY)^{\leq 1}.\] Note however that $\operatorname{SI}(\cY) \hookrightarrow \fD(\cY)$ is conservative and left exact by the discussion above. Hence we just need to check the statement about $*$-pullbacks. 

    Since $\IC^{\operatorname{ren}}_{\cY} \in \operatorname{SI}_{\operatorname{hol}}(\cY)$, the partially defined left adjoint $j^*_{\mu,\operatorname{hol}}:\fD_{\operatorname{hol}}(\cY) \to \fD_{\operatorname{hol}}(\cY^+_{\mu})$ to \[(j_{\mu})_*: \fD(\cY^+_{\mu}) \longrightarrow \fD(\cY)\] is defined on $\IC^{\operatorname{ren}}_{\cY}$. A similar statement holds for the left adjoint $(\pi_{\mu})_!$ of $\pi^!_{\mu}$. It follows that we may compute \[j^*_{\mu}(\IC^{\operatorname{ren}}_{\cY}) \overset{\sim}{\longrightarrow} (\pi_{\mu})_!j^*_{\mu,\operatorname{hol}}(\IC^{\operatorname{ren}}_{\cY}) \] viewing both sides as objects of $\fD(\cY^0_{\mu})$. Note that $\IC^{\operatorname{ren}}_{\cY}$ lies in perverse degrees $-\operatorname{dim}(\cY)$, and hence $j^*_{\mu,\operatorname{hol}}(\IC^{\operatorname{ren}}_{\cY})$ lies in perverse degrees $\leq -\operatorname{dim}(\cY) - 1$. But $\pi_{\mu}$ is smooth and so $\pi^!_{\mu}[-d_{\mu}]$ is perverse t-exact, implying $(\pi_{\mu})_![d_{\mu}]$ is perverse right t-exact. Therefore $j^*_{\mu}(\IC^{\operatorname{ren}}_{\cY})$ lies in perverse cohomological degrees $ \leq -\operatorname{dim}(\cY)+d_{\mu}-1$ as desired.    
\end{proof}

Let us apply Proposition \ref{semiinfiniteisusual} to some cases of interest. Let $\cY$ be the flag variety $G/B$ with an action of $\G_m$ obtained by picking a regular dominant coweight $\G_m \to T$. In this case, the Bia{\l}ynicki-Birula decomposition coincides with the Schubert stratification. Consider a Schubert variety $X_w= \newoverline{NwB}$ corresponding to an element $w$ of the Weyl group $W$. Then $X_w$ is stable under the $N$ action on $G/B$ and $\IC^{\operatorname{ren}}_{X_w}$ is $N$-invariant. Hence $\IC^{\operatorname{ren}}_{X_w}$ belongs to the semi-infinite category and by Proposition \ref{semiinfiniteisusual} it is canonically equivalent to $\IC^{\frac{\infty}{2}}(X_w)$. Indeed, requiring that a sheaf on a stratum $Nw'B$ is pulled back from the unique $\G_m$-fixed point $w'B$ is tautologically equivalent to imposing equivariance for $N$.

As another example, consider the affine Grassmannian $\Gr_G$ with its action by $\G_m$ via \emph{loop rotation}, and consider the closure $\newoverline{\Gr}^{\lambda}_G$ of a $\mathfrak{L}^+G$-orbit $\Gr^{\lambda}_G$ associated to a dominant coweight $\lambda$. For more information on the following setup, see \cite{Zhu}. The Bia{\l}ynicki-Birula decomposition of $\newoverline{\Gr}^{\lambda}_G$ for loop rotation coincides with its stratification into $\mathfrak{L}^+G$-orbits $\Gr^{\mu}_G$ indexed by dominant coweights $\mu \leq \lambda$. Let $K$ denote the kernel of the evaluation map \[\mathfrak{L}^+G \longrightarrow G\] a pro-unipotent group scheme. Then the action of $K$ on $\newoverline{\Gr}^{\lambda}_G$ preserves the fibers of the projections \[\Gr^{\mu}_G \longrightarrow G/P_{\mu}=(\Gr^{\mu}_G)^{\G_m} \] where $P_{\mu}$ is the parabolic subgroup associated to $\mu$. Explicitly, $P_{\mu}$ is the subgroup of $G$ generated by the root subgroups $U_{\check{\alpha}} \subseteq G$ for which the root $\check{\alpha}$ satisfies $\langle \check{\alpha},\mu \rangle \leq 0$. As in the case of $X_w$, the renormalized IC sheaf for $\newoverline{\Gr}^{\lambda}_G$ is $K$-equivariant and hence belongs to $\operatorname{SI}(\newoverline{\Gr}^{\lambda}_G)$ implying by Proposition \ref{semiinfiniteisusual} that we have $\IC^{\frac{\infty}{2}}(\newoverline{\Gr}^{\lambda}_G) \simeq \IC^{\operatorname{ren}}_{\newoverline{\Gr}^{\lambda}_G}$.

From the above examples, we see that $\IC^{\frac{\infty}{2}}(\cY)$ is likely to give something new only when the Bia{\l}ynicki-Birula strata are singular. The next lemma will be used for constructing interesting factorization algebras in this setting later. 

\begin{lemma}\label{factcomplete} Let $\cY$ be a $\G_m$-complete scheme equipped with a morphism $p: \cY \to \Conf$ and a factorization structure. Assume further that $p$ is $\G_m$-equivariant for the trivial action on $\Conf$, and that the factorization isomorphism is compatible with $\G_m$-actions on both sides. Then $\IC^{\frac{\infty}{2}}(\cY)$ has the structure of a factorization algebra. 
\end{lemma}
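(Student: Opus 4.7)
The plan is to transport the tautological factorization structure on the dualizing sheaf $\omega_{\cY^+_0}$ of the open attracting locus through the intermediate extension functor in the semi-infinite t-structure.

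First I would observe that, since the factorization isomorphism $[\cY \times \cY]_{\disj} \overset{\sim}{\to} \cY_{\disj}$ is $\G_m$-equivariant and $(\cY \times \cY)^+ = \cY^+ \times \cY^+$, it identifies each Bia{\l}ynicki-Birula stratum of the source with the corresponding stratum of the target. In particular, the open stratum $\cY^+_0 \subseteq \cY$ inherits a factorization structure over $\Conf$, which equips $\omega_{\cY^+_0}$ with a tautological structure of a factorization algebra in the semi-infinite heart.

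Next I would verify that the relevant pullback functors are t-exact on semi-infinite categories. The factorization isomorphism is an isomorphism of $\G_m$-complete schemes, so its $!$-pullback is trivially t-exact. The $!$-pullbacks along the open inclusions $u_L \colon [\cY \times \cY]_{\disj} \hookrightarrow \cY \times \cY$ and $u_R \colon \cY_{\disj} \hookrightarrow \cY$ are t-exact because open immersions are perversely t-exact on each Bia{\l}ynicki-Birula stratum, and the semi-infinite t-structure is assembled locally from these (compare Proposition \ref{tstructure}(3)). It remains to check that the external product
\[ \boxtimes \colon \operatorname{SI}(\cY) \otimes \operatorname{SI}(\cY) \longrightarrow \operatorname{SI}(\cY \times \cY) \]
is t-exact up to a fixed cohomological shift. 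This reduces, via the identifications $(\cY \times \cY)^+_{(\mu_1,\mu_2)} = \cY^+_{\mu_1} \times \cY^+_{\mu_2}$ and $\pi_{(\mu_1,\mu_2)} = \pi_{\mu_1} \times \pi_{\mu_2}$, to the classical t-exactness of the external product of perverse sheaves together with routine bookkeeping of the normalization shifts in Proposition \ref{tstructure}(2), so that $\omega_{\cY^+_0} \boxtimes \omega_{\cY^+_0}$ lands in the heart of $\operatorname{SI}(\cY \times \cY)$.

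With these ingredients in place, the desired isomorphism
\[ u_L^!\bigl(\IC^{\frac{\infty}{2}}(\cY) \boxtimes \IC^{\frac{\infty}{2}}(\cY)\bigr) \overset{\sim}{\longrightarrow} f^! u_R^!\bigl(\IC^{\frac{\infty}{2}}(\cY)\bigr) \]
follows because t-exact $!$-pullback commutes with intermediate extension, as is visible from the description of $\IC^{\frac{\infty}{2}}$ as the image of $\mathbb{H}^0_{\operatorname{si}}(j_0)_! \to \mathbb{H}^0_{\operatorname{si}}(j_0)_*$ together with base change for $j_!$ and $j_*$ along open embeddings. Both sides are thereby identified with the semi-infinite intermediate extension, on $[\cY \times \cY]_{\disj}$, of the dualizing sheaf on the open stratum $[\cY^+_0 \times \cY^+_0]_{\disj}$, and the factorization structure on $\omega_{\cY^+_0}$ from the first step supplies the comparison between them. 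Higher associativity and symmetry compatibilities follow from the analogous argument applied to higher disjoint powers of $\cY$. The main obstacle will be the shift bookkeeping in the t-exactness of external product and the careful verification that t-exact pullback commutes with the semi-infinite intermediate extension; the latter is not entirely formal since the semi-infinite t-structure is built from the Bia{\l}ynicki-Birula decomposition rather than from a single ambient perverse t-structure, so one must check that the semi-infinite cohomology functors $\mathbb{H}^0_{\operatorname{si}}(j_0)_!$ and $\mathbb{H}^0_{\operatorname{si}}(j_0)_*$ commute with the relevant pullbacks stratum by stratum.
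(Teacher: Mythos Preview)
Your overall strategy---transport the tautological factorization on $\omega_{\cY^+_0}$ through the intermediate extension by checking t-exactness of the relevant pullbacks---is correct and is the paper's strategy as well. There is, however, one concrete error: you call $u_R \colon \cY_{\disj} = \cY \times_{\Conf,\add} [\Conf \times \Conf]_{\disj} \to \cY$ an open inclusion, but it is not. It is the base change of $\add|_{\disj} \colon [\Conf \times \Conf]_{\disj} \to \Conf$, which is \'etale but not a monomorphism (for instance $(D_1,D_2)$ and $(D_2,D_1)$ have the same image). Your stated reason, ``open immersions are perversely t-exact on each stratum'', therefore does not cover $u_R$. The correct input is Proposition \ref{tstructure}(3) for $\G_m$-equivariant \'etale morphisms, applied component-wise: separately to each map $\cY^{\lambda} \times_{\Conf^{\lambda}} [\Conf^{\lambda'} \times \Conf^{\lambda''}]_{\disj} \to \cY^{\lambda}$, so that the hypothesis on connected components of the fixed locus is satisfied.

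This observation is essentially the entire content of the paper's proof: it records that $\add|_{\disj}$ is \'etale and invokes Proposition \ref{tstructure}(3) component-wise, without passing through $\cY \times \cY$ or treating the external product as a separate step. Your explicit verification that $\boxtimes$ is t-exact (up to the normalization shift), reducing to the classical fact for perverse sheaves via $(\cY \times \cY)^+_{(\mu_1,\mu_2)} = \cY^+_{\mu_1} \times \cY^+_{\mu_2}$, is correct and makes the identification of $u_L^!\bigl(\IC^{\frac{\infty}{2}}(\cY) \boxtimes \IC^{\frac{\infty}{2}}(\cY)\bigr)$ with $\IC^{\frac{\infty}{2}}\bigl([\cY \times \cY]_{\disj}\bigr)$ more transparent than the paper's compressed argument; the paper leaves that identification to the reader.
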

\begin{proof}
    This is an immediate consequence of point $3$ of Proposition \ref{tstructure}. Indeed, $\G_m$ acts on \[\cY \times_{\Conf} [\Conf \times \Conf]_{\disj}\] by acting on the left hand factor. The strata are of the form \[\cY^{\lambda,+}_{\mu} \times_{\Conf^{\lambda}} [\Conf^{\lambda'} \times \Conf^{\lambda''}]_{\disj}\] where $\cY^{\lambda,+}_{\mu}$ is an attracting locus in $\cY^{\lambda}$ and $\lambda=\lambda'+\lambda''$ (note here $\mu$ should simply be considered as indexing a stratum, not as a coweight). Now the result follows from the fact that \[\add: \Conf \times \Conf \to \Conf\] is \'{e}tale when restricted to the disjoint locus and point $3$ of Proposition \ref{tstructure} applied component-wise. 
\end{proof}

\subsection{The semi-infinite sheaf for compactified Zastava spaces.}\label{semiinfiniteforzastava} We will now consider the special case of the compactified Zastava $\pZ$, where we let $\G_m$ act via a regular dominant coweight $\G_m \to T$. Since $\pZ$ is proper over $\Spec(k)$, it is automatically $\G_m$-complete. Hence we may consider its t-structure together with its factorization algebra $\IC^{\frac{\infty}{2}}(\pZ)$. It is the goal of the next few sections to produce a canonical isomorphism \[\IC^{\frac{\infty}{2}}(\pZ) \overset{\sim}{\longrightarrow} \sIC \] of factorization algebras which extends the tautological identification over $\cZ^-$ (see Corollary \ref{cor: semiinfinite t-structures coincide}). Here the factorization structure on $\IC^{\frac{\infty}{2}}(\pZ)$ is obtained from Lemma \ref{factcomplete}.

\subsection{Attracting loci in the compactified Zastava.} Recall that we have a natural map \[\newoverline{\mathfrak{p}}: \pZ \longrightarrow \dBun_N \] which is $T$-equivariant. Define a stratification of $\pZ$ indexed by coweights in $\Lambda^-$ where the stratum $\pZ^{\lambda}_{=\mu}$ corresponding to $\mu$ in $\pZ^{\lambda}$ fits in a Cartesian square \[\begin{tikzcd}\label{basechangestrata}
	{\pZ^{\lambda}_{=\mu}} & {\pZ^{\lambda}} \\
	{\dBun^{=\mu}_N} & {\dBun_N}
	\arrow[from=1-1, to=1-2]
	\arrow[from=1-2, to=2-2]
	\arrow[from=1-1, to=2-1]
	\arrow[from=2-1, to=2-2]
\end{tikzcd}\] where we recall \begin{equation}\label{Nstratum} \dBun^{=\mu}_N \overset{\sim}{\longrightarrow} X^{\mu} \times_{\Bun_T} \Bun_B\end{equation} is the stratum of $\dBun_N$ classifying reductions with zeroes of total order exactly $\mu$. For example, the disjoint union 
\[\pZ_{=0} \coloneqq \coprod_{\lambda} \pZ_{=0}^{\lambda}\] of strata $\pZ^{\lambda}_{=0}$ is nothing other than the opposite affine Zastava $\cZ^-$. 

We also obtain a second stratification of $\pZ^{\lambda}$ whose strata are preimages of the strata $X^{\lambda-\mu} \times \Bun^{\mu}_{B^-}$ inside $\dBun_{B^-}$. Let ${_{=\mu}}{\pZ^{\lambda}}$ denote the $\mu$th stratum of $\pZ^{\lambda}$, by definition equal to the preimage of $X^{\lambda-\mu} \times \Bun^{\mu}_{B^-}$. In particular the $0$th stratum is $\cZ^{\lambda}$. We have the next easy but important lemma. 

\begin{lemma}\label{attractors}
    The strata $\pZ^{\lambda}_{=\mu}$ are the attracting loci in $\pZ^{\lambda}$. The corresponding repelling loci are the strata ${_{=\mu}}{\pZ^{\lambda}}$.
\end{lemma}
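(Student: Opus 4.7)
The plan is to identify the Bia{\l}ynicki-Birula decomposition of $\pZ^{\lambda}$ by comparing it with the canonical stratifications of $\dBun_N$ and $\dBun_{B^-}$ under the $T$-equivariant projections $\ofp$ and $\ofq$, and then to verify the identification explicitly via the factorizable affine Grassmannian.

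First I would analyze the $\G_m$-action on $\dBun_N$ directly: it comes from the adjoint action of $T$ on $N$, and for a regular dominant $\G_m \to T$ every positive root has strictly positive weight, so $\G_m$ contracts $N$ to the identity. Under the presentation $\dBun_N^{=\mu} \simeq X^{\mu} \times_{\Bun_T} \Bun_B$ of \eqref{Nstratum}, the action is trivial on the $X^{\mu}$ factor and contracts the fibers of $\Bun_B \to \Bun_T$ to the canonical section $\Bun_T \hookrightarrow \Bun_B$ induced by the splitting $B = TN$. Thus $\dBun_N^{=\mu}$ is precisely the attracting locus in $\dBun_N$ of its fixed component, which is canonically $X^{\mu}$. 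A dual argument applies to the strata $X^{\lambda - \mu} \times \Bun_{B^-}^{\mu}$ in $\dBun_{B^-}$: since the roots of $N^-$ all acquire strictly negative weights for a regular dominant coweight, these strata appear as repelling loci rather than attracting ones.

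I would then transfer these descriptions across the $T$-equivariant open embedding $\pZ \hookrightarrow \dBun_N \times_{\Bun_G} \dBun_{B^-}$ and verify the identification fiber-by-fiber over $\Conf^{\lambda}$. Using factorization of $\pZ$ over $\Conf$ and $\G_m$-equivariance of the factorization isomorphism, it suffices to work with fibers over a diagonal point $\lambda \cdot x \in X^{\lambda}$, where $\pZ^{\lambda}|_{\lambda x}$ identifies with $\newoverline{S}^0_x \cap \newoverline{S}^{-,\lambda}_x \subseteq \Gr_{G,x}$ and the $\G_m$-action becomes the restriction of the natural $T$-action on $\Gr_{G,x}$ via $T \hookrightarrow G \hookrightarrow \mathfrak{L}_x G$. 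On $\Gr_{G,x}$, for a regular dominant coweight, the attracting locus of the $T$-fixed point $t^{\nu}$ is the $\mathfrak{L}_x N$-orbit $S^{\nu}_x$ and the repelling locus is the $\mathfrak{L}_x N^-$-orbit $S^{-,\nu}_x$, because conjugation by $t \to 0$ contracts $\mathfrak{L}_x N$ and expands $\mathfrak{L}_x N^-$. Intersecting with $\newoverline{S}^0_x \cap \newoverline{S}^{-,\lambda}_x$ and matching the Grassmannian index $\nu$ with the stratum index $\mu$ via $\ofp$ and $\ofq$ then recovers the fibers over $\lambda x$ of $\pZ^{\lambda}_{=\mu}$ and ${_{=\mu}}{\pZ^{\lambda}}$, respectively.

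I expect the main obstacle to lie in the combinatorial bookkeeping: one has to carefully track how the indexing by $\nu \in \Lambda$ of the semi-infinite orbits in $\Gr_{G,x}$ translates, under $\ofp$ and $\ofq$, to the indexing by $\mu$ of the Drinfeld-compactification strata, and confirm that the embedding $\pZ \subseteq \newoverline{S}^0_{\Conf} \cap \newoverline{S}^-_{\Conf}$ selects exactly the claimed attracting and repelling pieces on each side. Once the fiberwise identification is established, globality on $\pZ^{\lambda}$ follows from another application of factorization, since both the Bia{\l}ynicki-Birula decomposition and the stratifications coming from $\dBun_N$ and $\dBun_{B^-}$ are compatible with the factorization structure over $\Conf^{\lambda}$.
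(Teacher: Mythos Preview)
Your proposal is correct but takes a more elaborate route than the paper. The paper proceeds directly at the level of the scheme $\pZ^{\lambda}$: it invokes the isomorphisms
\[
\pZ^{\lambda}_{=\mu} \;\overset{\sim}{\longrightarrow}\; X^{\mu} \times_{\Bun_T} \cZ^{-,\lambda-\mu}_{\Bun_T}
\qquad\text{and}\qquad
{_{=\mu}}{\pZ^{\lambda}} \;\overset{\sim}{\longrightarrow}\; \cZ^{\mu} \times X^{\lambda-\mu}
\]
coming from the stratifications of $\dBun_N$ and $\dBun_{B^-}$, and then simply cites the known contraction/repulsion of the relative Zastava $\cZ^{\pm}_{\Bun_T}$ onto $\Bun_T \times \Conf$ under the chosen regular dominant coweight (as in \cite{BFGM}). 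No passage to fibers or to the affine Grassmannian is needed. Your approach, by contrast, first analyzes the $\G_m$-action on the \emph{stacks} $\dBun_N$ and $\dBun_{B^-}$, then reduces via factorization to single-point fibers and identifies the Bia{\l}ynicki-Birula cells there with intersections of semi-infinite orbits in $\Gr_{G,x}$. This works, and the Grassmannian picture is conceptually illuminating, but it carries two costs: you must justify attractor/repeller formation for the stacky factors (or argue that $\uMap_{\G_m}(\A^1,-)$ commutes with the relevant fiber products so that everything can be checked on the scheme $\pZ^{\lambda}$), and you must explain why the fiberwise-plus-factorization check pins down the global locally closed strata, whereas the paper's argument gives the global identification in one stroke.
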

\begin{proof}
    By the isomorphism \ref{Nstratum}, we have an isomorphism \[\pZ^{\lambda}_{=\mu} \overset{\sim}{\longrightarrow} X^{\mu} \times_{\Bun_T} \cZ^{-,\lambda-\mu}_{\Bun_T},\] where $\cZ^{-,\lambda-\mu}_{\Bun_T}$ is the relative Zastava from \cite{BFGM}. Now our choice of coweight $\G_m \to T$ contracts $\cZ^{-,\lambda-\mu}_{\Bun_T}$ to $\Bun_T \times X^{\lambda-\mu}$, from which we conclude that $\G_m$ contracts $\pZ^{\lambda}_{=\mu}$ to the scheme $X^{\mu} \times X^{\lambda-\mu}$, which is obviously fixed by $\G_m$. 
    
    The statement about repelling loci follows from a similar argument using the isomorphism \[{_{=\mu}}{\pZ^{\lambda}} \overset{\sim}{\longrightarrow} \cZ^{\mu} \times X^{\lambda-\mu}\] obtained by the definition of the stratification. Again the fact that $\G_m$ repels this scheme from $X^{\mu} \times X^{\lambda-\mu}$ follows from our choice of coweight. 
\end{proof}

Let us denote by $\pi_{=\mu}: \pZ_{=\mu} \to X^{\mu} \times \Conf$ and $\sigma_{=\mu}: X^{\mu} \times \Conf \to \pZ_{=\mu}$ the resulting projection from attracting locus to fixed points and the natural embedding, respectively. We denote by $\pi^-_{=\mu}$ and $\sigma^-_{=\mu}$ the corresponding maps for the repelling loci ${_{=\mu}}{\pZ}$. 

\subsection{Zastava to global comparison.} Recall the category $\operatorname{SI}^{\leq 0}_{\operatorname{glob}}$ from \cite{Ga2}. It is the full subcategory of $\mathfrak{D}(\newoverline{\Bun}_N)$ whose $!$-pullback to each stratum $\newoverline{\Bun}^{=\lambda}_N$ is the $!$-pullback of an object on $X^{\lambda}$. As shown in \emph{loc. cit.}, $\operatorname{SI}^{\leq 0}_{\operatorname{glob}}$ is compatible with the perverse t-structure on $\mathfrak{D}(\newoverline{\Bun}_N)$ and thus also acquires a t-structure. 

Let us compare the normalized perverse t-structure on $\operatorname{SI}^{\leq 0}_{\operatorname{glob}}$ (shifted by $d=\operatorname{dim}(\Bun_N)=(g-1)\operatorname{dim}(N)$) and the semi-infinite t-structure on $\pZ^{\lambda}$ for an arbitrary $\lambda \in \Lambda^-$. The basic idea is that the semi-infinite t-structures on $\pZ^{\lambda}$ and $\dBun_N$ are set up in such a way that they ``think" the projection $\pZ^{\lambda} \to \dBun_N$ is smooth. 

\begin{proposition}\label{texactzastava}
    Pullback $\newoverline{\mathfrak{p}}^!$ along the projection \[\newoverline{\mathfrak{p}}: \pZ \to \dBun_N\] induces a t-exact functor \[\operatorname{SI}^{\leq 0}_{\operatorname{glob}} \longrightarrow \operatorname{SI}(\pZ^{\lambda})\] for every coweight $\lambda$. 
\end{proposition}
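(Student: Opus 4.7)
The plan is to verify two things: (i) that $\newoverline{\mathfrak{p}}^!$ carries $\operatorname{SI}^{\leq 0}_{\operatorname{glob}}$ into $\operatorname{SI}(\pZ^\lambda)$, and (ii) that the resulting functor is t-exact with respect to the shifted perverse t-structure on $\operatorname{SI}^{\leq 0}_{\operatorname{glob}}$ and the semi-infinite t-structure on $\operatorname{SI}(\pZ^\lambda)$.

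For (i), let $c \in \operatorname{SI}^{\leq 0}_{\operatorname{glob}}$; by hypothesis there exists $d_\mu \in \fD(X^\mu)$ with $c|_{\dBun^{=\mu}_N} \simeq \pi_{X^\mu}^!(d_\mu)$, writing $\pi_{X^\mu}: \dBun^{=\mu}_N \simeq X^\mu \times \Bun_N \to X^\mu$ for the tautological projection. The square defining $\pZ^\lambda_{=\mu}$ is Cartesian, so base change gives
\[
j^!_\mu \newoverline{\mathfrak{p}}^!(c) \simeq (\pi_{X^\mu} \circ \newoverline{\mathfrak{p}}^{=\mu})^!(d_\mu).
\]
By Lemma \ref{attractors}, the composition $\pi_{X^\mu} \circ \newoverline{\mathfrak{p}}^{=\mu}$ factors through the attraction projection $\pi_{=\mu}: \pZ^\lambda_{=\mu} \to X^\mu \times X^{\lambda-\mu}$ onto the $\G_m$-fixed locus (indeed, under the identification $\pZ^\lambda_{=\mu} \simeq X^\mu \times_{\Bun_T} \cZ^{-,\lambda-\mu}_{\Bun_T}$ both maps read off the divisor of singularities on the first factor). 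Therefore $j^!_\mu \newoverline{\mathfrak{p}}^!c \in \operatorname{SI}(\pZ^\lambda_{=\mu})$.

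For (ii), both t-structures are characterized stratum by stratum, so I would proceed one stratum at a time. Concretely, membership of $c$ in $\operatorname{SI}^{\leq 0,\heartsuit}_{\operatorname{glob}}$ translates into a specific perverse degree condition on each $d_\mu$ on $X^\mu$, while membership of $\newoverline{\mathfrak{p}}^!c$ in $\operatorname{SI}(\pZ^\lambda)^{\heartsuit}$ translates into a specific perverse degree condition on $\operatorname{pr}^!_{X^\mu}(d_\mu)$ on $X^\mu \times X^{\lambda-\mu}$. The key geometric input is that the restriction $\newoverline{\mathfrak{p}}^{=\mu}: \pZ^\lambda_{=\mu} \to \dBun^{=\mu}_N$ is smooth of the expected relative dimension, which permits matching the two conditions by a dimension count involving $\dim(\pZ^\lambda) = 2|\lambda|$, $d = \dim(\Bun_N)$, and $|\lambda - \mu|$. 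The analogous analysis for $j^*_\mu$, needed to control connectivity, proceeds by invoking Braden's theorem and running the same argument on the repelling stratum ${_{=\mu}}\pZ^\lambda \simeq \cZ^\mu \times X^{\lambda-\mu}$ of Lemma \ref{attractors}.

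The main obstacle will be establishing the smoothness of $\newoverline{\mathfrak{p}}^{=\mu}$ in general. On the open stratum ($\mu = 0$) this is classical, since $\cZ^- \to \Bun_N$ is known to be smooth. For arbitrary $\mu$ one must simultaneously control the singularities of the compactified Zastava and of Drinfeld's compactification, and I would address this via the explicit factorization description $\pZ^\lambda_{=\mu} \simeq X^\mu \times_{\Bun_T} \cZ^{-,\lambda-\mu}_{\Bun_T}$ from Lemma \ref{attractors}, using the $\Conf$-action on $\pZ$ to reduce to the open case.
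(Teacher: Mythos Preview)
Your overall strategy---reduce to strata, do a dimension count for the $!$-restriction, and invoke Braden's theorem for the $*$-restriction---matches the paper's. But the proposal contains a genuine error in what you identify as the ``key geometric input.''

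The map $\newoverline{\mathfrak{p}}^{=\mu}: \pZ^\lambda_{=\mu} \to \dBun^{=\mu}_N$ is \emph{not} smooth, even for $\mu=0$. The opposite affine Zastava $\cZ^-$ is singular in general (this is precisely why Section~\ref{kontsevich} introduces the Kontsevich resolution $\cZ^-_K$; see Proposition~\ref{zasissmooth}), and since $\Bun_N$ is smooth, the map $\cZ^- \to \Bun_N$ cannot be smooth. So your proposed reduction to the ``classical'' $\mu=0$ case rests on a false premise, and no amount of factorization will salvage it.

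The good news is that smoothness of $\newoverline{\mathfrak{p}}^{=\mu}$ is not needed. The semi-infinite t-structure on $\operatorname{SI}(\pZ^\lambda_{=\mu})$ is defined entirely in terms of the \emph{fixed locus} $X^\mu \times X^{\lambda-\mu}$: an object $\pi_{=\mu}^!(e)$ is coconnective if and only if $e$ sits in the correct perverse range on $X^\mu \times X^{\lambda-\mu}$. Your part (i) already establishes that $j_\mu^!\newoverline{\mathfrak{p}}^!(c) \simeq \pi_{=\mu}^!\operatorname{pr}_{X^\mu}^!(d_\mu)$, so for left t-exactness you only need to compare the perverse degree of $d_\mu$ on $X^\mu$ (governed by the relative dimension of the smooth fibration $\dBun^{=\mu}_N \to X^\mu$, namely $d + \langle 2\check{\rho},\mu\rangle$) with that of $\operatorname{pr}_{X^\mu}^!(d_\mu)$ on $X^\mu \times X^{\lambda-\mu}$. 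This is a pure dimension count and never touches the singular stratum $\pZ^\lambda_{=\mu}$ itself. The paper carries this out explicitly.

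For right t-exactness, Braden's theorem does convert $j^*_\mu\newoverline{\mathfrak{p}}^!(c)$ into $(\sigma^-_{=\mu})^*$ of a $!$-pullback along the repelling side, as you say. But there is then a non-obvious step: one must identify this with a $!$-pullback of the \emph{global} $*$-restriction $j^*_{=\mu}(c)$ along $X^\mu \times X^{\lambda-\mu} \to X^\mu$. This is a base-change-type identity that does not follow formally; the paper imports it from \cite[\S3.7.8]{Ga2} (and sketches the argument for general parabolics in Proposition~\ref{parabolictstr}, using the groupoid $\mathscr{H}_{N_P}$ and weak factorization). Your proposal does not flag this as requiring justification.
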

\begin{proof}
    For every $\mu \leq \lambda$, the following diagram commutes: \[\begin{tikzcd}
	{X^{\mu} \times X^{\lambda-\mu}} & {\pZ_{=\mu}} & {\pZ^{\lambda}} \\
	{X^{\mu}} & {\dBun_N^{=\mu}} & {\dBun_N}
	\arrow[from=1-2, to=1-3]
	\arrow["{\newoverline{\mathfrak{p}}}", from=1-3, to=2-3]
	\arrow[from=1-2, to=2-2]
	\arrow[from=2-2, to=2-3]
	\arrow[from=1-2, to=1-1]
	\arrow["{\operatorname{pr}_{X^{\mu}}}"', from=1-1, to=2-1]
	\arrow[from=2-2, to=2-1]
\end{tikzcd}\] It follows that $\newoverline{\mathfrak{p}}^!$ induces a functor on semi-infinite categories, by construction. Moreover, the projection $\dBun^{=\mu}_N \to X^{\mu}$ is a fibration with fiber at $D \in X^{\mu}$ given by the fiber $\Bun^{\mathscr{P}^0_T(-D)}_N$ of $\Bun_B$ over $\mathscr{P}^0_T(-D) \in \Bun_T$ by Corollary $2.2.9$ of \cite{FGV}. According to the discussion in \emph{loc.cit.}, we have \[ \operatorname{dim}(\Bun^{\mathscr{P}^0_T(-D)}_N)=\operatorname{dim}(\Bun_N)+\langle \check{\rho}, 2\mu \rangle.\] It follows that for an object $c \in \operatorname{SI}^{= \mu}_{\operatorname{glob}}$ sitting in semi-infinite degrees $\geq -\operatorname{dim}(\Bun_N)$ which is pulled back from $c' \in \fD(X^{\mu})$, then $c'$ lies in cohomological degrees $\geq \langle \check{\rho},2\mu \rangle$. In other words, $c'[\langle \check{\rho},2\mu \rangle ]$ is coconnective in the perverse t-structure. 

Now the relative dimension of $\operatorname{pr}_{X^{\mu}}$ is $\langle \check{\rho},\mu-\lambda \rangle$, and therefore we have \[\operatorname{pr}^!_{X^{\mu}}(c') \in \fD(X^{\mu} \times X^{\lambda-\mu})^{\geq \langle \check{\rho}, 2\mu-\mu+\lambda \rangle}\] where we emphasize $\langle \check{\rho},2\mu-\mu+\lambda \rangle= \langle \check{\rho},\lambda+\mu \rangle$. Since the relative dimension of \[ \pi_{=\mu}: \pZ_{=\mu} \longrightarrow X^{\mu} \times X^{\lambda-\mu}\] is also $\langle \check{\rho},\mu-\lambda \rangle$ and the object $\operatorname{pr}_{X^{\mu}}^!(c')[\langle \check{\rho}, \mu -\lambda \rangle ]$ lies in lies perverse cohomological degrees \[ \geq \langle \check{\rho},\lambda+\mu - \mu + \lambda \rangle = \langle \check{\rho}, 2\lambda \rangle = -\operatorname{dim}(\pZ^{\lambda})\]we conclude that $\newoverline{\mathfrak{p}}^!$ is left t-exact as desired. 

To see that $\newoverline{\mathfrak{p}}^!$ is right t-exact, consider the following diagram \[\begin{tikzcd}
	& {X^{\mu} \times X^{\lambda-\mu}} & {\cZ^{\mu} \times X^{\lambda-\mu}} & {\pZ^{\lambda}} \\
	{X^{\mu}} & {\dBun^{=\mu}_N} & {\dBun_N} & {\dBun_N}
	\arrow["{j^-_{=\mu}}", from=1-3, to=1-4]
	\arrow["{\newoverline{\mathfrak{p}}}", from=1-4, to=2-4]
	\arrow["{\mathfrak{p} \circ \operatorname{pr}_{\cZ^{\mu}}}", from=1-3, to=2-3]
	\arrow["{\operatorname{id}_{\dBun_N}}"', from=2-3, to=2-4]
	\arrow["{j_{=\mu}}"', from=2-2, to=2-3]
	\arrow["{\sigma^-_{=\mu}}", from=1-2, to=1-3]
	\arrow["{\iota_{=\mu} \circ \operatorname{pr}_{X^{\mu}}}", from=1-2, to=2-2]
	\arrow[from=2-2, to=2-1]
	\arrow[from=1-2, to=2-1]
\end{tikzcd}\] Let $c \in \operatorname{SI}^{\leq 0}_{\operatorname{glob}}$ be connective. By Braden's theorem, the $*$-restriction $j^*_{=\mu}\newoverline{\mathfrak{p}}^!(c)$ is canonically equivalent to $(\sigma^-_{=\mu})^*(\mathfrak{p} \circ \operatorname{pr}_{\cZ^{\mu}})^!(c)$. By Section $3.7.8$ of \cite{Ga2}, the latter is equivalent to $(\iota_{=\mu} \circ \operatorname{pr}_{X^{\mu}})^!j^*_{=\mu}(c)$ (see also the proof of Proposition \ref{parabolictstr} for a sketch of the argument in the case of a general parabolic). Now the argument follows a similar one as the one for coconnective objects by flipping the inequalities.
\end{proof}



\subsection{Semi-infinite t-structure on effective sheaves.} For each $\mu \in \Lambda^-$, the morphisms \[\pi_{=\mu}: \pZ_{=\mu} \longrightarrow X^{\mu} \times \Conf, \,\,\, \text{and} \,\,\, \sigma_{=\mu}: X^{\mu} \times \Conf \longrightarrow \pZ_{=\mu}\] are $\Conf$-equivariant, where $\Conf$ acts on $X^{\mu} \times \Conf$ on the second factor. The pullback $(\pi_{=\mu})^!$ therefore induces a fully-faithful functor \[\fD(X^{\mu}) \overset{\sim}{\longrightarrow} \fD_{\eff}(X^{\mu} \times \Conf) \longrightarrow \fD_{\eff}(\pZ_{=\mu})\] of invariant categories. Let $\operatorname{SI}^{=\mu}_Z$ be the essential image of this functor and write $f_{\mu}$ for the tautological equivalence between $\fD(X^{\mu})$ and $\operatorname{SI}^{=\mu}_Z$.



Using the equivalence \[f_{\mu}: \fD(X^{\mu}) \overset{\sim}{\longrightarrow} \operatorname{SI}^{=\mu}_Z\] of the previous paragraph, we may define a t-structure on $\operatorname{SI}^{=\mu}_Z$ by declaring an object $c$ to be connective/coconnective if $f_{\mu}^{-1}(c)[\langle \check{\rho}, 2\mu \rangle ]$ is connective/coconnective\footnote{We emphasize here that $\mu$ is a \emph{negative} coweight.}. It follows that the forgetful functor \[\operatorname{SI}^{=\mu}_Z \longrightarrow \operatorname{SI}(\pZ_{=\mu})\] is t-exact. 

Since the $\G_m$ and $\Conf$ actions on $\pZ$ commute, we have an action of $\fD(\Conf)$ on the monodromic category $\fD(\pZ)^{\G_m-\operatorname{mon}}$, where we view $\fD(\Conf)$ as a symmetric monoidal category via convolution. Hence we may consider the category \[\fD_{\eff}(\pZ)^{\G_m-\operatorname{mon}} \coloneqq \Hom_{\fD(\Conf)}\big(\operatorname{Vect},\fD(\pZ)^{\G_m-\operatorname{mon}}\big)\] of invariants. Equivalently, $\fD_{\eff}(\pZ)^{\G_m-\operatorname{mon}}$ is the full subcategory of $\fD_{\eff}(\pZ)$, equal to the preimage of $\fD(\pZ)^{\G_m-\operatorname{mon}}$ under the forgetful functor. Now the embeddings $j_{\mu}: \pZ_{=\mu} \hookrightarrow \pZ$ are $\Conf$-equivariant for every $\mu$, and so we define a full subcategory \[ \operatorname{SI}^{\leq 0}_Z \longhookrightarrow \fD_{\eff}(\pZ)^{\G_m-\operatorname{mon}}\] consisting of objects $c$ such that $j^!_{=\mu}(c)$ belongs to $\operatorname{SI}^{=\mu}_Z$ for every $\mu$. 

\begin{proposition}
    The left adjoint of the pushforward \[(j_{\mu})_*: \operatorname{SI}^{=\mu}_Z \longrightarrow \operatorname{SI}^{\leq 0}_Z \] is defined and as a result we obtain a t-structure on $\operatorname{SI}^{\leq 0}_Z$ such that the forgetful functor $\operatorname{SI}^{\leq 0}_Z \to \operatorname{SI}(\pZ)$ is t-exact.
\end{proposition}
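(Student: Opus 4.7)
The plan is to bootstrap from Proposition \ref{tstructure} by verifying that every construction used in its proof is compatible with the $\fD(\Conf)$-module structure on $\fD(\pZ)^{\G_m-\operatorname{mon}}$ that cuts out the effective subcategory.

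First, I would check that $(j_{\mu})_*: \operatorname{SI}^{=\mu}_Z \to \operatorname{SI}^{\leq 0}_Z$ is well-defined. The locally closed embeddings $j_{\mu}$ are $\Conf$-equivariant with respect to the defect action, since the strata $\pZ_{=\mu}$ are defined by pullback from the $\Conf$-equivariant strata $\dBun^{=\mu}_N$. Consequently $(j_{\mu})_*$ is $\fD(\Conf)$-linear and preserves effective objects, and the membership of $(j_{\mu})_*(c)$ in $\operatorname{SI}^{\leq 0}_Z$ is inherited from the non-effective statement via base change along the stratification.

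Second, I would construct the left adjoint $j^*_{\mu}: \operatorname{SI}^{\leq 0}_Z \to \operatorname{SI}^{=\mu}_Z$ by restricting the left adjoint on $\operatorname{SI}(\pZ)$ produced by Proposition \ref{tstructure}. The key point is that this left adjoint, which by Braden's theorem can be computed as a hyperbolic restriction through the repelling locus ${_{=\mu}}{\pZ}$, is assembled as a composition of pushforwards and pullbacks along the $\Conf$-equivariant geometric data $\pi_{=\mu}, \sigma_{=\mu}, \pi^-_{=\mu}, \sigma^-_{=\mu}$. Since the $\G_m$- and $\Conf$-actions on $\pZ$ commute by construction of $\pZ$ as an effective factorization space with a $\G_m$-action, each of these functors is individually $\fD(\Conf)$-linear, hence so is the composite $j^*_{\mu}$. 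Passing to the $\fD(\Conf)$-invariant subcategory yields the desired left adjoint on effective sheaves.

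Third, having the adjunctions $(j^*_{\mu}, (j_{\mu})_*)$ between strata and total categories, and the t-structures on each $\operatorname{SI}^{=\mu}_Z$ (which are compatible with the forgetful functor $\operatorname{SI}^{=\mu}_Z \to \operatorname{SI}(\pZ_{=\mu})$ by construction), I would glue these into a t-structure on $\operatorname{SI}^{\leq 0}_Z$ by the Cousin/recollement argument used in the proof of Proposition \ref{tstructure}. Explicitly, one declares $c \in \operatorname{SI}^{\leq 0}_Z$ connective (respectively coconnective) iff for every $\mu$ the shifted restriction $j^*_{\mu}(c)[-\operatorname{dim}(\pZ)]$ (respectively $j^!_{\mu}(c)[-\operatorname{dim}(\pZ)]$) lies in the corresponding half of $\operatorname{SI}^{=\mu}_Z$, with the shift understood componentwise over $\pZ = \coprod_{\lambda} \pZ^{\lambda}$. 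Since these conditions mirror the defining conditions of the t-structure on $\operatorname{SI}(\pZ)$ from Proposition \ref{tstructure} and the stratum-level forgetful functors are t-exact, the forgetful functor $\operatorname{SI}^{\leq 0}_Z \to \operatorname{SI}(\pZ)$ matches connective and coconnective parts term-by-term and is therefore t-exact.

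The principal obstacle I expect is the $\fD(\Conf)$-linearity of $j^*_{\mu}$ in the second step: the left adjoint is only partially defined on $\fD(\pZ)^{\G_m-\operatorname{mon}}$, so one cannot simply invoke abstract adjunction calculus for rigid monoidal categories without first placing Braden's theorem itself in a $\fD(\Conf)$-enriched setting. The cleanest approach is to reformulate Braden's theorem from the outset as a statement about $\fD(\Conf)$-linear adjunctions between $\fD(\Conf)$-module categories, using that all the relevant geometric maps live over $\Conf$; once this is done, the $\fD(\Conf)$-linearity of the Braden-style formula for $j^*_{\mu}$ is essentially tautological, and the remainder of the argument is a formal recollement.
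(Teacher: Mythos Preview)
Your proposal is correct and follows essentially the same approach as the paper: the paper's proof also reduces to showing that the relevant pushforward has a left adjoint by observing that both the attracting and repelling embeddings $\pZ_{=\mu} \hookrightarrow \pZ$ and ${_{=\mu}}{\pZ} \hookrightarrow \pZ$ are $\Conf$-equivariant, and then invoking Braden's theorem in this $\Conf$-equivariant setting. Your write-up is considerably more explicit---in particular you correctly isolate the $\fD(\Conf)$-linearity of the Braden-style left adjoint as the only nontrivial point---whereas the paper dispatches the whole argument in three sentences.
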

\begin{proof}
    By construction, it is enough to verify that the functor $(j_{\mu})_*(\pi^-_{\mu})^!$ has a left adjoint. Note that the embeddings \[{_{=\mu}}{\pZ} \longhookrightarrow \pZ \,\,\, \text{and} \,\,\, \pZ_{=\mu} \longhookrightarrow \pZ \] are maps of $\Conf$-spaces. The result then follows from $\Conf$-equivariance together with an application of Braden's theorem. 
\end{proof}

We will consider its perverse t-structure, renormalized so that, for example, $\omega_{\Bun_N}$ is in the heart of $\operatorname{SI}^{=0}_{\operatorname{glob}}$. More precisely, an object $c$ is connective/coconnective if $c[-\operatorname{dim}(\Bun_N)]$ is connective/coconnective. it is clear from the definitions that the pullback $\newoverline{\mathfrak{p}}^!$ induces a functor \[\operatorname{SI}^{\leq 0}_{\operatorname{glob}} \longrightarrow \operatorname{SI}^{\leq 0}_Z.\] We are finally ready to prove the main result of this section. 

\begin{theorem}\label{texactequivalence}
    The functor \[\newoverline{\mathfrak{p}}^!: \operatorname{SI}^{\leq 0}_{\operatorname{glob}} \longrightarrow \operatorname{SI}^{\leq 0}_Z\] is a t-exact equivalence. In particular, we have a canonical isomorphism \[\newoverline{\mathfrak{p}}^!(\IC^{\operatorname{ren}}_{\dBun_N}) \simeq \IC^{\frac{\infty}{2}}(\pZ)\] extending the tautological identifications over $\cZ^-$. 
\end{theorem}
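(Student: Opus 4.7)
The plan is to first deduce t-exactness on the glued categories from the stratum-wise result of Proposition \ref{texactzastava}, then show that $\newoverline{\mathfrak{p}}^!$ is an equivalence by checking both sides are recollements of the same stratum data $\fD(X^{\mu})$ ranging over $\mu \in \Lambda^{+}$, and finally deduce the IC sheaf identification as a formal corollary. The key observation enabling the argument is that both $\operatorname{SI}^{\leq 0}_{\operatorname{glob}}$ and $\operatorname{SI}^{\leq 0}_Z$ are, essentially by definition, constructed by recollement along the same stratification: the strata of $\dBun_N$ indexed by positive coweights $\mu$ correspond under $\newoverline{\mathfrak{p}}^{-1}$ to the Zastava strata $\pZ_{=\mu}$, via a Cartesian square of the kind displayed before Lemma \ref{attractors}.

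\textbf{t-exactness and equivalence.} For t-exactness, Proposition \ref{texactzastava} already gives that $\newoverline{\mathfrak{p}}^!:\operatorname{SI}^{\leq 0}_{\operatorname{glob}} \to \operatorname{SI}(\pZ^{\lambda})$ is t-exact on each component $\pZ^{\lambda}$; since the t-structure on $\operatorname{SI}^{\leq 0}_Z$ is defined so that the forgetful functor to $\operatorname{SI}(\pZ)$ is t-exact stratum by stratum, and since $\newoverline{\mathfrak{p}}$ is $\Conf$-equivariant for the trivial action on $\dBun_N$ (so all pullbacks acquire a canonical effective structure), t-exactness of $\newoverline{\mathfrak{p}}^!$ at the global level follows immediately. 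For the equivalence, on each stratum we have a commuting diagram
\[
\begin{tikzcd}
X^{\mu} \times X^{\lambda-\mu} \ar[r, "\sigma_{=\mu}"] \ar[d] & \pZ^{\lambda}_{=\mu} \ar[d] \\
X^{\mu} \ar[r] & \dBun_N^{=\mu},
\end{tikzcd}
\]
and under the tautological equivalences $\operatorname{SI}^{=\mu}_{\operatorname{glob}} \simeq \fD(X^{\mu})$ and $\operatorname{SI}^{=\mu}_Z \simeq \fD(X^{\mu})$ the stratum-wise pullback is readily checked to be the identity (after tracking the cohomological shift $\langle \check{\rho}, 2\mu\rangle$ already appearing in the proof of Proposition \ref{texactzastava}). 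Since the gluing data (the partially defined left adjoints constructed via Braden's theorem in Proposition \ref{tstructure}) is manifestly $\Conf$-equivariant and compatible with $\newoverline{\mathfrak{p}}^!$, the functor assembles into an equivalence of recollements. Alternatively, one can bypass the stratum argument by invoking Proposition \ref{torsor}: the factorization $\pZ \to \cZ^{\gen} \xrightarrow{\mathfrak{p}^{\gen}} \dBun_N$, combined with the identification $\fD_{\eff}(\pZ) \simeq \fD(\cZ^{\gen})$ and the universal homological contractibility of $\mathfrak{p}^{\gen}$, yields fully faithfulness at once; essential surjectivity is then checked on the semi-infinite subcategories by passing to strata.

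\textbf{IC sheaf and main obstacle.} Given the t-exact equivalence, the isomorphism $\newoverline{\mathfrak{p}}^!(\IC^{\operatorname{ren}}_{\dBun_N}) \simeq \IC^{\frac{\infty}{2}}(\pZ)$ is formal: both sheaves are characterized as the intermediate extension of $\omega$ along the open stratum (Drinfeld's open $\Bun_N \hookrightarrow \dBun_N$ on one side, $\cZ^- \hookrightarrow \pZ$ on the other, which correspond under the Cartesian square), and a t-exact equivalence transports intermediate extensions to intermediate extensions. The identification over $\cZ^-$ is tautological because $\cZ^- \to \Bun_N$ is smooth, so $!$-pullback sends $\IC^{\operatorname{ren}}_{\Bun_N} = \omega_{\Bun_N}[-2\dim\Bun_N]$ to the dualizing sheaf of $\cZ^-$ up to the expected shift. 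The main technical obstacle, and the step I expect to require the most care, is verifying that the gluing data on the two sides match under pullback: concretely, that the partially defined left adjoints $j^*_{=\mu}$ on $\operatorname{SI}^{\leq 0}_Z$ are computed by Braden-type contraction maps that are compatible with the corresponding functors on $\operatorname{SI}^{\leq 0}_{\operatorname{glob}}$ under $\newoverline{\mathfrak{p}}^!$. This should follow from a Cartesian-square argument of the kind appearing at the end of the proof of Proposition \ref{texactzastava}, using that $\pZ_{=\mu}$ and $\dBun_N^{=\mu}$ fit into a $\G_m$-equivariant Cartesian diagram and that the attracting/repelling loci on $\pZ^{\lambda}$ are compatible with those induced from $\dBun_N$ via the map $\newoverline{\mathfrak{p}}$.
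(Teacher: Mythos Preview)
Your proposal is correct, and the ``alternative'' route you sketch via Proposition~\ref{torsor} is in fact exactly the paper's proof: the paper invokes Proposition~\ref{torsor} to get full faithfulness of $\newoverline{\mathfrak{p}}^!$ immediately, then checks essential surjectivity by noting that $\operatorname{SI}^{\leq 0}_Z$ is generated under colimits by objects $(j_{\mu})_!(c)$ with $c$ pulled back from $X^{\mu}$, and that the stratum-wise functor $\newoverline{\mathfrak{p}}_{\mu}^!: \operatorname{SI}^{=\mu}_{\operatorname{glob}} \to \operatorname{SI}^{=\mu}_Z$ is a t-exact equivalence; t-exactness then follows by reduction to Proposition~\ref{texactzastava}.

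Your primary recollement-matching approach is also valid, but the paper's route is cleaner precisely because the torsor/contractibility result sidesteps the ``main technical obstacle'' you identify: once full faithfulness is known globally, there is no need to separately verify that the Braden-type left adjoints on the two sides intertwine under $\newoverline{\mathfrak{p}}^!$---essential surjectivity on generators suffices, and that only requires the stratum-wise equivalences $\operatorname{SI}^{=\mu}_{\operatorname{glob}} \simeq \fD(X^{\mu}) \simeq \operatorname{SI}^{=\mu}_Z$, which are tautological. So both approaches reach the same conclusion; the paper's ordering (full faithfulness first via geometry, then generation) trades a direct recollement comparison for an appeal to the homological contractibility of $\uMap^{\gen}(X,N)$.
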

\begin{proof}
    By Proposition \ref{torsor}, the functor $\newoverline{\mathfrak{p}}^!$ is fully faithful. Hence it suffices to show that it is essentially surjective. The category $\operatorname{SI}^{\leq 0}_Z$ is generated under colimits by objects of the form $(j_{\mu})_!(c)$, where $c$ is pulled back along the projection \[ \operatorname{pr}_{X^{\mu}} \circ \pi^-_{\mu}: \pZ_{=\mu} \longrightarrow X^{\mu}.\] Moreover, the functor \[\newoverline{\mathfrak{p}}_{\mu}^!: \operatorname{SI}^{=\mu}_{\operatorname{glob}} \longrightarrow \operatorname{SI}^{=\mu}_Z \] induced by pullback along the natural map $\newoverline{\mathfrak{p}}_{\mu}: \pZ_{=\mu} \to \dBun^{=\mu}_N$ is clearly a t-exact equivalence. Now the result follows from an argument similar to Proposition \ref{texactzastava} or by reduction to that case.
    

\end{proof}

As a corollary, we obtain an alternative construction of Gaitsgory's semi-infinite t-structure on the unital subcategory of $\operatorname{SI}^{\leq 0}_{\Ran}$. 

\begin{corollary}\label{cor: semiinfinite t-structures coincide}
    The equivalence of Theorem \ref{effisun} is t-exact when restricted to semi-infinite categories. In particular, we have a canonical isomorphism $\sIC \simeq \IC^{\frac{\infty}{2}}(\pZ)$ of factorization algebras.  
\end{corollary}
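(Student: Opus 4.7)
The plan is to package Theorem \ref{texactequivalence}, Proposition \ref{sicispulledbackfromdrin}, and Theorem \ref{effisun} into a commutative triangle of t-exact equivalences sitting over $\dBun_N$. First I would check that the equivalence $\mathscr{F}: \fD_{\eff}(\pZ) \overset{\sim}{\longrightarrow} \fD_{\untl}(\newoverline{S}^0_{\Ran})$ of Theorem \ref{effisun} restricts to an equivalence between $\operatorname{SI}^{\leq 0}_Z$ and the full subcategory of $\operatorname{SI}^{\leq 0}_{\Ran}$ lying inside $\fD_{\untl}(\newoverline{S}^0_{\Ran})$. This is a geometric matter: under the isomorphism $\pZ_{\Ran} \overset{\sim}{\longrightarrow} \newoverline{S}^0_{\Ran}$ used in the proof of Theorem \ref{effisun}, the stratification of $\pZ$ by attracting loci $\pZ_{=\mu}$ corresponds to the stratification of $\newoverline{S}^0_{\Ran}$ by the semi-infinite orbits $S^{\lambda}_{\Ran}$, and the Bia{\l}ynicki-Birula projection $\pi_{=\mu}: \pZ_{=\mu} \to X^{\mu} \times \Conf$ corresponds after Ranification to the projection from $S^{\lambda}_{\Ran}$ onto its $T$-fixed-point scheme $X^{\lambda}$. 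The defining conditions on sheaves cutting out the two semi-infinite subcategories therefore match.

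Next, I would establish t-exactness via a triangle argument. Both sides admit comparison functors from the global semi-infinite category $\operatorname{SI}^{\leq 0}_{\operatorname{glob}}$: on the Zastava side via $\newoverline{\mathfrak{p}}^!$ from Theorem \ref{texactequivalence}, and on the Ran side via $\newoverline{\mathfrak{p}}_{\Ran}^![d]$ from Proposition \ref{sicispulledbackfromdrin} together with the t-structure characterization of \cite{Ga2}; both are t-exact equivalences. The triangle
\[ \operatorname{SI}^{\leq 0}_Z \xleftarrow{\newoverline{\mathfrak{p}}^!} \operatorname{SI}^{\leq 0}_{\operatorname{glob}} \xrightarrow{\newoverline{\mathfrak{p}}_{\Ran}^![d]} \operatorname{SI}^{\leq 0}_{\Ran} \cap \fD_{\untl}(\newoverline{S}^0_{\Ran}) \]
is intertwined by $\mathscr{F}$ because both legs amount to $!$-pullback of a sheaf on $\dBun_N$ along $\newoverline{\mathfrak{p}}$ and $\newoverline{\mathfrak{p}}_{\Ran}$, and these projections are compatibly identified under the geometric equivalence $\pZ_{\Ran} \simeq \newoverline{S}^0_{\Ran}$ underlying $\mathscr{F}$. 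It will then follow that $\mathscr{F}$ is t-exact on the semi-infinite subcategories, since it factors as a composition of t-exact equivalences.

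Finally, I would obtain the IC sheaf identification by tracing $\IC^{\operatorname{ren}}_{\dBun_N}$ around the triangle: Theorem \ref{texactequivalence} sends it to $\IC^{\frac{\infty}{2}}(\pZ)$, Proposition \ref{sicispulledbackfromdrin} sends it to $\sICR$, and Theorem \ref{effisun} identifies $\mathscr{F}(\sIC)$ with $\sICR$ (up to the appropriate shift already accounted for by the renormalization). Combining gives $\mathscr{F}(\IC^{\frac{\infty}{2}}(\pZ)) \simeq \mathscr{F}(\sIC)$, whence the desired isomorphism $\sIC \simeq \IC^{\frac{\infty}{2}}(\pZ)$. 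Compatibility of this isomorphism with factorization structures is automatic once one observes that every functor in the triangle respects factorization (via Corollary \ref{rancategoryfromconfcategory}, Lemma \ref{factcomplete}, and the factorizability of $\newoverline{\mathfrak{p}}$ and $\newoverline{\mathfrak{p}}_{\Ran}$), and that both IC sheaves are characterized as the unique intermediate extensions whose factorization structure extends the tautological one on $\cZ^-$. The hard part will be the first step: verifying carefully that $\mathscr{F}$ matches the attracting-locus stratification on $\pZ$ with the semi-infinite orbit stratification on $\newoverline{S}^0_{\Ran}$ in a way compatible with the defining projections of the two semi-infinite t-structures, since these are specified by ostensibly different geometric data and only agree through the $\Conf$-to-$\Ran$ reconstruction principle of Section \ref{factoverconftoran}.
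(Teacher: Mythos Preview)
Your approach is essentially the paper's: both argue via the commutative triangle over $\dBun_N$ with legs $\newoverline{\mathfrak{p}}^!$ and $\newoverline{\mathfrak{p}}_{\Ran}^!$, invoking Theorem \ref{texactequivalence} for the Zastava side and the analogous result of \cite{Ga2} (Theorem 4.3.2 there) for the Ran side, then deducing factorization compatibility from the uniqueness of the extension from $\cZ^-$. The one difference is that your proposed ``hard part''---separately matching the attracting-locus and semi-infinite-orbit stratifications---is unnecessary: once the triangle $\pZ/\Conf \to \newoverline{S}^0_{\Ran}/\Ran \to \dBun_N$ commutes, the fact that both legs are t-exact equivalences onto the respective semi-infinite subcategories already forces $\mathscr{F}$ to restrict to an equivalence between them, so Step 1 can be omitted.
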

\begin{proof}
    We have a commutative diagram \[\begin{tikzcd}
	{\pZ/\Conf} & {\newoverline{S}^0_{\Ran}/\Ran} \\
	& {\dBun_N}
	\arrow[from=1-1, to=1-2]
	\arrow[from=1-2, to=2-2]
	\arrow[from=1-1, to=2-2]
\end{tikzcd}\] where the horizontal arrow is an equivalence of fppf sheaves. 

The equivalence of Theorem \ref{effisun} is obtained by $!$-pullback along the horizontal arrow, whereas the equivalence of Theorem \ref{texactequivalence} is obtained by $!$-pullback along the diagonal arrow. By Theorem $4.3.2$ of \cite{Ga2}, pullback along the vertical arrow induces a t-exact equivalence on semi-infinite categories. The fact that the resulting isomorphism $\sIC \simeq \IC^{\frac{\infty}{2}}(\pZ)$ is compatible with factorization can be checked simply by noting that it extends the tautological isomorphism over $\cZ^-$. Indeed, the factorization structures on both sides are uniquely determined by the fact that they extend the tautological one on $\omega_{\cZ^-}$.
\end{proof}

\section{Resolution of attracting loci via Kontsevich Zastava spaces}\label{kontsevich}

In this section we will introduce a new version $\pZ_K$ of the compactified Zastava space, together with its factorization structure. We will equip the space $\pZ_K$ with a proper birational map $\mathfrak{r}_Z: \pZ_K \to \pZ$ and show that the fiber of $\mathfrak{r}_Z$ over an attracting locus $\pZ_{=\mu}$ is a resolution of singularities. Lastly, we will conclude that the semi-infinite IC sheaf on $\pZ$ pulls back to the ordinary (renormalized) IC sheaf on $\pZ_K$. 

\subsubsection{}\label{introofKont} Recall the resolution $\dBun^K_B \to \dBun_B$ of singularities of $\dBun_B$ over $\Bun_G$ by stable maps introduced in \cite{C}. All results mentioned here about $\dBun^K_B$ can be found in \emph{loc. cit.} An $S$-point of $\dBun^K_B$ is a commutative diagram 
\begin{equation}\begin{tikzcd}\label{eq: kontsevich}
	C & {\pt/B} \\
	{X \times S} & {\pt/G}
	\arrow[from=1-2, to=2-2]
	\arrow[from=1-1, to=1-2]
	\arrow["p"', from=1-1, to=2-1]
	\arrow["\sP_G", from=2-1, to=2-2]
\end{tikzcd}\end{equation}
where $C \to S$ is a flat family of projective nodal curves of arithmetic genus $g=\operatorname{genus}(X)$, $p$ has degree $1$, and the induced map $C \to (G/B)_{p^*\sP_G}$ is stable over each geometric point of $S$. One shows that $\dBun^K_B$ is a smooth, locally of finite-type algebraic stack containing $\Bun_B$ as a dense open substack. 

The stack $\dBun^K_B$ is equipped with a proper morphism \[r: \dBun^K_B \longrightarrow \dBun_B\] which is an isomorphism when restricted to $\Bun_B$. To define $r$, take an $S$-point of $\dBun^K_B$ as in \eqref{eq: kontsevich}. Then the map $C \to \pt/B$ induces nondegenerate Pl\"{u}cker maps \[p^{\chlam}_K:\chlam(\mathscr{P}_T) \longrightarrow V^{\chlam}_{p^*\sP_G}\] for every dominant weight $\chlam \in \check{\Lambda}^+$. Define a point of $\dBun_B$ where the Pl\"{u}cker map corresponding to $\chlam$ is given by \[\operatorname{det}(p^{\chlam}_K): \operatorname{det}(p_*\chlam(\mathscr{P}_T)) \longrightarrow V^{\chlam}_{\sP_G}\] where $\operatorname{det}$ denotes the determinant operation on perfect complexes. Clearly, over the locus where $p$ is an isomorphism, the maps $\operatorname{det}(p^{\chlam}_K)$ are embeddings of vector bundles. 

Let $\ovF^K_{N,B^-}$ denote the fiber product $\dBun_N \times_{\Bun_G} \dBun^K_{B^-}$. Define the \emph{Kontsevich compactified Zastava space} $\pZ_K$ by requiring that it sits in a Cartesian square
\[\begin{tikzcd}
	{\pZ_K} & {\ovF^K_{N,B^-}} \\
	\pZ & {\ovF_{N,B^-}}
	\arrow[hook, from=1-1, to=1-2]
	\arrow["\operatorname{id}_{\dBun_N} \times r", from=1-2, to=2-2]
	\arrow["\mathfrak{r}_Z"', from=1-1, to=2-1]
	\arrow[hook, from=2-1, to=2-2]
\end{tikzcd}\] and the bottom arrow is the usual open embedding. There are the analogous versions $\cZ_K$ and $\cZ^-_K$ together with open embeddings $j_K$ and $j^-_K$, respectively, into $\pZ_K$. Note that $\mathfrak{r}_Z$ is proper and birational, and the open locus on which the map $\mathfrak{r}_Z$ is an isomorphism is given by the affine Zastava $\cZ \simeq \cZ_K$. 

\begin{remark} The space $\cZ^-_K$ appeared in \cite{FFKM} as a resolution for $\cZ^-$ over the affine curve $\mathbb{A}^1$. However, to our knowledge the present paper is the first time the space $\pZ_K$ has been considered in the literature. Proposition \ref{kontsevichfactorization} also appears to be new, even for $\cZ^-_K$.
\end{remark}

Note there is a map $\pZ_K \to \Conf$, obtained by composing $\mathfrak{r}_Z$ with the defect map $\pZ \to \Conf$. The next proposition establishes the factorization property of the Kontsevich Zastava spaces relative to $\Conf$, and will be an important ingredient in the remainder of the paper. 

\begin{proposition}\label{kontsevichfactorization}
The Kontsevich Zastava space $\pZ_K$ has a factorization structure which commutes with that of $\pZ$. The same holds for the open subspaces $\cZ_K$ and $\cZ^-_K$, and their open embeddings into $\pZ_K$ are compatible with factorization. 
\end{proposition}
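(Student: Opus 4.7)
The strategy is to first construct a factorization structure on $\dBun^K_{B^-}$ over $\Conf$, then transfer it to $\pZ_K$ via the fiber-product description, and finally verify that the factorization isomorphism is compatible with $r$ and preserves the open loci $\cZ_K, \cZ^-_K$.

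Step 1: \emph{Factorization of $\dBun^K_{B^-}$ over $\Conf$.} Given an $S$-point $(C, p, \sP_G, \kappa)$ of $\dBun^K_{B^-}$ whose defect colored divisor decomposes as $D = D_1 + D_2$ with $\Gamma_{D_1} \cap \Gamma_{D_2} = \emptyset$, observe that the locus in $C$ where $p$ fails to be an isomorphism lies set-theoretically over $\Gamma_D = \Gamma_{D_1} \sqcup \Gamma_{D_2}$. The rational subtrees of $C$ contracted by $p$ thus split into two disjoint collections $T_1, T_2$ lying over $\Gamma_{D_1}$, $\Gamma_{D_2}$ respectively. Setting $C_i = (X \times S) \cup T_i$, glued at the attaching nodes, produces a pair of Kontsevich data $(C_i, p_i, \sP_G, \kappa_i)$ with defects $D_i$. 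Conversely, given such a pair over the disjoint locus, the pushout $C_1 \cup_{X \times S} C_2$ equipped with the glued stable map gives a point over $D_1 + D_2$. These assignments are mutually inverse and functorial, yielding the desired factorization isomorphism $[\dBun^K_{B^-} \times \dBun^K_{B^-}]_{\disj} \overset{\sim}{\longrightarrow} (\dBun^K_{B^-})_{\disj}$; associativity and unitality are straightforward.

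Step 2: \emph{Compatibility with $r$.} Because $p_i$ is an isomorphism away from $\Gamma_{D_i}$ and the $\Gamma_{D_i}$ are disjoint, the determinant of the pushforward $\operatorname{det}(p_* \chlam(\mathscr{P}_T))$ decomposes canonically as a tensor product of local contributions coming from $T_1$ and $T_2$. Hence the maps $\operatorname{det}(p^{\chlam}_K)$ defining the image in $\dBun_{B^-}$ factor multiplicatively over disjoint divisors, and $r$ is a map of factorization spaces over $\Conf$.

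Step 3: \emph{Descent to $\pZ_K$.} The standard factorization on $\dBun_N$ and the one constructed in Step $1$ combine to endow $\ovF^K_{N,B^-} = \dBun_N \times_{\Bun_G} \dBun^K_{B^-}$ with a factorization structure over $\Conf$, and by Step $2$ this is compatible via the map $\operatorname{id} \times r$ with the factorization on $\ovF_{N,B^-}$. Since the generic transversality condition defining $\pZ$ inside $\ovF_{N,B^-}$ is open and is preserved by decomposition into disjoint components, it restricts to an open, factorization-compatible condition on $\ovF^K_{N,B^-}$, yielding the factorization of $\pZ_K$ together with the compatibility with $\mathfrak{r}_Z$.

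Step 4: \emph{Open subspaces.} The loci $\cZ_K$ and $\cZ^-_K$ are cut out by nondegeneracy of the respective Pl\"{u}cker data, which is a local condition around $\Gamma_D$; for $D = D_1 + D_2$ with disjoint supports, nondegeneracy for $D$ is equivalent to nondegeneracy for both $D_1$ and $D_2$. Hence the factorization isomorphism for $\pZ_K$ restricts to factorization isomorphisms for $\cZ_K$ and $\cZ^-_K$, and the open embeddings are factorization maps. The main obstacle is Step $1$: namely, checking that the decomposition $C = C_1 \cup_{X \times S} C_2$ is well-defined in flat families and preserves the stability condition over each geometric point of $S$. Stability for each $C_i$ follows from the fact that all contracted rational components of $C$ lie entirely in either $T_1$ or $T_2$ together with their attaching data, so that the count of special points on each such component is unchanged by the splitting.
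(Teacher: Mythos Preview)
Your proposal has a genuine conceptual gap in Step 1 (and consequently Step 3). You attempt to factorize $\dBun^K_{B^-}$ over $\Conf$ and then speak of ``the standard factorization on $\dBun_N$,'' but neither of these stacks carries a map to $\Conf$, so neither is a factorization space over $\Conf$ in its own right. The map $\pZ_K \to \Conf$ is obtained as $\pZ_K \xrightarrow{\mathfrak{r}_Z} \pZ \to \Conf$, and the second arrow is the \emph{defect of transversality} between the $N$- and $B^-$-reductions (the zeros of $(\kappa^-)^{\chlam}\circ\kappa^{\chlam}$). This divisor is not the Kontsevich defect you use in Step 1: over the dense open $\cZ_K\simeq\cZ\subseteq\pZ_K$ the curve $C$ equals $X\times S$ (Kontsevich defect empty) while the transversality defect $D$ is typically nonzero. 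So ``the locus where $p$ fails to be an isomorphism lies over $\Gamma_D$'' is simply false for the relevant $D$, and your splitting $C=C_1\cup_{X\times S}C_2$ does not separate the data according to $D_1,D_2$.

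The missing ingredient is the one the paper's proof supplies: generic transversality of the two reductions gives a \emph{trivialization} of $\sP_G$ on the domain $U_z$ complementary to the transversality defect, and it is this trivialization that lets one glue the two $G$-bundles (and then the nodal curves over them) along $U_z\cap U_{z'}$. Concretely, for $(z,z')$ over disjoint divisors one forms $C_{z,z'}$ as the pushout of $C_{U_{z'}}$ and $C'_{U_z}$ along $C_{U_z\cap U_{z'}}\simeq C'_{U_z\cap U_{z'}}$, with the isomorphism on the overlap coming from the trivializations. Your curve-splitting idea is the right \emph{shape} for the inverse map, but it only works once one has already glued the $G$-bundles via the transversality trivialization; without that, keeping the same $\sP_G$ for both factors does not produce two independent points of the fiber product, and no factorization isomorphism results.
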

\begin{proof}
Let $S$ be an affine scheme. For all $\lambda, \mu \in \Lambda^-$ we will only define a map 
\[[ \pZ^{\lambda}_K \times \pZ^{\mu}_K]_{\disj} \longrightarrow \pZ^{\lambda+\mu}_{K,\disj}\] since by the construction it will be clear how define its inverse. Consider an $S$-point $(z,z')$ of the left hand side. Then $z$ consists of a $G$-bundle $\sP_G \to X \times S$, a generalized $N$-reduction $\{\kappa^{\chlam}\}_{\chlam \in \check{\Lambda}^+}$ of $\sP_G$, and a point $z_K$ of $\dBun^K_{B^-}$ whose underlying generalized $B^-$-reduction of $\sP_G$ is generically transverse to $\{\kappa^{\chlam}\}_{\chlam \in \check{\Lambda}^+}$. Write the point $z_K$ of $\dBun^K_{B^-}$ as a commutative diagram
\[\begin{tikzcd}
	C & {\pt/B} \\
	{X \times S} & {\pt/G}
	\arrow[from=1-2, to=2-2]
	\arrow[from=1-1, to=1-2]
	\arrow["p"', from=1-1, to=2-1]
	\arrow[from=2-1, to=2-2]
\end{tikzcd}\]
where $C$ is a flat, connected family of nodal curves over $S$, the map $p$ has degree $1$, and $C \to (G/B)_{\sP_G}$ is stable over every geometric point of $S$. Generic transversality identifies $z_K$ with the trivial diagram over a domain $U_z \subseteq X \times S$, complement to the $S$-family of colored divisors associated to $z$. For the point $z'$ we get a similar curve $p': C' \to X \times S$ with locus $U_{z'}$ over which $p'$ is an isomorphism. 

Let $C_{U_{z'}}$ denote the fiber product $U_{z'} \times_{X \times S} C$. Note that by disjointness, the curve $C_{U_{z'}}$ contains all the nodes of $C$. Likewise, define $C'_{U_z}=U_z \times_{X \times S} C'$ which contains all the nodes of $C'$. Write $C_{U_z \cap U_{z'}}$ (resp. $C'_{U_z \cap U_{z'}}$) for the fiber product $U_z \cap U_{z'} \times_{X \times S} C$ (resp. $U_z \cap U_{z'} \times_{X \times S} C'$). There is a unique isomorphism \[C_{U_z \cap U_{z'}} \overset{\sim}{\longrightarrow} C'_{U_z \cap U_{z'}}\] over $U_z \cap U_{z'}$ since the latter is contained in the locus over which $p$ and $p'$ are both isomorphisms. 

By the assumption that the projections of $z$ and $z'$ to the configuration space have disjoint support we obtain a pushout diagram
\[\begin{tikzcd}
	{C_{U_z \cap U_{z'}} } & {C_{U_{z'}}} \\
	{C'_{U_{z}}} & C_{z,z'}
	\arrow[from=1-2, to=2-2]
	\arrow[from=1-1, to=1-2]
	\arrow[from=1-1, to=2-1]
	\arrow[from=2-1, to=2-2]
\end{tikzcd}\]
with a map $p_{z,z'}: C_{z,z'} \to X \times S$. Note that $C_{z,z'}$ is flat over $S$ by construction. Furthermore, since $C'_{U_z} \subseteq C'$ (resp. $C_{U_{z'}} \subseteq C$) intersects each fiber of $C' \to S$ (resp. $C \to S$) nontrivially and in the nonsingular locus, the scheme $C_{z,z'}$ is a proper family of nodal curves relative to $S$. 

From the fact that $C \to \pt/B$ and $C' \to \pt/B$ factor through the canonical map $\pt \to \pt/B$ when restricted to $C_U$, we obtain a commutative diagram 
\[\begin{tikzcd}
	C_{z,z'} & {\pt/B} \\
	{X \times S} & {\pt/G}
	\arrow[from=1-2, to=2-2]
	\arrow[from=1-1, to=1-2]
	\arrow["p'"', from=1-1, to=2-1]
	\arrow[from=2-1, to=2-2]
\end{tikzcd}\]
with $C_{z,z'} \to (G/B)_{\sP_G}$ stable over each geometric point $s \in S$. Indeed, the locus along which we glue $C_{U_z}$ and $C_{U_{z'}}$ does not contain any nodes.
\end{proof}

We will also record the following lemma for future use. 

\begin{lemma}\label{zastavaisdm} The Kontsevich Zastava $\pZ_K$ is a Deligne-Mumford stack. In particular, it has a schematic and proper diagonal. Moreover, $\pZ_K$ is a union of connected components $\pZ^{\lambda}_K$ of dimension $-\langle \lambda, 2 \check{\rho} \rangle$, where $\lambda \in \Lambda^-$.
\end{lemma}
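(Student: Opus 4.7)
The plan is to realize $\pZ_K$ as an open substack of the fiber product $\ovF^K_{N,B^-} = \dBun_N \times_{\Bun_G} \dBun^K_{B^-}$ and deduce all three claims from properties of this ambient object. The key input is that $\dBun^K_{B^-}$ is a smooth, locally finite-type Deligne-Mumford stack which is separated over $\Bun_G$; this is a main result of \cite{C}, reflecting the standard theory of Kontsevich stable maps, whose stability condition forces finite automorphism groups and whose moduli satisfies the valuative criterion for separatedness.

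First I would check that the projection $\dBun_N \to \Bun_G$ is schematic: over a fixed $G$-bundle $\sP_G$, a generalized $N$-reduction is cut out inside a product of relative Hilbert schemes of Pl\"{u}cker sections of $V^{\chlam}_{\sP_G}$ by the closed Pl\"{u}cker relations. Combined with the separated DM property of $\dBun^K_{B^-} \to \Bun_G$, base change then gives that $\ovF^K_{N,B^-}$ is Deligne-Mumford with schematic and proper diagonal, and $\pZ_K$ inherits these properties as an open substack. For the dimension and the connected-component decomposition, I would use that $\mathfrak{r}_Z : \pZ_K \to \pZ$ is proper and restricts to an isomorphism on the common open $\cZ \simeq \cZ_K$; hence $\mathfrak{r}_Z$ induces a proper birational map $\pZ^{\lambda}_K \to \pZ^{\lambda}$, giving $\dim \pZ^{\lambda}_K = \dim \pZ^{\lambda} = -\langle \lambda, 2\check{\rho}\rangle$ by the standard formula of \cite{BFGM}. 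Connectedness of $\pZ^{\lambda}_K$ follows because $\cZ^{\lambda}$ is irreducible (being fibered over the connected scheme $X^{\lambda}$) and dense in $\pZ^{\lambda}_K$, and the decomposition $\pZ_K = \bigsqcup_{\lambda \in \Lambda^-} \pZ^{\lambda}_K$ is induced by the composition of $\mathfrak{r}_Z$ with the natural projection $\pZ \to \Conf = \bigsqcup_{\lambda} \Conf^{\lambda}$.

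The main obstacle is justifying the Deligne-Mumford property in the presence of $\Bun_G$ having non-discrete stabilizers. Concretely, an automorphism of a point of $\ovF^K_{N,B^-}$ consists of a pair of compatible automorphisms of a generalized $N$-reduction and a Kontsevich-stable map to $(G/B^-)_{\sP_G}$; the latter has finite automorphisms by stability, and the generic transversality condition defining $\pZ_K$ cuts the stabilizer down to a finite group. One then needs to verify, via the standard formalism of fiber products of algebraic stacks, that this pointwise finiteness propagates to unramifiedness and properness of the global diagonal of $\pZ_K$.
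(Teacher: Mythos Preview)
Your argument contains a genuine gap at the step ``base change then gives that $\ovF^K_{N,B^-}$ is Deligne--Mumford.'' The statement that $\dBun^K_{B^-}$ is itself a Deligne--Mumford stack is false: it is only algebraic, because automorphisms of the underlying $G$-bundle can be positive-dimensional. What is true (and what you correctly invoke a line later) is that the \emph{morphism} $\dBun^K_{B^-} \to \Bun_G$ is Deligne--Mumford. But base change of this morphism along the schematic map $\dBun_N \to \Bun_G$ only tells you that $\ovF^K_{N,B^-} \to \dBun_N$ is Deligne--Mumford; since $\dBun_N$ is not Deligne--Mumford (e.g.\ the trivial $G$-bundle with its standard $N$-reduction has automorphism group $N$), the fiber product $\ovF^K_{N,B^-}$ need not be either. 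You are aware of this, and your final paragraph sketching a direct stabilizer computation on $\pZ_K$ via generic transversality is correct: transversality forces the $G$-bundle automorphism to be trivial on a dense open, hence globally trivial, leaving only the finite stable-map automorphisms. But promoting this pointwise calculation to the DM property with proper diagonal requires additional bookkeeping.

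The paper bypasses all of this with a cleaner factorization. Since $\dBun_{B^-} \to \Bun_G$ is schematic, the Deligne--Mumford morphism $\dBun^K_{B^-} \to \Bun_G$ factors as $\dBun^K_{B^-} \xrightarrow{r} \dBun_{B^-} \to \Bun_G$ with $r$ again Deligne--Mumford. Base-changing $r$ along $\ovF_{N,B^-} \to \dBun_{B^-}$ and then restricting to the open $\pZ \subset \ovF_{N,B^-}$ shows that $\mathfrak{r}_Z : \pZ_K \to \pZ$ is Deligne--Mumford. Now one uses the key fact you did not exploit: $\pZ$ is already known to be a \emph{scheme}, so $\pZ_K$ is Deligne--Mumford on the nose, with no stabilizer analysis needed. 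Your treatment of the dimension and connected-component statements via birationality of $\mathfrak{r}_Z$ over each $\pZ^\lambda$ is fine and matches the paper's (terser) argument.
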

\begin{proof} It is known that the fiber of the map $\dBun_{B^-}^K \to \Bun_G$ over an affine scheme $S$ is a Deligne-Mumford stack. Hence the same is true of the fiber of the resolution $\dBun_{B^-}^K \to \dBun_{B^-}$, as well as of the map $\newoverline{\mathscr{F}}^K_{N,B^-} \to \newoverline{\mathscr{F}}_{N,B^-}$. Since $\pZ$ is a scheme, we see that $\pZ_K$ is a Deligne-Mumford stack. Lastly, the dimension calculation for $\pZ_K^{\lambda}$ follows from the corresponding well-known fact for $\pZ^{\lambda}$.
\end{proof}

\subsubsection{} By \cite{C}, taking \emph{defects} of $k$-points of $\dBun^K_{B^-}$ gives us a stratification indexed by elements of $\Lambda^-$. More precisely, in \emph{loc. cit.} the defect of a $k$-point \[\eta: \Spec(k) \longrightarrow \dBun^K_{B^-}\] is defined as follows. Recall that to $\eta$ we can associate a canonical section of the projection $p:C \to X$. Identify $X$ with its image in $C$ and let $\overset{\circ}{X}$ be the intersection of $X$ with the smooth locus of $C$. Denote by $C_1, \ldots , C_n$ the connected components of the curve $C \setminus \overset{\circ}{X}$ and by $x_1, \ldots, x_n$ the points $C_i \cap X$. The defect is then the colored divisor $-\sum_{i=1}^n \operatorname{deg}(\mathscr{P}_{B^-} |_{C_i}) x_i$, where $\mathscr{P}_{B^-}$ is the $B^-$-bundle on $C$ associated to $\eta$ and $\mathscr{P}_{B^-}|_{C_i}$ is its restriction to $C_i$.  

Denote by ${_{\leq \mu}}{\dBun^{K,\lambda}_{B^-}}$ the open locus of $\dBun^{K,\lambda}_{B^-}$ which is a union of strata indexed by coweights greater than or equal to $\mu$. We therefore obtain open subsets ${_{\leq \mu}}{\pZ^{\lambda}_K}$ and ${_{\leq \mu}}{\cZ^{-,\lambda}_K}$ of the Kontsevich Zastavas. 

Now let $\pZ_{K,=\mu}$ denote the fiber product \[ \pZ_{K,=\mu} \coloneqq \pZ_K \times_{\pZ} \pZ_{=\mu} \] where we recall that $\pZ_{=\mu}$ is an attracting locus for the standard $\G_m$-action on $\pZ$. We have an equivalence \[\pZ_{K,=\mu} \overset{\sim}{\longrightarrow} X^{\mu} \times_{\Bun_T} \cZ^-_{K,\Bun_T} \] where $\cZ^-_{K,\Bun_T}$ is the open locus of $\Bun_B \times_{\Bun_G} \dBun^K_{B^-}$ determined by generic transversality. In what follows, we will also denote by $\oZ_{\Bun_T}$ the open locus of $\Bun_B \times_{\Bun_G} \Bun_{B^-}$ determined by generic transversality (recall the open Zastava $\oZ$ defined in Section \ref{definition of zastava}). 

Let $\mathfrak{r}_{Z,=\mu}$ denote the projection from $\pZ_{K,=\mu}$ to $\pZ_{=\mu}$. We therefore obtain a stratification of $\pZ_K$ into locally closed subschemes compatible with the stratification of $\pZ$. Proposition \ref{zasissmooth} below shows that this stratification is a genuine stratification into smooth subschemes. 

\begin{proposition}
\label{zasissmooth}
The strata $\pZ_{K,=\mu}$ are smooth for every $\mu \in \Lambda^-$, and therefore the maps \[\mathfrak{r}_{Z,=\mu}: \pZ_{K,=\mu} \longrightarrow \pZ_{=\mu}\] are resolutions of singularities. The loci over which $\mathfrak{r}_{Z,=\mu}$ is an equivalence is given by the open subscheme $X^{\mu} \times_{\Bun_T} \oZ_{\Bun_T}$ of $\pZ_{=\mu}$.
\end{proposition}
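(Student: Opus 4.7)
The plan is to leverage the identification $\pZ_{K,=\mu} \simeq X^{\mu} \times_{\Bun_T} \cZ^-_{K,\Bun_T}$ recorded immediately above the statement, reducing smoothness of the stratum to that of $\cZ^-_{K,\Bun_T}$ together with a standard transversality check for the fiber product. Since $X^{\mu}$ and $\Bun_T$ are both smooth, the fiber product will remain smooth once the middle factor is. The map $\mathfrak{r}_{Z,=\mu}$ is proper and birational by base change from $\mathfrak{r}_Z$, so smoothness of the source will automatically produce the desired resolution of singularities.

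For smoothness of $\cZ^-_{K,\Bun_T}$, I would lean on Chen's theorem \cite{C} that $\dBun^K_{B^-}$ is a smooth algebraic stack. By definition $\cZ^-_{K,\Bun_T}$ is the open locus of $\Bun_B \times_{\Bun_G} \dBun^K_{B^-}$ on which the $B$-reduction and the generalized $B^-$-reduction are generically transverse. I would compute the tangent complex at a point via the fiber sequence coming from the fiber product over $\Bun_G$, using Chen's explicit description of the tangent complex of $\dBun^K_{B^-}$ in terms of deformations of the stable map together with its $B^-$-structure. The potentially problematic obstruction term is an $H^1$ of a sheaf on $X$ (or on the nodal curve $C$) built out of the Lie-algebra intersection $\mathfrak{b} \cap \mathfrak{b}^-$ inside $\mathfrak{g}$, and generic transversality of the two reductions forces this intersection to equal $\mathfrak{t}$ generically, which is enough to make the relevant $H^1$ vanish by a Riemann--Roch count. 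Alternatively, one can describe $\cZ^-_{K,\Bun_T}$ locally over $\dBun^K_{B^-}$ as an open substack of a relative moduli of sections of a smooth bundle, which manifestly inherits smoothness.

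The final assertion about the isomorphism locus of $\mathfrak{r}_{Z,=\mu}$ is a direct consequence of the corresponding global statement in \ref{introofKont}, namely that $\mathfrak{r}_Z$ is an isomorphism exactly over the affine Zastava $\cZ \simeq \cZ_K$ (intuitively because on $\cZ$ the generalized $B^-$-reduction is genuine and the stable map must reduce to the identity $X \to X$ with no rational components). Restricting this to the stratum and transporting through $\pZ_{=\mu} \simeq X^{\mu} \times_{\Bun_T} \cZ^-_{\Bun_T}$, the intersection with $\cZ$ corresponds to the locus where the $B^-$-reduction on the $\cZ^-$-factor is also non-degenerate; by definition $\oZ = \cZ \cap \cZ^-$, and so the isomorphism locus is precisely $X^{\mu} \times_{\Bun_T} \oZ_{\Bun_T}$.

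The principal obstacle is a clean, rigorous verification of smoothness of $\cZ^-_{K,\Bun_T}$. The morphism $\Bun_B \to \Bun_G$ is not smooth in general, so the generic transversality condition has to be used in an essential way to kill the obstruction cohomology in the tangent complex; one must combine it carefully with Chen's deformation theory for $\dBun^K_{B^-}$, which involves deformations of nodal curves and stable maps rather than the usual smooth case. A potentially cleaner route would be to adapt the argument for smoothness of $\cZ^-_K$ over $\A^1$ from \cite{FFKM} to the present relative setting over $\Bun_T$, where the fiberwise smoothness of the Kontsevich compactification is already understood.
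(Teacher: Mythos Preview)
Your approach diverges substantially from the paper's, and the gap you yourself flag is real. The paper does \emph{not} attempt a direct tangent-complex computation for $\cZ^-_{K,\Bun_T}$. Instead it uses a factorization trick: Campbell's result in \cite{C} says that for $\lambda$ sufficiently dominant the projection ${_{\leq \nu}}{\dBun^{K,\lambda}_{B^-}} \to \Bun_G$ is \emph{relatively} smooth, so by base change ${_{\leq \nu}}{\cZ^{-,\lambda}_K} \to \Bun_N$ is smooth and hence ${_{\leq \nu}}{\cZ^{-,\lambda}_K}$ is smooth absolutely. For an arbitrary degree $\nu$ one then picks $\nu'$ large enough that $\nu+\nu'$ lies in the good range and uses the \'etale factorization map
\[
[\oZ^{\nu'} \times \cZ^{-,\nu}_K]_{\disj} \longrightarrow {_{\leq \nu}}{\cZ^{-,\nu+\nu'}_K}
\]
to pull smoothness back, finishing via the smooth surjective projection onto $\cZ^{-,\nu}_K$. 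The general-$\mu$ case is handled by the same factorization argument run relative to $X^{\mu}$, together with the observation that $\oZ_{\Bun_T} \to \Bun_T$ is smooth (miracle flatness plus fiberwise smoothness from \cite{BFGM}).

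Your proposed route---absolute smoothness of $\dBun^K_{B^-}$ plus a transversality/obstruction argument---is not obviously salvageable as stated. The claim that generic transversality forcing $\mathfrak{b} \cap \mathfrak{b}^- = \mathfrak{t}$ generically ``is enough to make the relevant $H^1$ vanish by a Riemann--Roch count'' is the crux, and it does not hold in the form you suggest: a sheaf that is generically a twist of $\mathfrak{t}$ still has $H^1$ governed by its global degree and the genus, with no a priori vanishing. Indeed $\cZ^-$ itself (without Kontsevich resolution) is singular despite the same generic-transversality hypothesis, so whatever mechanism makes $\cZ^-_K$ smooth must genuinely use the stable-map deformation theory on the nodal curve $C$, not just a Riemann--Roch estimate on $X$. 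Carrying this out directly would require redoing the convexity-type arguments from \cite{C} that underlie the relative smoothness result you would be bypassing. The paper's factorization manoeuvre sidesteps all of this by trading an arbitrary degree for a sufficiently dominant one where the relative smoothness is already known.

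Your treatment of the isomorphism locus is correct and matches the paper's implicit reasoning.
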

\begin{proof} For simplicity, first consider the case $\mu=0$ so that $\pZ_{=\mu}=\cZ^-$ and $\pZ_{K,=\mu}=\cZ^-_K$. Fix $\nu \in \Lambda^-$. We claim that for $\lambda$ sufficiently dominant (see \cite{C} for a precise bound), the open locus ${_{\leq \nu}}{\cZ^{-,\lambda}_K}$ of $\cZ^{-,\lambda}_K$ is smooth. Indeed, by the hypothesis on $\lambda$, the projection ${_{\leq \nu}}{\dBun^{K,\lambda}_{B^-}} \to \Bun_G$ is smooth. By base-change, so is the map ${_{\leq \nu}}{\cZ^{-,\lambda}_K} \to \Bun_N$. But $\Bun_N$ is smooth, and hence so is ${_{\leq \nu}}{\cZ^{-,\lambda}_K}$. 

Now let $\nu'$ be large enough so that $\nu'+\nu$ is sufficiently dominant. Factorization provides an \'{e}tale morphism \[ [\oZ^{\nu'} \times \cZ^{-,\nu}_K]_{\disj} \longrightarrow \cZ^{-,\nu'+\nu}_K\] and since the degeneracy of the $B^-$-reduction is bounded by $\nu$, this map factors through ${_{\leq \nu}}{\cZ^{-,\nu'+\nu}_K}$. We can therefore conclude that the left side is smooth. To finish, note the projection \[[\oZ^{\nu'} \times \cZ^{-,\nu}_K]_{\disj} \longrightarrow \cZ^{-,\nu}_K\] is smooth and surjective, whence the result. 

For a general stratum, note that $\pZ_{K,=\mu}$ has a natural factorization structure relative to the base $X^{\mu} \times \Conf$, where we view the latter as a commutative algebra in correspondences \emph{over} $X^{\mu}$ via the diagram \[\begin{tikzcd}
	& {X^{\mu} \times [\Conf \times \Conf]_{\disj}} \\
	{(X^{\mu} \times \Conf) \times_{X^{\mu}} (X^{\mu} \times \Conf)} && {X^{\mu} \times \Conf}
	\arrow["{\operatorname{id}_{X^{\mu}} \times \operatorname{add}}"', from=1-2, to=2-3]
	\arrow["{ }", from=1-2, to=2-1]
\end{tikzcd}\] and where the unitality diagram is constructed in the obvious way. The proof then follows the same strategy as the case of $\cZ_K^-$ above. In other words, fixing $\nu \in \Lambda^-$ such that $\nu \geq \mu$, there is $\lambda$ large enough so that the projection \[{_{\leq \nu}}{\cZ^{-,\lambda}_{K,\Bun_T}} \longrightarrow \Bun_B\] is smooth. Since the map $\Bun_B \to \Bun_T$ is also smooth, we conclude that the composition \[{_{\leq \nu}}{\cZ^{-,\lambda}_{K,\Bun_T}} \longrightarrow \Bun_B \longrightarrow \Bun_T\] is as well. Since $X^{\mu}$ is smooth, so is $X^{\mu} \times_{\Bun_T} {_{\leq \nu}}{\cZ^{-,\lambda}_{K,\Bun_T}}$. 

Choose $\nu' \geq \mu$ so that $\nu'+\nu$ is sufficiently dominant. By factorization, we obtain an \'{e}tale map \[[\big(X^{\mu} \times_{\Bun_T} \oZ^{\nu'}_{\Bun_T}\big) \times_{X^{\mu}}\big( X^{\mu} \times_{\Bun_T} \cZ^{-,\nu}_{K,\Bun_T}\big)]_{\disj} \longrightarrow X^{\mu} \times_{\Bun_T} {_{\leq \nu}}{\cZ^{-,\nu'+\nu}_{K,\Bun_T}}\] and so it is enough to show that $\alpha: \oZ^{\nu'}_{\Bun_T} \to \Bun_T$ is smooth for all $\nu'$. By \cite{BFGM}, the fibers of the map $\alpha$ are all \'{e}tale-locally equivalent to $\oZ^{\nu'}$. But $\alpha$ is schematic, and thus the result follows from miracle flatness and smoothness of $\oZ^{\nu'}$. 
\end{proof}

\begin{remark}\label{density} An argument similar to the proof of Proposition \ref{zasissmooth} shows that $\cZ^-_K$ is dense in $\pZ_K$. Indeed, the key point is that the preimage of a dense open substack under a smooth (and therefore open) map is dense.
\end{remark}

\subsubsection{} We are ready to prove our main theorem of this section. We will start by fixing some notation. In what follows, denote by \[\newoverline{\mathfrak{p}}_K: \pZ_K \longrightarrow \dBun_N\] the projection obtained via base change from the map $\dBun_{B^-}^K \to \Bun_G$. 

To make the exposition cleaner, for an algebraic stack $\cY$ with a map $\pi_{\cY}$ to $\Conf$ such that each preimage \[\cY^{\lambda} \coloneqq \pi^{-1}_{\cY}(\Conf^{\lambda})\] is connected and of finite type, let us denote by $\IC^{\operatorname{ren}}_{\cY}$ the sheaf which is the renormalized IC sheaf on each component of $\cY$. In particular, when $\cY=\pZ_K$ we have \[ \IC^{\operatorname{ren}}_{\pZ_K^{\lambda}}=\IC_{\pZ^{\lambda}_K}[-\langle \lambda, 2\check{\rho} \rangle ], \] where we recall that $-\langle \lambda, 2\check{\rho} \rangle $ is a nonnegative integer, and positive whenever $\lambda \neq 0$. Note that $\IC^{\operatorname{ren}}_{\pZ_K}$ is naturally a factorization algebra. 

\begin{theorem}
\label{sicisic}
The pullback $\mathfrak{r}_Z^! (\sIC)$ is canonically isomorphic to the renormalized intersection cohomology sheaf $\IC^{\operatorname{ren}}_{\pZ_K}$ as a factorization algebra. 
\end{theorem}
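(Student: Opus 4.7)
The strategy is to identify both $\mathfrak{r}_Z^!(\sIC)$ and $\IC^{\operatorname{ren}}_{\pZ_K}$ with the pullback $\newoverline{\mathfrak{p}}_K^!(\IC^{\operatorname{ren}}_{\dBun_N})$, where $\newoverline{\mathfrak{p}}_K \coloneqq \newoverline{\mathfrak{p}} \circ \mathfrak{r}_Z$. The first identification follows from Corollary \ref{cor: semiinfinite t-structures coincide} combined with Theorem \ref{texactequivalence}, which give $\sIC \simeq \newoverline{\mathfrak{p}}^!(\IC^{\operatorname{ren}}_{\dBun_N})$ and hence $\mathfrak{r}_Z^!(\sIC) \simeq \newoverline{\mathfrak{p}}_K^!(\IC^{\operatorname{ren}}_{\dBun_N})$.

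For the second identification I first apply Proposition \ref{semiinfiniteisusual} to obtain $\IC^{\operatorname{ren}}_{\pZ_K} \simeq \IC^{\frac{\infty}{2}}(\pZ_K)$. The $T$-action on $\pZ_K$ inherited via the map $\dBun^K_{B^-} \to \Bun_T$, restricted through the regular dominant cocharacter $\G_m \to T$, makes $\pZ_K$ a $\G_m$-complete Deligne--Mumford stack whose attracting loci are precisely the strata $\pZ_{K,=\mu}$. Since $\mathfrak{r}_{Z,=\mu}$ is a resolution which is an isomorphism over the (smooth) fixed locus, the fixed locus of each $\pZ_{K,=\mu}$ coincides with that of $\pZ_{=\mu}$, namely $X^\mu \times X^{\lambda - \mu}$ on the component of degree $\lambda$. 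The strata $\pZ_{K,=\mu}$ are smooth by Proposition \ref{zasissmooth}, so the projections to the fixed loci are Zariski-locally trivial affine bundles by Bia{\l}ynicki-Birula. Smoothness further implies that $\IC^{\operatorname{ren}}_{\pZ_K}$ restricted to any stratum is a shift of the dualizing sheaf, which is pulled back from the fixed locus; hence $\IC^{\operatorname{ren}}_{\pZ_K} \in \operatorname{SI}(\pZ_K)$. Both hypotheses of Proposition \ref{semiinfiniteisusual} are verified.

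To finish, I would establish the Kontsevich analogue of Theorem \ref{texactequivalence}: $\newoverline{\mathfrak{p}}_K^!$ sends $\IC^{\operatorname{ren}}_{\dBun_N}$ to $\IC^{\frac{\infty}{2}}(\pZ_K)$. Equivalently, one observes that $\mathfrak{r}_Z^!(\sIC) \in \operatorname{SI}(\pZ_K)$: because the attractor projections satisfy $\pi_\mu \circ \mathfrak{r}_{Z,=\mu} = \pi_{K,\mu}$ (the fixed loci coincide and $\mathfrak{r}_{Z,=\mu}$ restricts to the identity on them), the restriction to each stratum of $\pZ_K$ remains pulled back from the fixed locus. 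On the open stratum $\cZ^-_K$, $\mathfrak{r}_Z^!(\sIC)$ equals the shifted dualizing sheaf (as $!$-pullback commutes with $\omega$), matching $\IC^{\frac{\infty}{2}}(\pZ_K)$. Granted t-exactness of $\mathfrak{r}_Z^!$ on semi-infinite categories, $\mathfrak{r}_Z^!(\sIC)$ is then forced to be the intermediate extension, yielding the desired isomorphism. Factorization compatibility follows because both sides carry factorization structures extending the tautological one on $\cZ_K$ (via Lemma \ref{factcomplete} and Proposition \ref{kontsevichfactorization} on one side, and via pullback on the other), and such structures are uniquely determined by their restriction to a dense open.

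The main obstacle is the t-exactness of $\mathfrak{r}_Z^!$ on semi-infinite categories (equivalently, the Kontsevich analogue of Theorem \ref{texactequivalence}). Since $\mathfrak{r}_Z$ is proper but not \'{e}tale, Proposition \ref{tstructure}(3) does not apply directly; one must instead combine the stratum-wise smoothness of the Kontsevich resolutions (Proposition \ref{zasissmooth}) with a Cousin-style recollement argument to control the gluing of t-structures across strata.
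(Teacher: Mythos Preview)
Your approach routes through a semi-infinite t-structure on $\pZ_K$ and Proposition~\ref{semiinfiniteisusual}, but the verification of hypothesis~(2) of that proposition contains an error. You write that smoothness of the strata $\pZ_{K,=\mu}$ implies that the $!$-restriction of $\IC^{\operatorname{ren}}_{\pZ_K}$ to each stratum is a shift of the dualizing sheaf. This inference is backwards: smoothness of a locally closed substack says nothing about how the IC sheaf of the ambient stack restricts to it. What you would need is smoothness of $\pZ_K$ itself, and $\pZ_K$ is \emph{not} smooth---indeed the paper's argument shows that $\pZ_K$ is smooth-locally isomorphic to $\dBun_N$, which is singular along its boundary strata. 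So $\IC^{\operatorname{ren}}_{\pZ_K}$ is not the dualizing sheaf, and its restrictions to strata are not a priori constant along the contracting fibers. The statement $\IC^{\operatorname{ren}}_{\pZ_K} \in \operatorname{SI}(\pZ_K)$ is in fact true, but establishing it is essentially equivalent to the theorem itself.

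The paper avoids the semi-infinite t-structure on $\pZ_K$ entirely. The key input, from \cite{C}, is that for $\lambda$ sufficiently dominant the map ${_{\leq \mu}}{\dBun^{K,\lambda}_{B^-}} \to \Bun_G$ is smooth; by base change, $\newoverline{\mathfrak{p}}_K$ is smooth on ${_{\leq \mu}}{\pZ^{\lambda}_K}$, so $\newoverline{\mathfrak{p}}_K^!(\IC_{\dBun_N})$ is the IC sheaf there up to shift. For arbitrary $\lambda$, one uses the factorization map $[\oZ^{\nu}_K \times \pZ^{\lambda}_K]_{\disj} \to {_{\leq \lambda}}{\pZ^{\lambda+\nu}_{K,\disj}}$ with $\nu$ large: the projection $h$ onto $\pZ^{\lambda}_K$ is smooth, surjective, with connected fibers, so $h^!$ is conservative, t-exact up to shift, and fully faithful on perverse sheaves, and the isomorphism over the disjoint locus descends. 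This sidesteps both your unjustified membership claim and the t-exactness of $\mathfrak{r}_Z^!$ that you correctly flag as the main obstacle.
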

\begin{proof}
Fix $\mu \in \Lambda^-$ and let $\lambda$ be sufficiently dominant for any $\nu \leq \mu$. Then by \cite{C} the projection \[{_{\leq \mu}}{\dBun^{K,\lambda}_{B^-}} \longrightarrow \Bun_G\] is smooth, and hence so is the restriction \[{_{\leq \mu}}{\newoverline{\mathfrak{p}}}^{\lambda}_K: {_{\leq \mu}}{\pZ^{\lambda}_K} \longrightarrow \dBun_N\] of $\newoverline{\mathfrak{p}}^{\lambda}_K$ to ${_{\leq \mu}}{\pZ^{\lambda}_K}$ by base-change. We conclude that $({_{\leq \mu}}{\ofp_K^{\lambda}})^! (\IC_{\dBun_N})$ is the IC sheaf up to a cohomological shift. The statement of the theorem is therefore true on the latter space (note the cohomological shift is absorbed into the definition of $\sIC$). 

For the general case, let $\lambda$ be arbitrary and let $\nu$ be big enough such that $\lambda+\nu$ is sufficiently dominant for all $\mu \leq \nu$. Factorization provides a commutative square \[\begin{tikzcd}
	{[\oZ^{\nu}_K \times \pZ^{\lambda}_K]_{\disj}} & {{_{\leq \lambda}}{\pZ^{\lambda+\nu}_{K,\disj}}} \\
	{[\oZ^{\nu} \times \pZ^{\lambda}]_{\disj}} & {{_{\leq \lambda}}{\pZ^{\lambda+\nu}_{\disj}}}
	\arrow[from=1-1, to=1-2]
	\arrow[from=1-2, to=2-2]
	\arrow[from=1-1, to=2-1]
	\arrow[from=2-1, to=2-2]
\end{tikzcd}\] where both horizontal arrows are open immersions (see also the proof of Proposition \ref{zasissmooth}). Hence there is a natural isomorphism \begin{equation}\label{disjiso}\alpha: \IC^{\operatorname{ren}}_{\oZ^{\nu}_K} \boxtimes_{\disj} \IC^{\operatorname{ren}}_{\pZ^{\lambda}_K} \overset{\sim}{\longrightarrow} \IC^{\operatorname{ren}}_{\oZ^{\nu}} \boxtimes_{\disj} \mathfrak{r}_Z^!(\sIC)\end{equation} where $(-) \boxtimes_{\disj} (-)$ denotes the external product restricted to the disjoint locus. 

Since $\oZ^{\nu}_K$ is smooth, the left hand side (resp. right hand side) of \eqref{disjiso} is the $!$-pullback of $\IC^{\operatorname{ren}}_{\pZ^{\lambda}_K}$ (resp. $\mathfrak{r}_Z^!(\sIC)$) along the projection \[h: [\oZ^{\nu}_K \times \pZ^{\lambda}_K]_{\disj} \to \pZ^{\lambda}_K\] up to a cohomological shift. Since $h$ is schematic and surjective, $h^!$ is conservative and t-exact up to a cohomological shift. It follows that $\mathfrak{r}_Z^!(\sIC)$ is perverse up to a cohomological shift\footnote{For us ``perverse" means a holonomic D-module concentrated in cohomological degree $0$.}. Since the geometric fibers of $h$ are connected, the functor $h^!$ is fully faithful when restricted to perverse sheaves, and hence the isomorphism $\alpha$ descends to an isomorphism $\beta: \IC^{\operatorname{ren}}_{\pZ^{\lambda}_K} \overset{\sim}{\to} \mathfrak{r}_Z^! (\sIC)$ in $\fD(\pZ^{\lambda}_K)$. 

It remains to check that $\beta$ is an isomorphism of factorization algebras. A priori, we have \emph{two} factorization structures on $\IC^{\operatorname{ren}}_{\pZ_K}$: one by definition and the other obtained via transport of structure from $\beta$. However, the equivalence $\beta$ extends the tautological isomorphism \[\omega_{\cZ_K^-}=\IC^{\operatorname{ren}}_{\pZ_K}|^!_{\cZ_K^-} \overset{\sim}{\longrightarrow} \mathfrak{r}_Z^!(\sIC)|^!_{\cZ^-}=\omega_{\cZ_K^-} \] of factorization algebras over $\cZ_K^-$ obtained by Proposition \ref{zasissmooth}. The latter condition uniquely determines the factorization structure on $\IC^{\operatorname{ren}}_{\pZ_K}$ by Remark \ref{density}.   
\end{proof}


\begin{remark}
    Note that in the course of the proof of Theorem \ref{sicisic} we have showed that $\pZ_K$ is locally in the smooth topology isomorphic to $\dBun_N$. From this perspective, it is clear why the intersection cohomology of $\pZ_K$ should model the semi-infinite intersection cohomology sheaf. 
\end{remark}

\subsection{Recovering the effective category.} The goal of this section is to show that the effective category $\fD_{\eff}(\pZ)$ (and consequently the unital category $\fD_{\untl}(\newoverline{S}^0_{\Ran})$) can be recovered from $\fD(\pZ_K)$ after imposing equivariance with respect to a certain groupoid. Under this equivalence, the renormalized intersection cohomology sheaf $\IC^{\operatorname{ren}}_{\pZ_K}$ will match up with $\sICR$. 

Recall that a prestack $\cY$ is QCA if it is quasi-compact with affine stabilizers \cite{DG}. Moreover, a stack is called \emph{safe} if the neutral component of any stabilizer of a geometric point is a unipotent algebraic group \cite{DG}. For example, any Deligne-Mumford stack is safe. We have the following general lemma. 

\begin{lemma}\label{conserv} Let $\varphi: \cX \to \cY$ be a proper (not necessarily schematic) morphism of safe QCA algebraic stacks which is surjective on geometric points. Then $\varphi^!$ is monadic, and in particular admits a left adjoint. 
\end{lemma}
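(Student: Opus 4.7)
The plan is to apply the Barr--Beck--Lurie monadicity theorem (Theorem 4.7.3.5 of \cite{Lu}) to an adjunction $\varphi_* \dashv \varphi^!$. First I would establish the existence of the left adjoint: since $\varphi$ is proper and $\cX$, $\cY$ are safe QCA, the main results of \cite{DG} guarantee that $\varphi_*$ (which agrees with $\varphi_!$ by properness) is continuous, i.e.\ preserves all colimits. It therefore admits a right adjoint, and unwinding the constructions identifies this right adjoint with $\varphi^!$. This already yields the second conclusion of the lemma.

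With the adjunction $\varphi_* \dashv \varphi^!$ in place, the Barr--Beck--Lurie criterion for monadicity of $\varphi^!$ requires two conditions: (i) $\varphi^!$ is conservative, and (ii) $\varphi^!$ preserves $\varphi^!$-split geometric realizations. Condition (ii) is formal in our setting: $\varphi^!$-split simplicial objects have colimits of an absolute nature (the splitting furnishes a natural contracting homotopy after applying $\varphi^!$), so they are preserved by any functor. Hence the entire content of the proof is condition (i).

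To prove conservativity of $\varphi^!$, I would proceed in two reduction steps. \emph{Step one: reduce to the case where $\cY$ is a scheme.} Choose a smooth surjective atlas $u:U \to \cY$ from an affine scheme; then $u^!$ is conservative by smooth descent. Form the base change $\varphi_U : \cX_U \to U$, which is again proper and surjective on geometric points (these properties being stable under base change), and with $\cX_U$ still safe QCA. By smooth base change, $u^! \circ \varphi^! \simeq \varphi_U^! \circ (u')^!$ up to the obvious identifications, so conservativity of $\varphi^!$ follows from conservativity of $\varphi_U^!$. \emph{Step two: reduce to schemes on the source.} Using that $\cX$ is QCA, choose a proper surjective cover $T \to \cX$ from a scheme $T$ (for instance a Chow-type cover). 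The composition $T \to \cX \to U$ is then a proper surjection of schemes, whose $!$-pullback is conservative by classical arguments (reducing to field-valued points via proper base change and Nakayama). Since $\fD(\cX)$ is compactly generated by objects pushed forward from such scheme covers, conservativity of $\varphi^!$ follows.

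The main obstacle will be the second reduction, which requires controlling the behavior of D-modules on the stack $\cX$ in terms of scheme covers; the safe QCA hypothesis enters precisely here, as it guarantees both continuity of the relevant pushforwards under the base changes above and the existence of well-behaved proper scheme covers detecting vanishing of D-modules on $\cX$.
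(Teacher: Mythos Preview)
Your proposal contains a genuine gap in the construction of the left adjoint. You assert that for a proper morphism $\varphi$ of safe QCA stacks one has an adjunction $\varphi_* \dashv \varphi^!$, writing that ``$\varphi_*$ (which agrees with $\varphi_!$ by properness)'' is continuous and that its right adjoint ``identifies with $\varphi^!$.'' This is false in the non-schematic setting: the paper remarks immediately after this lemma that ``unless $\varphi$ is schematic, in general $\varphi_!$ will differ from $\varphi_*$,'' the discrepancy being governed by Gaitsgory's pseudo-identity functor. Indeed, the very next lemma (Lemma~\ref{dmpushforward}) is devoted to showing that $\varphi_! \simeq \varphi_*$ under the additional hypothesis that $\varphi$ is Deligne--Mumford, which would be redundant if your claim held. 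The paper's actual construction of the left adjoint is indirect: since $\cX$ and $\cY$ are safe QCA, compact objects coincide with coherent ones; $\varphi_*$ preserves coherent objects (Proposition~10.6.2 of \cite{DG}), hence compacts; since both D-module categories are dualizable and $\varphi^! \simeq (\varphi_*)^\vee$ under Verdier duality, it follows from \cite{Ga-1} that $\varphi^!$ admits a left adjoint $\varphi_!$.

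Your conservativity argument also has a weaker step: in ``Step two'' you invoke a proper surjective scheme cover $T \to \cX$ and then claim that conservativity follows because $\fD(\cX)$ is generated by pushforwards from such covers. Neither the existence of such a cover for an arbitrary QCA stack nor the deduction of conservativity from the generation statement is justified. The paper instead passes to $\IndCoh$: given nonzero $c \in \fD(\cY)$, choose an affine $S \to \cY$ with $f^!(c) \neq 0$, base change to $\cX \times_\cY S$, and take a smooth scheme cover $T$ of the latter. Then conservativity of $!$-pullback on $\IndCoh$ for surjective morphisms of schemes (Proposition~8.1.2 of \cite{Ga-.5}), together with conservativity of the forgetful functor $\fD \to \IndCoh$, yields the result directly without any Chow-type input.
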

\begin{proof} We wish to show that $\varphi^!$ satisfies the hypotheses of the Barr-Beck-Lurie theorem. First we will show that $\varphi^!$ is conservative. Let $c \in \fD(\cY)$ be an arbitrary nonzero D-module on $\cY$. By definition, we have an equivalence \[\fD(\cY) \overset{\sim}{\longrightarrow} \operatorname{lim}_{S \in \Aff_{/\cY}} \fD(S)\] so we may choose an affine scheme $S$ with a map $f:S \to \cY$ such that $f^!(c)$ is not zero. 

The fiber product $\cX \times_{\cY} S$ is algebraic, so there exists a smooth cover \[g: T \longrightarrow \cX \times_{\cY} S\] by a scheme $T$. Letting $p_S: \cX \times_{\cY} S \to S$ denote the projection, we see it suffices to show that $(p_S \circ g)^!(c)$ is nonzero. 

By assumption, $p_S \circ g$ is surjective on geometric points. Then the right vertical arrow in the commutative diagram \[\begin{tikzcd}
	{\fD(S)} & {\operatorname{IndCoh}(S)} \\
	{\fD(T)} & {\operatorname{IndCoh}(T)}
	\arrow["{\operatorname{oblv}_S}", from=1-1, to=1-2]
	\arrow["{(p_S \circ g)^!}"', from=1-1, to=2-1]
	\arrow["{\operatorname{oblv}_T}"', from=2-1, to=2-2]
	\arrow["{(p_S \circ g)^{!,\operatorname{IndCoh}}}", from=1-2, to=2-2]
\end{tikzcd}\] is conservative by Proposition 8.1.2 of \cite{Ga-.5}. Since the forgetful functor from D-modules to indcoherent sheaves is conservative, we see that $(p_S \circ g)^!$ is conservative as well. 

we will now verify that $\varphi^!$ admits a left adjoint. Since $\cX$ and $\cY$ are QCA, their categories of D-modules are compactly generated. Moreover, since they are both safe we have equalities \[\fD(\cX)^c = \fD_{\operatorname{coh}}(\cX) \,\,\, \text{and} \,\,\, \fD(\cY)^c = \fD_{\operatorname{coh}}(\cY)\] where for a stack $\cY'$ we denote by $\fD_{\operatorname{coh}}(\cY')$ the category of D-modules $c$ such that for every scheme $S$ with a smooth map $f:S \to \cY'$ the pullback $f^!(c)$ has coherent cohomologies. 

By Proposition 10.6.2 of \cite{DG}, the de Rham pushforward functor \[\varphi_*: \fD(\cX) \longrightarrow \fD(\cY)\] preserves the coherent subcategories, and hence preserves compact objects by the discussion above. Now the category of D-modules for any QCA stack is dualizable, and under Verdier duality we have $\varphi^! \overset{\sim}{\to} (\varphi_*)^{\vee}$. Hence by \cite{Ga-1} the functor $\varphi^!$ admits a left adjoint $\varphi_!$. Since $\varphi^!$ is continuous, it is monadic by the Barr-Beck-Lurie theorem and the associated monad has underlying functor given by the composition $\varphi^!\varphi_!: \fD(\cX) \to \fD(\cX)$.
\end{proof}

Note that, unless $\varphi$ is schematic, in general $\varphi_!$ will differ from $\varphi_*$. In fact, the difference is related to the pseudo-identity functor of Gaitsgory. Nevertheless, we can relax the schematicity assumption in the following case. A map $\varphi: \cX \to \cY$ is called \emph{Deligne-Mumford} if for every affine scheme $S$ with a map $S \to \cY$, the fiber product $\cX \times_{\cY} S$ is a Deligne-Mumford stack. We have the following lemma.  

\begin{lemma}\label{dmpushforward} Let $\varphi: \cX \to \cY$ be a proper Deligne-Mumford morphism of safe QCA stacks. Then the left adjoint to $\varphi^!$ is defined and coincides with the de Rham pushforward $\varphi_*$.
\end{lemma}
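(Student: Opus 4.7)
The plan is to show that the canonical comparison map $\alpha: \varphi_! \to \varphi_*$ (which exists for any proper morphism between safe QCA stacks, and is known to be an isomorphism in the proper schematic case by classical Verdier duality together with proper base change) is an isomorphism in our setting. The task is therefore to reduce the proper DM case to the proper schematic case.

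First, I would argue that both $\varphi^!$ and $\varphi_*$ satisfy smooth base change in $\cY$: the former by construction, the latter by proper base change for $D$-modules. Consequently so does the left adjoint $\varphi_!$ produced in Lemma \ref{conserv}, and hence also the comparison map $\alpha$. By smooth descent for the $D$-module category on $\cY$ together with the fact that $\fD(\cY) \overset{\sim}{\longrightarrow} \lim_{S \in \Aff_{/\cY}} \fD(S)$ under smooth covers, it suffices to check that $\alpha$ becomes an isomorphism after pulling back along each affine scheme in a smooth atlas of $\cY$. Hence we may reduce to the case $\cY = S$ an affine scheme, so that $\cX$ is a proper Deligne-Mumford stack over $S$.

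Next, I would factor $\varphi$ through the coarse moduli space as $\varphi = q \circ \pi$, where $\pi: \cX \to |\cX|$ is the coarse moduli map (proper DM with finite geometric fibers) and $q: |\cX| \to S$ is a proper morphism of algebraic spaces. The map $q$ is schematic after further \'{e}tale localization on $|\cX|$ (which is harmless since the statement is itself \'{e}tale-local on the target by the first step), and for proper schematic maps the identification $q_! \simeq q_*$ is classical. It remains to establish the analogous statement for $\pi$. Here the key fact is that \'{e}tale-locally on $|\cX|$, the map $\pi$ takes the form $[V/\Gamma] \to V/\Gamma$ with $V$ a scheme and $\Gamma$ a finite group; in characteristic zero, $\Gamma$-invariants coincide with $\Gamma$-coinvariants by Maschke's theorem, which yields $\pi_! \simeq \pi_*$. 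Once this is in hand, composing gives $\varphi_! = q_! \pi_! \simeq q_* \pi_* = \varphi_*$ as desired.

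The main obstacle is the last identification: one must verify that the canonical natural transformation $\alpha_{\pi}$ is genuinely the averaging isomorphism coming from the \'{e}tale-local finite quotient picture, rather than just agreeing with it up to an auto-equivalence. Equivalently, one needs to check the pointwise comparison $\alpha_{\pi}$ globalizes correctly across the \'{e}tale gluing data. An alternative route that avoids this, and which may be what Raskin has in mind, is to invoke directly the relative version of Drinfeld--Gaitsgory's safety criterion: since $\varphi$ is DM, the relative inertia is finite, so the geometric fibers of $\varphi$ are safe QCA stacks, and the main result of Drinfeld--Gaitsgory on compact generation and de Rham cohomology of safe stacks then furnishes the identification $\varphi_! \simeq \varphi_*$ directly.
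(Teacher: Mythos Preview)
Your overall strategy---reduce to an affine base and then factor through the coarse moduli space---is plausible but differs from the paper's argument and has several gaps that are not easily patched.

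First, the assertion that a canonical comparison map $\varphi_! \to \varphi_*$ exists for arbitrary proper morphisms of safe QCA stacks is not obvious and you do not construct it; in the schematic case the two functors coincide by uniqueness of adjoints, but for general stacky $\varphi$ one needs a separate construction. Second, the claim that $q: |\cX| \to S$ becomes schematic ``after further \'etale localization on $|\cX|$'' is incorrect: $|\cX|$ is the \emph{source} of $q$, and \'etale-localizing the source does not render a morphism schematic. A proper algebraic space over an affine scheme need not be a scheme, so the reduction to the schematic case for $q$ requires more (e.g.\ Chow's lemma, or a separate argument for algebraic spaces). Third, you acknowledge but do not resolve the gluing issue for $\pi$. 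The proposed alternative via Drinfeld--Gaitsgory is too vague to constitute a proof: the results of \cite{DG} establish compact generation and preservation of coherence under $\varphi_*$, which is what Lemma~\ref{conserv} uses to show $\varphi_!$ exists, but they do not by themselves furnish the identification $\varphi_! \simeq \varphi_*$.

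The paper's argument bypasses all of this via a kernel computation. After reducing to affine $\cY$, it identifies $\varphi_!$ through its integral kernel $K \in \fD(\cX \times \cY)$: one has $K = (\id_{\fD(\cX)} \otimes \varphi_!)\bigl((\Delta_\cX)_*(\omega_\cX)\bigr)$, which equals $(\id_\cX \times \varphi)_!\bigl((\Delta_\cX)_*(\omega_\cX)\bigr)$. The Deligne--Mumford hypothesis enters precisely here: $\Delta_\cX$ is schematic and proper, so $(\Delta_\cX)_! = (\Delta_\cX)_*$, giving $K = (\gamma_\varphi)_!(\omega_\cX)$ for the graph $\gamma_\varphi$. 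Since $\cY$ is affine, $\gamma_\varphi$ is a base change of the diagonal of $\cY$, hence a closed immersion, so $(\gamma_\varphi)_! = (\gamma_\varphi)_*$. But $(\gamma_\varphi)_*(\omega_\cX)$ is exactly the kernel of $\varphi_*$. No coarse moduli, no gluing, and no explicit comparison transformation are needed.
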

\begin{proof} Since the question is local on $\cY$, we may assume that $\cY$ is an affine scheme. The fact that the left adjoint $\varphi_!$ to $\varphi^!$ is defined follows from Lemma \ref{conserv}. Since the categories of D-modules on $\cX$ and $\cY$ are dualizable and canonically self-dual, the functor $\varphi_!$ is given by a kernel $K \in \fD(\cX \times \cY)$. 

By definition, \[K= (\operatorname{id}_{\fD(\cX)} \otimes \varphi_!)\big((\Delta_{\cX})_*(\omega_{\cX})\big) \] where $\operatorname{id}_{\fD(\cX)} \otimes \varphi_!$ is the functor \[\fD(\cX \times \cX) \simeq \fD(\cX) \otimes \fD(\cX) \longrightarrow \fD(\cX) \otimes \fD(\cY) \simeq \fD(\cX \times \cY)\] given by the tensor product of the identity with $\varphi_!$. Here, $\Delta_{\cX}$ denotes the diagonal of $\cX$.

Since \[\operatorname{id}_{\cX} \times \varphi: \cX \times \cX \longrightarrow \cX \times \cY\] is proper, $K$ also identifies with $(\operatorname{id}_{\cX} \times \varphi)_!\big((\Delta_{\cX})_*(\omega)\big)$. By assumption, $\Delta_{\cX}$ is schematic and proper, and hence $(\Delta_{\cX})_!=(\Delta_{\cX})_*$. 

We therefore obtain that $K$ is given by $(\gamma_{\varphi})_!(\omega_{\cX})$, where $\gamma_{\varphi}$ denotes the inclusion \[\Gamma_{\varphi} \hookrightarrow \cX \times \cY \] of the graph of $\varphi$. But the latter is proper and schematic, and therefore $(\gamma_{\varphi})_!$ is equivalent to $(\gamma_{\varphi})_*$. Since $(\gamma_{\varphi})_*(\omega_{\cX})$ is the kernel of $\varphi_*$, we are done. 
\end{proof}



Denote by $\mathfrak{r}_Z^{\gen}$ the composition \[\pZ_K \overset{\mathfrak{r}_Z}{\longrightarrow} \pZ \overset{\mathfrak{t}^{\gen}_Z}{\longrightarrow} \cZ^{\gen}\]We are now ready to prove the following reconstruction theorem.

\begin{theorem}\label{recovery} There is a monad $\mathscr{M}_{K}$ acting on $\fD(\pZ_K)$ such that \begin{enumerate} \item $\mathscr{M}_K$ sends factorization algebras to factorization algebras; \item $\IC^{\operatorname{ren}}_{\pZ_K}$ is a module for $\mathscr{M}_K$. \end{enumerate} Moreover, there is an equivalence \[(\mathfrak{r}^{\gen}_Z)^!_{\operatorname{enh}}: \fD(\cZ^{\gen}) \overset{\sim}{\longrightarrow} \mathscr{M}_K-\operatorname{mod}\big(\fD(\pZ_K)\big)\] such that $(\mathfrak{r}^{\gen}_Z)^!_{\operatorname{enh}}(\sIC)$ is canonically isomorphic to $\IC^{\operatorname{ren}}_{\pZ_K}$ as a factorization algebra. 
\end{theorem}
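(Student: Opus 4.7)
The plan is to apply the Barr--Beck--Lurie monadicity theorem to the functor $(\mathfrak{r}_Z^{\gen})^! : \fD(\cZ^{\gen}) \to \fD(\pZ_K)$, and take $\mathscr{M}_K := (\mathfrak{r}_Z^{\gen})^! \circ (\mathfrak{r}_Z^{\gen})_!$ as the desired monad. Under the straightening identification $\fD(\cZ^{\gen}) = \fD(\pZ^{\to}_{\str}) \simeq \fD_{\eff}(\pZ)$, this functor factors as $(\mathfrak{r}_Z)^! \circ (\mathfrak{t}_Z^{\gen})^!$, so I would analyze each factor separately. Continuity is automatic. Conservativity follows by combining full faithfulness of $(\mathfrak{t}_Z^{\gen})^!$ with conservativity of $(\mathfrak{r}_Z)^!$: the map $\mathfrak{r}_Z$ is proper, Deligne--Mumford by Lemma \ref{zastavaisdm}, and surjective on geometric points (by properness, birationality, and Proposition \ref{zasissmooth}), so Lemma \ref{conserv} applies connected-component by connected-component of $\pZ$.

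The remaining hypothesis, existence of a left adjoint $(\mathfrak{r}_Z^{\gen})_!$, is the main technical point. Lemma \ref{dmpushforward} immediately yields a left adjoint $(\mathfrak{r}_Z)_! = (\mathfrak{r}_Z)_*$ to $(\mathfrak{r}_Z)^!$, so it suffices to produce a left adjoint to the fully faithful inclusion $(\mathfrak{t}_Z^{\gen})^! : \fD_{\eff}(\pZ) \hookrightarrow \fD(\pZ)$. This is the geometric incarnation of coinvariants for the lax $\Conf^{\to}$-action on $\pZ$, and it can be constructed as follows. The defect action morphisms $\overline{\iota}_\lambda : X^\lambda \times \pZ \to \pZ$ (restricted from $\dBun_{B^-}$) are proper on each graded component, and their $!$-pushforwards assemble, via the simplicial resolution of $\pZ \to \pZ^{\to}_{\str}$ and Lemma \ref{strcat}, into a colimit description of the left adjoint to the inclusion. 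The key verifications are that these pushforwards are continuous and that the resulting colimit lands in the effective subcategory, both of which follow from properness of the graded addition maps $\add : \Conf^{\lambda'} \times \Conf^{\lambda''} \to \Conf^{\lambda'+\lambda''}$.

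Once monadicity is established, the factorization structure on $\mathscr{M}_K$ is formal: by Proposition \ref{kontsevichfactorization} both $\mathfrak{r}_Z$ and $\mathfrak{t}_Z^{\gen}$ are morphisms of effective factorization spaces over $\Conf$, so both $(\mathfrak{r}_Z^{\gen})^!$ and $(\mathfrak{r}_Z^{\gen})_!$ preserve factorization algebras, and hence so does $\mathscr{M}_K$. The object $\IC^{\operatorname{ren}}_{\pZ_K}$ automatically acquires an $\mathscr{M}_K$-module structure by virtue of lying in the essential image of $(\mathfrak{r}_Z^{\gen})^!$. Finally, the identification $(\mathfrak{r}_Z^{\gen})_{\operatorname{enh}}^!(\sIC) \simeq \IC^{\operatorname{ren}}_{\pZ_K}$ as factorization algebras follows by tracing through the straightening equivalence and applying Theorem \ref{sicisic}, which already provides $(\mathfrak{r}_Z)^!(\sIC) \simeq \IC^{\operatorname{ren}}_{\pZ_K}$. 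The main obstacle throughout is thus the careful construction of the $\Conf$-coinvariants left adjoint; the remaining steps are formal consequences of Barr--Beck--Lurie and the factorization compatibilities already established in the previous sections.
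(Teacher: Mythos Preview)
Your overall strategy matches the paper's: apply Barr--Beck--Lurie to $(\mathfrak{r}_Z^{\gen})^!$, factored as $(\mathfrak{r}_Z)^! \circ (\mathfrak{t}_Z^{\gen})^!$, and handle each factor separately. The treatment of $\mathfrak{r}_Z$ via Lemmas \ref{conserv} and \ref{dmpushforward} is exactly what the paper does, and the deduction of points 1 and 2 from Theorem \ref{sicisic} and factorization compatibility is correct.

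However, there is a genuine error in your treatment of $\mathfrak{t}_Z^{\gen}$. You claim that $(\mathfrak{t}_Z^{\gen})^! : \fD_{\eff}(\pZ) \to \fD(\pZ)$ is fully faithful, but the paper explicitly notes (end of Section \ref{factorization}) that since $\Conf$ is \emph{not} homologically contractible, the forgetful functor $\fD_{\eff}(\cY) \to \fD(\cY)$ is \emph{not} fully faithful. Conservativity of $(\mathfrak{t}_Z^{\gen})^!$ does hold, but for a different reason: $\mathfrak{t}_Z^{\gen}$ is fppf-locally surjective. So your argument for conservativity needs repair, though the conclusion survives.

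More significantly, your construction of the left adjoint to $(\mathfrak{t}_Z^{\gen})^!$ via $\Conf$-coinvariants is unnecessarily roundabout and its correctness is not obvious as stated. The paper's approach is far simpler and bypasses this entirely: it observes that the map $\dBun_{B^-} \to \Bun_{G,B^-}^{\gen}$ is proper and schematic, and these properties are preserved under base change, so $\mathfrak{t}_Z^{\gen} : \pZ \to \cZ^{\gen}$ is itself proper and schematic. Hence the de Rham pushforward $(\mathfrak{t}_Z^{\gen})_*$ is automatically left adjoint to $(\mathfrak{t}_Z^{\gen})^!$, with no need to build anything by hand from the simplicial resolution. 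Composing with $(\mathfrak{r}_Z)_* = (\mathfrak{r}_Z)_!$ then gives $(\mathfrak{r}_Z^{\gen})_* \dashv (\mathfrak{r}_Z^{\gen})^!$ directly. You missed this key geometric input about $\mathfrak{t}_Z^{\gen}$, which is what makes the argument short.
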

\begin{proof} We will begin with the construction of the monad and a proof of the stated equivalence. Recall that the projection $\dBun_{B^-} \to \Bun_{G,B^-}^{\gen}$ is proper and schematic, as well as fppf locally surjective. Since these properties are preserved under base change, the same holds for the map \[\mathfrak{t}^{\gen}_Z: \pZ \longrightarrow \cZ^{gen}.\] In particular, the de Rham pushforward functor $(\mathfrak{r}^{\gen}_Z)_*$ is left adjoint to the conservative and continuous functor $(\mathfrak{r}^{\gen}_Z)^!$.

Fix a coweight $\lambda \in \Lambda^-$. Then $\pZ^{\lambda}$ is quasicompact, and hence $\pZ^{\lambda}_K$ is a safe QCA stack. Since $\mathfrak{r}^{\lambda}_Z: \pZ^{\lambda}_K \to \pZ^{\lambda}$ is proper and surjective on geometric points for every $\lambda$, by Lemma \ref{conserv} the functor \[(\mathfrak{r}^{\lambda}_Z)^!: \fD(\pZ^{\lambda}) \longrightarrow \fD(\pZ^{\lambda}_K)\] is monadic with left adjoint $(\mathfrak{r}^{\lambda}_Z)_!$. 

By Lemma \ref{zastavaisdm}, the morphism $\mathfrak{r}^{\lambda}_Z$ is Deligne-Mumford, and hence by Lemma \ref{dmpushforward} the left adjoint $(\mathfrak{r}^{\lambda}_Z)_!$ coincides with $(\mathfrak{r}^{\lambda}_Z)_*$. Since $\pZ_K$ is the disjoint union of the connected components $\pZ^{\lambda}_K$, by the previous paragraph the functor $(\mathfrak{r}_Z^{\gen})^!$ admits a left adjoint given by the de Rham pushforward functor $(\mathfrak{r}_Z^{\gen})_*$. 

Let $\mathscr{M}_K$ denote the monad \[(\mathfrak{r}_Z^{\gen})^! \circ (\mathfrak{r}_Z^{\gen})_* : \fD(\pZ_K) \longrightarrow \fD(\pZ_K) \] acting on $\fD(\pZ_K)$. Since $(\mathfrak{r}_Z^{\gen})^!$ is conservative, by Barr-Beck-Lurie we conclude that there is an equivalence of categories \[(\mathfrak{r}^{\gen}_Z)^!_{\operatorname{enh}}: \fD(\cZ^{\gen}) \overset{\sim}{\longrightarrow} \mathscr{M}_K - \operatorname{mod}\big(\fD(\pZ_K)\big)\] whose composition with the forgetful functor \[\oblv_{\mathscr{M}_K}: \mathscr{M}_K - \operatorname{mod}(\fD(\pZ_K)) \longrightarrow \fD(\pZ_K)\] canonically identifies with $(\mathfrak{r}^{\gen}_Z)^!$.

Note that since $\mathfrak{r}_Z$ is a morphism of factorization spaces, the monad $\mathscr{M}_K$ preserves factorization algebras, proving point $1$. To show point $2$, note that by Theorem \ref{effisun} the Zastava semi-infinite IC sheaf $\sIC$ has an effective structure, and hence may be upgraded to an object of $\fD(\cZ^{\gen})$, which we denote by the same name. By Theorem \ref{sicisic} there is a canonical isomorphism \[\IC^{\operatorname{ren}}_{\pZ_K} \overset{\sim}{\longrightarrow} (\mathfrak{r}^{\gen}_Z)^!_{\operatorname{enh}}( \sIC),\] from which we conclude that $\IC^{\operatorname{ren}}_{\pZ_K}$ has a natural structure of an $\mathscr{M}_K$-module and is canonically equivalent to $(\mathfrak{r}^{\gen}_Z)^!_{\operatorname{enh}}(\sIC)$. 
\end{proof}

The monad $\mathscr{M}_K$ has the following concrete description. Consider the Cartesian diagram \[\begin{tikzcd}
	{\pZ_K \times_{\cZ^{\gen}} \pZ_K} & {\pZ_K} \\
	{\pZ_K} & {\cZ^{\gen}}
	\arrow["{\operatorname{pr}_2}", from=1-1, to=1-2]
	\arrow["{\mathfrak{r}_Z^{\gen}}", from=1-2, to=2-2]
	\arrow["{\operatorname{pr}_1}"', from=1-1, to=2-1]
	\arrow["{\mathfrak{r}_Z^{\gen}}"', from=2-1, to=2-2]
\end{tikzcd}\] where $\operatorname{pr}_i$ for $i \in \{1,2\}$ denotes the projection onto the $i$th factor. Base change along this diagram identifies the underlying functor of the monad $\mathscr{M}_K$ with the composition of functors $(\operatorname{pr}_2)_* \circ (\operatorname{pr}_1)^!$. 

\section{Parabolic semi-infinite IC sheaves}\label{parabolicicsheaves}

In this section, we consider an application of the theory developed in this paper. Namely, we will construct an analogue of $\sICR$ for arbitrary parabolic subgroups of $G$. In the following, let $P \subseteq G$ be a parabolic subgroup containing $B$ with Levi factor $M$ and unipotent radical $N_P$. To $P$ we may associate a subdiagram of the Dynkin diagram of $G$ spanned by some set of vertices $\mathcal{I}_M \subseteq \mathcal{I}_G$. Let $P^-$ denote the opposite parabolic satisfying $P \cap P^- =M$.

\subsection{Parabolic Zastava spaces.} One can consider the \emph{parabolic compactified Zastava space}, the moduli space \[\cZ_{\widetilde{N}_P,\newoverline{P}^-} \coloneqq (\widetilde{\Bun}_{N_P} \times_{\Bun_G} \dBun_{P^-})^0\] of generically transverse pairs of generalized reductions of a $G$-bundle to $N_P$ and $P^-$. Here, $\widetilde{\Bun}_{N_P}$ denotes the fiber product \[\widetilde{\Bun}_P \times_{\Bun_M} \{\mathscr{P}^0_M\}\] where $\widetilde{\Bun}_P$ is the compactification of $\Bun_P$ considered in \cite{BFGM}, and $\dBun_{P^-}$ denotes the ``naive" compactification of $\Bun_{P^-}$ also considered in \emph{loc. cit.}. 

Note there is a natural defect of transversality map \[\cZ_{\widetilde{N}_P,\newoverline{P}^-} \longrightarrow \Conf_P,\] where $\Conf_P$ is the configuration space of points in $X$ associated to the lattice $\Lambda^-_P$ of negative cocharacters \[ \G_m \longrightarrow M/[M,M].\] In other words, we consider the negative span of the images of the coroots $\alpha_i$ for $i \in \mathcal{I}_G \setminus \mathcal{I}_M$ under the map $\Lambda \to \Lambda_P$. Just as in the principal case, there is an action of $\Conf_P$ on $\cZ_{\widetilde{N}_P, \newoverline{P}^-}$ making the latter into an augmented monoid over $\Conf_P$. Hence we may consider the associated lax prestack $\cZ^{\to}_{\widetilde{N}_P, \newoverline{P}^-}$ and its correspondence version \[\cZ^{\to}_{\widetilde{N}_P, \newoverline{P}^-,\corr} \longrightarrow \Conf^{\to}_{P,\corr}.\] 

Let us now denote by $\widetilde{S}^0_{P,\Ran}$ the following closed subfunctor of $\Gr_{G,\Ran}$. A map $S \to \widetilde{S}^0_{P,\Ran}$ is equivalent to a point $x_I$ of $\Ran$, a $G$-bundle $\mathscr{P}_G$ over $X \times S$, and a trivialization $\alpha$ away from $\Gamma_{x_I}$ such that the following holds. For every representation $V$ of $G$, we require that the composition of the maps \[V^{N_P}_{\mathscr{P}^0_M} \longrightarrow V_{\mathscr{P}^0_G} \overset{\alpha}{\longrightarrow} V_{\mathscr{P}_G}\] extends (possibly with zeroes) to $X \times S$. Here we regard the invariants $V^{N_P}$ with its natural structure as an $M$-representation.

\begin{proposition}\label{paraboliceffisun}
    The straightening $\cZ_{\widetilde{N}_P,\newoverline{P}^-, \Ran}$ is naturally equivalent to $\widetilde{S}^0_{P,\Ran}$ as a factorization space.
\end{proposition}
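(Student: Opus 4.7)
The plan is to mirror the proof of Theorem \ref{effisun} with $\pZ$ replaced by $\cZ_{\widetilde{N}_P,\newoverline{P}^-}$ and $\Conf$ replaced by $\Conf_P$. First I would introduce configuration-space analogues $\widetilde{S}^0_{P,\Conf}$ and $\widetilde{S}^-_{P,\Conf}$ as closed subprestacks of a parabolic variant $\Gr_{G,\Conf_P}$ of the configuration-space affine Grassmannian, defined by imposing the parabolic Pl\"ucker regularity condition defining $\widetilde{\Bun}_{N_P}$ and, dually, a negativity condition for $\newoverline{P}^-$ with poles bounded by the colored divisor. Both inherit effective factorization structures over $\Conf_P$, and an argument parallel to Section \ref{section: zastava spaces via the affine grassmannian} realizes $\cZ_{\widetilde{N}_P,\newoverline{P}^-}$ as the intersection $\widetilde{S}^0_{P,\Conf} \cap \widetilde{S}^-_{P,\Conf}$ inside $\Gr_{G,\Conf_P}$, compatibly with effective factorization structures on both sides.

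The technical heart of the argument is the parabolic analogue of Lemma \ref{strisequiv}: the inclusion $\widetilde{S}^{-,\to}_{P,\Conf,\corr} \hookrightarrow \Gr^{\to}_{G,\Conf_P,\corr}$ should induce an equivalence on straightenings over $\Ran$, giving $(\widetilde{S}^{-,\to}_{P,\Conf})_{\Ran} \overset{\sim}{\to} \Gr_{G,\Ran}$. Following the principal case, given an $S$-point $(D,x_I,\sP_G,\alpha)$ of $\Gr^{\to}_{G,\Conf_P,\corr}$, the trivialization $\alpha$ induces meromorphic Pl\"ucker maps landing in $N_{P^-}$-coinvariants, and one enlarges $D$ by a divisor of the form $\sum_{i \in \mathcal{I}_G \setminus \mathcal{I}_M}(-n_i)\alpha_i \Gamma_D \in \Conf_P$ with $n_i$ sufficiently large to clear the poles. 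Only the simple coroots $\alpha_i$ for $i \in \mathcal{I}_G \setminus \mathcal{I}_M$ need to appear, since the Pl\"ucker equations defining $\dBun_{P^-}$ involve only representations on which the center of $[M,M]$ acts trivially --- this is precisely what makes $\Lambda^-_P$ the correct lattice. The resulting slice categories are directed sets, so Lemma \ref{quillenthm} gives the desired equivalence.

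Combining this with Proposition \ref{effectivetounital} applied to $\cY = \cZ_{\widetilde{N}_P,\newoverline{P}^-}$ produces the required equivalence $\cZ_{\widetilde{N}_P,\newoverline{P}^-,\Ran} \overset{\sim}{\to} \widetilde{S}^0_{P,\Ran}$ at the level of fppf sheaves. Compatibility with factorization is automatic because all constructions are performed at the level of effective factorization spaces and Proposition \ref{conftoran} transports these to unital factorization spaces over $\Ran$; one checks as in the principal case that the multiplication and unit correspondences defining the commutative algebra structure on $\Conf^{\to}_{P,\corr}$ are realization fibrations. I expect the main obstacle to be the pole-absorption step: one must verify that divisors valued in the strictly smaller lattice $\Lambda^-_P$ still suffice to absorb the poles of \emph{all} Pl\"ucker maps relevant to the $\newoverline{P}^-$-condition, which requires a careful analysis of which $G$-representations carry nontrivial $N_{P^-}$-coinvariants and how their highest weights decompose in terms of the simple coroots outside $\mathcal{I}_M$.
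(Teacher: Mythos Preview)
Your proposal is correct and matches the paper's approach exactly: the paper's proof is a one-line reference to the principal case (Theorem \ref{effisun}), and you have spelled out precisely what that entails in the parabolic setting. One small point: the statement here is only about an equivalence of factorization \emph{spaces}, so the invocation of Proposition \ref{effectivetounital} (which concerns categories of sheaves) is not needed for this proposition itself---it enters only in the subsequent construction of $\IC^{\frac{\infty}{2}}_{P,\Ran}$.
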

\begin{proof}
    The argument is the same as in the principal case (see the proof of \ref{effisun}).
\end{proof}

Note for the proof of Proposition \ref{paraboliceffisun} to work, it is crucial that we compactify $\Bun_{P^-}$ ``naively"; there is no action of $\Conf_P$ on $\widetilde{\Bun}_{P^-}$. 

\subsection{The sheaf.} Choose a coweight $\G_m \to Z(M)$ so that it contracts $N_P$ to the identity $1 \in N_P$. Consider the resulting $\G_m$-action on $\cZ_{\widetilde{N}_P, \newoverline{P}^-}$. For $\theta \in \Lambda^-_P$ the fixed points of the connected component of $\cZ^{\theta}_{\widetilde{N}_P,\newoverline{P}^-}$ identify with $\Gr^{+,\theta'}_{M, \Conf} \times X^{\theta}$, where $\Gr^{+,\theta'}_{M,\Conf}$ is a closed subscheme of the affine Grassmannian $\Gr_{M,\Conf}$ (see \cite{BFGM} for the definition) and $\theta' \leq \theta$. The corresponding attracting locus is the scheme \[\cZ^-_{P,=\theta} \coloneqq \Gr^{+,\theta}_{M,\Conf} \times_{\Bun_M} \cZ_{P,\newoverline{P}^-}, \] where $\cZ_{P,\newoverline{P}^-}$ is the relative Zastava $(\Bun_P \times_{\Bun_G} \dBun_{P^-})^0$. For example, when $\theta=0$ the fixed locus is $\Conf$ and the attracting locus is $\cZ_{N_P,\newoverline{P}^-}=(\Bun_{N_P} \times_{\Bun_G} \dBun_{P^-})^0$. The repelling loci are the schemes $\cZ^{\theta}_{\widetilde{N}_P,P^-} \times \Conf_P$. 

Since $\cZ_{\widetilde{N}_P, \newoverline{P}^-}$ is a proper scheme, each of whose connected components are of finite-type, it is $\G_m$-complete and we may consider the semi-infinite category $\operatorname{SI}(\cZ_{\widetilde{N}_P, \newoverline{P}^-})$ together with its factorization algebra $\IC^{\frac{\infty}{2}}(\cZ_{\widetilde{N}_P, \newoverline{P}^-})$. As in the principal case, everything is compatible with the $\Conf$ action and we obtain a semi-infinite category $\operatorname{SI}^{\leq 0}_P$ of effective sheaves on $\cZ_{\widetilde{N}_P, \newoverline{P}^-}$ together with a factorization algebra $\IC^{\frac{\infty}{2}}_P$. Explicitly, $\IC^{\frac{\infty}{2}}_P$ is the intermediate extension of the dualizing sheaf $\omega_{\cZ_{N_P,\newoverline{P}^-}}$ in the semi-infinite t-structure on $\operatorname{SI}(\cZ_{\widetilde{N}_P,\newoverline{P}^-})$.

Now by Proposition \ref{paraboliceffisun} and Proposition \ref{effectivetounital} we obtain a unital factorization algebra denoted $\IC^{\frac{\infty}{2}}_{P,\Ran}$ on $\Gr_{G,\Ran}$.

\begin{definition}\label{parabolicicdefinition} We will call the factorization algebra $\IC^{\frac{\infty}{2}}_{P,\Ran}$ the \emph{parabolic semi-infinite intersection cohomology sheaf}. 
\end{definition}

Let us consider the extreme cases. When $P=B$, we recover the semi-infinite intersection cohomology sheaf discussed in the bulk of this text. At the other extreme we have $P=G$. In this case, the support of $\IC^{\frac{\infty}{2}}_{P,\Ran}$ is the \emph{positive locus} of $\Gr_{G,\Ran}$. Explicitly, this is the moduli space of triples $(x_I,\mathscr{P}_G,\alpha)$ as in the definition of $\Gr_{G,\Ran}$ such that for every $V \in \operatorname{Rep}(G)$ the a priori meromorphic map \[V_{\mathscr{P}^0_G} \overset{\alpha}{\longrightarrow} V_{\mathscr{P}_G}\] is regular. Since $G$ is semisimple, this induces a nonzero endomorphism of the trivial line bundle on $X$ by taking determinants, and hence $\alpha$ itself must extend to the disc. It follows that $\IC^{\frac{\infty}{2}}_{P,\Ran}$ is nothing more than the factorizable unit for the chiral category $\fD(\Gr_{G,\Ran})$, obtained as the pushforward of the dualizing sheaf along the canonical section \[\Ran \longrightarrow \Gr_{G,\Ran}\] induced by the unital structure of $\Gr_{G,\Ran}$. 

\subsection{Local-to-global parabolic compatibility.} We would like to show that the Zastava parabolic semi-infinite IC sheaf is compatible with the global IC sheaf of parabolic Drinfeld's compactification. To do this, we first need some preparations. The constructions that follow can be found in \cite{ABBGM} for the principal case and in the proof of Proposition $4.2.21$ of \cite{DL} for a general parabolic.

For a coweight $\theta \in \Lambda^-_P$, let $\widetilde{\Bun}_{N_P,\operatorname{level},\theta}$ denote the moduli space whose $S$-points consists of a point $D \in X^{\theta}(S)$, a generalized reduction $(\mathscr{P}_G,\kappa) \in \widetilde{\Bun}_{N_P}$ whose locus of degeneracy is disjoint from the support of $D$, and a trivialization of the resulting $N_P$ bundle on the formal neighborhood of $\Gamma_D$. Letting $\widetilde{\Bun}_{N_P, \operatorname{good},\theta}$ be the moduli of a divisor $D$ as before and a generalized $N_P$-reduction which is nondegenerate at $D$, we see there is a map \[\widetilde{\Bun}_{N_P,\operatorname{level},\theta} \longrightarrow \widetilde{\Bun}_{N_P,\operatorname{good},\theta}\] which is a torsor for the group $\mathfrak{L}^+_{X^{\theta}}(N_P)$ over $X^{\theta}$ of arcs from the multidisc into $N_P$. By regluing, the action of $\mathfrak{L}^+_{X^{\theta}}(N_P)$ extends to an action of $\mathfrak{L}_{X^{\theta}}(N_P)$ which does not preserve the projection to $\widetilde{\Bun}_{N_P,\operatorname{good},\theta}$. We therefore obtain a groupoid \[\begin{tikzcd}\label{groupoidaction}
	{\mathscr{H}^{\theta}_{N_P} \coloneqq \mathfrak{L}_{X^{\theta}}(N_P) \times^{\mathfrak{L}^+_{X^{\theta}}(N_P)} \widetilde{\Bun}_{N_P,\operatorname{level},\theta}} & {\widetilde{\Bun}_{N_P,\operatorname{good},\theta}}
	\arrow[shift right, from=1-1, to=1-2]
	\arrow[shift left=2, from=1-1, to=1-2]
\end{tikzcd}\] where the two arrows are given by projection and action. Note the category given by imposing $\mathscr{H}_{N_P}$-equivariance on $D$-modules over $\widetilde{\Bun}_{N_P,\operatorname{good},\theta}$ is a full subcategory of $\mathfrak{D}(\widetilde{\Bun}_{N_P,\operatorname{good},\theta})$ by unipotence of $N_P$. 

Define $\operatorname{SI}^{\leq 0}_{\operatorname{glob},P}$ to be the full subcategory of $\fD(\widetilde{\Bun}_{N_P})$ consisting of sheaves whose $!$-pullback to $\widetilde{\Bun}_{N_P,\operatorname{good},\theta}$ is $\mathscr{H}^{\theta}_{N_P}$-equivariant for all $\theta \in \Lambda^-_P$. Since the action of $\mathscr{H}_{N_P}$ preserves the strata of $\widetilde{\Bun}_{N_P}$, we obtain categories $\operatorname{SI}^{=\theta}_{\operatorname{glob},P}$ for every stratum $\Gr^{+,\theta}_{M,\Conf}\times_{\Bun_M} \Bun_P$. Moreover, since for each $\theta$ the projection \[\widetilde{\Bun}_{N_P,\operatorname{good},\theta} \longrightarrow \widetilde{\Bun}_{N_P} \] is smooth, the categories $\operatorname{SI}^{=\theta}_{\operatorname{glob},P}$ and $\operatorname{SI}^{ \leq 0}_{\operatorname{glob},P}$ acquire t-structures, simply by restricting the perverse t-structure up to the usual shift by $\operatorname{dim}(\Bun_P)$. The following lemma is standard. 

\begin{lemma}\label{groupoidequiv}
    An object of $\fD(\widetilde{\Bun}_{N_P})$ belongs to $\operatorname{SI}^{\leq 0}_{\operatorname{glob},N_P}$ if and only if its $!$-pullback to a stratum $\Gr^{+,\theta}_{M,\Conf} \times_{\Bun_M} \Bun_P$ is $!$-pulled back from $\Gr^{+,\theta}_{M,\Conf}$. 
\end{lemma}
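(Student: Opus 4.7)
The plan is to reduce the equivalence to a stratumwise identification of a groupoid quotient. Both conditions cut out full subcategories of $\fD(\widetilde{\Bun}_{N_P})$: $\mathscr{H}^{\theta}_{N_P}$-equivariance is a property rather than extra structure because $N_P$ is unipotent, while the condition of being $!$-pulled back from $\Gr^{+,\theta}_{M,\Conf}$ on the stratum is a property because the projection $\Gr^{+,\theta}_{M,\Conf} \times_{\Bun_M} \Bun_P \to \Gr^{+,\theta}_{M,\Conf}$ is a smooth surjection with $N_P$-torsor fibers. Hence it suffices to verify that a given sheaf $c$ satisfies one condition exactly when it satisfies the other.

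Next I would reduce the verification to a single defect stratum. Since the action of $\mathfrak{L}_{X^\theta}(N_P)$ on the good locus preserves the induced $M$-bundle of any point, the groupoid $\mathscr{H}^{\theta}_{N_P}$ preserves the stratification pulled back from $\widetilde{\Bun}_{N_P}$, and so $\mathscr{H}^{\theta}_{N_P}$-equivariance of the pullback of $c$ can be checked one stratum at a time. Fix $\theta' \in \Lambda^-_P$ and let $Y_{\theta,\theta'}$ be the preimage of ${_{=\theta'}}{\widetilde{\Bun}_{N_P}}$ in $\widetilde{\Bun}_{N_P,\operatorname{good},\theta}$; concretely this parametrizes tuples $(D,x,\tau)$ with $D \in X^\theta$, $x$ in the $\theta'$-stratum (so $x \in \Gr^{+,\theta'}_{M,\Conf} \times_{\Bun_M} \Bun_P$) such that $D$ is disjoint from the defect of $x$, and $\tau$ a trivialization of the $N_P$-bundle of $x$ along $\Gamma_D$. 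I claim the quotient $Y_{\theta,\theta'}/\mathscr{H}^{\theta}_{N_P}$ identifies naturally with an open subscheme of a space depending only on $\Gr^{+,\theta'}_{M,\Conf}$ and on the divisor $D$; the key point is that the loop-group action modifies the $N_P$-part of the $P$-reduction together with $\tau$ while leaving the $M$-data and $D$ intact, so on the quotient only the $M$-side survives.

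Granting the quotient identification, both directions of the lemma follow. For the "only if" direction: if the pullback of $c$ to $Y_{\theta,\theta'}$ is equivariant for $\mathscr{H}^{\theta}_{N_P}$, it descends to a sheaf on the quotient, and combined with smoothness of the further projection $Y_{\theta,\theta'} \to {_{=\theta'}}{\widetilde{\Bun}_{N_P}}$ this forces $i_{\theta'}^!(c)$ to be $!$-pulled back from $\Gr^{+,\theta'}_{M,\Conf}$ by faithfully flat descent. Conversely, if $i_{\theta'}^!(c)$ is pulled back from $\Gr^{+,\theta'}_{M,\Conf}$, then its further pullback to $Y_{\theta,\theta'}$ factors tautologically through the quotient and is therefore $\mathscr{H}^{\theta}_{N_P}$-equivariant. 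The main technical obstacle I anticipate is executing the quotient identification cleanly, including verifying that the auxiliary datum $D \in X^\theta$ really does drop out on the quotient in a manner uniform in $\theta$ and $\theta'$; once that identification is in place, the descent step is routine.
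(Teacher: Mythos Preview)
The paper does not give a proof of this lemma; it simply labels it as ``standard'' and moves on. Your outline captures the correct mechanism: the groupoid $\mathscr{H}^{\theta}_{N_P}$ acts through the $N_P$-direction only, so equivariance amounts to constancy along the fibers of $\Bun_P \to \Bun_M$. Your ``if'' direction is fine as stated.

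For the ``only if'' direction, the point you flag as the main obstacle is indeed where the content lies, and your sketch is slightly too optimistic for a \emph{fixed} $\theta$. What makes the quotient identification work is unipotence of $N_P$: for any nonempty $D$, the complement $X \setminus \Gamma_D$ is affine, so $H^1$ of the twisted group scheme $(N_P)_{\mathscr{P}_M}$ over it vanishes, and hence any two $P$-reductions with the same underlying $M$-bundle become isomorphic away from $D$ and are therefore related by an element of $\mathfrak{L}_{X^\theta}(N_P)$. This transitivity is what forces the quotient of the $(D,\theta')$-slice to be (an open piece of) $\Gr^{+,\theta'}_{M,\Conf}$ up to the classifying stack of a pro-unipotent stabilizer, which is homologically trivial. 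From there your descent argument goes through: restricting to a fixed $D$ gives an open embedding of the good-at-$D$ locus into the stratum on which the sheaf is visibly pulled back from $\Gr^{+,\theta'}_{M,\Conf}$; varying $D$ covers the stratum, and the local identifications glue because they all agree with the tautological projection. So the approach is correct once you make the role of unipotence and the resulting transitivity explicit rather than leaving the quotient identification as a black box.
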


Let us pause for a moment to fix some notation. Recall that for a factorization space $\cY$ over $\Conf$ there is a notion of \emph{weak factorization module space} for $\cY$. This is a space $\cX$ equipped with homotopy associative morphisms \[\varphi: [\cY \times \cX]_{\disj} \longrightarrow \cX_{\disj}\] which are compatible with the factorization structure on $\cY$ in a suitable sense. Given a factorization algebra $A$ in $\mathfrak{D}(\cY)$ we may form the category \[A-\operatorname{mod}^{\operatorname{fact}}(\mathfrak{D}(\cX))\] of factorization modules for $A$ over $\cX$. Roughly, these are sheaves $c$ in $\mathfrak{D}(\cX)$ equipped with isomorphisms \[ A \boxtimes c \overset{\sim}{\longrightarrow} \varphi^!(c)\] which are compatible with the factorization structure on $A$. For an overview on factorization modules see \cite{R1}.

Note that $\cZ_{\widetilde{N}_P,P^-}$ is naturally a weak factorization space for $\cZ_{N_P,P^-}$, and let $\mathfrak{p}: \cZ_{\widetilde{N}_P,P^-} \to \widetilde{\Bun}_{N_P}$ denote the usual map from the affine Zastava. Note that since $\cZ_{N_P,P^-}$ is smooth, its renormalized intersection cohomology sheaf $\IC^{\operatorname{ren}}_{\cZ_{N_P,P^-}}$ is just the dualizing sheaf $\omega_{\cZ_{N_P,P^-}}$. In particular, it is endowed with a canonical structure of a factorization algebra, and we can consider its category of factorization modules in $\mathfrak{D}(\cZ_{\widetilde{N}_P,P^-}$. We have the following important lemma. 

\begin{lemma}\label{weakfactorization}
    The functor of $!$-pullback \[\mathfrak{p}^!: \operatorname{SI}^{\leq 0}_{\operatorname{glob},P} \longrightarrow \fD(\cZ_{\widetilde{N}_P,P^-}) \] upgrades to a functor \[\mathfrak{p}^!_{\operatorname{enh}}: \operatorname{SI}^{\leq 0}_{\operatorname{glob},P} \longrightarrow \IC^{\operatorname{ren}}_{\cZ_{N_P,P^-}} - \operatorname{mod}^{\operatorname{fact}}(\fD(\cZ_{\widetilde{N}_P,P^-}))\] from the semi-infinite category to factorization modules for the renormalized intersection cohomology sheaf of the open Zastava.
\end{lemma}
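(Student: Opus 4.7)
The plan is to produce, for each $c \in \operatorname{SI}^{\leq 0}_{\operatorname{glob},P}$, the data of a factorization module structure on $\mathfrak{p}^!(c)$ over the factorization algebra $\IC^{\operatorname{ren}}_{\cZ_{N_P,P^-}} = \omega_{\cZ_{N_P,P^-}}$. Concretely, this amounts to constructing, compatibly with higher $n$-ary operations, a canonical isomorphism
\[ \omega_{\cZ_{N_P,P^-}} \boxtimes \mathfrak{p}^!(c) \overset{\sim}{\longrightarrow} \varphi^!\bigl(\mathfrak{p}^!(c)\bigr) \]
on $[\cZ_{N_P,P^-} \times \cZ_{\widetilde{N}_P,P^-}]_{\disj}$, where $\varphi$ is the factorization map. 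Since $\cZ_{N_P,P^-}$ is smooth, the left-hand side identifies, up to a cohomological shift, with the $!$-pullback $(\mathfrak{p} \circ \operatorname{pr}_2)^!(c)$ restricted to the disjoint locus. Thus the problem reduces to producing a canonical identification between $(\mathfrak{p} \circ \operatorname{pr}_2)^!(c)$ and $(\mathfrak{p} \circ \varphi)^!(c)$ on $[\cZ_{N_P,P^-} \times \cZ_{\widetilde{N}_P,P^-}]_{\disj}$.

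The key geometric step is to compare $\mathfrak{p} \circ \operatorname{pr}_2$ and $\mathfrak{p} \circ \varphi$ over the disjoint locus. For a point $(z_1,z_2)$ with divisor images $D_1$ and $D_2$ disjoint, the generalized $N_P$-reduction underlying $\mathfrak{p}(z_2)$ is nondegenerate on a formal neighborhood of $\Gamma_{D_1}$, since the degeneracy of $z_2$ is supported on $D_2$; likewise the glued reduction $\mathfrak{p}(\varphi(z_1,z_2))$ is nondegenerate near $D_1$, because there it agrees with the honest $N_P$-reduction coming from $z_1 \in \cZ_{N_P,P^-}$. Setting $\theta = \deg(D_1)$, both maps therefore factor through $\widetilde{\Bun}_{N_P,\operatorname{good},\theta}$, and the resulting two reductions differ precisely by an element of $\mathfrak{L}_{X^{\theta}}(N_P)$ encoding the restriction of $z_1$'s honest $N_P$-reduction to the formal neighborhood of $\Gamma_{D_1}$ against the tautological trivialization coming from $z_2$'s nondegeneracy there. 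This packages (component-wise in $\theta$) into a canonical morphism
\[ [\cZ_{N_P,P^-} \times \cZ_{\widetilde{N}_P,P^-}]_{\disj} \longrightarrow \mathscr{H}^{\theta}_{N_P} \]
whose compositions with the two legs of the groupoid \eqref{groupoidaction} agree with $\mathfrak{p}\circ\operatorname{pr}_2$ and $\mathfrak{p}\circ\varphi$ respectively.

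Combining this with Lemma \ref{groupoidequiv} and the definition of $\operatorname{SI}^{\leq 0}_{\operatorname{glob},P}$, the $\mathscr{H}^{\theta}_{N_P}$-equivariance of $c$ on each stratum yields a canonical isomorphism between the two $!$-pullbacks, which is exactly the required binary factorization module structure. The main obstacle is the verification of higher coherence: one must produce analogous morphisms from the $n$-fold disjoint loci into appropriate jet-groupoids and check that they assemble into a homotopy coherent factorization module structure. This reduces to the observation that the groupoid $\mathscr{H}_{N_P}$ itself carries a factorization structure over $\Conf_P$ (via concatenation of jets on disjoint multidiscs) with respect to which the comparison morphism above is multiplicative; the needed higher coherences then follow by transport from the coherences of the factorization structure on $\widetilde{\Bun}_{N_P,\operatorname{level}}$. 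Uniqueness of the resulting enhancement $\mathfrak{p}^!_{\operatorname{enh}}$ will come from the tautological fact that, over the generic transversality locus where $\mathfrak{p}$ factors through the open Zastava $\cZ_{N_P,P^-}$, the factorization module structure is forced by the factorization algebra structure on $\omega_{\cZ_{N_P,P^-}}$.
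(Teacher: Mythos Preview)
Your proposal is correct and follows essentially the same approach as the paper's proof. Both arguments reinterpret the factorization module structure for the dualizing sheaf as equivariance for the groupoid $[\cZ_{N_P,P^-} \times \cZ_{\widetilde{N}_P,P^-}]_{\disj}$ acting on $\cZ_{\widetilde{N}_P,P^-}$ via projection and factorization, and then produce a morphism of groupoids from this to $\coprod_{\theta}\mathscr{H}^{\theta}_{N_P}$, so that $\mathscr{H}_{N_P}$-equivariance of $c$ transports to the desired structure; the paper simply cites the analogous argument in \cite{ABBGM}, whereas you spell out the geometric construction of the groupoid map and address the higher coherences more explicitly. One small remark: the identification $\omega_{\cZ_{N_P,P^-}} \boxtimes \mathfrak{p}^!(c) \simeq (\mathfrak{p}\circ\operatorname{pr}_2)^!(c)$ holds on the nose (not just up to a shift), since $\omega \boxtimes (-)$ is exactly $\operatorname{pr}_2^!$.
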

\begin{proof}
    The proof is almost identical to one direction of Proposition $4.2.6$ of \cite{ABBGM}. The idea is as follows. We can view the disjoint locus $[\cZ_{N_P,P^-} \times \cZ_{\widetilde{N}_P,P^-}]_{\disj}$ as a groupoid acting on $\cZ_{\widetilde{N}_P,P^-}$, where one map is given by projection and the other by factorization. By generic transversality, there is a map \[[\cZ_{N_P,P^-} \times \cZ_{\widetilde{N}_P,P^-}]_{\disj} \longrightarrow \coprod_{\theta \in \Lambda^-_P} \mathscr{H}^{\theta}_{N_P}\] commuting with the action maps on $\cZ_{\widetilde{N}_P,P^-}$ and $\coprod_{\theta \in \Lambda^-_P}\widetilde{\Bun}_{N_P,\operatorname{good},\theta}$. 
    
    The result now follows from the fact that the renormalized intersection cohomology sheaf of $\cZ_{N_P,P^-}$ is the dualizing sheaf as a factorization algebra. Indeed, for a sheaf $c \in \mathfrak{D}(\cZ_{\widetilde{N}_P,P^-})$, the structure of a factorization module on $c$ for the factorization algebra $\IC^{\operatorname{ren}}_{\cZ_{N_P,P^-}}$ is therefore equivalent to an equivariant structure on $c$ with respect to the groupoid $[\cZ_{N_P,P^-} \times \cZ_{\widetilde{N}_P,P^-}]_{\disj}$ acting on $\cZ_{\widetilde{N}_P,P^-}$. 
\end{proof}

We will denote by \[\widetilde{\mathfrak{p}}: \cZ_{\widetilde{N}_P, \newoverline{P}^-} \longrightarrow \widetilde{\Bun}_{N_P} \] the projection to Drinfeld's compactification. One can show that every object of $\operatorname{SI}^{\leq 0}_{\operatorname{glob},P}$ is $\G_m$-monodromic, and hence the functor $\widetilde{\mathfrak{p}}^!$ factors through $\operatorname{SI}(\cZ_{\widetilde{N}_P,\newoverline{P}^-}$) by Lemma \ref{groupoidequiv}. The next proposition establishes the basic local-to-global property for the sheaf $\IC^{\frac{\infty}{2}}_P$. 

\begin{proposition} \label{parabolictstr}
    For every $\theta \in \Lambda^-_P$, the functor \[\widetilde{\mathfrak{p}}^!: \operatorname{SI}^{\leq 0}_{\operatorname{glob},P} \longrightarrow \operatorname{SI}(\cZ^{\theta}_{\widetilde{N}_P,\newoverline{P}^-}) \] is t-exact. Moreover, there is a canonical equivalence \[\widetilde{\mathfrak{p}}^!(\IC^{\operatorname{ren}}_{\widetilde{\Bun}_{N_P}}) \overset{\sim}{\longrightarrow} \IC^{\frac{\infty}{2}}_P\] extending the tautological isomorphism over $\cZ_{N_P,\newoverline{P}^-}$. 
\end{proposition}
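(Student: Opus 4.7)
The strategy is to mirror the argument of Theorem \ref{texactequivalence} (via Proposition \ref{texactzastava}), checking t-exactness stratum by stratum using the Bia{\l}ynicki-Birula decomposition of $\cZ_{\widetilde{N}_P,\newoverline{P}^-}$ described in the section, and then bootstrapping to the identification of $\IC^{\frac{\infty}{2}}_P$ using the weak factorization module structure of Lemma \ref{weakfactorization}. Concretely, I will fix $\theta \in \Lambda^-_P$ and analyze the diagram
\[\begin{tikzcd}
\Gr^{+,\mu'}_{M,\Conf} \times X^{\theta - \mu'} & \cZ^-_{P,=\mu'} & \cZ^\theta_{\widetilde{N}_P,\newoverline{P}^-} \\
\Gr^{+,\mu'}_{M,\Conf} & \widetilde{\Bun}^{=\mu'}_{N_P,\operatorname{good}} & \widetilde{\Bun}_{N_P}
\arrow[from=1-2,to=1-3]
\arrow[from=1-3,to=2-3,"\widetilde{\mathfrak{p}}"]
\arrow[from=1-2,to=2-2]
\arrow[from=2-2,to=2-3]
\arrow[from=1-2,to=1-1]
\arrow[from=1-1,to=2-1,"\operatorname{pr}"']
\arrow[from=2-2,to=2-1]
\end{tikzcd}\]
for each $\mu' \leq \theta$ in $\Lambda^-_P$, where the notation matches the fixed-point/attracting decomposition identified above.

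For left t-exactness, by Lemma \ref{groupoidequiv} an object $c \in \operatorname{SI}^{\leq 0}_{\operatorname{glob},P}$ sitting in semi-infinite degrees $\geq 0$ has its $!$-restriction to the stratum labeled by $\mu'$ pulled back from an object $c' \in \fD(\Gr^{+,\mu'}_{M,\Conf})$ sitting in perverse degrees $\geq -\dim(\widetilde{\Bun}_{N_P,\operatorname{good},\mu'})+\dim(\Gr^{+,\mu'}_{M,\Conf})$. I then compute the relative dimension of the projection $\Gr^{+,\mu'}_{M,\Conf} \times X^{\theta-\mu'} \to \Gr^{+,\mu'}_{M,\Conf}$ and the relative dimension of $\cZ^-_{P,=\mu'} \to \Gr^{+,\mu'}_{M,\Conf} \times X^{\theta-\mu'}$, and verify that these shifts combine to place $\widetilde{\mathfrak{p}}^!(c)$ in semi-infinite degrees $\geq 0$ on the attracting stratum. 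This is a direct parabolic analogue of the $\langle \check{\rho}, 2\mu \rangle$ bookkeeping in the proof of Proposition \ref{texactzastava}, with the appropriate weight $\check{\rho}_P$ of $P$ replacing $\check{\rho}$. For right t-exactness I use Braden's theorem to identify $j^*_{=\mu'} \widetilde{\mathfrak{p}}^!(c)$ on the attracting locus with the $*$-restriction on the corresponding repelling locus $\cZ^{\mu'}_{\widetilde{N}_P,P^-} \times \Conf_P$, combined with an application of the argument of Section 3.7.8 of \cite{Ga2} to commute $\widetilde{\mathfrak{p}}^!$ past the $*$-restriction to the Drinfeld stratum, and then flip the inequalities.

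For the identification, t-exactness and conservativity of the forgetful functor to $\fD(\widetilde{\Bun}_{N_P})$ imply that $\widetilde{\mathfrak{p}}^!(\IC^{\operatorname{ren}}_{\widetilde{\Bun}_{N_P}})$ lies in the heart of the semi-infinite t-structure on $\operatorname{SI}(\cZ^\theta_{\widetilde{N}_P,\newoverline{P}^-})$. By Lemma \ref{weakfactorization}, it carries a canonical structure of a factorization module over the renormalized IC sheaf of $\cZ_{N_P,\newoverline{P}^-}$, and its restriction to the open stratum $\cZ_{N_P,\newoverline{P}^-}$ is tautologically $\omega_{\cZ_{N_P,\newoverline{P}^-}}$. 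Since $\IC^{\frac{\infty}{2}}_P$ is by construction the intermediate extension of $\omega_{\cZ_{N_P,\newoverline{P}^-}}$ in the semi-infinite t-structure, the comparison will follow from the universal property of intermediate extensions once I verify that $\widetilde{\mathfrak{p}}^!(\IC^{\operatorname{ren}}_{\widetilde{\Bun}_{N_P}})$ has no subobject or quotient supported on the boundary strata; this reduces to the already-established fact that its $*$- and $!$-restrictions to each stratum are sharply in degrees $\leq -1$ and $\geq 1$ respectively, which is proven by the same Braden/Cousin argument.

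The main obstacle I anticipate is the relative-dimension computation on the strata $\cZ^-_{P,=\mu'}$: unlike the principal case, the fixed-point subscheme $\Gr^{+,\mu'}_{M,\Conf}$ has a nontrivial stratification (coming from the Levi Grassmannian), and one must be careful that the shift prescribed by the semi-infinite t-structure on $\cZ^\theta_{\widetilde{N}_P,\newoverline{P}^-}$ is compatible with the Bruhat-type shifts internal to $\Gr^{+,\mu'}_{M,\Conf}$. Once this bookkeeping is matched---using the formulas for the dimensions of strata in the parabolic Drinfeld compactification from \cite{BFGM} in place of the $\langle \check{\rho},2\mu\rangle$ formulas from \cite{FGV}---the rest of the argument is a faithful translation of the principal case.
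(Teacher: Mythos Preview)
Your proposal is essentially correct and follows the same route as the paper. One organizational point deserves comment: you invoke Lemma \ref{weakfactorization} in the IC-identification paragraph, but its real role is earlier, in the right t-exactness step. The ``argument of Section 3.7.8 of \cite{Ga2}'' that you cite to commute $\widetilde{\mathfrak{p}}^!$ past the $*$-restriction to a Drinfeld stratum is precisely the place where, in the parabolic setting, one needs the Hecke-groupoid/factorization-module compatibility of Lemma \ref{weakfactorization}. The paper makes this explicit: one first checks the base-change isomorphism $\sigma_{=\theta}^* \circ \mathfrak{p}_\theta^! \simeq \iota_\theta^! \circ j_{=\theta}^*$ over the open locus $\cZ^{\theta,r}_{\widetilde{N}_P,P^-}$ where the map to $\widetilde{\Bun}_{N_P}$ is smooth, then uses factorization with $\cZ^{\theta'}_{N_P,P^-}$ for $\theta'$ large (which lands in this locus) together with $\mathscr{H}_{N_P}$-equivariance to propagate the isomorphism everywhere. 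Once t-exactness is in hand, the identification $\widetilde{\mathfrak{p}}^!(\IC^{\operatorname{ren}}_{\widetilde{\Bun}_{N_P}}) \simeq \IC^{\frac{\infty}{2}}_P$ follows directly from the intermediate-extension characterization via the strict $*$/$!$ estimates on boundary strata, which are inherited from those of $\IC^{\operatorname{ren}}_{\widetilde{\Bun}_{N_P}}$ through the commutation just established; no further appeal to the factorization-module structure is required there.

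On the dimension bookkeeping: the paper uses $\check{\rho}_G - \check{\rho}_M$ (your ``$\check{\rho}_P$''), and the key input is that the fiber of $\Bun_P \to \Bun_M$ over a bundle of degree $\theta'$ has dimension $\dim(\Bun_{N_P}) + \langle \check{\rho}_G - \check{\rho}_M, 2\theta' \rangle$, computed as the Euler characteristic of torsors for the twisted group scheme $(N_P)_{\mathscr{P}_M}$. Your anticipated obstacle about the internal stratification of $\Gr^{+,\mu'}_{M,\Conf}$ does not arise: the semi-infinite t-structure on the Zastava side is defined by pullback from the fixed locus as a whole, so the ordinary perverse t-structure on $\Gr^{+,\mu'}_{M,\Conf}$ is all that enters, and the shifts match exactly as in the principal case.
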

\begin{proof}
    The proof proceeds in the same way as Proposition \ref{texactzastava}. The main observation is that the projection \[\Gr^{+,\theta'}_{M,\Conf} \times_{\Bun_M} \Bun_P \longrightarrow \Gr^{+,\theta'}_{M,\Conf} \] has relative dimension \[\operatorname{dim}(\Bun_{N_P}) + \langle \check{\rho}_G-\check{\rho}_M,2\theta' \rangle\] where we view $\check{\rho}_G-\check{\rho}_M$ as an element of the lattice dual to $\Lambda_P$. Indeed, the fiber over $\Bun_P \to \Bun_M$ at $\mathscr{P}_M$ is the moduli stack of torsors for the twisted group scheme $(N_P)_{\mathscr{P}_M}$ over $X$. To compute the dimension of this stack, one computes its shifted Euler characteristic. Similarly, the dimension of $\cZ^{\theta}_{\widetilde{N_P},P^-}$ is \[\langle \check{\rho}_G-\check{\rho}_M,2\theta \rangle.\] It follows that, just as in the proof of Proposition \ref{texactzastava}, the shifts match up. 

    Now the argument for left exactness follows from a diagram chase around the commutative diagram 
	\[\begin{tikzcd}
	{\Gr^{+,\theta'}_{M,\Conf} \times X^{\theta-\theta'}} & {\Gr^{+,\theta'}_{M,\Conf} \times_{\Bun_M} \cZ^{\theta-\theta'}_{P,\newoverline{P}^-} } & {\cZ^{\theta}_{\widetilde{N}_P,\newoverline{P}^-}} \\
	{\Gr^{+,\theta'}_{M,\Conf}} & {\Gr^{+,\theta'}_{M,\Conf} \times_{\Bun_M} \Bun_P} & {\widetilde{\Bun}_{N_P}}
	\arrow[from=1-3, to=2-3]
	\arrow[from=2-2, to=2-3]
	\arrow[from=1-2, to=1-3]
	\arrow[from=1-2, to=2-2]
	\arrow[from=1-2, to=1-1]
	\arrow[from=1-1, to=2-1]
	\arrow[from=2-2, to=2-1]
\end{tikzcd}\] using Lemma \ref{groupoidequiv} and the above observation on shifts. 

To check right exactness, as in the proof of Proposition \ref{texactzastava}, it suffices to check the following claim. Consider the Cartesian diagram \[\begin{tikzcd}
	{\Gr^{+,\theta}_{M,\Conf}} & {\cZ^{\theta}_{\widetilde{N}_P,P^-}} \\
	{\Gr^{+,\theta}_{M,\Conf} \times_{\Bun_M} \Bun_P} & {\widetilde{\Bun}_{N_P}}
	\arrow["{\sigma_{=\theta}}", from=1-1, to=1-2]
	\arrow["{\mathfrak{p}_{\theta}}", from=1-2, to=2-2]
	\arrow["{\iota_{\theta}}"', from=1-1, to=2-1]
	\arrow["{j_{=\theta}}"', from=2-1, to=2-2]
\end{tikzcd}\] Base change along this Cartesian square induces a natural transformation \[\sigma_{=\theta}^* \circ \mathfrak{p}^!_{\theta} \longrightarrow \iota^!_{\theta} \circ j_{=\theta}^*\] which we would like to show is an isomorphism when evaluated on objects of $\operatorname{SI}^{\leq 0}_{\operatorname{glob},P}$. This is a standard argument (see e.g. Section $3.7.8$ of \cite{Ga2} and Proposition $4.2.21$ of \cite{DL}) which we will sketch. 

For $\theta \in \Lambda^-_P$, let $\cZ^{\theta,r}_{\widetilde{N}_P,P^-}$ denote the open locus of $\cZ^{\theta}_{\widetilde{N}_P,P^-}$ from Section $3.6$ of \cite{BFGM} which maps smoothly to $\widetilde{\Bun}_{N_P}$. The desired isomorphism clearly holds over this subscheme. As in the proof of Proposition $4.2.21$ of \cite{DL}, the subscheme $\cZ^{\theta,r}_{\widetilde{N}_P,P^-}$ is compatible with factorization in a suitable sense. In other words, the factorization map \[[\cZ^{\theta'}_{N_P,P^-} \times \cZ^{\theta}_{\widetilde{N}_P,P^-}]_{\disj}^r\longhookrightarrow \cZ^{\theta'+\theta}_{\widetilde{N}_P,P^-,\disj}\] lands in \[\cZ^{\theta'+\theta,r}_{\widetilde{N}_P,P^-} \coloneqq \cZ^{\theta'+\theta,r}_{\widetilde{N}_P,P^-} \times_{\Conf^{\theta'+\theta}_P} [\Conf^{\theta'}_P \times \Conf^{\theta}_P]_{\disj} \]for every $\theta'$. Moreover, for $\theta'$ large enough, the projection \[[\cZ^{\theta'}_{N_P,P^-} \times \cZ^{\theta}_{\widetilde{N}_P,P^-}]_{\disj}^r \longrightarrow \cZ^{\theta}_{\widetilde{N}_P,P^-}\] is surjective. Fix such a $\theta'$.

Consider the diagram \[\begin{tikzcd}
	{[\cZ^{\theta'}_{N_P,P^-} \times \Gr^{+,\theta}_{M,\Conf}]^r_{\disj}} & {[\cZ^{\theta'}_{N_P,P^-} \times \cZ_{\widetilde{N}_P,P^-}^{\theta}]^r_{\disj} } \\
	{\Gr^{+,\theta}_{M,\Conf}} & {\cZ_{\widetilde{N}_P,P^-}^{\theta}} \\
	{\Gr^{+,\theta}_{M,\Conf} \times_{\Bun_M} \Bun_P} & {\widetilde{\Bun}_{N_P}}
	\arrow[from=1-1, to=1-2]
	\arrow[from=1-1, to=2-1]
	\arrow[from=1-2, to=2-2]
	\arrow[from=2-1, to=2-2]
	\arrow[from=3-1, to=3-2]
	\arrow[from=2-1, to=3-1]
	\arrow[from=2-2, to=3-2]
\end{tikzcd}\] where the top two vertical arrows are the projections. By $\mathscr{H}_{N_P}$-equivariance, $!$-pulling an object $c \in \operatorname{SI}^{\leq 0}_{\operatorname{glob},P}$ back along the outer vertical composition is the same as $!$-pulling back along the composition \[[\cZ^{\theta'}_{N_P,P^-} \times \cZ^{\theta}_{\widetilde{N}_P,P^-}]^r_{\disj} \overset{\operatorname{fact}}{\longrightarrow} \cZ^{\theta'+\theta,r}_{\widetilde{N}_P,P^-} \longrightarrow \widetilde{\Bun}_{N_P}\] by Lemma \ref{weakfactorization}, and where here $\operatorname{fact}$ denotes the factorization morphism. We conclude that the desired isomorphism holds along the outer square of the diagram. But $\cZ^{\theta'}_{N_P,P^-}$ is smooth, and hence the same is true on the lower square. 
\end{proof}

In the following, write $\mathfrak{p}_{\Ran}$ for the map \[\widetilde{S}^0_{P,\Ran} \longrightarrow \widetilde{\Bun}_{N_P}.\] We have the following corollary to Proposition \ref{parabolictstr}.

\begin{corollary}\label{localtoglobalrancomp}
    There is a canonical equivalence $\mathfrak{p}_{\Ran}^!(\IC^{\operatorname{ren}}_{\widetilde{\Bun}_{N_P}}) \overset{\sim}{\longrightarrow} \IC^{\frac{\infty}{2}}_{P,\Ran}$ extending the tautologial isomorphism over $S^0_{P,\Ran}$. 
\end{corollary}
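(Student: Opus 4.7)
The plan is to deduce the corollary by transporting the isomorphism of Proposition \ref{parabolictstr} along the equivalence of categories provided by the parabolic analogue of Theorem \ref{effisun}, together with compatibility of all the maps involved with their respective quotient structures.

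First I would observe that the map $\widetilde{\mathfrak{p}}: \cZ_{\widetilde{N}_P,\newoverline{P}^-} \to \widetilde{\Bun}_{N_P}$ is $\Conf_P$-invariant for the trivial action on the target, and hence factors through a map on the level of straightenings
\[
\cZ^{\gen}_{\widetilde{N}_P,\newoverline{P}^-} \coloneqq (\cZ^{\to}_{\widetilde{N}_P,\newoverline{P}^-})_{\str} \longrightarrow \widetilde{\Bun}_{N_P}.
\]
Consequently the object $\widetilde{\mathfrak{p}}^!(\IC^{\operatorname{ren}}_{\widetilde{\Bun}_{N_P}})$ is naturally an effective sheaf on $\cZ_{\widetilde{N}_P,\newoverline{P}^-}$. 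By Proposition \ref{parabolictstr} it is moreover canonically isomorphic to $\IC^{\frac{\infty}{2}}_P$ in $\operatorname{SI}(\cZ_{\widetilde{N}_P,\newoverline{P}^-})$; since the semi-infinite IC sheaf is itself effective (being constructed inside $\operatorname{SI}^{\leq 0}_P$), this isomorphism is canonically an isomorphism of effective sheaves extending the tautological one over the smooth locus $\cZ_{N_P,\newoverline{P}^-}$.

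Next I would apply the parabolic version of Theorem \ref{effisun}, which combines Proposition \ref{paraboliceffisun} and Proposition \ref{effectivetounital}, to obtain a canonical equivalence
\[
\mathfrak{D}_{\eff}(\cZ_{\widetilde{N}_P,\newoverline{P}^-}) \overset{\sim}{\longrightarrow} \mathfrak{D}_{\untl}(\widetilde{S}^0_{P,\Ran}).
\]
Under this equivalence, $\IC^{\frac{\infty}{2}}_P$ is sent to $\IC^{\frac{\infty}{2}}_{P,\Ran}$, essentially by the definition of the latter (Definition \ref{parabolicicdefinition}). On the other hand, the relevant maps to $\widetilde{\Bun}_{N_P}$ fit into a diagram
\[
\begin{tikzcd}[column sep=small]
\cZ_{\widetilde{N}_P,\newoverline{P}^-}/\Conf_P \arrow[rr, "\sim"] \arrow[dr, "\widetilde{\mathfrak{p}}"'] & & \widetilde{S}^0_{P,\Ran}/\Ran \arrow[dl, "\mathfrak{p}_{\Ran}"] \\
& \widetilde{\Bun}_{N_P} &
\end{tikzcd}
\]
which commutes after fppf sheafification by the same argument used for the principal case in the proof of Theorem \ref{effisun}. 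Pulling back $\IC^{\operatorname{ren}}_{\widetilde{\Bun}_{N_P}}$ along the two paths and comparing therefore produces the desired canonical isomorphism $\mathfrak{p}_{\Ran}^!(\IC^{\operatorname{ren}}_{\widetilde{\Bun}_{N_P}}) \simeq \IC^{\frac{\infty}{2}}_{P,\Ran}$.

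The main obstacle, such as it is, lies in Proposition \ref{parabolictstr} itself and in setting up Proposition \ref{paraboliceffisun} — once these are in hand, the corollary is essentially formal. The only remaining point to verify is that the constructed isomorphism indeed extends the tautological identification over $S^0_{P,\Ran}$; this follows because under the equivalence of effective and unital categories the semi-infinite orbit $S^0_{P,\Ran}$ corresponds to the smooth affine locus $\cZ_{N_P,\newoverline{P}^-}$, over which the extension property is already part of Proposition \ref{parabolictstr}.
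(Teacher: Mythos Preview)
Your proposal is correct and matches the paper's intended argument: the corollary is stated without proof as an immediate consequence of Proposition \ref{parabolictstr}, and your deduction via the parabolic analogue of Theorem \ref{effisun} (combining Propositions \ref{paraboliceffisun} and \ref{effectivetounital}) together with the commutative triangle over $\widetilde{\Bun}_{N_P}$ is exactly the parabolic version of the argument given for Corollary \ref{cor: semiinfinite t-structures coincide} in the principal case.
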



\section{Quiver Zastava semi-infinite IC sheaves}\label{quivericsheaves}

we will begin by reviewing the Mirkovi\'{c} \emph{quiver Zastava spaces}\cite{MYZ}, a generalization of the usual theory. At the end, we will construct a semi-infinite IC sheaf for these spaces which agrees with the sheaf in the group case. From now onwards, we will drop the requirement that the curve $X$ is proper. 

\subsubsection{} Pick a graph $Q$ without edge loops and with a totally ordered set of vertices $I$ called ``colors". Consider the "coweight" lattice $\Lambda$ with generators $\alpha_i$ indexed by elements $i \in I$ and let $\Lambda^- \subseteq \Lambda$ denote the negative cone. In other words, $\Lambda^-$ consists of sums $\sum_{i \in I} n_i \alpha_i$ such that $n_i \leq 0$. The adjacency matrix of $Q$ gives a symmetric bilinear form $\kappa$ on $\Lambda$. Explicitly, $\kappa(\alpha_i,\alpha_j)$ is the number of edges between $i$ and $j$ whenever $i \neq j$, and is equal to $-2$ when $i=j$. 

Let $\Conf$ denote the moduli space of divisors on $X$ with coefficients in $\Lambda^-$. Equivalently, $\Conf$ is the moduli space of anti-effective divisors on the curve $X \times I$. This is a scheme with connected components given by partially symmetrized powers $X^{\lambda}$ indexed by $\lambda \in \Lambda^-$. In each $X^{\lambda}$ we have an incidence divisor $\Delta^{\lambda}$ with irreducible components $\Delta^{\lambda}_{i \leq j}$ where $\leq$ denotes the ordering on $I$. Explicitly, $\Delta^{\lambda}_{i \leq j}$ is the divisor where a point of color $i$ is equal to a point of color $j$. Write $\Delta=\cup_{\lambda \in \Lambda^-} \Delta^{\lambda}$ for the union of all the incidence divisors.

Define a factorizable line bundle on $\Conf$ as \[\mathscr{L}_Q \coloneqq \mathcal{O}_{\Conf}(-\kappa\Delta)\] where $-\kappa\Delta$ means that we multiply $\Delta^{\lambda}_{i\leq j}$ by $-\kappa(\alpha_i,\alpha_j)$ whenever $i < j$ and by $1$ whenever $i=j$. Now consider the two-step flag bundle $\Conf^+$ over $\Conf$ consisting of pairs $(D,E)$ of divisors such that $D$ is scheme-theoretically supported on $E$. We have a correspondence \[\begin{tikzcd}
	& {\Conf^+} \\
	\Conf && \Conf
	\arrow["q"', from=1-2, to=2-1]
	\arrow["p", from=1-2, to=2-3]
\end{tikzcd}\] where $q((D,E))=D$ and $p((D,E))=E$. We have the following lemma. 

\begin{lemma}\label{finitelocallyfree}
    The projection $p: \Conf^+ \to \Conf$ is finite locally free, and the quasicoherent sheaf $p_*q^*(\mathscr{L}_Q)$ is a factorizable vector bundle.
\end{lemma}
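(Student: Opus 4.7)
The plan is to reduce the statement to properties of the addition map on $\Conf$, exploiting a natural identification of $\Conf^+$ with a product.

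First I would observe that the map sending a pair $(D,E)$ to $(D, E-D)$ defines an isomorphism $\Conf^+ \overset{\sim}{\longrightarrow} \Conf \times \Conf$: if $D$ is scheme-theoretically supported on $E$ then $F \coloneqq E - D$ is anti-effective, and conversely any pair $(D,F)$ of anti-effective divisors produces $E = D + F$ containing $D$. Under this identification $p$ corresponds to the addition map $\add \colon \Conf \times \Conf \to \Conf$ and $q$ becomes the first projection. To verify that $\add$ is finite locally free, I would work component-wise: for $\mu, \nu \in \Lambda^-$, the restriction $\add \colon \Conf^{\mu} \times \Conf^{\nu} \to \Conf^{\mu + \nu}$ is a morphism between products of symmetric powers of $X$. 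Both source and target are smooth and irreducible of the same dimension (the total number of points of $\mu + \nu$ counted with multiplicities), and the map is obviously finite since a given effective divisor has only finitely many sub-divisors. Miracle flatness then gives flatness, which together with finiteness yields finite local freeness. Consequently $p_* q^* \mathscr{L}_Q$ is locally free of finite rank.

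For the factorization structure, I would use base change combined with the factorization structure that $\mathscr{L}_Q$ already carries. The key geometric input is the Cartesian square
\[\begin{tikzcd}
{[\Conf^+ \times \Conf^+]_{\disj}} \arrow[r] \arrow[d, "p \times p"'] & \Conf^+ \arrow[d, "p"] \\
{[\Conf \times \Conf]_{\disj}} \arrow[r, "\add"] & \Conf
\end{tikzcd}\]
which follows from the fact that if $D \leq E$ and $E = E_1 + E_2$ with disjoint supports, then $D$ decomposes uniquely as $D_1 + D_2$ with $D_i \leq E_i$. Base change for the finite flat morphism $p$ together with the projection formula yields a canonical isomorphism
\[\add^*(p_* q^* \mathscr{L}_Q)\big|_{\disj} \overset{\sim}{\longrightarrow} (p \times p)_*(q \times q)^*\bigl(\add^* \mathscr{L}_Q\bigr)\big|_{\disj},\]
and feeding in the factorization isomorphism $\add^* \mathscr{L}_Q |_{\disj} \simeq (\mathscr{L}_Q \boxtimes \mathscr{L}_Q)|_{\disj}$ followed by a second base change produces the desired identification with $(p_* q^* \mathscr{L}_Q) \boxtimes (p_* q^* \mathscr{L}_Q)|_{\disj}$. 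The associativity and unitality of this structure descend from the corresponding properties of $\mathscr{L}_Q$.

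The main obstacle I anticipate is purely bookkeeping: ensuring the Cartesian squares and projection-formula isomorphisms assemble into genuine factorization data (rather than a pointwise isomorphism on the disjoint locus) and verifying compatibility with the higher-arity factorization maps. Once the identification $\Conf^+ \simeq \Conf \times \Conf$ is in hand this reduces to functorial manipulations, so I do not expect any essential geometric difficulty beyond the miracle-flatness step.
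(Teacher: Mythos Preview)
Your proposal is correct and follows the same route as the paper: both identify $\Conf^+ \overset{\sim}{\to} \Conf \times \Conf$ via $(D,E) \mapsto (D,E-D)$, reducing $p$ to the addition map and $q$ to the first projection. The paper's proof is terser, asserting that $\add$ is ``obviously'' finite locally free and that factorization follows because $p$ and $q$ are compatible with factorization; your miracle-flatness argument and explicit base-change computation simply fill in the details the paper leaves to the reader.
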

\begin{proof}
    There is an isomorphism \[\Conf^+ \overset{\sim}{\longrightarrow} \Conf \times \Conf\] sending a pair $(D,E)$ to $(D, E-D)$. Under this isomorphism $q$ is sent to the first projection and $p$ is sent to the addition map, which is obviously finite locally free. Hence $p_*q^*(\mathscr{L}_Q)$ is a vector bundle. Since $p$ and $q$ are compatible with factorization, it also inherits a factorization structure. 
\end{proof}

By Lemma \ref{finitelocallyfree} we can consider the affine bundle given as a relative spectrum \[\mathscr{V}_Q \coloneqq \underline{\Spec}\big(p_*q^*(\mathscr{L}_Q)^{\vee}\big)\] as a factorization space lying over $\Conf$. Now although the projectivization $\mathbb{P}(\mathscr{V}_Q)$ of $\mathscr{V}_Q$ is \emph{not} factorizable in general, its restriction $s^*\mathbb{P}(\mathscr{V}_Q)$ to the simple locus \[s: \Conf_{\operatorname{simple}} \longhookrightarrow \Conf\] will contain a factorizable subscheme $\mathbb{P}^{\operatorname{loc}}(\mathscr{V}_Q)$. Here $\Conf_{\operatorname{simple}}$ is, by definition, the complement of the divisor $\Delta$. 

Using the tautological trivialization of $\mathscr{L}_Q$ over $\Conf_{\operatorname{simple}}$, we can describe $\mathbb{P}^{\operatorname{loc}}(\mathscr{V}_Q)$ as the subscheme which is fiberwise equal to the Segre embedding. In other words, over a divisor \[D=\sum_j \lambda_jx_j \in \Conf_{\operatorname{simple}} \] where each $\lambda_j$ is equal to some $-\alpha_i$ and $x_j \neq x_{j'}$ for $j \neq j'$, the fiber $\mathbb{P}^{\operatorname{loc}}(\mathscr{V}_Q)$ is equal to the image of the Segre embedding \[ \prod_j \mathbb{P}^1 \longhookrightarrow \mathbb{P}(\mathscr{V}_Q)_D\] where we use the tautological isomorphism $\mathbb{P}(\mathscr{V}_Q)_{-\alpha_ix_j} \simeq \mathbb{P}^1$ obtained from the tautological trivialization $\mathscr{L}_{Q,-\alpha_i x_j} \simeq k$ of the fiber of $\mathscr{L}_Q$ over $-\alpha_i x_j$. Note we are using here that the fiber of $p_*q^*(\mathscr{L}_Q)$ over $-\alpha_i x_j$ is canonically isomorphic to $k \oplus \mathscr{L}_{Q,-\alpha_ix_j}$, where the trivial factor comes from viewing the empty set as a subdivisor of $-\alpha_ix_j$. 

\begin{definition}
    Define the \emph{compactified quiver Zastava} $\pZ_Q$ associated to $Q$ to be the closure of $\mathbb{P}^{\operatorname{loc}}(\mathscr{V}_Q)$ inside the projective bundle $\mathbb{P}(\mathscr{V}_Q)$. 
\end{definition}

By construction, $\pZ_Q$ is a proper factorizable scheme over $\Conf$. Moreover, when $Q$ is a graph associated to a Dynkin diagram of type ADE, there is an isomorphism of factorization spaces \[\pZ_Q \overset{\sim}{\longrightarrow} \pZ_G\] where $\pZ_G$ is the usual compactified Zastava associated to the semisimple and simply-connected algebraic group of ADE type connected to $Q$ \cite{MYZ}. Hence we may consider the spaces $\pZ_Q$ to be a generalization of the theory of Drinfeld's Zastavas to arbitrary Kac-Moody algebras with symmetric Cartan matrix. In particular, when $Q$ is of affine ADE type, we obtain a space which may be considered a Zastava space for the corresponding affine Lie algebra.  

\subsection{Quiver Zastava semi-infinite IC sheaves.} The action of the $\Conf^+$ group scheme $\G_m \times \Conf^+$ on the line bundle $q^*(\mathscr{L}_Q)$ induces an action of the Weil restriction $\G \coloneqq \operatorname{Res}^{\Conf^+}_{\Conf}(\G_m \times \Conf^+)$ on $\mathbb{P}(\mathscr{V}_Q)$. By definition, the fiber of $\G$ at a divisor $D$ of degree $\lambda$ is the group of maps from the moduli $\Conf^+_D$ of subdivisors of $D$ to $\G_m$. Note that we have a decomposition \[\Conf^+_D = \coprod_{\mu \leq \lambda} \Conf^{+,\mu}_D\] into connected components. Hence we have \[ \G_D \overset{\sim}{\longrightarrow} \prod_{\mu \leq \lambda} \G^{\mu}_D\] where $\G^{\mu}_D$ is the group of maps $\Conf^{+,\mu}_D \to \G_m$. Define a homomorphism $\G_m \to \G_D$ by requiring that its composition with each projection $\G_D \to \G^{\mu}_D$ sends a point $t \in \G_m$ to the constant function of degree $\langle \check{\rho},\mu \rangle =\sum_{i \in I} n_i$ where $\mu= \sum_{i \in I} n_i\alpha_i$ with $n_i \leq 0$. Clearly, this defines a homomorphism $h: \G_m \to \G$ whose action on $\mathbb{P}(\mathscr{V}_Q)$ preserves the Segre equations. 

Since $\pZ_Q$ is defined as a closure of a Segre embedding, the homomorphism $h$ from the previous section induces an action of $\G_m$ on $\pZ_Q$. Moreover, $\pZ_Q$ is $\G_m$-complete by \cite{D}. Hence we may consider the semi-infinite category $\operatorname{SI}(\pZ_Q)$ as well as the sheaf $\IC^{\frac{\infty}{2}}(\pZ_Q)$. To see that this sheaf coincides with $\sIC$ under the equivalence $\pZ_Q \simeq \pZ_G$, it suffices to match up the $\G_m$-actions on both sides. But this is clear: on the simple locus the $\G_m$-action on $\pZ_G$ is compatible with factorization and hence is easily seen to be the correct one. Hence we may view the construction above as assigning a ``semi-infinite IC sheaf" to each Kac-Moody algebra with a symmetric Cartan matrix.

\end{document}